\newcommand\defect{{\rm def}}
\newcommand\sZ{{\mathcal Z}}
\newcommand{\rk}{{\rm rk}}
\newcommand{\codim}{{\rm codim}}
\newcommand{\sK}{{\mathcal K}}
\newtheorem{lemma1}{}[section]
\newenvironment{lemma}{\begin{lemma1}{\bf Lemma.}}{\end{lemma1}}
\newenvironment{theorem}{\begin{lemma1}{\bf Theorem.}}{\end{lemma1}}
\newenvironment{proposition}{\begin{lemma1}{\bf Proposition.}}{\end{lemma1}}
\newenvironment{corollary}{\begin{lemma1}{\bf Corollary.}}{\end{lemma1}}
\newenvironment{remark}{\begin{lemma1}{\bf Remark.}\rm}{\end{lemma1}}
\newenvironment{remarks}{\begin{lemma1}{\bf Remarks.}\rm}{\end{lemma1}}
\newenvironment{definition}{\begin{lemma1}{\bf Definition.}}{\end{lemma1}}
\newenvironment{notation}{\begin{lemma1}{\bf Notation.}}{\end{lemma1}}
\newenvironment{setup}{\begin{lemma1}{\bf Setup.}}{\end{lemma1}}
\newenvironment{assumption}{\begin{lemma1}{\bf Assumption.}}{\end{lemma1}}
\newenvironment{remark*}{{\bf Remark.}}{}
\newenvironment{example*}{{\bf Example.}}{}
\newenvironment{assumption*}{{\bf Assumption.}}{}
\newenvironment{claim*}{{\bf Claim.}}{}
\newcommand{\R}{\ensuremath{\mathbb{R}}}
\newcommand{\Q}{\ensuremath{\mathbb{Q}}}
\newcommand{\C}{\ensuremath{\mathbb{C}}}
\newcommand{\PP}{\ensuremath{\mathbb{P}}}
\newcommand{\holom}[3]{\ensuremath{#1:#2  \rightarrow #3}}
\newcommand{\fibre}[2]{\ensuremath{#1^{-1} (#2)}}
\newcommand\sF{{\mathcal F}}
\newcommand\sH{{\mathcal H}}
\newcommand\sI{{\mathcal I}}
\newcommand\sO{{\mathcal O}}
\newcommand\sS{{\mathcal S}}
\newcommand\sU{{\mathcal U}}
\newcommand\sC{{\mathcal C}}
\newcommand\sD{{\mathcal D}}
\DeclareMathOperator*{\pic}{Pic}
\DeclareMathOperator*{\red}{red}
\DeclareMathOperator*{\Bir}{Bir}
\DeclareMathOperator*{\Mult}{Mult}
\DeclareMathOperator*{\Inf}{Inf}
\DeclareMathOperator*{\Fin}{Fin}
\newcommand{\Chow}[1]{\ensuremath{\mbox{\rm Chow}(#1)}}
\DeclareMathOperator*{\supp}{Supp}
\newcommand{\Univ}{\ensuremath{\mathcal{U}}}
\setlist[itemize]{leftmargin=*}
\setlist[enumerate]{leftmargin=*}
\title{Fano manifolds with big tangent bundle: a characterisation of $V_5$} 
\date{\today}
\author{Andreas H\"oring}
\author{Jie Liu}
\address{Andreas H\"oring, Universit\'e C\^ote d'Azur, CNRS, LJAD, France, Institut Universitaire de France}
\email{Andreas.Hoering@univ-cotedazur.fr}
\address{Jie Liu, Institute of Mathematics, Academy of Mathematics and Systems Science, Chinese Academy of Sciences, Beijing, 100190, China}
\email{jliu@amss.ac.cn}
\subjclass[2020]{14J45, 14J40, 14E30}
\keywords{Fano manifold, tangent bundle, minimal rational curves, VMRT}
\begin{document}

\begin{abstract}  
Let $X$ be a Fano manifold with Picard number one such that the tangent bundle $T_X$ is big. If $X$ admits a rational curve with trivial normal bundle, we show that $X$ is isomorphic to the del Pezzo threefold of degree five.
\end{abstract}

\maketitle

\section{Introduction}

\subsection{Main results}

Fano manifold are naturally classified in terms of their anticanonical class $-K_X$, but making an assumption about the positivity of the tangent bundle allows to identify Fanos with a particularly rich geometry. This idea already appears in Mori's characterisation of projective manifolds with ample tangent bundle \cite{Mor79} and has been pursued by many authors over the last fourty years (\cite{Mok88, CP91, DPS94, MOSW15, Mat18, Iwa18, HIM19} to cite just a few). The common denominator of these papers is that the tangent bundle is assumed to be ``semipositive'' over a Zariski open subset $\emptyset \neq X_0 \subset X$. In this paper we go in a direction that is closer to the idea of higher Fano manifolds \cite{DJS07, AC12}: we assume that the tangent bundle is big, i.e., the canonical class $\zeta:= \sO_{\PP(X)}(1)$ on the projectivised tangent bundle $\PP(T_X)$ is a big divisor class. While this implies that $\zeta$ is ``semipositive'' over some Zariski open subset $\emptyset \neq U_0 \subset \PP(T_X)$, its negative locus 
typically maps surjectively onto $X$.
Nevertheless the examples considered in our earlier work with F. Shao \cite{HLS20} indicate that assuming bigness should lead to strong restrictions on the manifold.
We prove the first classification result for Fano manifolds with big tangent bundles:

\begin{theorem}
	\label{thm:big-finite-VMRT}
	Let $X$ be a Fano manifold with Picard number one.
	Assume that the tangent bundle $T_X$ is big. Then $X$ contains 
	 a rational curve with trivial normal bundle if and only if $X$ is isomorphic to the quintic del Pezzo threefold~$V_5$, i.e., a smooth codimension $3$ linear section of the Grassmannian ${\rm Gr}(2,5)\subset \PP^9$ in its Pl\"ucker embedding.
\end{theorem}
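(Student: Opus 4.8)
By \cite{HLS20} the tangent bundle of $V_5$ is big, so it suffices to produce a rational curve on $V_5$ with trivial normal bundle. This is classical: the Hilbert scheme of lines on $V_5$ is isomorphic to $\PP^2$, the associated universal family dominates $V_5$, and a general line $\ell\subset V_5$ is standard, $T_{V_5}|_\ell\cong\O_{\PP^1}(2)\oplus\O_{\PP^1}^{\oplus 2}$, so that $N_{\ell/V_5}\cong\O_{\PP^1}^{\oplus 2}$ is trivial. For the ``only if'' direction, write $n=\dim X$ and let $C\subset X$ be a rational curve with $T_X|_C\cong\O_{\PP^1}(2)\oplus\O_{\PP^1}^{\oplus(n-1)}$, i.e. with trivial normal bundle. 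The plan is to extract from $C$ a minimal dominating family with finite VMRT, then to use the bigness of $T_X$ to force $n=3$, and finally to conclude via the geometry of Fano threefolds.

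\emph{Step 1: a minimal family with finite VMRT.} From the splitting of $T_X|_C$ one reads off $-K_X\cdot C=2$; since $H^1(C,N_{C/X})=0$ the Hilbert scheme of $X$ is smooth of dimension $n-1$ at $[C]$, and since $H^0(C,N_{C/X}(-x))=0$ for $x\in C$ only finitely many deformations of $C$ pass through a general point of $C$. Hence the universal family over the component of the Hilbert scheme containing $[C]$ is generically finite and dominant over $X$; and since a Fano manifold carries no dominating family of rational curves of anticanonical degree one --- a general member of such a family would be a free rational curve of anticanonical degree one, which is impossible --- this component is a minimal dominating family $\sK$, whose general member is, by semicontinuity, again standard with trivial normal bundle. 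Consequently, by the theorems of Kebekus and of Hwang--Mok on the tangent map of a minimal family, the variety of minimal rational tangents (VMRT) $\sC_x$ of $\sK$ at a general point $x\in X$ is a finite reduced set of points in $\PP(T_xX)\cong\PP^{n-1}$.

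\emph{Step 2: bigness forces $n=3$.} One has $n\geq 3$, since $\PP^2$ --- the only Fano manifold of Picard number one and dimension $2$ --- contains no rational curve of self-intersection zero; the reverse inequality is the crux, and I expect it to be the main obstacle. The idea is to play the bigness of $\zeta=\O_{\PP(X)}(1)$ against the minimal family $\sK$: over a general $\ell\in\sK$ the restriction $T_X|_\ell\cong\O_{\PP^1}(2)\oplus\O_{\PP^1}^{\oplus(n-1)}$ is completely explicit, so the restriction of $\zeta$ to the $n$-dimensional subvariety of $\PP(X)$ determined by $\ell$, and the way these subvarieties are assembled along the total VMRT, can be controlled through a Zariski-type decomposition of $\zeta$; combining this with the finiteness of $\sC_x$ and with the study of Fano manifolds of Picard number one with big tangent bundle from \cite{HLS20} should yield $n=3$. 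It is here that bigness, rather than mere finiteness of the VMRT, plays the essential role: for instance, it is what must rule out the smooth cubic fourfold, which also contains lines with trivial normal bundle.

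\emph{Step 3: the threefold case.} With $n=3$, $X$ is a Fano threefold of Picard number one carrying a smooth rational curve of anticanonical degree two with trivial normal bundle and with big tangent bundle. On $\PP^3$ (resp. on $Q^3$) every rational curve has anticanonical degree divisible by $4$ (resp. by $3$), so neither carries a rational curve of anticanonical degree two; and the Fano threefolds of index one do not have big tangent bundle. Hence $X$ is a del Pezzo threefold $V_d$ with $1\leq d\leq 5$, and since $T_{V_d}$ is big if and only if $d=5$ --- for $d\leq 4$ one checks that $\zeta$ is not big --- we conclude that $X\cong V_5$.
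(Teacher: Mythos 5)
There is a genuine gap, and it sits exactly where you flag it: Step 2 is not an argument but an announcement that an argument is needed. Saying that one should ``play the bigness of $\zeta$ against the minimal family'' via ``a Zariski-type decomposition'' does not produce the inequality $n\leq 3$, and nothing in your sketch explains how the finiteness of $\sC_x$ interacts with bigness to bound the dimension. The paper's actual route is long and quite different in flavour: first, bigness forces the class of the total dual VMRT $\check\sC\subset\PP(T_X)$ to be $a\check\zeta-\check\pi^*A$ with $A$ the ample generator, and an intersection computation along a general line $l$ (Proposition \ref{Prop:Degree-VMRT}) gives $A\cdot l=1$, i.e.\ index two, together with a precise description of the components of $f^{-1}(l)$ under the evaluation map of the universal family (Theorem \ref{Thm:Summary-VMRT}). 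The dimension bound then does \emph{not} come from $\zeta$ directly but from a second family: one smooths pairs of incident lines to get a family of conics $\sH$, proves that its tangent variety $\sD_x\subset\PP(\Omega_{X,x})$ at a general point is an irreducible projective plane (Theorem \ref{thm:irreduciblity-tangent-H}, using projective duality to see that the components of $\sD_x$ are cones with vertices on $\sC_x$, and then the indeterminacy structure of the tangent map to upgrade cones to planes), deduces that the linear span of $\sC_x$ is $2$-dimensional, and hence that the distribution $\sF\subset T_X$ spanned by the VMRT has rank $3$ with $c_1(\sF)=c_1(-K_X)$; if $n\geq 4$ the quotient $T_X/\sF$ has trivial determinant, contradicting Peternell's generic ampleness theorem. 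None of this is recoverable from your sketch, so Step 2 must be regarded as missing rather than merely compressed.

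A secondary problem is in Step 3: the assertion that ``the Fano threefolds of index one do not have big tangent bundle'' is unproved and, as stated, dangerously close to circular, since the index bound in this paper is itself a consequence of the machinery above (the identity $A\cdot l=1$ from Proposition \ref{Prop:Degree-VMRT}); index-one prime Fano threefolds do carry conics of anticanonical degree two with trivial normal bundle, so they are not excluded by your hypotheses a priori. The paper avoids this entirely because index two is already established before the dimension is known, so once $n=3$ one lands immediately on del Pezzo threefolds and concludes by \cite[Thm.~1.5]{HLS20}. Your Step 1 and the ``if'' direction are fine.
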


Note that the assumption on the Picard number excludes trivial examples like $\PP^1 \times Y$ and allows us to focus on the most interesting case.
We had shown in \cite[Thm.1.5]{HLS20} that $V_5$ is the only prime del Pezzo threefold
having big tangent bundle. The key result of this paper is that the presence of a family of minimal rational curves of anticanonical degree two forces the dimension of $X$ to be small. Combining Theorem \ref{thm:big-finite-VMRT} with the classification of prime Fano threefolds due to Iskovskikh, one easily deduces:

\begin{corollary}
	\label{c.threefolds}
	Let $X$ be a smooth Fano threefold with Picard number one. Then $T_X$ is big if and only if $X$ is isomorphic to one of the following varieties:
	\begin{enumerate}
		\item the projective space $\PP^3$;
		
		\item the smooth quadric threefold $Q^3\subset \PP^4$;
		
		\item the quintic del Pezzo threefold $V_5$.
	\end{enumerate}
\end{corollary}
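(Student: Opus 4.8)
\textbf{Proof strategy for Corollary~\ref{c.threefolds}.}
The argument splits according to the Fano index. For the ``if'' direction, recall that $T_{\PP^3}$ is ample, so $\zeta=\sO_{\PP(T_{\PP^3})}(1)$ is ample and in particular big; that $T_{Q^3}$ is globally generated (being the tangent bundle of a rational homogeneous space), so $\zeta$ is nef, while an elementary Segre class computation gives $\zeta^{5}=\int_{Q^{3}}s_{3}(\Omega^{1}_{Q^{3}})>0$, whence $\zeta$ is nef and big; finally $T_{V_5}$ is big by \cite[Thm.~1.5]{HLS20}. Thus each of $\PP^3$, $Q^3$, $V_5$ has big tangent bundle.

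For the converse, let $X$ be a Fano threefold with $\rho(X)=1$ and $T_X$ big, and let $r_X\in\{1,2,3,4\}$ be its Fano index. By Iskovskikh's classification, $r_X=4$ forces $X\cong\PP^3$ and $r_X=3$ forces $X\cong Q^3$, so we may assume $r_X\le 2$. If $r_X=2$ then $X$ is a del Pezzo threefold, and \cite[Thm.~1.5]{HLS20} yields $X\cong V_5$. It remains to exclude $r_X=1$, i.e.\ the case where $X$ is a prime Fano threefold $X_{2g-2}$.

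To do so, the plan is to produce a smooth rational curve $C\subset X$ with trivial normal bundle and then invoke Theorem~\ref{thm:big-finite-VMRT}: it would give $X\cong V_5$, contradicting $r_{V_5}=2$. The curve $C$ will be a conic. Every prime Fano threefold carries a $2$-dimensional family of conics — smooth rational curves with $-K_X\cdot C=2$ — and this family covers $X$; a general member $C$ of a covering family is free, so $N_{C/X}$ is globally generated, and since $\deg N_{C/X}=-K_X\cdot C+2g(C)-2=0$ this forces $N_{C/X}\cong\sO_{\PP^1}\oplus\sO_{\PP^1}$. The main point to pin down carefully is exactly this last step — that the general conic is free with trivial (rather than $\sO_{\PP^1}(1)\oplus\sO_{\PP^1}(-1)$) normal bundle on \emph{every} prime Fano threefold, in particular in the low-genus cases $g=2,3$ where $X$ carries no lines and the conics are already the minimal rational curves; here one appeals to the classical theory of Hilbert schemes of conics on prime Fano threefolds. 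The remaining steps — recognising $\PP^3$ and $Q^3$ from the index, and reducing the index-$2$ case to \cite{HLS20} — are immediate.
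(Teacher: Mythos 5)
Your proof is correct and follows essentially the same route as the paper: split by index, identify $\PP^3$ and $Q^3$ in index $\geq 3$, and in low index use the covering family of anticanonical-degree-two rational curves (whose general member is free, hence immersed with trivial normal bundle since a degree-zero globally generated quotient on $\PP^1$ is trivial) to invoke Theorem~\ref{thm:big-finite-VMRT}. The only cosmetic difference is that the paper applies Theorem~\ref{thm:big-finite-VMRT} uniformly to index $\leq 2$, whereas you handle index $2$ directly via \cite[Thm.~1.5]{HLS20} and reserve Theorem~\ref{thm:big-finite-VMRT} for the index-one case; since the proof of Theorem~\ref{thm:big-finite-VMRT} itself terminates in that same result of \cite{HLS20}, the two arguments coincide in substance.
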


In view of Theorem \ref{thm:big-finite-VMRT} and the examples considered in \cite{HLS20},
we expect that prime Fano manifolds with big tangent bundle should have a rather high index. Our computations yield a first step in this direction:

\begin{corollary}
	\label{c.indexone}
	Let $X$ be a Fano manifold with Picard number one such that $T_X$ is big. Assume that there exists a covering family $\sK$ of minimal rational curves on $X$ such that the VMRT $\sC_x\subset \PP(\Omega_{X,x})$ at a general point $x\in X$ is not dual defective. Then $X$ has index at least two.
\end{corollary}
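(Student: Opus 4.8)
The plan is to argue by contradiction: assume $X$ has index one, i.e., $-K_X = H$ is the ample generator of $\Pic(X)$. First I would recall that for a covering family $\sK$ of minimal rational curves on a Fano manifold of Picard number one, the anticanonical degree $-K_X \cdot C$ equals $p+1$, where $p = \dim \sC_x$ is the dimension of the VMRT at a general point plus one in the usual normalisation; since the index is one, the minimal possible anticanonical degree is $2$, which forces the minimal rational curves to have normal bundle $N_{C/X} \cong \sO(1)^{\oplus a} \oplus \sO^{\oplus (n-1-a)}$ for the standard family, and in the extreme case $-K_X\cdot C = 2$ the curves have normal bundle $\sO^{\oplus(n-1)}$, i.e., $C$ is a rational curve with trivial normal bundle. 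Here I would want to combine the dual-nondefectivity hypothesis with the standard VMRT machinery (Hwang--Mok) to pin down the geometry of $\sC_x$.

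The key point is to connect bigness of $T_X$ with the VMRT. The tautological class $\zeta = \sO_{\PP(T_X)}(1)$ restricted to the subvariety $\tilde{\sC} \subset \PP(T_X)$ swept out by the VMRTs carries information: bigness of $T_X$ should translate, via the projection formula and the incidence variety of $\sK$, into a positivity statement about $\sC_x \subset \PP(\Omega_{X,x})$, namely that the secant/tangent variety behaviour of $\sC_x$ is constrained. The hypothesis that $\sC_x$ is not dual defective is exactly what is needed to run the argument in \cite{HLS20} (or its analogue here) relating the non-nef locus of $\zeta$ to the dual variety $\sC_x^\vee \subset \PP(T_{X,x})$: when $\sC_x$ is not dual defective, the dual variety is a hypersurface, and one can estimate the volume (or the restricted volume along fibres) of $\zeta$ in terms of $\dim \sC_x$ and $-K_X\cdot C$. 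The upshot should be an inequality forcing $-K_X \cdot C \geq 3$ whenever $T_X$ is big and the VMRT is not dual defective, contradicting index one.

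Concretely, the steps in order: (1) reduce to index one and let $\sK$ be the given family, with $C$ a general member and $d := -K_X \cdot C$; deduce $d \geq 2$ and, if $d=2$, that $N_{C/X}$ is trivial so Theorem \ref{thm:big-finite-VMRT} applies and gives $X \cong V_5$ — but $V_5$ has index two, a contradiction, so actually $d = 2$ is already impossible and one needs $d \geq 3$ to get a contradiction directly, meaning the real work is ruling out $d = 2$ without invoking the full classification (or: if one does invoke it, one is done). (2) For $d \geq 2$ with the family fixed, use the identification $p = \dim\sC_x = d - 2$ and the Hwang--Mok theory so that $\sC_x \subset \PP^{n-1}$ has dimension $d-2$. (3) Show bigness of $\zeta$ forces the generic fibre-wise behaviour: the restriction of a sufficiently divisible multiple of $\zeta$ to a general fibre $\PP(\Omega_{X,x}) = \PP^{n-1}$ has base locus related to $\sC_x^\vee$; non-dual-defectivity makes $\sC_x^\vee$ a hypersurface of some degree $\delta$, and a volume computation (integrating $\zeta^n$ against the Segre class of the base ideal) yields $\vol(\zeta) > 0$ only if $d - 2$ is large enough relative to $n$ — but independently $n$ is bounded below in terms of $d$ via the VMRT, and assembling these gives $d \geq 3$. (4) Conclude that index one is impossible, i.e., the index is at least two.

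The main obstacle I anticipate is step (3): making the passage from "$T_X$ big" to a clean, usable positivity statement on the VMRT. Bigness is a global statement on $\PP(T_X)$, while dual-defectivity is a pointwise statement about $\sC_x$; bridging them requires controlling how the non-nef locus of $\zeta$ meets a general fibre, and in particular knowing that the relevant sections of $m\zeta$ do not all vanish on a fibre for degree reasons. The non-dual-defective hypothesis is precisely the lever that makes this controllable — it guarantees the dual variety is a divisor, hence imposes only codimension-one constraints — but quantifying the resulting volume estimate, and doing so uniformly so that it bites exactly when $d = 2$, is the delicate part. I expect this is where one reuses, in sharpened form, the volume/positivity estimates already developed in \cite{HLS20} and the earlier sections of this paper.
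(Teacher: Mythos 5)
Your overall shape is right: argue by contradiction from index one, and observe that the case $-K_X\cdot C=2$ is already fatal, because a standard minimal rational curve of anticanonical degree two has trivial normal bundle, so $\sK$ is an \'etale web and Theorem \ref{Thm:Summary-VMRT} (or, as you suggest, the full classification Theorem \ref{thm:big-finite-VMRT} giving $X\cong V_5$) forces $A\cdot C=1$ and hence index two. This part agrees with the paper, which runs exactly this step via Theorem \ref{Thm:Summary-VMRT}.

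The genuine gap is your step (3): you never actually rule out $-K_X\cdot C\geq 3$, and that is the entire content of the corollary beyond what Theorem \ref{Thm:Summary-VMRT} already gives. The paper disposes of it in one line by citing \cite[Theorem 3.4]{FuLiu21}, which states precisely that under the hypotheses ($\rho(X)=1$, $T_X$ big, VMRT not dual defective, index one) the general member of $\sK$ has anticanonical degree at most two. Your sketch of a substitute is not only incomplete but points the wrong way: you assert that $\mathrm{vol}(\zeta)>0$ should hold ``only if $d-2$ is large enough relative to $n$,'' and you end step (3) with ``assembling these gives $d\geq 3$,'' which is the opposite of what is needed. The actual mechanism is that bigness forces the degree to be \emph{small}: non-dual-defectivity makes the total dual VMRT $\check{\sC}\subset\PP(T_X)$ a prime divisor with $[\check{\sC}]=m\check{\zeta}+\check{\pi}^*\Delta$, Proposition \ref{Prop:Bigness-Criterion} shows that $T_X$ is big if and only if $\Delta$ is anti-ample, and one must then compute $\Delta\cdot C$ along a general minimal curve (the analogue of Lemma \ref{Lemma:Class-Dual-Curves} for a positive-dimensional VMRT); anti-ampleness of $\Delta$ then yields an \emph{upper} bound on the degree of $C$. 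None of the estimates in this paper treat the case $p\geq 1$ (Proposition \ref{Prop:Degree-VMRT} and its surroundings are written for $p=0$ only), so without invoking \cite{FuLiu21} or redoing that intersection-theoretic computation, your proof does not close.
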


\subsection{Strategy of proof of Theorem \ref{thm:big-finite-VMRT}}

Let $X$ be a Fano manifold with Picard number one such that $T_X$ is big, and let $\sK$ be a family
of minimal rational curves such that $-K_X \cdot l=2$ for a curve $[l] \in \sK$.
The starting point of our proof is the total dual VMRT $\check{\mathcal C} \subset \PP(T_X)$ associated to the family $\sK$ (cf. Definition \ref{Def:Dual-VMRT}): over a general point $x \in X$ the fibre $\check{\mathcal C}_x$
is a finite union of hyperplanes, one for each curve in $\sK$ passing through the point $x$. We observed in \cite{HLS20} that the bigness assumption leads to restrictions
on the cohomology class of $\check{\mathcal C} \subset \PP(T_X)$.
In fact Proposition \ref{Prop:Degree-VMRT} shows
that $X$ has index two, i.e., we have $-K_X = 2 A$ with $A$ an ample Cartier divisor. Thus $A \cdot l=1$ (the rational curve $l$ is a ``line''), and the family $\sK$ is unsplit. 

Eventually we want to consider the saturated subsheaf $\sF \subset T_X$ defined by the linear span of the tangent directions
of the minimal rational curves in the family $\sK$ (see \cite[Sect.2]{Hwa01} for more details).
Our goal is to show that this distribution 
has rank three and $c_1(\sF)=c_1(-K_X)$. If $X$ has dimension at least four, a theorem of Peternell \cite{Pet12} on the generic ampleness of $T_X$ then yields a contradiction. 

Proving that $\rk \sF=3$ requires a new tool that should be of independent interest: smoothing two curves
of $\sK$ passing through a general point, we obtain a family of rational curves $\sH$
on $X$ such that $A \cdot C=2$ for a general member $[C]\in \sH$ (the family $\sH$ is a family of ``conics''), and we consider its total variety of tangents $\sD \subset \PP(\Omega_X)$ (cf. Subsection \ref{subsectionVOT}). The main technical result of this paper (Theorem \ref{thm:irreduciblity-tangent-H}) is that in our case the general fibre $\sD_x \subset \PP(\Omega_{X,x})$ is a linear plane! The proof of this surprising fact comes in two steps: in the first step we observe that the bigness assumption and the projective duality imply that the
irreducible components of $\sD_x \subset \PP(\Omega_{X,x})$ are two dimensional cones
with vertices the points of the VMRT $\sC_x \subset \PP(\Omega_{X,x})$.
The second, much more difficult, step is to show that these cones are actual linear planes. 
For this second step we consider the tangent map
$$
\tau_x: \sH_x \dashrightarrow \PP(\Omega_{X,x})
$$
defined on the (normalised) family of curves parametrised by $\sH$ passing through a general point $x$. Contrary to the case of minimal rational curves,
the tangent map $\tau_x$ always has indeterminacies (cf. Lemma \ref{lemmaindeterminacy}) and is not always birational onto its image
\cite[Prop.3.2]{IN03}. Our basic idea is to turn these pathologies to our advantage, using that we already know that the image, the tangent variety $\sD_x$, is a union of cones. 

Unfortunately the implementation of this idea is not straightforward, for two  reasons:
\begin{itemize}
    \item While a general member $l$ of $\sK$ is smooth in a general point of $X$ \cite{Keb02}, it might have singularities in some special point. 
    \item For a general member $l$ of $\sK$, denote by $B_l \subset \sK$ the members of $\sK$ that intersect $l$. Then $B_l$ might have several irreducible components.
\end{itemize}
A posteriori these problems disappear: on the del Pezzo threefold $V_5$ the family of minimal rational curves is a family of lines, and the curve $B_l$ is a smooth conic in $\sK \simeq \PP^2$. A priori, things are less obvious, and we have to be very careful at every step of the construction, building on ideas of Hwang \cite{Hwa17a} for the analysis
of rational curves with trivial normal bundle. Indeed the description of the family of minimal rational curves in Subsection \ref{subsectionbig} is the first big step of our proof. For the convenience of the reader we have summarised the results in Theorem \ref{Thm:Summary-VMRT} and Corollary \ref{c.lines-B_l^1-properties} so that it is possible to read the later sections independently.

{\bf Acknowledgements.}
We would like to thank Baohua Fu, Jun-Muk Hwang and Qifeng Li for very helpful communications on various technical problems related to this subject. The first-named author thanks the Institut Universitaire de France and the A.N.R. project Foliage (ANR-16-CE40-0008) for providing excellent working conditions.
The second-named author is supported by the NSFC grants No. 11688101 and No. 12001521.

\section{Notation and basic facts}
\label{Section:basic-facts}

We work over $\C$, for general definitions we refer to \cite{Har77}. 
We use the terminology of \cite{KM98} for birational geometry and that of \cite{Ko96} for rational curves.
We refer to Lazarsfeld's books for notions of positivity of $\R$-divisors, in particular
\cite[Sect.2.2]{Laz04a} for pseudoeffectivity and the associated cones.
We will use the terminology of $\Q$-twisted vector bundles as explained in
\cite[Sect.6.2]{Laz04b}

Manifolds and varieties will always be supposed to be irreducible. A fibration is a proper surjective map with connected fibres \holom{\varphi}{X}{Y} between normal varieties.

Let $X$ be a projective variety, and let $\Chow{X}_1$ be the Chow variety of $1$-cycles on $X$. Let $\mathcal{H}$ be the normalisation of a subvariety of $\Chow{X}_1$,
and denote by $\Univ_{\sH}$ the normalisation of the pull-back of the universal family. Then we have a natural diagram
\begin{equation} \label{standarddiagram}
    \begin{tikzcd}[column sep=large, row sep=large]
	\Univ_{\sH}  \arrow[r,"e_{\sH}"] \arrow[d,"q_{\sH}"]    &   X  \\
	\sH                                                     &
\end{tikzcd}
\end{equation}

\begin{definition}
	Let $X$ be a projective variety.
	\begin{enumerate}
		\item The normalisation $\sH$ of an irreducible projective subvariety of $\Chow{X}_1$ is called an irreducible covering family of curves on $X$ if the following conditions holds:
		\begin{enumerate}
			\item a general fiber of $q_{\sH}$ is irreducible and reduced, and
			\item the evalution morphism $e_{\sH}$ is surjective.
		\end{enumerate}
		
		\item A covering family of curves on $X$ is a finite union of irreducible covering family of curves.
	\end{enumerate}
\end{definition}

In this paper we will frequently study the behaviour of covering families under certain morphisms.
We follow the definitions of \cite{Hwa17a}, which differ from the classical definitions of pull-back and push-forward:

\begin{definition}
	\label{Defn:pull-back-forward-web}
	Let $X$ be a projective variety and let $\sH$ be a covering family of curves over $X$.
	\begin{enumerate}
		\item Let $g\colon X\rightarrow X'$ be a surjective morphism to a projective variety $X'$ such that $g$ does not contract general members of irreducible components of $\sH$. The images under $g$ of the general members of irreducible components of $\sH$ determine a covering family of curves on $X'$ which we denote by  $g_*\sH \rightarrow \Chow{X'}_1$. We have a natural commutative diagram of dominant rational maps
		\[
		\begin{tikzcd}[column sep=large, row sep=large]
			\Univ_{\sH} \arrow[r,dashed] \arrow[d,"q_{\sH}" left]  &  \Univ_{g_*\sH} \arrow[d,"q_{g_*\sH}"] \\
			\sH         \arrow[r,dashed]                      &  g_*\sH
		\end{tikzcd}
		\]
		
		\item Let $g\colon X\rightarrow X'$ be a surjective morphism to a projective variety $X'$. We define a decomposition 
		$$
		\sH={\rm Fib}_g(\sH)\cup {\rm Hor}_g(\sH)
		$$
		with no common components as follows. An irreducible component $\sH_i$ of $\sH$ belongs to ${\rm Fib}_g(\sH)$ if and only if the general members in $\sH_i$ are contracted by $g$. Moreover, we can also define a decomposition 
		$$
		{\rm Hor}_g(\sH)={\Inf}_g(\sH)\cup {\Fin}_g(\sH)
		$$
		with no common components as follows. An irreducible component $\sH_i$ of ${\rm Hor}_g(\sH)$ belongs to $\Fin_g(\sH)$ (resp. $\Inf_g(\sH)$) if and only if the induced morphism $\sH_i\dashrightarrow g_*\sH_i$ is generically finite (resp. not generically finite). 
		
		\item Let $f\colon X'\rightarrow X$ be a surjective generically finite morphism from a projective variety $X'$. The inverse images under $f$ of the general members of irreducible components of $\sH$ determines a covering family of curves on $X'$ which we will denote by $f^*\sH\rightarrow \Chow{X'}_1$. We have a natural commutative diagram whose horizontal arrows are generically finite rational maps
		\[
		\begin{tikzcd}[column sep=large, row sep=large]
			\Univ_{f^*\sH} \arrow[r,dashed] \arrow[d,"q_{f^*\sH}" left]  &  \Univ_{\sH} \arrow[d,"q_{\sH}"] \\
			f^*\sH          \arrow[r,dashed]                              &  \sH
		\end{tikzcd}
		\]
		We shall denote by $\Bir(f^*\sH)$ (resp. $\Mult(f^*\sH)$) the union of irreducible components of $f^*\sH$ whose general members are sent to members of $\sH$ birationally (resp. not birationally) by $f$, so that 
		$$
		f^*\sH=\Bir(f^*\sH)\cup \Mult(f^*\sH).
		$$
	\end{enumerate}
\end{definition}

\begin{remark*}
In order to avoid confusion, let us stress that in the definitions of $g_*\sH$ and $f^*\sH$, the images and inverse images are taken in the set-theoretical sense, not in the cycle-theoretic sense. This choice is coherent with the definition in our main reference \cite{Hwa17a}.
\end{remark*}

\section{Webs of algebraic curves}
\label{Section:web-curves}

\begin{definition}
	Let $X$ be a projective variety. 
	\begin{enumerate}
		\item An irreducible covering family of curves $\sH$ on $X$ is called an irreducible web of curves if the evaluation morphism $e_{\sH}:\Univ_{\sH}\rightarrow X$ is generically finite.
		
		\item A web of curves on $X$ is a finite union of irreducible webs of curves. In particular, a web of curves on $X$ has pure dimension equal to $\dim(X)-1$.
		
		\item The degree of an irreducible web $\sH$ of curves on $X$ is defined to be the degree of the evaluation morphism $e_{\sH}$.
	\end{enumerate}
\end{definition}

The term \emph{web} originates from the notion of a web-structure in local differential geometry, defined as follows.

\begin{definition}
	\label{defn:webstructure}
	Let $U$ be a complex manifold. A web-structure (of rank $1$) on $U$ is a finite collection of line subbundles
	\[
	W_i\subset T_U,\quad 1\leq i\leq d,
	\]
	for some integer $d\geq 1$ such that for any $1\leq i\not=j\leq d$, the intersection $W_i\cap W_j\subset T_U$ is the zero section.
\end{definition}

An irreducible web $\sH$ of curves with degree $d$ on a projective variety $X$ will define a web-structure at a general point of $X$ in the following sense \cite[Proposition 3.5]{Hwa17a}: there exists a dense Zariski-open subset $X_o \subset X$ such that each point $x\in X_o$ has an Euclidean open neighborhood $U_x \subset X_o$ satisfying the following conditions:
\begin{enumerate}
	\item the preimage $e_{\sH}^{-1}(U_x)$ consists of $d$ disjoint connected open subsets
	\[
	V_x^1,\dots,V_x^d\subset \Univ_{\sH}^{\rm sm},
	\]
	each of which is biholomorphic to $U_x$ by $e_{\sH}$;
	
	\item for each $1\leq i\leq d$, each fibre of $q_{\sH}\vert_{V_x^i}\colon V_x^i\rightarrow q_{\sH}(V_x^i)$ is smooth and connected;
	
	\item let $W_i\subset T_{U_x}$, $1\leq i\leq d$, be the image of $T_{V^i_x/q_{\sH}(V^i_x)}$ ; then $W_1,\dots,W_d$ give a web-structure on $U_x$.
\end{enumerate}
In particular, for any member $C$ of $\sH$, the intersection $C\cap U_x$ is smooth.

Let $X$ be a projective manifold of dimension $n$. An irreducible and reduced curve $C\subset X$ has a \emph{trivial normal bundle} if under the normalisation $f\colon \widetilde{C}\rightarrow C$, we get the following exact sequence
\[
0\rightarrow T_{\widetilde{C}}\rightarrow f^*T_X\rightarrow \sO_{\widetilde{C}}^{\oplus (n-1)}\rightarrow 0.
\]
In particular the morphism $\widetilde{C}  \rightarrow X$ is immersive.
Suppose that a curve $C$ in an $n$-dimensional projective manifold $X$ has trivial normal bundle and deformations of $C$ with constant geometric genus cover an open subset of $X$. The germ of the space of deformations of $C$ with constant geometric genus must have dimension at least $n-1$. The Zariski tangent space to this space at the point corresponding to $C$ is $$
H^0(\widetilde{C},f^*T_X/T_{\widetilde{C}}) \simeq H^0(\widetilde{C}, \sO_{\widetilde{C}}^{\oplus (n-1)}) \simeq \C^{n-1}.
$$
Thus the germ is smooth. The normalisation $\sH$ of the closure of this germ in the Chow variety of $X$ will define an irreducible web of curves.

\begin{definition}
	\label{defn:etale-web-curves}
	Let $X$ be a projective manifold. An \'etale web of curves on $X$ is an irreducible web  of curves $\sH$ on $X$ such that the following holds: in terms of the normalised universal family 
	\[
	\begin{tikzcd}[column sep=large, row sep=large]
		\Univ_{\sH} \arrow[r,"e_{\sH}"] \arrow[d,"q_{\sH}" left] & X  \\
		\sH
	\end{tikzcd}
	\]
	there exists a Zariski open subset $\sH_o$ of the smooth locus of $\sH$ such that for each point $t\in \sH_o$,
	\begin{enumerate}
		\item the fibre $(q_{\sH})^{-1}(t)$ is a smooth curve, and
		
		\item the morphism $e_{\sH}$ is unramified over the Zariski open subset $(q_{\sH})^{-1}(\sH_o)$.
	\end{enumerate}
\end{definition}

Let $\sH$ be an \'etale web of curves on a projective manifold $X$, and let $Z\subset X$ be a closed subset with codimension at least two. Since $e_{\sH}$ is unramified in a neighbourhood of a general fibre of $q_{\sH}$, then an easy dimension count shows that a general member of $\sH$ is disjoint from $Z$.

\begin{remark}
	Our definition of \'etale webs of curves is \emph{a priori} slightly weaker than the definition of \'etale webs of \emph{smooth} curves given in \cite[Definition 6.1]{Hwa17a} since we do not require 
	that $e_\sH(\fibre{q_\sH}{t}) \subset X$ is smooth for general $t \in \sH$.
However, many results proved for \'etale webs of smooth curves in \cite{Hwa17a} still hold in this more general situation.
\end{remark}

One can easily see that the irreducible web of curves defined by a curve $C$ with trivial normal bundle in a projective manifold $X$ is an \'etale web of curves. Conversely, if $\sH$ is an \'etale web of curves, then for a general point $t\in \sH$, the curve $C_t\subset X$ corresponding to $t$ has trivial normal bundle since the natural induced morphism
\[
(q_{\sH})^{-1}(t)\longrightarrow C_t
\]
is birational and finite, and hence it coincides with the normalisation of $C_t$. 

\begin{definition} \label{definition-transversally}
Let $X$ be a complex manifold, and let $C \subset X$ be a connected curve.
Let $B \subset X$ be a hypersurface. We say that the intersection 
$C\cap B$ is transversal if for every point $p \in C \cap B$ and every local irreducible germ $C'$ of $C$ at $p$, the germ $C'$ is smooth and not tangent to $B$.
\end{definition}

 The following lemma is stated for \'etale web of smooth curves in \cite{Hwa17a}, but the proof still works for \'etale web of curves (see also \cite[Proposition 1 and Proposition 6]{HM03}).

\begin{lemma}
	\cite[Lemma 6.2]{Hwa17a}
	\label{lemma:transversal-locus}
	Let $\sH$ be an irreducible \'etale web of curves on a projective manifold $X$. Let $B\subset X$ be an irreducible hypersurface which has positive intersection number with members of $\sH$. Let $B_o\subset B$ be a dense Zariski open subset. Then there exist dense Zariski open subsets $B'\subset B_o$ and $\sH_B\subset \sH_o$ such that 
	\begin{enumerate}
		\item for any member $[C]\in \sH_B$, the curve $C$ intersects $B$ transversally and $C\cap B$ is contained in $B'$; and
		
		\item for any point $x\in B'$, there exists $t\in \sH_B$ such that $x\in C_t$; and
		
		\item if a curve $C\subset X$ is a member in $\sH$ such that $C\cap B'\not=\emptyset$, then $C=C_t$ for some point $t\in \sH_B$.
	\end{enumerate}
\end{lemma}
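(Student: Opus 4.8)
The plan is to follow the argument of \cite[Lemma 6.2]{Hwa17a}, adapting it to the weaker notion of \'etale web of (possibly non-smooth) curves, and to point out where the difference is harmless. First I would set up the candidate loci. Since $\sH$ is an \'etale web, there is a Zariski open subset $\sH_o$ of the smooth locus of $\sH$ over which the fibre $(q_\sH)^{-1}(t)$ is a smooth curve and $e_\sH$ is unramified over $(q_\sH)^{-1}(\sH_o)$. Shrinking $\sH_o$ if necessary, I may assume that for $t \in \sH_o$ the induced map $(q_\sH)^{-1}(t) \to C_t$ is the normalisation, so $C_t$ is immersed; moreover I can arrange that $e_\sH$ restricted to $(q_\sH)^{-1}(\sH_o)$ is an unramified (hence open, local-biholomorphism) morphism onto its image. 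The point where the weakening matters is exactly here: $C_t$ itself may fail to be smooth, but since the normalisation $(q_\sH)^{-1}(t) \to C_t$ is immersed and finite, every local germ of $C_t$ at each of its points is smooth; this is all that Definition~\ref{definition-transversally} requires, so the notion of transversal intersection $C_t \cap B$ still makes sense pointwise on the normalisation.

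Next I would carry out the dimension count that produces $\sH_B$. Consider the incidence variety $I = \{(t,p) : t \in \sH_o, \ p \in e_\sH((q_\sH)^{-1}(t)) \cap B\}$, or more precisely its analogue inside the normalised universal family: $\widetilde I = (e_\sH|_{(q_\sH)^{-1}(\sH_o)})^{-1}(B)$. Because $B$ has positive intersection number with members of $\sH$, every member meets $B$, so the projection $\widetilde I \to \sH_o$ is surjective; and because $e_\sH$ is unramified over $(q_\sH)^{-1}(\sH_o)$, the fibre of $\widetilde I$ over a general $t$ is a finite reduced set of points of the normalisation of $C_t$, each mapping to a point of $B$ where the smooth germ $C_t'$ is not tangent to $B$ — tangency would be a proper closed condition cut out on $\widetilde I$, so it is avoided over a dense open $\sH_B \subset \sH_o$. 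This gives (1) for the transversality part. Intersecting further with the open locus of $\widetilde I$ mapping into $B_o$ (nonempty and dense over $\sH_o$ by generic flatness / upper semicontinuity of fibre dimension, using again that $e_\sH$ is a local biholomorphism near general fibres), and shrinking $\sH_B$ accordingly, I get that $C_t \cap B \subset B_o$; then I set $B'$ to be the union of the images $e_\sH((q_\sH)^{-1}(t)) \cap B$ over $t \in \sH_B$, intersected with $B_o$, and shrink it to a dense Zariski open subset of $B_o$. Part (2) is then immediate from the construction of $B'$ — each point of $B'$ lies on some $C_t$ with $t \in \sH_B$ — and transversality at that point holds because $t \in \sH_B$.

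For part (3) I would use the \'etale/unramified property in its strong form: if $C \subset X$ is a member of $\sH$ (say $C = C_s$ for some $s \in \sH$) meeting $B'$ at a point $x$, I want $s \in \sH_B$. The idea, exactly as in \cite{Hwa17a}, is that $x \in B'$ is swept out by curves $C_t$, $t \in \sH_B$, and near a general such $x$ the web $\sH$ defines an honest web-structure (finitely many local leaf directions through $x$), so the local leaf through $x$ in the direction of $C$ coincides with the local leaf of one of the $C_t$'s; the identity theorem for the algebraic family then forces $C = C_t$ and hence $s = t \in \sH_B$. Concretely one shrinks $B'$ once more so that over $B'$ the map $e_\sH$ is a finite unramified cover in a neighbourhood, matching the $d$ local leaves, and a member of $\sH$ through a point of $B'$ must be one of these $d$ leaves, all of which belong to $\sH_B$-members by construction. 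The main obstacle I anticipate is purely bookkeeping: ensuring that all the successive shrinkings of $\sH_o$, $B_o$, $B'$ remain \emph{dense} and mutually compatible — in particular that $B'$ stays dense in $B$ after imposing that members through it lie in $\sH_B$ — but this is exactly the content of \cite[Lemma 6.2]{Hwa17a} and the cited \cite[Propositions 1 and 6]{HM03}, and the only genuinely new verification is the trivial one noted above, namely that ``$C_t$ has smooth local germs'' replaces ``$C_t$ is smooth'' with no loss in any of these steps.
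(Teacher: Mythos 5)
Your proposal is correct and follows essentially the same route as the paper: both arguments rest on the incidence variety $(e_{\sH})^{-1}(B)\to\sH$ being surjective and generically finite (from the positive intersection number), on generic \'etaleness/transversality over a dense open subset of $\sH_o$, and on the unramifiedness of $e_{\sH}$ near general fibres to pass between the universal family and the curves in $X$. The only difference is organisational: the paper first chooses $B'\subset B_o$ with $(e_{\sH})^{-1}(B')\subset \bar{B}_1$ and then sets $\sH_B:=q_{\sH}((e_{\sH})^{-1}(B'))$, which disposes of the compatibility bookkeeping you flag at the end in a single step.
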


\begin{proof} We use the notation of Definition \ref{defn:etale-web-curves}.
Since the intersection number is positive, the induced morphism 
	\[
	(e_{\sH})^{-1}(B)\longrightarrow \sH
	\]
	is surjective and generically finite. Thus we may choose a dense Zariski open subset $\sH_1\subset \sH_o$ such that the restricted morphism
	\[
	\bar{B}_1:=(e_{\sH})^{-1}(B)\cap (q_{\sH})^{-1}(\sH_1)\longrightarrow \sH_1
	\]
	is surjective and \'etale. Then we can choose a Zariski open subset $B'\subset B_o$ such that $(e_{\sH})^{-1}(B')\subset \bar{B}_1$. Set $\sH_B:=q_{\sH}((e_{\sH})^{-1}(B'))$. Then they satisfy the required conditions.
\end{proof}

\begin{proposition}
	\label{Prop:Pull-Back-Trivial-Normal-Bundle}
	\cite[Proposition 6]{HM03}
	Let $f\colon Y\rightarrow X$ be a generically finite morphism between projective manifolds. Suppose that there exists an \'etale web  of curves $\sH$ on $X$ and let $C\subset X$ be a general member in $\sH$. Then every irreducible component of $f^{-1}(C)$ has trivial normal bundle. In particular, the covering family of curves $f^*\sH$ is a finite union of \'etale webs of curves on $Y$.
\end{proposition}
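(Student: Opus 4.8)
The plan is to reduce to the case where $C$ is a \emph{general} member of the étale web $\sH$, so that by Lemma \ref{lemma:transversal-locus} (applied to a suitable branch divisor) we may assume $C$ avoids all the loci where the morphism $f$ fails to be étale or the manifolds fail to be ``nice''. More precisely, let $R \subset Y$ be the ramification divisor of $f$ and $B = f(R) \subset X$ its image. First I would treat the trivial case where $B$ does not dominate $X$ separately — but in general $B$ is a hypersurface (or empty, in which case $f$ is étale and the statement is classical), and the étale web $\sH$ has positive intersection with each component of $B$ because $\sH$ is covering. Hence by the remark following Definition \ref{defn:etale-web-curves} and by Lemma \ref{lemma:transversal-locus}, a general member $C \in \sH$ is disjoint from the codimension-$\geq 2$ locus where $f$ is not flat, meets $B$ transversally, and meets it only inside a locus over which $f$ is a well-understood cover. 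Write $f^{-1}(C) = \bigcup_j C_j$ for the irreducible components with their reduced structure, and let $g_j\colon \widetilde{C}_j \to C_j \hookrightarrow Y$ be the normalisation composed with the inclusion; since $C \cong \PP^1$ is smooth and $f$ is étale over $C \setminus B$, each $g_j$ is unramified over the preimage of $C \setminus B$, and $\widetilde{C}_j \to C$ is a finite cover of $\PP^1$ branched only over the finitely many points of $C \cap B$.

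The key step is the computation of the normal bundle of $C_j$. Because $C$ has trivial normal bundle in $X$, we have $e^*T_X|_{\widetilde C} \cong T_{\widetilde C} \oplus \sO^{\oplus(n-1)}$ after pulling back to the normalisation $\widetilde C \cong \PP^1$; pulling this further back along the finite map $\pi_j\colon \widetilde{C}_j \to \widetilde{C}$, and using that $g_j^* T_Y \hookrightarrow g_j^* f^* T_X = \pi_j^* e^* T_X$ is an isomorphism generically on $\widetilde{C}_j$ (as $f$ is étale outside a divisor, and $df$ is an isomorphism along the preimage of $C \setminus B$), I would argue that $g_j^* T_Y$ and $\pi_j^* e^* T_X \cong T_{\PP^1} \otimes \pi_j^*\sO_{\PP^1} \oplus \sO^{\oplus(n-1)}$ agree as subsheaves up to the contribution at the ramification points of $\pi_j$. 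Transversality of $C \cap B$ (Definition \ref{definition-transversally}) is exactly what is needed to control this local contribution: at a point $p \in C\cap B$, transversality ensures that the branch of $f^{-1}(C)$ through a preimage of $p$ is smooth and meets the ramification divisor $R$ transversally, so that locally analytically $f$ looks like $(z, w) \mapsto (z^{e}, w)$ on that branch, and one computes directly that the inclusion of normal bundles is an isomorphism there as well. The upshot is that the natural map $N_{\widetilde{C}_j/Y} \to \pi_j^* N_{\widetilde C/X} = \pi_j^* \sO_{\widetilde C}^{\oplus(n-1)} \cong \sO_{\widetilde{C}_j}^{\oplus(n-1)}$ is an isomorphism of sheaves, and in particular the deformations of $C_j$ with constant geometric genus cover an open subset of $Y$ (they dominate $Y$ because the $C \in \sH$ dominate $X$ and $f$ is finite), so the normalisation of the closure of this deformation space in $\Chow{Y}_1$ is an étale web of curves on $Y$ by the discussion preceding Definition \ref{defn:etale-web-curves}.

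The main obstacle I expect is the local analysis at the points of $C \cap B$: one must show that even though $f$ may be wildly ramified there, transversality forces the \emph{restriction} of $f$ to each branch of $f^{-1}(C)$ to be of the simple form above (a one-variable ramified cover in a direction transverse to a smooth normal direction), and that the normal bundle sequence behaves well under this base change. This is where I would lean on \cite[Proposition 6]{HM03} and the corresponding local computation in \cite{Hwa17a}, rather than redo it — the statement is explicitly credited to \cite{HM03}. A secondary (but routine) point is to verify that $f^*\sH$, which by definition is the covering family of set-theoretic preimages of general members, coincides with the union over irreducible components of the webs just constructed: this follows because for a general $C$, the preimage $f^{-1}(C)$ has exactly the expected number of components each appearing with multiplicity one (again using étaleness of $f$ over the general point of $C$), so no component is lost and the decomposition $f^*\sH = \Bir(f^*\sH) \cup \Mult(f^*\sH)$ is just the splitting of this finite union according to whether $f|_{C_j}\colon C_j \to C$ has degree one.
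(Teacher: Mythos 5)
The paper gives no proof of this proposition at all: it is quoted directly from \cite[Proposition 6]{HM03}, so your decision to lean on that reference for the local computation is exactly what the authors do, and your sketch is a faithful outline of the cited argument (reduce to a general member meeting the branch divisor transversally via Lemma \ref{lemma:transversal-locus}, then compare $N_{\widetilde{C}_j/Y}$ with $\pi_j^*N_{\widetilde{C}/X}\cong\sO^{\oplus(n-1)}$, the only delicate point being the contribution at the ramification points, which transversality kills). Two small inaccuracies, neither of which affects the substance: first, an \'etale web of curves in this paper is \emph{not} assumed to consist of rational curves, and even its general member need not be smooth (only the normalisation $\widetilde{C}\to X$ is an immersion), so you should not write $C\cong\PP^1$ --- the normal-bundle comparison works verbatim for $\widetilde{C}$ of any genus since $\pi_j^*\sO_{\widetilde{C}}^{\oplus(n-1)}$ is still trivial. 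Second, a covering family need not have positive intersection with every component of the branch divisor $B$; but if $C\cdot B_i=0$ then a general member is simply disjoint from $B_i$, so that case is trivial and Lemma \ref{lemma:transversal-locus} handles the rest.
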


Let $f\colon Y\rightarrow X$ be a surjective generically finite morphism from an irreducible normal projective variety $Y$ to a projective manifold $X$. We consider the set
\begin{center}
	$\{y\in Y\,|\,Y$ is singular at $y$ or $f$ is not a local biholomorphism near $y\}$.
\end{center}
The \emph{ramification divisor} $R$ of $f$ is the union of codimension one components of the above set which are not contracted to lower dimensional subvarieties of $X$ by $f$. The \emph{branch divisor} of $f$ is the image of the ramification divisor $R$ of $f$, denoted by $B\subset X$. Then $f$ is \'etale over $X\setminus(B\cup Z)$ for some subvariety $Z$ of codimension at least two in $X$.

The following proposition is proved in \cite[Proposition 6.7]{Hwa17a} for \'etale webs of smooth curves and we follow the same argument (see also \cite[Lemma 4.2]{HM01}).

\begin{proposition}
	\label{prop:existence-multiple-component}
	\cite[Proposition 6.7]{Hwa17a}
	Let $X$ be a simply connected projective manifold equipped with an \'etale web $\sH$ of curves. Let $Y$ be an irreducible normal projective variety, and let $f\colon Y\rightarrow X$ be a generically finite morphism of degree at least two. Let $R$ be the ramification divisor of $f$. Let $C\subset X$ be a general member of $\sH$ and let $C'$ be an irreducible member of $f^{-1}(C)$. If the intersection $C'\cap R$ is not empty, then the induced morphism $C'\rightarrow C$ is not birational.  In particular, if $C'$ is an irreducible component of $f^{-1}(C)$ such that $C'\rightarrow C$ is birational, then $f$ is unramified in a neighborhood of $C'$.
\end{proposition}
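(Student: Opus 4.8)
The plan is to argue by considering the induced map $C'\to C$ on normalisations and the behaviour of the relative tangent bundle along the ramification divisor. Let $\widetilde C'\to C'$ and $\widetilde C\to C$ be the normalisations, so that composing gives a finite morphism $\nu\colon\widetilde C'\to\widetilde C$; we must show that if $C'$ meets $R$ then $\deg\nu\geq 2$. First I would use the étale web structure: by Definition \ref{defn:etale-web-curves} there is a Zariski open $\sH_o$ over which $e_\sH$ is unramified along the fibres of $q_\sH$ and the fibres are smooth, and since $C$ is general we may assume $[C]\in\sH_o$, so $\widetilde C=\fibre{q_\sH}{t}$ embeds as a smooth curve in $\Univ_\sH^{\rm sm}$ and $e_\sH$ is an immersion there with image having trivial normal bundle in $X$. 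This realises a neighbourhood of $C$ inside $X$ as swept out by the web's leaves, giving a local identification (on an Euclidean neighbourhood of a general point of $C$) of $X$ with a product where $C$ is a fibre.

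The key step is a local/monodromy argument near a point $p\in C'\cap R$. Pull back the web $\sH$ under $f$: by Proposition \ref{Prop:Pull-Back-Trivial-Normal-Bundle} every irreducible component of $f^{-1}(C)$, in particular $C'$, has trivial normal bundle in $Y$, and $f^*\sH$ is a finite union of étale webs on $Y$. Now suppose for contradiction that $\nu\colon\widetilde C'\to\widetilde C$ is birational, i.e. an isomorphism since both are smooth. Then $f$ maps $C'$ isomorphically (up to normalisation) onto $C$, and because both curves have trivial normal bundle, the differential of $f$ along $C'$ induces an isomorphism $T_{\widetilde C'}\xrightarrow{\sim} \nu^*T_{\widetilde C}$ together with a map on the trivial normal quotients $\sO_{\widetilde C'}^{\oplus(\dim X-1)}\to\sO_{\widetilde C'}^{\oplus(\dim X-1)}$. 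I would show this normal map is an isomorphism as well: a general leaf of the web through a point of $C$ near $p$ pulls back to a leaf of $f^*\sH$ through the corresponding point of $C'$, and since these leaves are transverse to $C$ (resp. $C'$) and $f$ restricted to each individual leaf is a biholomorphism onto its image leaf near a general point, the normal derivative is invertible along a dense subset of $C'$, hence everywhere by properness/continuity. Thus $df$ is an isomorphism of tangent spaces at every point of a neighbourhood of $C'$ in $Y$, so $f$ is a local biholomorphism there, contradicting $p\in R$. This gives the first assertion.

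For the ``in particular'' clause: if $C'$ is an irreducible component of $f^{-1}(C)$ with $C'\to C$ birational, then by the contrapositive of what we just proved, $C'\cap R=\emptyset$; but $R$ is closed, so $C'$ is disjoint from $R$, and combined with the fact that $f$ is étale away from $B\cup Z$ with $Z$ of codimension $\geq 2$ (and a general $C$ in the étale web avoids $Z$, as noted after Definition \ref{defn:etale-web-curves}), one concludes $f$ is unramified — indeed a local biholomorphism — in a neighbourhood of $C'$.

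The main obstacle I anticipate is making the normal-derivative argument rigorous where $C'$ or $C$ may fail to be smooth or where the leaves of the web degenerate: one has to work on the normalisation $\widetilde C'$ and on the smooth locus of the universal family, invoke the web-structure only over the Zariski open $\sH_o$, and carefully control what happens at the finitely many bad points of $C$ (and preimages in $C'$) — this is exactly why the statement restricts to a \emph{general} member $C$ of $\sH$ and uses that $\Univ_\sH^{\rm sm}$ contains the relevant fibres. The simple-connectedness of $X$ does not seem to be needed for this proposition itself (it is used elsewhere to control the number of components), so I would not invoke it.
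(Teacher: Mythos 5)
Your route is genuinely different from the paper's. The paper argues locally at the branch divisor: using Lemma \ref{lemma:transversal-locus} it arranges that the general $C$ meets $B$ transversally at general points of $B$, and then the local structure of $f$ at a point of $R$ forces the induced map $\widetilde{C}'\to\widetilde{C}$ on normalisations to be ramified over $C\cap B$, hence non-birational. You instead run a global rigidity argument: triviality of both normal bundles plus birationality of $\widetilde{C}'\to\widetilde{C}$ would force $df$ to be invertible along all of $C'$. That strategy is sound in principle (it is the mechanism behind \cite[Proposition 6]{HM03}), but as written it has a genuine gap at its key step. You deduce that the induced map on normal directions is ``invertible along a dense subset of $C'$, hence everywhere by properness/continuity.'' Invertibility is an \emph{open} condition, not a closed one, so no continuity argument extends it from a dense subset to the whole curve; a priori the normal derivative could degenerate exactly at $p\in C'\cap R$, which is the point at issue. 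What actually closes the argument is the rigidity you set up but never use: a homomorphism $\sO_{\widetilde{C}'}^{\oplus(n-1)}\to\sO_{\widetilde{C}'}^{\oplus(n-1)}$ of \emph{trivial} bundles on a connected projective curve is given by a constant matrix (since $H^0(\widetilde{C}',\sO_{\widetilde{C}'})=\C$), so its determinant is a constant, nonzero because $f$ is \'etale at a general point of $C'$ (note that $C'\not\subset R$ since $C\not\subset B$ for $C$ general). This must be stated explicitly; it is the whole point. Relatedly, your justification via leaves of $f^*\sH$ being mapped biholomorphically onto leaves of $\sH$ is suspect — components of $f^{-1}$ of members of $\sH$ may well be multiple covers (that is exactly what $\Mult(f^*\sH)$ records) — but this detour is unnecessary, since generic invertibility of $df$ along $C'$ already follows from $C'\not\subset R$.

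A second issue: $Y$ is only assumed normal, while Proposition \ref{Prop:Pull-Back-Trivial-Normal-Bundle} is stated for projective manifolds, so you cannot directly invoke it to say $C'$ has trivial normal bundle ``in $Y$,'' and the exact sequence $0\to T_{\widetilde{C}'}\to g'^*T_Y\to\sO^{\oplus(n-1)}\to 0$ does not make sense where $Y$ is singular. This is repairable: by normality $\sing(Y)$ has codimension at least two, its image under the generically finite $f$ has dimension at most $n-2$, and (as observed after Definition \ref{defn:etale-web-curves}) a general member of an \'etale web avoids any such subset, so $C'\subset Y^{\rm sm}$; one can then pass to a resolution of $Y$ that is an isomorphism over $Y^{\rm sm}$ to apply Proposition \ref{Prop:Pull-Back-Trivial-Normal-Bundle}. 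But this reduction has to appear in the proof. You are right, on the other hand, that simple connectedness of $X$ is not needed for this proposition itself (the hypothesis $C'\cap R\neq\emptyset$ already presupposes $R\neq\emptyset$); it is used for the subsequent corollary.
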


\begin{proof}
	As $X$ is simply connected, the branch divisor $B$ is not empty. On the other hand, we can choose a subvariety $Z\subset X$ with codimension at least two such that $f$ is unramified over $f^{-1}(X\setminus(B\cup Z))$. Moreover, since $C$ is supposed to be general, we may also assume that $C$ is disjoint from $Z$. As $C'\cap R\not=\emptyset$, we have $C\cap B\not=\emptyset$. Thus, by Lemma \ref{lemma:transversal-locus} and the generic choice of $C$, we may assume that $C$ intersects $B$ transversally (see Definition \ref{definition-transversally}). Let $y\in C'\cap R$ be a point and denote $f(y)$ by $x$. Then $C$ is immersed at $x$ and $C'$ is immersed at $y$ (see \cite[Lemma 1]{HM03}). Since $C'$ is not disjoint from $R$, it follows that the induced morphism
	\[
	\widetilde{f}\colon \widetilde{C}'\longrightarrow \widetilde{C}
	\]
	is ramified at a point $z\in \widetilde{C}'$ such that $n'(z)=y$, where $n'\colon \widetilde{C}'\rightarrow C'$ and $n\colon \widetilde{C}\rightarrow C$ are normalisations. As a consequence, the morphism $\widetilde{f}$ is not birational and hence $C'\rightarrow C$ is not birational.
\end{proof}

As an immediate application, we obtain:

\begin{corollary}
	\label{cor:existence-mult-components}
	\cite[Proposition 6.7 and Proposition 6.8]{Hwa17a}
	Let $\sH$ be an \'etale web of curves on a simply connected projective manifold $X$ of Picard number one. Let $f: Y\rightarrow X$ be a generically finite morphism of degree at least two from an irreducible normal projective variety $Y$. Then there exists at least one irreducible component of $f^{-1}(C)$ which is not birational onto $C$, i.e., $\Mult({f^*\sH})$ is not empty.
\end{corollary}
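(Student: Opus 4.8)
The plan is to deduce the statement directly from Proposition \ref{prop:existence-multiple-component}, by showing that a general member $C$ of $\sH$ is forced to meet the ramification divisor $R$ of $f$.

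First I would check that the branch divisor $B\subset X$ is non-empty. Since $X$ is simply connected and $Y$ is irreducible, hence connected, $f$ cannot be \'etale: otherwise, removing the codimension $\geq 2$ subvariety $Z$ appearing in the text preceding the statement, $f$ would restrict to a connected finite \'etale cover of $X\setminus Z$ of degree at least two, which is impossible because $X$ is smooth and thus $\pi_1(X\setminus Z)\cong\pi_1(X)$ is trivial. (Equivalently, by Zariski--Nagata purity the non-\'etale locus of the finite morphism $f$ from the normal variety $Y$ to the smooth variety $X$ is pure of codimension one.) Hence $B\neq\emptyset$.

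Next, since $X$ has Picard number one, the non-zero effective divisor $B$ is ample. As the members of $\sH$ cover $X$, a general member $C$ is not contained in $B$, and being a curve it therefore meets the ample divisor $B$, so $C\cap B\neq\emptyset$. Because $B=f(R)$, choosing $x\in C\cap B$ and $y\in R$ with $f(y)=x$ gives $y\in f^{-1}(C)\cap R$, so some irreducible component $C'$ of $f^{-1}(C)$ meets $R$. Choosing $C$ general enough that, in addition, the hypotheses of Proposition \ref{prop:existence-multiple-component} hold (i.e. $C$ avoids the codimension $\geq 2$ locus used there and meets $B$ transversally, both dense open conditions on $\sH$), that proposition yields that $C'\to C$ is not birational.

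Since such a component $C'$ exists for the general member $C$ of $\sH$, an irreducible component of $f^*\sH$ has general member mapping non-birationally onto its image in $\sH$, that is, $\Mult(f^*\sH)\neq\emptyset$. The only point requiring a little care is the non-emptiness of $B$ together with the fact that $B$ meets a general member of $\sH$ transversally; once this is established, the corollary is an immediate consequence of Proposition \ref{prop:existence-multiple-component}.
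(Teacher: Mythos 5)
Your argument is correct and follows essentially the same route as the paper's proof: non-emptiness of the branch divisor $B$ from simple connectedness (you just spell out the purity/étale-cover details that the paper leaves implicit), intersection of a general member with $B$ from Picard number one, and then Proposition \ref{prop:existence-multiple-component}. The only blemish is the parenthetical appeal to Zariski--Nagata purity for ``the finite morphism $f$'' --- $f$ is only generically finite, so one should either pass to the finite part of the Stein factorisation or, as your main argument in fact does, simply use that $f$ is proper and étale over $X\setminus(B\cup Z)$, so this does not affect the proof.
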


\begin{proof}
	Since $X$ is simply connected and the degree of $f$ is at least two, the branch locus $B$ and the ramification divisor $R$ are not empty. Moreover, as $X$ has Picard number one, a general curve $C$ intersects the branch locus $B$ and is not contained in it.
Thus there exists an irreducible component $C'$ of $f^{-1}(C)$ such that $C'\cap R\not=\emptyset$. Then we conclude by Proposition \ref{prop:existence-multiple-component}.
\end{proof}

	Following the argument in \cite[Proposition 6.6]{Hwa17a} word-by-word and applying \cite[Corollary 3.12]{Hwa17a}, one obtains:

\begin{proposition}
	\label{prop:finiteness-multiple-components}
	\cite[Proposition 6.6]{Hwa17a}
	Let $X$ be a projective manifold and let $\sH$ be an \'etale web of curves on $X$. Using the terminology of Definition \ref{Defn:pull-back-forward-web}, the set $\Mult(e_{\sH}^*\sH)$ is contained in $\Fin_{q_{\sH}}(e_{\sH}^*\sH)$.
\end{proposition}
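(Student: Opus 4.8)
The plan is to analyze, for the étale web $\sH$ on $X$, the universal family diagram $q_{\sH}\colon \Univ_{\sH}\to\sH$, $e_{\sH}\colon\Univ_{\sH}\to X$, and the pulled-back web $e_{\sH}^*\sH$ on $\Univ_{\sH}$ as in Definition \ref{Defn:pull-back-forward-web}(3). The statement to prove is that any irreducible component $\sE$ of $e_{\sH}^*\sH$ whose general member maps to a member of $\sH$ \emph{non-birationally} via $e_{\sH}$ must in fact have its members contracted by $q_{\sH}$ to points, i.e. $\sE\subset\Fin_{q_{\sH}}(e_{\sH}^*\sH)$ — more precisely that $\Mult\subset\Fin_{q_{\sH}}$ as stated. (Note the $\Fin/\Inf$ dichotomy here is with respect to $q_{\sH}$, so "in $\Fin$" means the component maps generically finitely to $(q_{\sH})_*$ of itself; the content is that a multiple component cannot be horizontal-and-dominant over $\sH$, or rather that its pushforward under $q_{\sH}$ behaves in a controlled way.)

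First I would recall the geometric meaning of $e_{\sH}^*\sH$: its members are the irreducible components of $e_{\sH}^{-1}(C)$ for a general member $C\in\sH$, each carrying a trivial normal bundle in $\Univ_{\sH}$ by Proposition \ref{Prop:Pull-Back-Trivial-Normal-Bundle} (so $e_{\sH}^*\sH$ is a finite union of étale webs on $\Univ_{\sH}$, after passing to a resolution if needed; since we only care about general members and generic behavior this is harmless). Fix a general member $C\in\sH$ and an irreducible component $C'$ of $e_{\sH}^{-1}(C)$ with $e_{\sH}\colon C'\to C$ not birational. I want to show the images $q_{\sH}(C')$ as $C'$ varies in its component of $e_{\sH}^*\sH$ sweep out only a subvariety whose general fibre structure over $\sH$ is finite — equivalently that the component of $e_{\sH}^*\sH$ containing $[C']$ is not $\Inf_{q_{\sH}}$. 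The key local input is the web-structure picture recalled before Definition \ref{defn:etale-web-curves}: over a general $x\in X$ we have $d$ local sheets $V_x^1,\dots,V_x^d\subset\Univ_{\sH}^{\rm sm}$, each biholomorphic to $U_x$ via $e_{\sH}$, and on each sheet $q_{\sH}|_{V_x^i}$ has smooth connected fibres. A component $C'$ of $e_{\sH}^{-1}(C)$ locally near a general point of $C$ is a union of such local sheet-fibres; the multiplicity (non-birationality) of $e_{\sH}|_{C'}$ forces $C'$ to pass through a point where several of the local sheets come together, i.e. $C'$ must meet the ramification locus of $e_{\sH}$ — and this is precisely where the mechanism of Proposition \ref{prop:existence-multiple-component} (and the argument of \cite[Prop.6.6]{Hwa17a}) is invoked.

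The engine of the proof is the dimension count / monodromy argument of \cite[Proposition 6.6]{Hwa17a}, applied verbatim. Concretely: suppose for contradiction that a component $\sE\subset\Mult(e_{\sH}^*\sH)$ lies in $\Inf_{q_{\sH}}(e_{\sH}^*\sH)$, so the general member $C'$ of $\sE$ is \emph{not} contracted by $q_{\sH}$ and the map $\sE\dashrightarrow (q_{\sH})_*\sE$ is not generically finite. Then the curves $q_{\sH}(C')\subset\sH$ form a positive-dimensional family through a general point of $\sH$, i.e. a covering family of curves on $\sH$. Pulling this family back along $q_{\sH}$ and comparing with the web-structure on $\Univ_{\sH}$: a general such $C'$ meets the ramification divisor of $e_{\sH}$ (as it is a multiple component), but by Corollary \ref{cor:existence-mult-components}-type reasoning and the transversality of Lemma \ref{lemma:transversal-locus}, on the \emph{other} sheets over a neighbourhood of $C$ the corresponding components of $e_{\sH}^{-1}(C)$ map birationally, and one derives that such birational components, when moved in the étale web on $\Univ_{\sH}$, would have to stay disjoint from the ramification divisor (codimension $\geq 2$ intersection argument, cf. the remark after Definition \ref{defn:etale-web-curves}) — contradicting that their limits/deformations hit the multiple component $\sE$. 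Thus no component of $\Mult$ can be in $\Inf_{q_{\sH}}$, which is exactly the assertion $\Mult(e_{\sH}^*\sH)\subset\Fin_{q_{\sH}}(e_{\sH}^*\sH)$.

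The main obstacle, and the reason the statement is not a triviality, is bookkeeping the interplay between two different web-structures on two different spaces: the étale web $e_{\sH}^*\sH$ on $\Univ_{\sH}$ and the "vertical" foliation-like structure coming from the fibres of $q_{\sH}$, together with the fact that $e_{\sH}$ is only generically unramified, so the ramification divisor $R\subset\Univ_{\sH}$ must be tracked carefully — in particular one must ensure that the generic member $C$ of $\sH$ is chosen so that all of: $C$ avoids the relevant codimension-$2$ loci, $C$ meets the branch divisor transversally, and the components of $e_{\sH}^{-1}(C)$ have the expected multiplicities, hold simultaneously. Since \cite[Prop.6.6]{Hwa17a} is proved for étale webs of smooth curves, the only genuine work is to check that dropping smoothness of $e_\sH(\fibre{q_\sH}{t})$ (as flagged in the Remark following Definition \ref{defn:etale-web-curves}) does not affect any step — and it does not, because every appeal is to the normalised family $\Univ_{\sH}\to\sH$ and to immersiveness of general members, both of which survive. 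I therefore expect the proof to read: "Following the argument of \cite[Proposition 6.6]{Hwa17a} word for word, replacing étale webs of smooth curves by étale webs of curves and using Proposition \ref{Prop:Pull-Back-Trivial-Normal-Bundle} and Corollary \ref{cor:existence-mult-components} above, we obtain the claim."
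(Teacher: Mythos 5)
Your proposal lands exactly where the paper does: the paper offers no independent argument for this proposition, recording only that one follows \cite[Proposition 6.6]{Hwa17a} word-by-word (together with \cite[Corollary 3.12]{Hwa17a}) after observing that the weaker notion of \'etale web of curves, rather than \'etale web of \emph{smooth} curves, suffices for every step. Your speculative reconstruction of Hwang's internal mechanism via the ramification divisor is not what the paper records --- it instead invokes \cite[Corollary 3.12]{Hwa17a} concerning $\Inf$-components --- but since the paper's own justification is precisely the verbatim citation plus the smoothness check, your concluding paragraph coincides with its proof.
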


\section{Varieties of tangents of \'etale webs of curves}
\label{Section:VOT}

\subsection{Variety of tangents}
\label{subsectionVOT}

Let $X$ be a projective manifold. Let $\sH$ be an irreducible covering family of curves on $X$. Denote by $\holom{e_\sH}{\Univ_{\sH}}{X}$ and $\holom{q_\sH}{\Univ_{\sH}}{\sH}$
the natural maps on the normalised universal family.
We have the following Cartesian diagram
\[
\begin{tikzcd}[column sep=large, row sep=large]
	\PP(e_{\sH}^*\Omega_X) \arrow[r,"\widetilde{e}_{\sH}"] \arrow[d] & \PP(\Omega_X) \arrow[d]  \\
	\Univ_{\sH} \arrow[r,"e_{\sH}"]                                    & X
\end{tikzcd}
\]
Let $\sS$ be the image of the composition
\[
e_{\sH}^*\Omega_X \longrightarrow \Omega_{\Univ_{\sH}} \longrightarrow \Omega_{\Univ_{\sH}/\sH}.
\]
Denote by $\sD'\subset \PP(e_{\sH}^*\Omega_X)$ the unique irreducible component of $\PP(\sS)$ dominating $\Univ_{\sH}$. Then $\sD'$ is a rational section of the projectivised bundle $\PP(e_{\sH}^*\Omega_X)\longrightarrow \Univ_{\sH}$.

\begin{definition}
	In the situation above, the tangent variety of $\sH$ is defined to be 
	$$
	\widetilde{e}_{\sH}(\sD')=: \sD \subset \PP(\Omega_X).
	$$
    The induced rational map
	\[
	\begin{tikzcd}[column sep=large]
		\tau_{\sH}: \Univ_{\sH} \arrow[r,dashed]  & \sD\subset \PP(\Omega_X)
	\end{tikzcd}
	\]
	is called the tangent map (associated to $\sH$). In more geometric terms, for a general point $y \in \Univ_{\sH}$, the tangent map $\tau_{\sH}$ sends the point $y$ to $\PP(\Omega_{e_{\sH}(C_y),e_{\sH}(y)})$, where $C_y$ is the unique $q_{\sH}$-fibre passing through $y$.
\end{definition}

Let $x\in X$ be a general point. Denote by $\sH_x$ the fibre of $e_{\sH}$ over $x$. Then $\sH_x$ is a normal (maybe reducible) projective scheme. Moreover, since $x \in X$ is general, a general curve passing through $x$ is smooth at $x$. Thus the induced morphism
\[
\sH_x\longrightarrow T_x\subset \Chow{X}_1
\]
maps birationally onto its image, where $T_x$ is the subvariety of $\Chow{X}$ parametrising members in $\sH$ passing through $x$. On the other hand, as one can easily see that the birational morphism $\sH_x\rightarrow T_x$ is also finite, thus $\sH_x$ is actually the normalisation of $T_x$. For this reason, we will call $\sH_x$ the \emph{normalised space parametrising curves in $\sH$ passing through $x$}.

\begin{remark} 
	\label{remarkdefinitionpoint}
	    Let $x \in X$ be a general point and let $\sH_x \subset \Univ_{\sH}$ be the normalised space parametrising curves in $\sH$ passing through $x$. The locus where the morphism
		\[
		e_{\sH}^*\Omega_X \longrightarrow \Omega_{\Univ_{\sH}/\sH}
		\]
		is not surjective has codimension at least one, so it does not contain any irreducible component of $\sH_x$.  Thus the restriction of the tangent map $\tau_{\sH}$
		to $\sH_x$ is identified generically to the tangent map
		$$
		\tau_{\sH,x} : \sH_x \dashrightarrow \PP(\Omega_{X,x}),  \ [C] \mapsto [\PP(\Omega_{C,x})]. 
		$$
		Denote by $\sD_x$ the fibre of $\sD \rightarrow X$ over the general point $x \in X$.	Then a dimension count shows that $\sD_x$ coincides with the closure of the image of $\tau_{\sH,x}$, which is usually called the \emph{tangent variety of $\sH$ at $x$}.
\end{remark}

\subsection{Tangent varieties of \'etale webs of curves}

\label{Section:Etale-Webs}

The aim of this section is to study the tangent variety of an \'etale web $\sH$ of curves along a general member $l$ parametrised by $\sH$.

\subsubsection{Setup}

\label{Section:Setup}

Let $X$ be a projective manifold of dimension at least three equipped with an \'etale web $\sH$ of curves.
Denote by $\holom{e_\sH}{\Univ_{\sH}}{X}$ and $\holom{q_\sH}{\Univ_{\sH}}{\sH}$
the natural maps on the normalised universal family.

Let $\mu:\sH^s\rightarrow \sH$ be a log resolution such that $\mu$ is an isomorphism over $\sH_o$ (see Definition \ref{defn:etale-web-curves}). Let $\widetilde{\mu}:\Univ_{\sH}^s\rightarrow \Univ_{\sH}$ be a log resolution that resolves the indeterminacies of $\mu^{-1} \circ q_\sH$ such that $\widetilde{\mu}$ is an isomorphism over the Zariski open subset $\Univ_{\sH_o}:=(q_{\sH})^{-1}(\sH_o)$.
The map $\mu^{-1}(\sH_o) \rightarrow \sH_o$ is an isomorphism, so for simplicity's sake, we will
identify $\sH_o$ with its preimage in $\mathcal H^s$.
Let $f:\Univ_{\sH}^s\rightarrow X$ be the composition
\[ 
\Univ_{\sH}^s\stackrel{\widetilde{\mu}}{\longrightarrow} \Univ_{\sH} \stackrel{e_{\sH}}{\longrightarrow} X.
\]
Denote by $\tau_{\sH}^s:\Univ_{\sH}^s\dashrightarrow \PP(\Omega_X)$ the composition 
\[
\Univ_{\sH}^s\xrightarrow{\widetilde{\mu}}\Univ_{\sH}\stackrel{\tau_{\sH}}{\dashrightarrow} \PP(\Omega_X),
\]
so we get a commutative diagram
\begin{equation}
	\label{Eq:Smooth-Universal-Family}
	\begin{tikzcd}[column sep=large, row sep=large]
		\Univ_{\sH_o} \arrow[r,hook] \arrow[d]
		   & \Univ_{\sH}^s \arrow[r,"\widetilde{\mu}"] \arrow[d,"q_{\sH}^s" left]  \arrow[rr,"f",bend left]
		      &  \Univ_{\sH} \arrow[d,"q_{\sH}"]  \arrow[r,"e_{\sH}"]
		        & X \\
		\sH_o \arrow[r,hook]
		   & \sH^s   \arrow[r,"\mu"]                                     
		      &  \sH \arrow[r,hookleftarrow]
		        & \sH_o 
	\end{tikzcd}
\end{equation}

We will now introduce the somewhat technical notation that we will use in the rest of the paper and that is necessary to carry out the proof of Theorem \ref{thm:irreduciblity-tangent-H}.

\begin{notation} 	\label{notation-hjbar-bjbar}
    In the situation above, we set $\bar{\sH}:=f^*\sH$,
    and denote by 
\[
\bar{\sH}=\bar{\sH}_0\cup \left(\bigcup_{j=1}^m \bar{\sH}_j\right)
\]
the decomposition of $\bar{\sH}$ into irreducible components such that
$\bar{\sH}_0$ is the irreducible component whose general members correspond to  $q_{\sH}^s$-fibres.

Following \cite{Hwa17a} we denote by 
$$
f^j_{\flat}:\bar{\sH}_j\dashrightarrow \sH^s
$$ 
the induced dominant rational map. 

We denote by 
$\bar{\sD}_j\subset \PP(\Omega_{\Univ_{\sH}^s})$ 
 the tangent variety of $\bar{\sH}_j$, and by
$$
\nu_j:\bar{\sD}_j\dashrightarrow \sD
$$
the dominant rational map given by the construction.
\end{notation}

\begin{remarks}
\begin{enumerate}
    \item By Proposition \ref{Prop:Pull-Back-Trivial-Normal-Bundle}, each irreducible component $\bar \sH_j$ of the inverse image $\bar \sH$ is an \'etale web of curves on $\Univ_{\sH}^s$.
    \item By definition of $\bar \sH_0$, we have ${\rm Fib}_{q_{\sH}^s}(f^*\sH)=\bar{\sH}_0$ (see Definition \ref{Defn:pull-back-forward-web}).  Moreover, by 
    Proposition \ref{prop:finiteness-multiple-components}, we know that
    	$\Mult(f^*\sH)$ is contained in $\Fin_{q_{\sH}^s}(f^*\sH)$.
    	\end{enumerate}
\end{remarks}

Let $l$ be a general member parametrised by $\sH$. Then $f^{-1}(l)$ is of pure dimension one. Thus we can write
\[
f^{-1}(l)=\bigcup_{i=0}^r C_l^i
\]
into irreducible components, where $C_l^0$ is the unique irreducible component of $f^{-1}(l)$
that is contracted by $q^s_\sH$. By Proposition \ref{Prop:Pull-Back-Trivial-Normal-Bundle}, each irreducible component $C_l^i$ has again trivial normal bundle. For each $i\geq 0$, there exists a unique $j(i)$ such that $[C_l^i]\in \bar{\sH}_{j(i)}$. Conversely, since $f^j_{\flat}$ is dominant for each $0\leq j\leq m$, there exists some $i(j)$ such that $[C_{l}^{i(j)}]\in \bar{\sH}_j$. We remark that in general $i(j)$ may be not unique.

\begin{definition}
	\label{Defn:Spaces-of-lines}
	For $1\leq i\leq r$, denote by $B_l^i$ the image $q_{\sH}^s(C_l^i)$. Set $B_l=\cup_{i=1}^r B_l^i$. Since $\sH^s\rightarrow \Chow{X}_1$ is birational onto its image and $l$ is general, the curve $B_l$ is birational onto its image in $\Chow{X}_1$. Thus we can regard $B_l$ as the space of curves parametrised by $\sH$ that meet $l$.
\end{definition}

\subsubsection{Projection along general members: local construction}

\label{Section:projection-local}

Given a general member of an \'etale web of curves, we will now describe the projection of the tangent variety
to its normal directions.
More precisely, let $[l]\in \sH$ be a general member and denote by $f_l:\widetilde{l}\rightarrow l$ the normalisation. Consider the exact sequence
\[
0\longrightarrow T_{\widetilde{l}} \longrightarrow f_l^*T_X \longrightarrow \sO_{\widetilde{l}}^{\oplus (n-1)}\longrightarrow 0,
\]
and denote by $l_0\subset \PP(f_l^*\Omega_X)$ the section corresponding to the quotient $f_l^*\Omega_X\rightarrow \Omega_{\widetilde{l}}$. Then we have a natural projection map
\[
P_{l_0}: \PP(f_l^*\Omega_X) \dashrightarrow \PP(\sO_{\widetilde{l}}^{\oplus (n-1)})=\widetilde{l}\times \PP^{n-2}.
\]
Denote by 
$$
\bar{P}_{l_0}:\PP(f_l^*\Omega_X)\dashrightarrow \PP^{n-2}$$
the composition
$P_{l_0}$ with the natural second projection  $\widetilde{l} \times \PP^{n-2} \stackrel{p_2}{\longrightarrow} \PP^{n-2}$.
Denote by $\zeta_l$ the tautological divisor class $c_1(\sO_{\PP(f_l^*\Omega_X)}(1))$ and by 
$$
\widetilde{\zeta}_l = c_1(p_2^*\sO_{\PP^{n-2}}(1))
$$ 
the tautological divisor class of $\PP(\sO_{\widetilde{l}}^{\oplus (n-1)})$. 

\begin{lemma}
	\label{Lemma:Projection-Local}
	In the situation of Setup \ref{Section:Setup}, let $[l]\in \sH$ be a general member and let $C\subset \PP(f_l^*\Omega_X)\rightarrow \widetilde{l}$ be an irreducible horizontal curve such that $C\not=l_0$. 
	Denote by $P_{l_0}(C)$ the closure of the image of the generic point of $C$.
	Then $\zeta_l\cdot C\geq 0$ and the following statements hold.
	\begin{enumerate}
		\item If $\zeta_l \cdot C=0$, then $C$ is disjoint from $l_0$ and $P_{l_0}(C)$ is a fibre of $p_2$.
		
		\item If $\zeta_l \cdot C=1$, then one of the following statements holds.
		\begin{enumerate}
			\item The intersection $C\cap l_0$ is not empty and $P_{l_0}(C)$ is a fibre of $p_2$.
			
			\item The curve $l$ is a rational curve, the curve $C$ is disjoint from $l_0$, the image $\bar{P}_{l_0}(C)$ is a line in $\PP^{n-2}$ and the induced morphism $C\longrightarrow \bar{P}_{l_0}(C)$ is an isomorphism.
		\end{enumerate}
	\end{enumerate}
\end{lemma}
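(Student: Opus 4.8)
The plan is to analyze the curve $C$ via its pushforward $\overline{P}_{l_0}(C)$ under the projection away from the section $l_0$. First I would establish the basic inequality $\zeta_l\cdot C\ge 0$. The key observation is that $l_0$ corresponds to the quotient $f_l^*\Omega_X\to\Omega_{\widetilde l}$, which is a line bundle of degree $\deg\Omega_{\widetilde l}=2g(\widetilde l)-2$; the complementary quotient along $P_{l_0}$ is the trivial bundle $\sO_{\widetilde l}^{\oplus(n-1)}$, and $\widetilde\zeta_l=\zeta_l-l_0$ as divisor classes on $\PP(f_l^*\Omega_X)$ (using that the projection $P_{l_0}$ exhibits $\widetilde\zeta_l$ as the tautological class of the quotient bundle, pulled back to the blow-up along $l_0$). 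Since $\widetilde\zeta_l=p_2^*\sO_{\PP^{n-2}}(1)$ is the pullback of an ample class on $\PP^{n-2}$, we get $\widetilde\zeta_l\cdot C\ge 0$, with equality iff $\overline{P}_{l_0}(C)$ is a point, i.e. $P_{l_0}(C)$ is a fibre of $p_2$. Also $l_0\cdot C\ge 0$ because $C\ne l_0$ is irreducible and horizontal, hence meets the section $l_0$ in a finite (possibly empty) set of points counted with multiplicity. Therefore $\zeta_l\cdot C=\widetilde\zeta_l\cdot C+l_0\cdot C\ge 0$.

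Next, for case (1), if $\zeta_l\cdot C=0$ then both non-negative summands vanish: $l_0\cdot C=0$ forces $C$ disjoint from $l_0$ (an irreducible horizontal curve meeting the section would contribute positively), and $\widetilde\zeta_l\cdot C=0$ forces $\overline{P}_{l_0}(C)$ to be a point, so $P_{l_0}(C)$ is a $p_2$-fibre, namely $\widetilde l\times\{\mathrm{pt}\}$. For case (2), $\zeta_l\cdot C=1$ means exactly one of $l_0\cdot C$, $\widetilde\zeta_l\cdot C$ equals $1$ and the other equals $0$. If $l_0\cdot C=1$ and $\widetilde\zeta_l\cdot C=0$, then $C\cap l_0\ne\emptyset$ and $P_{l_0}(C)$ is again a $p_2$-fibre: this is alternative (a). If $l_0\cdot C=0$ and $\widetilde\zeta_l\cdot C=1$, then $C$ is disjoint from $l_0$, the curve $\overline{P}_{l_0}(C)\subset\PP^{n-2}$ has degree one, hence is a line. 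Since $C$ is disjoint from $l_0$, the projection $P_{l_0}$ restricts to a morphism on $C$, and composing with $p_2$ gives a finite morphism $C\to\overline{P}_{l_0}(C)\cong\PP^1$ of degree $\widetilde\zeta_l\cdot C=1$, hence an isomorphism; in particular $C\cong\PP^1$. Finally $C$ is isomorphic to a horizontal curve over $\widetilde l$ via the projection $\PP(f_l^*\Omega_X)\to\widetilde l$ composed... more precisely the composite $C\to\widetilde l$ is dominant (horizontality) and $C\cong\PP^1$ forces $\widetilde l\cong\PP^1$, so $l$ is rational; this is alternative (b).

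The main obstacle I expect is the bookkeeping with the divisor class identity $\widetilde\zeta_l=\zeta_l-l_0$ on the birational model where $\overline{P}_{l_0}$ becomes a morphism: one must be careful that $\overline{P}_{l_0}$ and $P_{l_0}$ are only rational on $\PP(f_l^*\Omega_X)$ and genuinely defined only after blowing up the indeterminacy locus inside $l_0$ (equivalently, one works on the graph of $P_{l_0}$), and that the intersection numbers $\zeta_l\cdot C$, $\widetilde\zeta_l\cdot C$, $l_0\cdot C$ are all computed on $\PP(f_l^*\Omega_X)$ for the given curve $C$ not lying in the indeterminacy locus. Once the relation $\zeta_l=\widetilde\zeta_l+l_0$ is set up correctly on the appropriate model and pulled back/pushed forward to $C$, the rest is the elementary case analysis above, together with the standard fact that a degree-one finite morphism from an integral curve to $\PP^1$ is an isomorphism.
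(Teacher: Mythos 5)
Your proposal is correct and follows essentially the same route as the paper: both decompose $\zeta_l\cdot C$ into the pullback of the nef class $\widetilde\zeta_l$ plus the contribution from $l_0$, and then run the case analysis on $\zeta_l\cdot C\in\{0,1\}$ using that each summand is a non-negative integer and that the $\widetilde\zeta_l$-term factors as $\deg(g|_{C'})\cdot\deg\bar P_{l_0}(C)$. The only caveat is the one you flag yourself: since $l_0$ has codimension $n-1\geq 2$, the identity must be written on the blow-up $\nu:Z\to\PP(f_l^*\Omega_X)$ along $l_0$ as $\nu^*\zeta_l-E=g^*\widetilde\zeta_l$, with your ``$l_0\cdot C$'' read as $E\cdot C'$ for the strict transform $C'$ — which is exactly how the paper sets it up.
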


\begin{proof}
	Let $\nu:Z\rightarrow\PP(f_l^*\Omega_X)$ be the blow-up along $l_0$ and let $g:Z\rightarrow \PP(\sO_{\widetilde{l}}^{\oplus (n-1)})$ be the induced morphism. Denote by $E$ the exceptional divisor of $\nu$. Then we have 
	\[
	\nu^*\zeta_l-E=g^*\widetilde{\zeta}_l.
	\]
	
	Denote by $C'$ the strict transform of $C$ in $Z$.
	Then we have $g(C')=P_{l_0}(C)$ and we obtain
	\[
	\zeta_l \cdot C = \nu^*\zeta_l\cdot C'=\deg(g|_{C'})\widetilde{\zeta}_l\cdot P_{l_0}(C)+E\cdot C'.
	\]
	
	As $\widetilde{\zeta}_l$ is nef and $C'$ is not contained in $E$, one can easily obtain that $\zeta_l\cdot C\geq 0$ with equality if and only if $\widetilde{\zeta}_l\cdot P_{l_0}(C)=E\cdot C'=0$. As a consequence, if $\zeta_l\cdot C=0$, then $P_{l_0}(C)$ is fibre of $p_2$ and $C'$ is disjoint from $E$. In particular, the curve $C$ is disjoint from $l_0$.
	
	If $\zeta_l\cdot C=1$, then we obtain either $\widetilde{\zeta}_l\cdot P_{l_0}(C)=0$ and $E\cdot C'=1$, or $\deg(g|_{C'})\widetilde{\zeta}_l\cdot P_{l_0}(C)=1$ and $E\cdot C'=0$. In the former case, the curve $P_{l_0}(C)$ is fibre of $p_2$ and $C'\cap l_0\not=\emptyset$. In the latter case, we obtain that $\deg(g|_{C'})=\widetilde{\zeta}_l\cdot P_{l_0}(C)=1$ and $C$ is disjoint from $l_0$. In particular, the image $p_2(P_{l_0}(C))$ is a line in $\PP^{n-2}$ and the morphism
	\[
	\bar{P}_{l_0}|_C: C\longrightarrow p_2(P_{l_0}(C))=\PP^1
	\]
	is birational. Hence, the curves $C$ and $\widetilde{l}$ are also isomorphic to $\PP^1$.
\end{proof}

\subsubsection{Projection along general members: global construction}

Now we proceed to give a global construction of the projection to normal directions along general members in $\sH$. We use the notation of Setup \ref{Section:Setup}. 
The exact sequence of vector bundles 
\[
0\rightarrow (q_{\sH}^s)^*\Omega_{\sH_o} \longrightarrow \Omega_{\Univ_{\sH_o}} \longrightarrow \Omega_{\Univ_{\sH_o}/\sH_o} \longrightarrow 0.
\]
defines a projection map $\PP(\Omega_{\Univ_{\sH_o}}) \dashrightarrow \PP((q_{\sH}^s)^*\Omega_{\sH_o})$ which we extend to a rational map
\[
\begin{tikzcd}[column sep=large, row sep=large]
	P_{\bar{\sD}_0}: \PP(\Omega_{\Univ_{\sH}^s}) \arrow[r,dashed] & \PP((q_{\sH}^s)^*\Omega_{\sH^s})
\end{tikzcd}
\]
In particular, over the Zariski open subset $\Univ_{\sH_o}$, the map $P_{\bar{\sD}_0}$ is exactly the projection from the rational section $\bar{\sD}_0\subset \PP(\Omega_{\Univ_{\sH}^s})$ (see Notation \ref{notation-hjbar-bjbar}). We also have the following composition
\[
\begin{tikzcd}[column sep=large]
	\bar{P}_{\bar{\sD}_0}:\PP(\Omega_{\Univ_{\sH}^s})\arrow[r, dashed, "P_{\bar{\sD}_0}"] & \PP((q_{\sH}^s)^*\Omega_{\sH^s}) \arrow[r]  & \PP(\Omega_{\sH^s})
\end{tikzcd}
\]

\begin{notation} \label{notation-sZj}
For $1\leq j\leq m$, let us denote by $\sZ_j\subset \PP(\Omega_{\sH^s})$ the image $\bar{P}_{\bar{\sD}_0}(\bar{\sD}_j)$, i.e. the closure of the image of the generic point of  $\bar{\sD}_j$.
\end{notation}

Set $t=[l]\in \sH_o$. Then $C_l^0$ is the fibre of $q_{\sH}^s$ over $t$ and the induced morphism $f|_{C_l^0}:C_l^0\rightarrow l$ is nothing but the normalisation of $l$. Moreover, as $f$ is unramified over $\Univ_{\sH_o}$, we have a natural isomorphism $f^*T_X|_{C_l^0}\cong T_{\Univ_{\sH}^s}|_{C_l^0}$. Then, under this identification, the curve $l_0$ defined in Section \ref{Section:projection-local} is exactly the curve $\bar{\sD}_0\cap \PP(\Omega_{\Univ_{\sH}^s}|_{C_l^0})$ and $(q^s_{\sH})^*\Omega_{\sH^s}|_{C_l^0}$ identifies to the co-normal bundle of $l$. Hence, restricted to the fibre $C_l^0$, the projections $P_{\bar{\sD}_0}$ and $\bar{P}_{\bar{\sD}_0}$ are exactly the projections $P_{l_0}$ and $\bar{P}_{l_0}$ defined in Section \ref{Section:projection-local}, respectively. In particular, we shall denote by $P_{\bar{\sD}_l^0}$ (resp. $\bar{P}_{\bar{\sD}_l^0}$) the restriction of $P_{\bar{\sD}_0}$ (resp. $\bar{P}_{\bar{\sD}_0}$) to $\PP(\Omega_{\sU_{\sH}^s}|_{C_l^0})$.

\begin{theorem}
	\label{Thm:Projection-VMRT}
	In the situation of Setup \ref{Section:Setup}, let $\bar{\sH}_j \subset \bar{\sH}$ be an irreducible component such that $1\leq j\leq m$. If the projection 
	\[
	\bar{P}_j:=\bar{P}_{\bar{\sD}_0}|_{\bar{\sD}_j}:\bar{\sD}_j\dashrightarrow \sZ_j
	\]
	is not generically finite, then $\bar{\sH}_j$ is contained in $\Inf_{q_{\sH}^s}(f^*\sH)$. In particular, if $\bar{\sH}_j$ is contained in $\Mult(f^*\sH)$, then $\bar{P}_j:\bar{\sD}_j\dashrightarrow \sZ_j$ is generically finite.
\end{theorem}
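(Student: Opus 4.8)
The plan is to exploit the factorisation of $\bar P_j$ through the tangent variety of the pushed-forward family, and to compare it with the tautological intersection numbers computed fibrewise via Lemma \ref{Lemma:Projection-Local}. First I would fix a general member $[l]\in\sH$ and, for each $j$, a suitable irreducible component $C_l^{i(j)}$ of $f^{-1}(l)$ with $[C_l^{i(j)}]\in\bar\sH_j$; the curve $C_l^0$ is the $q_\sH^s$-fibre over $t=[l]$, along which (as recalled just before the theorem) $P_{\bar\sD_0}$ and $\bar P_{\bar\sD_0}$ restrict to the local projections $P_{l_0}$, $\bar P_{l_0}$ from the diagonal section $l_0$. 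The tangent variety $\bar\sD_j$ is swept out by the curves $\tau_{\bar\sH_j}(C)$ for $C$ a member of $\bar\sH_j$; over the general fibre $C_l^0$ these give horizontal curves in $\PP(\Omega_{\Univ_\sH^s}|_{C_l^0})=\PP(f_l^*\Omega_X)$ to which the dichotomy of Lemma \ref{Lemma:Projection-Local} applies, once one checks $\zeta_l\cdot C\le 1$.

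The key translation is: $\bar P_j$ is generically finite if and only if, for a general member $C$ of $\bar\sH_j$ meeting the general fibre $C_l^0$, the image $\bar P_{l_0}(\tau_{\bar\sH_j}(C)\cap\PP(\Omega|_{C_l^0}))$ is a \emph{curve} (a line, by Lemma \ref{Lemma:Projection-Local}(2)(b)) rather than a point, and moreover the assignment $C\mapsto$ that line is generically injective. The failure of generic finiteness of $\bar P_j$ therefore forces these images to be fibres of $p_2$ — case (1) or case (2)(a) of Lemma \ref{Lemma:Projection-Local} — for the general such $C$. Now I would argue that this collapsing is exactly the statement that the rational map $f^j_\flat:\bar\sH_j\dashrightarrow\sH^s$ has the property that the curves of $\bar\sH_j$ through a general point of $C_l^0$ all map to the \emph{same} point of $\sH^s$, equivalently that $q_\sH^s$ restricted to $\bar\sH_j$ does not separate these curves generically finitely; this is precisely the definition of $\bar\sH_j\subset\Inf_{q_\sH^s}(f^*\sH)$ in Definition \ref{Defn:pull-back-forward-web}(2). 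More concretely: a member $C$ of $\bar\sH_j$, being a curve with trivial normal bundle by Proposition \ref{Prop:Pull-Back-Trivial-Normal-Bundle}, meets a general fibre $C_l^0$ of $q_\sH^s$, and the tangent-direction-at-the-intersection-point data of such curves is what $\bar P_{\bar\sD_0}$ records; if this data is constant along $C_l^0$ (the fibre-of-$p_2$ case), the curves of $\bar\sH_j$ through a moving point of $C_l^0$ cannot vary their image in $\sH^s$, so $q_\sH^s|_{\Univ_{\bar\sH_j}}\to\sH^s$ has positive-dimensional fibres, i.e.\ is not generically finite.

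For the final sentence, I would invoke Remark (2) after Notation \ref{notation-hjbar-bjbar}: by Proposition \ref{prop:finiteness-multiple-components}, $\Mult(f^*\sH)\subset\Fin_{q_\sH^s}(f^*\sH)$, and $\Fin_{q_\sH^s}(f^*\sH)$ is by definition disjoint (no common components) from $\Inf_{q_\sH^s}(f^*\sH)$; hence an irreducible component $\bar\sH_j\subset\Mult(f^*\sH)$ cannot lie in $\Inf_{q_\sH^s}(f^*\sH)$, so by the first part $\bar P_j$ must be generically finite. The main obstacle I anticipate is the second half of the second paragraph: cleanly identifying "the image under $\bar P_{l_0}$ of the tangent curve of a member of $\bar\sH_j$ is a fibre of $p_2$ for the general such member" with "$f^j_\flat$ fails to separate, i.e.\ $q_\sH^s|_{\Univ_{\bar\sH_j}}$ is not generically finite onto $\sH^s$" — this requires a careful incidence-variety dimension count relating $\bar\sD_j$, the family $\bar\sH_j$, and the fibres of $q_\sH^s$, handling the indeterminacies of the tangent map and the possible non-uniqueness of $i(j)$, and verifying that the curves $\tau_{\bar\sH_j}(C)$ restricted to the general fibre $C_l^0$ indeed have $\zeta_l$-degree at most one so that Lemma \ref{Lemma:Projection-Local} is applicable (this last point presumably follows from $\bar\sD_0$ being a section together with $\bar\sH_j$ being an étale web, but it needs to be spelled out).
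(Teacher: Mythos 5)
There is a genuine gap, and it sits exactly where you anticipated trouble. Your reduction of generic finiteness of $\bar P_j$ to the behaviour of the projected tangent data over a general fibre $C_l^0$ is fine (this is essentially Remark \ref{Remark:Generical-finiteness}, and it follows from a dimension count because the fibres of $\bar P_j$ are contained in the fibres of $\bar\sD_j\rightarrow\sH^s$). The problem is the inference ``the projected tangent data is constant along $C_l^0$, hence the curves of $\bar\sH_j$ through a moving point of $C_l^0$ cannot vary their image in $\sH^s$.'' What the constancy gives you is only that all the image curves $q_{\sH}^s(C_s)$ pass through the fixed point $t=[l]$ \emph{with the same tangent direction at $t$}; two distinct curves can perfectly well be tangent at a point, so no incidence-variety dimension count will upgrade tangency at $t$ to equality of the image curves. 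The paper's proof supplies precisely the missing ingredient: working in the local web-structure charts $U_x$, $V_y$, $O_t$, the hypothesis that $\bar P_j$ is not generically finite forces $\bar P_j(\PP(\bar W_i^*))$ to be a \emph{section} of $\PP(\Omega_{O_t})\rightarrow O_t$ (because the positive-dimensional fibres of $\bar P_j|_{\PP(\bar W_i^*)}$ must fill up the connected one-dimensional fibres of $V_y\rightarrow O_t$), hence to define a holomorphic line field $L_i\subset T_{O_t}$ of which the images $q_{\sH}^s(C_s)\cap O_t$ are leaves; the uniqueness of the leaf of $L_i$ through $t$ is what forces all these images to coincide, and Zariski closure gives $\bar\sH_j\subset\Inf_{q_{\sH}^s}(f^*\sH)$. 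Without this integrability/uniqueness-of-leaves step the conclusion does not follow.

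A secondary issue: your plan leans on the dichotomy of Lemma \ref{Lemma:Projection-Local}, which requires $\zeta_l\cdot C\in\{0,1\}$. That bound is \emph{not} a formal consequence of $\bar\sD_0$ being a section or of $\bar\sH_j$ being an \'etale web; in the generality of Setup \ref{Section:Setup} the multisection $\bar\sD_j\cap\PP(\Omega_{\Univ_{\sH}^s}|_{C_l^0})$ can have arbitrary $\zeta_l$-degree. The degree bound is only established later (Proposition \ref{Prop:Degree-VMRT}) using the bigness of $T_X$ and the class of the total dual VMRT, whereas Theorem \ref{Thm:Projection-VMRT} must hold for an arbitrary \'etale web. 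Fortunately Lemma \ref{Lemma:Projection-Local} is not actually needed here: the only fibrewise fact you need is ``contracted by $\bar P_{l_0}$ versus not contracted,'' which requires no degree hypothesis. Your treatment of the final sentence (via $\Mult(f^*\sH)\subset\Fin_{q_{\sH}^s}(f^*\sH)$ from Proposition \ref{prop:finiteness-multiple-components} and the disjointness of $\Fin$ and $\Inf$) is correct and is what the paper does.
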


The following picture illustrates the statement: since projecting from $\bar{\sD}_0$ corresponds to taking images under the tangent map of $q^s_\sH$,
the projection map is not generically finite if and only if 
there are infinitely many curves in $\bar \sH_j$ which are mapped 
onto the same curve in $\sH^s$.

\begin{minipage}{0.4\linewidth}%
	\begin{tikzpicture}[scale=.6, z={(-.707,-.3)}]
		\coordinate (A) at (3,3,-2);
		\fill[black]  (A) circle [radius=2pt];
		
		\coordinate (B) at (3,1.5,-2);
		\fill[black]  (B) circle [radius=2pt];
		
		\coordinate (C) at (3,0,-2);
		\fill[black]  (C) circle [radius=2pt];
		
		\draw (2,3,-4) node[above] {$C_l^0$};
		\draw (3,-3,-2) node[above] {$t$};
		
		\draw (6,0,0) -- (0,0,0) -- (0,3,0) -- (6,3,0) -- cycle; %frontside
		\draw (6,0,0) -- (6,0,-4) -- (6,3,-4) -- (6,3,0);  %rightside
		\draw (6,3,0) -- (6,3,-4) -- (0,3,-4) -- (0,3,0);  %topside
		\draw (0,1.5,0) -- (6,1.5,0) -- (6,1.5,-4);  %midplan
		\draw (0,-3,0) -- (6,-3,0) -- (6,-3,-4) -- (0,-3,-4) -- cycle; %underplan
		\draw (3,0,-2) -- (3,-1,-2);  %bottommidline
		\draw (3,4,-2) -- (3,3,-2);   %topmidline
		\draw (6,0,-2) -- (6,3,-2);   %rightsidemidline
		\draw[style=dashed] (6,0,-4) -- (0,0,-4) -- (0,0,0); %bottomside
		\draw[style=dashed] (6,1.5,-4) -- (0,1.5,-4) -- (0,1.5,0); %midplan
		\draw[style=dashed] (0,0,-4) -- (0,3,-4); %leftbackside
		\draw[style=dashed] (3,3,-2) -- (3,0,-2); %midline
		\draw[style=dashed] (0,0,-2) -- (0,3,-2); %leftsidemidline

		\draw[thick,fill=gray!90,opacity=0.2] (0,0,0) -- (6,0,0) -- (6,0,-4) -- (0,0,-4) -- cycle;		
		\draw[thick,fill=gray!90,opacity=0.2] (0,3,0) -- (6,3,0) -- (6,3,-4) -- (0,3,-4) -- cycle;
		\draw[thick,fill=gray!90,opacity=0.2] (0,1.5,0) -- (6,1.5,0) -- (6,1.5,-4) -- (0,1.5,-4) -- cycle;
		\draw (0,1.5,0) -- (6,1.5,0) -- (6,1.5,-4);
		\draw[thick,fill=gray!30,opacity=0.2] (6,-3,0) -- (0,-3,0) -- (0,-3,-4) -- (6,-3,-4) -- cycle;

		\draw[blue] (0,3,0) .. controls (1,3,-0.5) and (2,3,-3.5) .. (3,3,-2);
		\draw[blue] (3,3,-2) .. controls (4,3,-0.5) and (5,3,-1) .. (6,3,-4);
		
		\draw[red] (0,1.5,-2) .. controls (1,1.5,-3.5) and (2,1.5,-0.5) .. (3,1.5,-2);
		\draw[red] (3,1.5,-2) .. controls (4,1.5,-3.5) and (5,1.5,0.5) .. (6,1.5,-2);
		
		\draw[orange] (0,0,-4) .. controls (2,0,-0.01) and (2.8,0,-0.1) .. (3,0,-2);
		\draw[orange] (3,0,-2) .. controls (4,0,-3.8) and (4.2,0,-3.5) .. (6,0,0);
		
		\draw[thick,blue,->] (3,3,-2) -- (4.25,3,-0.3);
		\draw[thick,red,->] (3,1.5,-2) -- (3.35,1.5,-2.6);
		\draw[thick,orange,->] (3,0,-2) -- (2.9,0,-1.2);
		
		\draw[blue] (0,-3,0) .. controls (1,-3,-0.5) and (2,-3,-3.5) .. (3,-3,-2);
		\draw[blue] (3,-3,-2) .. controls (4,-3,-0.5) and (5,-3,-1) .. (6,-3,-4);
		
		\draw[red] (0,-3,-2) .. controls (1,-3,-3.5) and (2,-3,-0.5) .. (3,-3,-2);
		\draw[red] (3,-3,-2) .. controls (4,-3,-3.5) and (5,-3,0.5) .. (6,-3,-2);
		
		\draw[orange] (0,-3,-4) .. controls (2,-3,-0.01) and (2.8,-3,-0.1) .. (3,-3,-2);
		\draw[orange] (3,-3,-2) .. controls (4,-3,-3.8) and (4.2,-3,-3.5) .. (6,-3,0);
		
		\draw[thick,blue,->] (3,-3,-2) -- (4.25,-3,-0.3);
		\draw[thick,red,->] (3,-3,-2) -- (3.35,-3,-2.6);
		\draw[thick,orange,->] (3,-3,-2) -- (2.9,-3,-1.2);
		
		\coordinate (D) at (3,-3,-2);
		\fill[black]  (D) circle [radius=2pt];
		
		\draw[->] (3,-1.3,-2) -- (3,-2.2,-2);
		\draw (3,-1.8,-2) node[left] {$q_{\sH}^s$};
	\end{tikzpicture}
\end{minipage}
\hfill
\begin{minipage}{0.4\linewidth}
	\begin{tikzpicture}[scale=.6, z={(-.707,-.3)}]
		\coordinate (A) at (3,3,-2);
		\fill[black]  (A) circle [radius=2pt];
		
		\coordinate (B) at (3,1.5,-2);
		\fill[black]  (B) circle [radius=2pt];
		
		\coordinate (C) at (3,0,-2);
		\fill[black]  (C) circle [radius=2pt];
		
		\coordinate (D) at (3,-3,-2);
		\fill[black]  (D) circle [radius=2pt];
		
		\draw (6,0,0) -- (0,0,0) -- (0,3,0) -- (6,3,0) -- cycle; %frontside
		\draw (6,0,0) -- (6,0,-4) -- (6,3,-4) -- (6,3,0);  %rightside
		\draw (6,3,0) -- (6,3,-4) -- (0,3,-4) -- (0,3,0);  %topside
		\draw (0,1.5,0) -- (6,1.5,0) -- (6,1.5,-4);  %midplan
		\draw (0,-3,0) -- (6,-3,0) -- (6,-3,-4) -- (0,-3,-4) -- cycle; %underplan
		\draw (3,0,-2) -- (3,-1,-2);  %bottommidline
		\draw (3,4,-2) -- (3,3,-2);   %topmidline
		\draw (6,0,-2) -- (6,3,-2);   %rightsidemidline
		\draw[style=dashed] (6,0,-4) -- (0,0,-4) -- (0,0,0); %bottomside
		\draw[style=dashed] (6,1.5,-4) -- (0,1.5,-4) -- (0,1.5,0); %midplan
		\draw[style=dashed] (0,0,-4) -- (0,3,-4); %leftbackside
		\draw[style=dashed] (3,3,-2) -- (3,0,-2); %midline
		\draw[style=dashed] (0,0,-2) -- (0,3,-2); %leftsidemidline
		
		\draw[thick,fill=gray!90,opacity=0.2] (0,0,0) -- (6,0,0) -- (6,0,-4) -- (0,0,-4) -- cycle;		
		\draw[thick,fill=gray!90,opacity=0.2] (0,3,0) -- (6,3,0) -- (6,3,-4) -- (0,3,-4) -- cycle;		
		\draw[thick,fill=gray!90,opacity=0.2] (0,1.5,0) -- (6,1.5,0) -- (6,1.5,-4) -- (0,1.5,-4) -- cycle;
		\draw[thick,fill=gray!30,opacity=0.2] (6,-3,0) -- (0,-3,0) -- (0,-3,-4) -- (6,-3,-4) -- cycle;

		\draw[blue] (0,3,-4) .. controls (2,3,-0.01) and (2.8,3,-0.1) .. (3,3,-2);
		\draw[blue] (3,3,-2) .. controls (4,3,-3.8) and (4.2,3,-3.5) .. (6,3,0);
		
		\draw[red] (0,1.5,-4) .. controls (2,1.5,-0.01) and (2.8,1.5,-0.1) .. (3,1.5,-2);
		\draw[red] (3,1.5,-2) .. controls (4,1.5,-3.8) and (4.2,1.5,-3.5) .. (6,1.5,0);
		
		\draw[orange] (0,0,-4) .. controls (2,0,-0.01) and (2.8,0,-0.1) .. (3,0,-2);
		\draw[orange] (3,0,-2) .. controls (4,0,-3.8) and (4.2,0,-3.5) .. (6,0,0);
		
		\draw[thick,blue,->] (3,3,-2) -- (2.9,3,-1.2);
		\draw[thick,red,->] (3,1.5,-2) -- (2.9,1.5,-1.2);
		\draw[thick,orange,->] (3,0,-2) -- (2.9,0,-1.2);

		\draw (0,-3,-4) .. controls (2,-3,-0.01) and (2.8,-3,-0.1) .. (3,-3,-2);
		\draw (3,-3,-2) .. controls (4,-3,-3.8) and (4.2,-3,-3.5) .. (6,-3,0);

		\draw[thick,->] (3,-3,-2) -- (2.9,-3,-1.2);
		
		\coordinate (D) at (3,-3,-2);
		\fill[black]  (D) circle [radius=2pt];
		
		\draw (2,3,-4) node[above] {$C_l^0$};
		\draw (3,-3,-2) node[above] {$t$};
		\draw[->] (3,-1.3,-2) -- (3,-2.2,-2);
		\draw (3,-1.8,-2) node[left] {$q_{\sH}^s$};
	\end{tikzpicture}
\end{minipage}

\begin{proof}
	Let $t=[l]\in \sH_o$ be a general member and let $y\in C_l^0$ be a general point. Set $x=f(y)$. Then there exists an Euclidean neighbourhood $x\in U_x\subset X$ of $x$ satisfying the following properties:
	\begin{enumerate}
		\item the \'etale web $\sH$ of curves defines a web-structure $W_i\subset T_{U_x}$ $(1\leq i\leq d)$ as in Definition \ref{defn:webstructure};
		
		\item if $V_y$ is the connected component of $f^{-1}(U_x)$ containing $y$,
		the induced morphism $f_y:=f|_{V_y}:V_y\rightarrow U_x$ is biholomorphic and $O_t:=q_{\sH}^s(V_y)$ is contained $\sH_{o}$;
		
		\item the \'etale web of curves $\bar{\sH}_j$ defines a web-structure $\bar{W}_i\subset T_{V_y}$ $(1\leq i\leq d')$ as in Definition \ref{defn:webstructure}.
	\end{enumerate}
    From the construction of $\bar{\sH}_j$, it follows that $\bar{W}_i\subset T_{V_y}$ $(1\leq i\leq d')$ is actually a sub-web-structure of the pull-back of the web-structure $W_i\subset T_{U_x}$ $(1\leq i\leq d)$ on $U_x$ via the biholomorphism $f_y:V_y\rightarrow U_x$. After renumbering, we may assume that $\bar{W}_i\subset T_{V_y}$ is exactly the pull-back of $W_i\subset T_{U_x}$. By the definition of $\bar{\sD}_j$, we have
    \[
    \bar{\sD}_j\cap \PP(\Omega_{V_y}) = \bigcup_{i=1}^{d'} \PP(\bar{W}_i^*)\subset \PP(\Omega_{V_y}),
    \]
    where $\bar{W}_i^*$ is the dual of $\bar{W}_i$. Moreover, we note that $T_{V_y/O_t}\cap \bar{W}_i$ is just the zero section for any $1\leq i\leq d'$. It follows that $\bar{P}_j$ is well-defined along $\bar{\sD}_j\cap \PP(\Omega_{V_y})$. 
    \[
    \begin{tikzcd}[column sep=large, row sep=large]
    	\bigcup_{i=1}^{d'}\PP(\bar{W}_i^*)=\bar{\sD}_j\cap \PP(V_y) \arrow[d,"\pi_1" left] \arrow[r,hookrightarrow] \arrow[rrr,bend left,"\bar{P}_j"]
    	    & \PP(\Omega_{V_y}) \arrow[d,"\pi_1" left] \arrow[r,dashed,"\bar{P}_{\bar{\sD}_0}"] 
    	        & \PP(\Omega_{O_t}) \arrow[r,hookleftarrow] \arrow[d,"\pi_2"]
    	            & \sZ_j\cap \PP(\Omega_{O_t}) \arrow[d,"\pi_2"] \\
        V_y \arrow[r,equal]
            & V_y \arrow[r,"q_{\sH}^s"]
                & O_t \arrow[r,equal]
                    & O_t
    \end{tikzcd}
    \]
    
    Since $\bar{P}_j:\bar{\sD}_j\dashrightarrow \sZ_j$ is not generically finite by our assumption, there exists some $1\leq i\leq d'$ such that $\PP(\bar{W}_i^*)\rightarrow \bar{P}_j(\PP(\bar{W}_i^*))$ is not generically finite. As $\PP(\bar{W}_i^*)$ is a section of $\pi_1:\PP(\Omega_{V_y})\rightarrow V_y$, it follows that the induced morphism
    \[
    \PP(\bar{W}_i^*) \xrightarrow{\bar{P}_j} \sZ_j\cap \PP(\Omega_{O_t}) \xrightarrow{\pi_2} O_t 
    \]
    is surjective and has relative dimension at least one. On the other hand, since $V_y\rightarrow O_t$ is of relative pure dimension one with connected and smooth fibres, it follows that the morphism $\bar{P}_j(\PP(\bar{W}_i^*))\rightarrow O_t$ is birational. In particular, after replacing $x$ by a general point in $U_x$ and then shrinking $U_x$, we may assume that $\bar{P}_j(\PP(\bar{W}_i^*))\rightarrow O_t$ is an biholomorphism; that is, $\bar{P}_j(\PP(\bar{W}_i^*))$ is a section of $\pi_2:\PP(\Omega_{O_t})\rightarrow O_t$. Denote by $L_i\subset T_{O_t}$ the $1$-dimensional distribution such that $\PP(L_i^*)=\bar{P}_j(\bar{W}_i^*)\subset \PP(\Omega_{O_t})$. 
    
    Let $C$ be a curve parametrised by $\bar{\sH}_j$ such that $C\cap V_y$ is a leaf of $W_i$. Then the image $q_{\sH}^s(C)\cap O_t$ is a leaf of $L_i$. In particular, if $\{C_s\}_{s\in \Delta}$ are the curves parametrised by $\bar{\sH}_j$ meeting $C_l^0\cap V_y$ such that $C_s\cap V_y$ are leaves of $W_i$, then $q_{\sH}^s(C_s)\cap O_t$ are leaves of $L_i$ passing through the fixed point $t$. Hence, by the uniqueness of leaves, the image $q_{\sH}^s(C_s)\cap O_t$ is  independent of $s\in \Delta$. Then taking Zariski closure shows that the image $q_{\sH}^s(C_s)$ is independent of $s\in \Delta$. Hence, $\bar{\sH}_j$ is contained in $\Inf_{q_{\sH}^s}(f^*\sH)$. 
\end{proof}

\begin{remark}
	\label{Remark:Generical-finiteness}
	Note that the restriction $\bar{\sD}_j\cap \PP(\Omega_{\Univ_{\sH}^s}|_{C_l^0})$ is a finite union of horizontal irreducible curves $\sD_{ji}$ which are different from $\bar{\sD}_0\cap \PP(\Omega_{\Univ_{\sH}^s}|_{C_l^0})$, and we have the following commutative diagram whose vertical morphisms are generically finite. 
		\[
		\begin{tikzcd}[column sep=large, row sep=large]
			\bar{\sD}_j \arrow[d] \arrow[r,dashed,"\bar{P}_j"] 
			    & \sZ_j \arrow[d] \\
			\sU_{\sH}^s \arrow[r,"q_{\sH}^s"] 
			    & \sH^s
		\end{tikzcd}
		\]
	Thus the local description of the projections given in Section \ref{Section:projection-local} shows that the map $\bar{P}_j:\bar{\sD}_j\dashrightarrow\sZ_j$ is generically finite if and only the image $P_{\bar{\sD}^0_{l}}(\sD_{ji})$ is not a fibre of the projection $p_2:C_l^0\times \PP^{n-2}\rightarrow \PP^{n-2}$.
\end{remark}

\section{Dual variety of minimal rational tangents}

\subsection{Standard covering family of rational curves}

Let $X$ be a uniruled projective manifold of dimension $n$, and let $C\subset X$ be a rational curve. We say that $C$ is a \emph{standard} rational curve if for the normalisation $f:\PP^1\rightarrow C$, we have
\begin{equation}
	\label{Eq:Standard-curves}
	f^*T_X\cong \sO_{\PP^1}(2)\oplus \sO_{\PP^1}(1)^{\oplus p}\oplus \sO_{\PP^1}^{\oplus (n-p-1)},
\end{equation}
where $p+2=-K_X \cdot C$ is the anti-canonical degree of $C$. Let $\sH$ be an irreducible covering family of rational curves on $X$. We say that $\sH$ is \emph{standard} if general curves parametrised by $\sH$ are standard rational curves. We say $\sH$ is \emph{minimal} if $\sH$ has minimal degree with respect to some fixed ample line bundle $A$ on $X$. It is well-known that a covering family $\sH$ of minimal rational curves is standard and a uniruled projective manifold always carries a covering family of minimal rational curves. If there is no danger of confusion, we shall write the tangent maps $\tau_{\sH}$ and $\tau_{\sH,x}$ as $\tau$ and $\tau_x$, respectively.

\begin{proposition}
	\label{p.tangentstanadard}
	Let $\sH$ be a standard covering family of rational curves on a projective manifold $X$. Denote by $\sD_x\subset \PP(\Omega_{X,x})$ the tangent variety of $\sH$ at a general point $x\in X$. Then the following statements hold.
	\begin{enumerate}
		\item The tangent map $\tau_x:\sH_x\dashrightarrow \sD_x$ is an immersion at the point in $\sH_x$ corresponding to a standard rational curve.
		
		\item Let $[C]\in \sH_x$ be a standard rational curve with $y=\tau_x([C])$ and denote by $\widehat{T}_{y}\subset T_{X,x}$ the $(p+1)$-dimensional subspace corresponding to the positive factors of the splitting 
		\[
		f^*T_X\cong \sO_{\PP^1}(2)\oplus \sO_{\PP^1}(1)^{\oplus p}\oplus \sO_{\PP^1}^{\oplus (n-p-1)},
		\]
		where $f:\PP^1\rightarrow C$ is the normalisation. Then $\PP(\widehat{T}^*_{y})\subset \PP(\Omega_{X,x})$ is the projective tangent bundle of $\sD_x$ at $y$.
		
		\item If we assume moreover that $\sH$ is minimal, then $\sH_x$ is smooth and $\tau_x:\sH_x \dashrightarrow \sD_x$ coincides with the normalisation of $\sD_x$.
	\end{enumerate}
\end{proposition}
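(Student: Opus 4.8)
The plan is to reduce all three statements to the splitting type of the normal bundle of a standard rational curve, supplemented in the minimal case by the theorems of Kebekus and of Hwang--Mok. Fix a standard curve $[C]\in\sH_x$ with normalisation $f\colon\PP^1\to C$ and $f(0)=x$, and set $\sN_f:=(f^*T_X)/T_{\PP^1}$. From \eqref{Eq:Standard-curves} one reads off $\sN_f\cong\O_{\PP^1}(1)^{\oplus p}\oplus\O_{\PP^1}^{\oplus(n-1-p)}$, since any nonzero map $\O_{\PP^1}(2)\to f^*T_X$ is, up to a scalar, the inclusion of the degree-$2$ summand. Standard deformation theory identifies the Zariski tangent space $T_{[C]}\sH_x$ with $H^0(\PP^1,\sN_f(-\{0\}))$ --- the first-order deformations of $C$ that keep the point $x$ fixed --- and since $\sN_f(-\{0\})\cong\O_{\PP^1}^{\oplus p}\oplus\O_{\PP^1}(-1)^{\oplus(n-1-p)}$ has vanishing $H^1$, this space has dimension $p$ and $\sH_x$ is smooth of dimension $p$ at $[C]$.

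The one genuinely non-formal step is the description of the differential $d\tau_x$ at $[C]$. Put $y:=\tau_x([C])=[T_xC]$ and use the canonical isomorphisms $T_y\PP(\Omega_{X,x})\cong\Hom(T_xC,T_{X,x}/T_xC)\cong\sN_f\vert_0\otimes(T_0\PP^1)^{\ast}$. I claim $d\tau_x$ is the evaluation-at-$0$ map
\[
H^0(\PP^1,\sN_f(-\{0\}))\longrightarrow\sN_f(-\{0\})\vert_0=\sN_f\vert_0\otimes(T_0\PP^1)^{\ast}.
\]
Indeed, a first-order deformation of the pointed curve $(C,x)$ corresponds to a section $s$ of $\sN_f$ with $s(0)=0$, and the induced infinitesimal motion of the tangent direction $T_xC$ is governed by the leading Taylor coefficient of $s$ at $0$, i.e.\ by the image of $s$ in $\sN_f(-\{0\})\vert_0$; one checks this in a local trivialisation by writing the deformation as $t\mapsto f(t)+\varepsilon s(t)$ and differentiating the resulting family of tangent lines at $t=0$. (This is the computation underlying Kebekus' study of the tangent map of minimal families.)

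Given this, (1) is immediate: $\ker(d\tau_x)=H^0(\PP^1,\sN_f(-2\{0\}))=H^0(\O_{\PP^1}(-1)^{\oplus p}\oplus\O_{\PP^1}(-2)^{\oplus(n-1-p)})=0$, so $\tau_x$ is an immersion at $[C]$. For (2), the evaluation map is injective with image $\widehat N_y\otimes(T_0\PP^1)^{\ast}$, where $\widehat N_y\subset\sN_f\vert_0=T_{X,x}/T_xC$ is the image of $(\O_{\PP^1}(1)^{\oplus p})\vert_0$; by the definition of $\widehat T_y$ as the fibre at $0$ of the positive sub-bundle $\O_{\PP^1}(2)\oplus\O_{\PP^1}(1)^{\oplus p}\subset f^*T_X$, one has $\widehat N_y=\widehat T_y/T_xC$. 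Hence $\im(d\tau_x)=\Hom(T_xC,\widehat T_y/T_xC)$, and the tangent space, at a representative $v\in T_{X,x}$ of $y$, of the affine cone over $\sD_x$ in $T_{X,x}$ is the preimage of $\im(d\tau_x)$ under $T_{X,x}\to T_{X,x}/T_xC$, namely $\widehat T_y$ itself. Projectivising, $\PP(\widehat T_y^{\ast})\subset\PP(\Omega_{X,x})$ is the projective tangent space of $\sD_x$ at $y$.

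For (3), assume $\sH$ is minimal. By Kebekus \cite{Keb02} the variety $\sH_x$ is smooth and the tangent map $\tau_x$ is a finite morphism onto $\sD_x$; by the Hwang--Mok birationality theorem for minimal rational curves, $\tau_x$ is moreover birational onto $\sD_x$. A finite birational morphism from the normal variety $\sH_x$ onto $\sD_x$ coincides, by the universal property of the normalisation, with the normalisation of $\sD_x$. The main obstacle in all of this is the differential computation of the second paragraph; given it, parts (1) and (2) are linear algebra on $\sN_f$, and (3) combines the quoted theorems with the uniqueness of normalisation.
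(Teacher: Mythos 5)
Your proof is correct and follows essentially the same route as the paper: the paper simply observes that \cite[Prop.\ 1.4 and 2.3]{Hwa01} only use the splitting type of $f^*T_X$ and cites \cite{Keb02a} and \cite{HM04} for part (3), while you write out the underlying computation (identification of $T_{[C]}\sH_x$ with $H^0(\sN_f(-\{0\}))$, the differential of $\tau_x$ as evaluation at $0$, and the vanishing $H^0(\sN_f(-2\{0\}))=0$) explicitly for a standard family. No gaps; the citations for (3) are exactly those the paper uses.
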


\begin{proof}
	The statements (1) and (2) are proved in \cite[Proposition 1.4 and Proposition 2.3]{Hwa01} under the assumption that $\sH$ is minimal. However, the proof only uses the splitting type of $T_X$ along general curves parametrised by $\sH$. The statement (3) is proved by \cite{Keb02a} and \cite{HM04}.
\end{proof}

	In this paper, we shall consider simultaneously 
	two families of rational curves, one of them being minimal. As in the preceding sections we will denote a general covering family of curves by $\sH$. We will distinguish the family of minimal rational curves by using the notation that is most common in the literature:

\begin{notation}
Let $X$ be a projective manifold. We shall 
denote by $\sK$ a family of minimal rational curves on $X$, and somewhat abusively, by the same letter the covering family obtained by taking the the normalisation of its closure in $\Chow{X}_1$.  For such a family, we denote the tangent variety by
$$
\sC\subset \PP(\Omega_{X})
$$
and call it the total variety of minimal rational tangents (total VMRT for short). For a general point $x\in X$ 
the tangent variety $\sC_x\subset \PP(\Omega_{X,x})$ is called the variety of minimal rational tangents (VMRT for short).
\end{notation}

\begin{lemma}
	\label{Lemma:Standard}
	Let $X$ be a uniruled projective manifold equipped with a covering family $\sH$ of rational curves with anti-canonical degree four such that the general member is immersed. If $\sH$ is not standard, then the tangent variety $\sD_x\subset \PP(\Omega_{X,x})$ of $\sH$ at a general point $x\in X$ is contained in a finite union of projective lines.
\end{lemma}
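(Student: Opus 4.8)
The plan is to pin down the splitting type of $f^*T_X$ along a general member of $\sH$, use it to bound $\dim \sD_x$ by a differential computation, and then run a second-order argument showing that each irreducible component of $\sD_x$ is a line.

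\emph{Step 1: splitting type.} Let $C$ be a general member of $\sH$ with normalisation $f\colon \PP^1\to C$. Since $\sH$ is a covering family of rational curves, $C$ is free, so $f^*T_X$ is globally generated; thus $f^*T_X\cong\bigoplus_{i=1}^n\sO_{\PP^1}(a_i)$ with all $a_i\geq 0$ and $\sum a_i=-K_X\cdot C=4$. As $C$ is immersed, the derivative $T_{\PP^1}=\sO_{\PP^1}(2)\hookrightarrow f^*T_X$ is a subbundle inclusion; since a nonzero map $\sO_{\PP^1}(2)\to\bigoplus\sO_{\PP^1}(a_i)$ necessarily factors through the summands with $a_i\geq 2$, a short case analysis with $\sum a_i=4$ and $a_1\geq 2$ leaves only $(a_i)=(2,1,1,0,\dots,0)$ (standard) or $(a_i)=(2,2,0,\dots,0)$. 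As $\sH$ is not standard, $f^*T_X\cong\sO_{\PP^1}(2)^{\oplus 2}\oplus\sO_{\PP^1}^{\oplus(n-2)}$ and $N_C:=f^*T_X/T_{\PP^1}\cong\sO_{\PP^1}(2)\oplus\sO_{\PP^1}^{\oplus(n-2)}$.

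\emph{Step 2: $\dim\sD_x\leq 1$.} Fix a general point $x\in X$ and a general $[C]\in\sH_x$ with $f(0)=x$. Then $\dim\sH_x=-K_X\cdot C-2=2$ and $T_{[C]}\sH_x\cong H^0(\PP^1,N_C(-1))$. By Step 1 this space is identified, via the unique positive summand of $N_C$, with $H^0(\sO_{\PP^1}(1))$, and the differential of $\tau_x$ at $[C]$ — sending a deformation $v$ to $v'(0)\bmod T_xC$ in $T_{X,x}/T_xC$ — has image the one-dimensional subspace spanned by the image $\bar e$ at $x$ of that $\sO_{\PP^1}(2)$-summand. Hence $\dim\sD_x\leq 1$, and, exactly as in Proposition~\ref{p.tangentstanadard}(2) — whose proof uses only the splitting type along a general curve — the projective tangent line to $\sD_x$ at $y=\tau_x([C])$ is $\PP(\widehat T_y)$, where $\widehat T_y\subset T_{X,x}$ is the $2$-plane spanned by $T_xC$ and $\bar e$.

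\emph{Step 3: every component of $\sD_x$ is a line.} Let $\sD_x^{(i)}$ be an irreducible component; by Step 2 it is a reduced irreducible curve, and it is enough to show $\sD_x^{(i)}=\PP(\widehat T_y)$ for a general $y\in\sD_x^{(i)}$, equivalently that the Gauss (tangent-line) map $y\mapsto\PP(\widehat T_y)$ of $\sD_x^{(i)}$ is constant. Since $H^0(N_C(-3\cdot 0))=0$, a general member of $\sH$ is determined by its $2$-jet at a general point, so the one-dimensional fibre $F_y:=\tau_x^{-1}(y)$ is parametrised by that $2$-jet; and the deformation spanning $H^0(N_C(-2\cdot 0))$ modulo reparametrisations — namely $s^2$ times a local frame of the $\sO_{\PP^1}(2)$-summand of $N_C$ — moves the $2$-jet of $C$ at $x$ only in the direction $\bar e\in\widehat T_y$. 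The remaining task is to upgrade this first-order statement: one shows that the tangent and osculating directions at $x$ of all the curves in $\tau_x^{-1}(\sD_x^{(i)})$ stay inside one \emph{fixed} plane $\widehat T_y$ — equivalently that the two-dimensional cone over $\sD_x^{(i)}$ in $T_{X,x}$, i.e.\ the tangent cone at $x$ of the subvariety swept out by $\tau_x^{-1}(\sD_x^{(i)})$, is a union of $2$-planes; alternatively one adapts the local projection computation of Lemma~\ref{Lemma:Projection-Local}, whose $\zeta_l\cdot C\leq 1$ case produces precisely lines in the normal $\PP^{n-2}$, to the family $\sH$ along a general member. The main obstacle is exactly this promotion from the first-order picture to all orders — ruling out, say, that $\sD_x^{(i)}$ is a conic, which is invisible to the differential alone and requires either a genuine second-order jet computation exploiting the triviality of $N_C$ away from its positive part, or an integrability argument for the distribution cut out by the planes $\widehat T_y$.
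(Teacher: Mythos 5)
Your Steps 1 and 2 are fine (and consistent with what the paper does implicitly): non-standard, free and immersed forces $f^*T_X\cong \sO_{\PP^1}(2)^{\oplus 2}\oplus\sO_{\PP^1}^{\oplus(n-2)}$, and the differential computation via $H^0(N_C(-1))$ does show that each component of $\sD_x$ is at most a curve whose embedded tangent line at a general point is $\PP(\widehat T_y)$. But the proof stops exactly where it becomes nontrivial: \emph{every} reduced curve in $\PP^{n-1}$ has a two-dimensional embedded tangent plane at a general point, so the first-order statement carries no information beyond $\dim\sD_x^{(i)}\le 1$. To conclude that $\sD_x^{(i)}$ is a line you must show the Gauss map $y\mapsto\PP(\widehat T_y)$ is \emph{constant}, i.e.\ that the $2$-plane $T_xC+\langle\bar e_C\rangle$ does not move with $[C]$, and neither of your two suggested remedies (a second-order jet computation, or integrability of the distribution of planes $\widehat T_y$) is carried out; as stated, Step 3 is a description of the remaining problem, not a proof. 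So the argument has a genuine, and acknowledged, gap.

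The paper closes this gap by a global deformation argument rather than an infinitesimal one. Since the splitting type $\sO(2)^{\oplus2}\oplus\sO^{\oplus(n-2)}$ means a general member is a free curve that is \emph{not minimal} in the sense of \cite[IV, Defn.2.8]{Ko96}, one can invoke \cite[IV, Cor.2.9]{Ko96}: the curve deforms in a positive-dimensional family keeping the two points $x$ and a general $y\in\text{Locus}(\sH_x^i)$ fixed. Hence the evaluation map $\sH_x^i\times\PP^1\to \text{Locus}(\sH_x^i)$ has positive-dimensional general fibre, which forces $S:=\text{Locus}(\sH_x^i)$ to be a surface; a symmetry argument ($S=\text{Locus}(\sH_y^j)$ for general $y\in S$) lets one assume $S$ is smooth at $x$, and then every tangent direction at $x$ of every curve of $\sH_x^i$ lies in the fixed $2$-plane $T_{S,x}$, so $\tau_x(\sH_x^i)\subset\PP(\Omega_{S,x})$ is contained in a single line. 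In other words, the constancy of your plane $\widehat T_y$ is obtained for free, because all the relevant curves are trapped inside one surface through $x$. If you want to salvage your approach you would need an argument of comparable strength; the bend-and-break--type statement \cite[IV, Cor.2.9]{Ko96} is precisely the tool that converts the non-standard splitting type into the required global rigidity.
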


\begin{proof}
	Since the total tangent variety $\sD$ is irreducible, each irreducible component of $\sD_x$ has the same degree. In particular, it is enough to show that there exists an irreducible component of $\sD_x$ which is contained in a projective line. 

  Let $[C]\in \sH$ be a general member with normalisation $f:\PP^1\rightarrow X$, then $f$ is free.
 Since by assumption $f$ is immersive and $\sH$ is not standard, we have
  	\[
	f^*T_X\cong \sO_{\PP^1}(2)\oplus \sO_{\PP^1}(2)\oplus \sO_{\PP^1}^{\oplus (n-2)}.
	\]
  Let now $\sH_x^i \subset \sH_x$ be an irreducible component and  $S=\text{Locus}(\sH_x^i)\subset X$ be the Zariski closure of the union of curves parametrised by $\sH_x^i$.  
  Let $y\in S$ be a general point, so that $S$ is smooth at $y$
  and there exists a curve $[C] \in \sH_x^i$ having splitting type as above. Since $f$ is not standard (i.e. not minimal in the terminology of \cite[IV, Defn.2.8]{Ko96}), we know by \cite[IV, Cor.2.9]{Ko96}
  that we can deform the rational curve $[C]$ keeping $x$ and $y$ fixed, i.e.
  there exists an irreducible curve $\delta_y\subset \sH_x^i$ such that every curve $C$ parametrised by $\delta_y$ passes through $x$ and $y$.
  Thus the general fibre of the evaluation morphism
  $$
  ev: \sH_x^i \times \PP^1 \rightarrow S
  $$
  has positive dimension. Since $\sH_x^i$ has dimension two, this shows
  that $\dim S=2$. Since $x \in X$ is general, and $\sH_x^i \subset \sH_x$ is any irreducible component, we obtain
  that $\dim \text{Locus}(\sH_z)=2$ for every general point $z \in X$.
  
  Let $\sH_y^j$ be an irreducible component of $\sH_y$ containing $\delta_y$. Then we have
	\[
	S=\text{Locus}(\delta_y)\subset\text{Locus}(\sH_y^j).
	\]
	As both $S$ and $\text{Locus}(\sH_y^j)$ are irreducible surfaces, we obtain $S=\text{Locus}(\sH^j_y)$. In particular, after replacing $x$ by $y$, we may assume that $S$ is smooth at $x$. Then $\tau_{\sH}(\sH_x^i)$ is contained in $\PP(\Omega_{S,x})\subset \PP(\Omega_{X,x})$ which is a projective line.
\end{proof}

\subsection{Dual variety of tangent variety}
Let $[C]\in \sH$ be a standard rational curve. Then a \emph{minimal section $\bar{C}$ over $C$} is the image of a section of $\PP(f^*T_X)\rightarrow \PP^1$ corresponding to the quotient
\[
f^*T_X\cong \sO_{\PP^1}(2)\oplus \sO_{\PP^1}(1)^{\oplus p}\oplus \sO_{\PP^1}^{\oplus (n-p-1)} \longrightarrow \sO_{\PP^1},
\]
where $f:\PP^1\rightarrow C$ is the normalisation.

\begin{definition}
	\label{Def:Dual-VMRT}
	let $\sH$ be a standard covering family  of rational curves over a projective manifold $X$. 
 The total dual tangent variety $\check{\sD}\subset \PP(T_X)$ of $\sH$ is defined as the Zariski closure of the union of all minimal sections over standard rational curves in $\sH$
	\[
	\check{\sD}=\overline{\bigcup_{[C]\in \sH:\ \text{standard rational curve}} \bar{C}}^{\rm Zar} \subset \PP(T_X).
	\]
\end{definition}

Let us recall the definition of dual varieties of projective varieties . Let $V$ be a complex vector space of dimension $N+1$, and let $Z\subset \PP^N=\PP(V)$ be a projective variety. We denote by $T_{Z,z}$ the tangent space at any smooth point $z\in Z^{\rm sm}$, where $Z^{\rm sm}$ is the non-singular locus of $Z$. We can also define the \emph{embedded projective tangent space} $\hat{T}_{Z,z}\subset \PP^N$ as follows:
\[
\hat{T}_{Z,z}=\PP(T_{{\rm Cone}(Z),v}),
\]
where ${\rm Cone}(Z)\subset V$ is the cone over $Z$, $v$ is any non-zero point on the line $z$, and we consider $T_{{\rm Cone}(Z),v}$ as a linear subspace of $V$. A hyperplane $H\subset \PP^N$ is a tangent hyperplane of $Z$ if $\hat{T}_{Z,z}\subset H$ for some $z\in Z^{\rm sm}$. 

\begin{definition}
	Let $Z\subset \PP^N=\PP(V)$ be a projective variety.
	
	\begin{enumerate}
		\item The closure of the set of all tangent hyperplanes is called the projectively dual variety $\check{Z}\subset \check{\PP}^N=\PP(V^*)$, where $V^*$ is the dual space of $V$.
		
		\item The dual defect $\defect(Z)$ of $Z$ is defined as $N-\dim(\check{Z})-1$, and $Z$ is called dual defective if $\defect(Z)>0$.
	\end{enumerate}
\end{definition}

The next result justifies the terminology in Definition \ref{Def:Dual-VMRT}:

\begin{proposition}
	\label{Prop:Dual-Defect-VMRT}
	\cite[Proposition 5.14]{MunozOcchettaSolaCondeWatanabeEtAl2015}
	With the same notation as in Definition \ref{Def:Dual-VMRT}, let $x\in X$ be a general point. Then $\check{\mathcal{D}}_x\subset \PP(T_{X,x})$ is the projectively dual variety of $\mathcal{D}_x\subset \PP(\Omega_{X,x})$.
\end{proposition}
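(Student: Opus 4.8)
The plan is to compare the fibres of $\check{\sD}$ and of $\sD$ over a \emph{general} point $x\in X$. A routine reduction comes first: both $\sD$ and $\check{\sD}$ are Zariski closures of loci sitting over $X$, so a standard generic-flatness/limiting argument shows that for general $x$ the fibre $\check{\sD}_x\subset\PP(T_{X,x})$ is the Zariski closure of the union, over standard members $[C]\in\sH_x$ and over all minimal sections $\bar C$ lying over such a $C$, of the points $\bar C\cap\PP(T_{X,x})$, while $\sD_x=\overline{\tau_x(\sH_x)}$. By Proposition~\ref{p.tangentstanadard}(1) the tangent map $\tau_x$ is an immersion at the point of $\sH_x$ corresponding to a standard curve, so $\sD_x$ is smooth at $y:=\tau_x([C])$ for a general standard $[C]\in\sH_x$.

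The core of the argument is a fibrewise computation. I would take a general standard curve $[C]\in\sH_x$, let $f\colon\PP^1\to C$ be the normalisation and $p\in\PP^1$ the point with $f(p)=x$, and write $f^*T_X\cong\sO_{\PP^1}(2)\oplus\sO_{\PP^1}(1)^{\oplus p}\oplus\sO_{\PP^1}^{\oplus(n-p-1)}$; let $\sP=\sO_{\PP^1}(2)\oplus\sO_{\PP^1}(1)^{\oplus p}$ be the positive part, so that $\sP|_p=\widehat{T}_y\subset T_{X,x}$ in the notation of Proposition~\ref{p.tangentstanadard}(2). Since $\Hom(\sP,\sO_{\PP^1})=0$, every minimal section over $C$, i.e. every surjection $f^*T_X\twoheadrightarrow\sO_{\PP^1}$, factors through $f^*T_X/\sP\cong\sO_{\PP^1}^{\oplus(n-p-1)}$, and conversely every nonzero map $\sO_{\PP^1}^{\oplus(n-p-1)}\to\sO_{\PP^1}$ is surjective; hence the minimal sections over $C$ are parametrised by $\PP(\Hom(f^*T_X,\sO_{\PP^1}))\cong\PP^{n-p-2}$, and restricting such a section to $p$ produces a rank-one quotient of $T_{X,x}$ whose kernel is a hyperplane containing $\widehat{T}_y$. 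Running through this $\PP^{n-p-2}$ one obtains, bijectively, \emph{all} hyperplanes of $T_{X,x}$ containing $\widehat{T}_y$. Thus the part of $\check{\sD}_x$ contributed by a single standard curve $C$ through $x$ is exactly $\{\,H\in\PP(T_{X,x}):\widehat{T}_y\subseteq H\,\}$.

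Finally I would recognise this set as the set of tangent hyperplanes of $\sD_x$ at $y$, using the projective-duality formalism with $V=\Omega_{X,x}$: here $\PP(\Omega_{X,x})=\PP(V)$ is the space of lines in $T_{X,x}$ and $\PP(T_{X,x})=\PP(V^*)$ is the dual space of hyperplanes of $T_{X,x}$. By Proposition~\ref{p.tangentstanadard}(2) the embedded projective tangent space $\widehat{T}_{\sD_x,y}$ of $\sD_x$ at the smooth point $y$ equals $\PP(\widehat{T}^*_{y})\subset\PP(\Omega_{X,x})$, i.e. the linear subspace of $\PP(V)$ consisting of the lines contained in $\widehat{T}_y$; hence a hyperplane $H$ of $T_{X,x}$ is tangent to $\sD_x$ at $y$ precisely when $\widehat{T}_y\subseteq H$. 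Combined with the previous step, the points of $\check{\sD}_x$ coming from a general standard $[C]\in\sH_x$ are exactly the tangent hyperplanes of $\sD_x$ at $y=\tau_x([C])$; since these $y$ are dense in the smooth locus of $\sD_x$ (as $\tau_x$ is dominant onto $\sD_x$ and an immersion at general standard members), passing to Zariski closures on both sides identifies $\check{\sD}_x$ with the closure of the set of all tangent hyperplanes of $\sD_x$, i.e. with the projectively dual variety of $\sD_x\subset\PP(\Omega_{X,x})$. The one genuinely non-formal ingredient is Proposition~\ref{p.tangentstanadard}(2), which matches the span of the positive factors of $f^*T_X$ along $C$ with the projective tangent space of $\sD_x$; the remaining difficulty is purely bookkeeping, namely keeping the two mutually dual conventions for $\PP(\Omega_{X,x})$ and $\PP(T_{X,x})$ consistent throughout.
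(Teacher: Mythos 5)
Your proposal is correct and follows essentially the same route as the paper: the paper's own proof consists of the observation that the argument of \cite[Proposition 5.14]{MunozOcchettaSolaCondeWatanabeEtAl2015} only uses the splitting type of $T_X$ along general members and therefore applies verbatim to standard families, and what you have written out is precisely that argument --- minimal sections over a standard $C$ factor through the trivial quotient $f^*T_X/\sP\cong\sO_{\PP^1}^{\oplus(n-p-1)}$, hence restrict at $x$ to exactly the hyperplanes of $T_{X,x}$ containing $\widehat{T}_y$, which by Proposition \ref{p.tangentstanadard}(2) are the tangent hyperplanes of $\sD_x$ at $y=\tau_x([C])$, and one concludes by taking closures over the dense set of such $y$. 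The only points left implicit (that for general $x$ no irreducible component of $\sH_x$ consists entirely of non-standard curves, so every component of $\sD_x$ is reached) are routine dimension counts and do not affect the argument.
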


\begin{proof}
	In the proof of \cite[Proposition 5.14]{MunozOcchettaSolaCondeWatanabeEtAl2015}, $\sH$ is assumed to be minimal. However, the proof uses only the splitting type of $T_X$ along general curves parametrised by $\sH$, so it still works for standard covering families of rational curves.
\end{proof}

Let $\sK$ be a covering family of minimal rational curves on $X$. Then the total dual tangent variety of $\sK$ is denoted by $\check{\sC}\subset \PP(T_X)$ and we shall call it the \emph{total dual VMRT} of $\sK$. Denote by $c$ the dual defect of $\sC_x\subset \PP(\Omega_{X,x})$. If $c=0$, then the total dual VMRT $\check{\sC}\subset \PP(T_X)$ is a prime divisor. In particular, there exists a Cartier divisor $\Delta$ on $X$ such that
\[
[\check{\sC}] = m\check{\zeta} + \check{\pi}^*\Delta,
\]
where $\check{\pi}:\PP(T_X)\rightarrow X$ is the projectivisation and $\check{\zeta}$ is tautological divisor class $c_1(\sO_{\PP(T_X)}(1))$. Moreover, by Proposition \ref{Prop:Dual-Defect-VMRT}, we also see that $m$ is equal to the degree of the dual variety of $\sC_x\subset \PP(\Omega_{X,x})$ at a general point $x\in X$, which is a hypersurface in $\PP(T_{X,x})$ as $c=0$.

\begin{remark}
	Let $\sH$ be a standard covering family of rational curves on an $n$-dimensional projective manifold $X$. If a general member of $\sH$ has $(-K_X)$-degree $n+1$, then its tangent variety $\sD$ is the total space $\PP(\Omega_{X})$ and the total dual tangent variety $\check{\sD}$ is empty by our definition. However, if we assume that $\sH$ is a covering family of minimal rational curves, then we know by  \cite{CMS02,Keb02}
	that if $X$ is not isomorphic to $\PP^n$, then the anti-canonical degree of a general member in $\sH$ is less than or equal to $n$; that is, we have $n-p-1>0$ in \eqref{Eq:Standard-curves} if $X\not\cong \PP^n$.
\end{remark}

\begin{proposition}
	\label{Prop:Bigness-Criterion}
	Assume that $X$ is a Fano manifold of Picard number one equipped with a covering family $\sK$ of minimal rational curves such that $c=0$. Then $T_X$ is big if and only if $\Delta$ is anti-ample on $X$.
	In this case $\check \sC$ is a generator of the pseudoeffective cone $\mbox{\rm Pseff}(\PP(T_X))$.
\end{proposition}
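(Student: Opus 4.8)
The plan is to compute the pseudoeffective cone $\Pseff(\PP(T_X))$ completely, working in the two-dimensional real vector space $N^1(\PP(T_X))_{\R} = \R\,\check{\zeta}\oplus\R\,\check{\pi}^*H$, where $H$ is the ample generator of the Picard group of $X$ (here we use that $X$ has Picard number one). Two curve classes carry all the information: the class $\ell_f$ of a line contained in a fibre of $\check{\pi}$, for which $\check{\zeta}\cdot\ell_f = 1$ and $\check{\pi}^*H\cdot\ell_f = 0$; and the class $\bar{C}$ of a minimal section over a general member $[C]\in\sK$. By the very definition of a minimal section, $\sO_{\PP(T_X)}(1)$ pulls back to the trivial quotient along the normalisation $\PP^1\to\bar{C}$, so $\check{\zeta}\cdot\bar{C} = 0$, while the projection formula gives $\check{\pi}^*D\cdot\bar{C} = D\cdot C$ for every divisor $D$ on $X$; in particular $[\check{\sC}]\cdot\bar{C} = \Delta\cdot C$. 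I would first record two structural facts. (i) The class $\check{\pi}^*H$ spans an extremal ray of $\Pseff(\PP(T_X))$: the fibres of $\check{\pi}$ cover $\PP(T_X)$ and lines move freely inside them, so $\ell_f$ is a movable curve class, whence $\Pseff(\PP(T_X))\subseteq\{D : D\cdot\ell_f\geq 0\}$; as $\Pseff(\PP(T_X))$ is a full-dimensional pointed cone in a plane, the face it cuts out on $\ell_f^{\perp}$ is one of its two extremal rays, namely $\R_{\geq 0}\,\check{\pi}^*H$ (note $-\check{\pi}^*H$ is not pseudoeffective). (ii) By Definition \ref{Def:Dual-VMRT}, the minimal sections $\bar{C}$ over general members of $\sK$ sweep out the divisor $\check{\sC}$, which is irreducible since $c=0$.

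I would then treat the implication ``$\Delta$ anti-ample $\Rightarrow T_X$ big'' together with the statement about the extremal ray. Writing $\Delta\equiv -bH$ with $b>0$, we get $[\check{\sC}] = m\,\check{\zeta} - b\,\check{\pi}^*H$ with $m\geq 1$, so $\check{\zeta} = \tfrac1m[\check{\sC}] + \tfrac bm\,\check{\pi}^*H$ is a positive combination of the effective class $[\check{\sC}]$ and the nef class $\check{\pi}^*H$, hence $\check{\zeta}\in\Pseff(\PP(T_X))$. It cannot lie on the boundary: $\partial\Pseff(\PP(T_X))$ is a union of two rays, one being $\R_{\geq 0}\,\check{\pi}^*H$ by (i); if $\check{\zeta}$ lay on the other ray, then $\Pseff(\PP(T_X))$ would be spanned by $\check{\pi}^*H$ and $\check{\zeta}$, forcing every pseudoeffective class to have non-negative coordinate along $\check{\pi}^*H$, contradicting $[\check{\sC}] = m\,\check{\zeta} - b\,\check{\pi}^*H$. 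So $\check{\zeta}$ lies in the interior of $\Pseff(\PP(T_X))$, i.e.\ $T_X$ is big. Moreover, in this case $\check{\sC}\cdot\bar{C} = \Delta\cdot C = -b\,(H\cdot C)<0$; since general minimal sections sweep out the irreducible divisor $\check{\sC}$, every section of $\sO_{\PP(T_X)}(k\check{\sC})$ restricts to zero on a general $\bar{C}$ and therefore vanishes along $\check{\sC}$. Dividing out $\check{\sC}$ and iterating gives $h^0(\sO_{\PP(T_X)}(k\check{\sC})) = 1$ for all $k\geq 0$, so $\check{\sC}$ is not big; hence $[\check{\sC}]\in\partial\Pseff(\PP(T_X))$, and being non-proportional to $\check{\pi}^*H$ it spans the second extremal ray. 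Thus $\Pseff(\PP(T_X))$ is generated by $\check{\pi}^*H$ and $\check{\sC}$.

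For the converse, assume $\check{\zeta}$ is big. By Kodaira's lemma there are an integer $k>0$, an ample divisor $A$ and an effective divisor $E$ with $k\,\check{\zeta}\sim A+E$. Intersecting with a general minimal section yields $0 = A\cdot\bar{C} + E\cdot\bar{C}$ with $A\cdot\bar{C}>0$, hence $E\cdot\bar{C}<0$, so $\bar{C}\subseteq\supp E$; as the $\bar{C}$ sweep out $\check{\sC}$, this forces $\check{\sC}\subseteq\supp E$. Write $E = a\,\check{\sC} + E'$ with $a = \mathrm{ord}_{\check{\sC}}(E)\geq 1$ and $\check{\sC}\not\subseteq\supp E'$, substitute $[\check{\sC}] = m\,\check{\zeta} + \check{\pi}^*\Delta$, and rearrange to $(k-am)\,\check{\zeta}\equiv A + a\,\check{\pi}^*\Delta + E'$. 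Intersecting this with a general $\bar{C}$, which is not contained in $\supp E'$ because $\check{\sC}\not\subseteq\supp E'$ and the general $\bar{C}$ cover $\check{\sC}$, gives $0 = A\cdot\bar{C} + a\,(\Delta\cdot C) + E'\cdot\bar{C}$ with $A\cdot\bar{C}>0$ and $E'\cdot\bar{C}\geq 0$. Therefore $\Delta\cdot C<0$, which, since $X$ has Picard number one, means precisely that $\Delta$ is anti-ample.

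The formal cone geometry and the implication ``$\Delta$ anti-ample $\Rightarrow T_X$ big'' are routine once (i) and (ii) are in place. The genuine difficulty is the reverse implication: a direct comparison of classes does not suffice, because a priori $\Pseff(\PP(T_X))$ could extend past $[\check{\sC}]$, so one must run the Kodaira-lemma argument, repeatedly subtracting the maximal multiple of $\check{\sC}$ from the effective part --- its essential input being that $\check{\sC}$ is swept out by curves $\bar{C}$ on which $\check{\zeta}$ has degree zero. The only remaining point requiring care is that a general minimal section over a general $[C]\in\sK$ is irreducible and reduced and is not contained in the auxiliary divisors $\supp E$ and $\supp E'$; this follows from the irreducibility of $\check{\sC}$ together with the fact that these sections cover it.
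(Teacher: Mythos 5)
Your proof is correct, and it takes a genuinely different route from the paper's. The paper handles both implications by importing the machinery of \cite[Lemma 2.3]{HLS20} (bigness of $T_X$ is equivalent to the pseudoeffectivity of $\check\zeta+\check\pi^*H$ for some anti-big $H$) and \cite[Corollary 2.6]{HLS20} (a big but not modified-nef class forces the existence of a prime divisor spanning an extremal ray of the pseudoeffective cone); the key computation $(\check\zeta+\check\pi^*H)\cdot\bar C=H\cdot C<0$ on minimal sections then identifies that extremal prime divisor with $\check\sC$. You instead work entirely inside the two-dimensional space $N^1(\PP(T_X))_\R$: the forward implication becomes the elementary observation that $\check\zeta$ is a positive combination of the effective class $[\check\sC]$ and the nef class $\check\pi^*H$ and cannot sit on either boundary ray, and the converse is run through Kodaira's lemma, peeling off the maximal multiple of $\check\sC$ from the effective part and intersecting with a general minimal section (on which $\check\zeta$ has degree zero). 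The extremal-ray statement you obtain by showing $h^0(k\check\sC)=1$ for all $k$, again via the negativity of $[\check\sC]\cdot\bar C$. What your approach buys is self-containedness: no appeal to modified nef classes or the divisorial Zariski decomposition, at the cost of a somewhat longer argument that exploits $\rho(\PP(T_X))=2$ in an essential way (the cone-geometry steps would not transfer to higher Picard number, whereas the cited results from \cite{HLS20} are stated more generally). The points requiring care --- irreducibility of $\check\sC$ when $c=0$, $m\geq 1$, and the fact that a general minimal section is not contained in $\supp E'$ because no prime divisor other than $\check\sC$ can contain a covering family of them --- are all addressed.
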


\begin{proof}
	If $\Delta$ is anti-ample, then $T_X$ is big by \cite[Lemma 2.3]{HLS20}. Conversely, assume that $T_X$ is big. Then by \cite[Lemma 2.3]{HLS20}, there exists an anti-big $\Q$-divisor $H$ on $X$ such that $\check{\zeta}+\check{\pi}^*H$ is pseudoeffective. As $X$ has Picard number one, it follows that $H$ is actually anti-ample. Note that $\check{\sC}$ is dominated by minimal sections $\bar{C}$ and 
	\[
	(\check{\zeta}+\check{\pi}^*H)\cdot \bar{C}=\check{\pi}^*H\cdot \bar{C}=H\cdot C<0.
	\]
	Therefore, the divisor $\check{\zeta}+\check{\pi}^*H$ is not modified nef. Thus, by \cite[Corollary 2.6]{HLS20}, there exists a prime divisor $D$ such that $[D]$ generates an extremal ray of the pseudoeffective cone of $\PP(T_X)$. As $T_X$ is big, there exists a positive integer $m'$ and an anti-ample $\Q$-divisor $H'$ such that
	\[
	\frac{1}{m'}D\equiv \check{\zeta}+\check{\pi}^*H'.
	\] 
	On the other hand, the same computation as above shows that the restriction $D|_{\check{\sC}}$ is not pseudoeffective. Hence, we have $\check{\sC}=D$ and consequently we obtain $m=m'$ and $\Delta=mH'$ is anti-ample.
\end{proof}

In this paper we are mainly interested in the case where $p=0$. In particular, we have $c=0$ as $c\leq p$ (see \cite[Proposition 5.12]{MunozOcchettaSolaCondeWatanabeEtAl2015}). To compute the cohomological class of the total dual VMRT in this case, we start with a general construction. 

Let $B$ be a smooth projective curve and let $E\rightarrow B$ be a vector bundle of rank $r\geq 2$. Denote by $\eta:\PP(E)\rightarrow B$ the projectivisation. Let $C = \cup_{i=1}^r C_i \subset \PP(E)$ be a finite union of curves which does not contain any $\eta$-vertical irreducible components. We define the total dual variety $\check{C}\subset \PP(E^*)$ of $C$ as follows: for a general point $b\in B$, the fibre of $\check{\eta}^{-1}(b)\cap \check{C}$ is the dual variety of $\eta^{-1}(b)\cap C$, where $\check{\eta}:\PP(E^*)\rightarrow B$ is the projectivisation. Then we define the 
$\check{C}$ to be the Zariski closure of the union of these dual varieties over general points of $B$. Since $\eta^{-1}(b)\cap C$ is a finite union of points in $\PP(E_b)$, it follows that $\check{\eta}^{-1}(b)\cap \check{C}$ is a finite union of hyperplanes in $\PP(E^*_b)$. Hence, the variety $\check{C}\subset \PP(E^*)$ is a divisor. Let $\check{\zeta}$ be the tautological divisor class of $\PP(E^*)$ and let $\check{F}$ be a fibre of $\check{\eta}$. Then there exist integers $a$ and $b$ such that
\[
[\check{C}] \equiv a\check{\zeta} + b\check{F}.
\]

\begin{lemma}
	\label{Lemma:Class-Dual-Curves}
	Let $\zeta$ be the tautological divisor class of $\PP(E)$ and let $F$ be a $\eta$-fibre. Then we have
	\begin{center}
		$a=F\cdot C$\quad and \quad $b=\zeta \cdot C$.
	\end{center}
\end{lemma}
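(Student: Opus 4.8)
The plan is to compute the two coefficients $a$ and $b$ in $[\check{C}] \equiv a\check{\zeta} + b\check{F}$ by intersecting both sides of this numerical equivalence with suitable test curves in $\PP(E^*)$ whose intersection numbers with $\check{\zeta}$ and $\check{F}$ are easy to control. Since $\PP(E^*)$ is a $\PP^{r-1}$-bundle over the curve $B$, its numerical group is rank two, generated by $\check{\zeta}$ and $\check{F}$, with the relations $\check{\zeta}^{r-1}\cdot \check{F}=0$ is not quite right — rather $\check{F}$ is a fibre so $\check{F}^2=0$, $\check{\zeta}\cdot\check{F}^{?}$... the cleanest approach is to pair with a line in a fibre and with a section.

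**Step 1: Compute $a$ by restricting to a general fibre.** Let $\check{F}=\check{\eta}^{-1}(b)\cong \PP^{r-1}$ be a general fibre of $\check{\eta}$, and let $\ell\subset \check{F}$ be a line in this projective space. Then $\check{F}\cdot\ell = 0$ (two fibres are disjoint, or: $\check{F}$ restricted to $\check{F}$ is trivial) and $\check{\zeta}\cdot \ell = 1$. On the other hand, for general $b$, the intersection $\check{F}\cap\check{C}$ is the dual variety of the finite set of points $\eta^{-1}(b)\cap C\subset \PP(E_b)$, which by the discussion preceding the lemma is a union of $F\cdot C$ hyperplanes in $\PP(E_b^*)\cong \check{F}$ (one hyperplane for each of the $F\cdot C$ points of $C$ in the fibre, counted with multiplicity). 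A general line $\ell$ meets each such hyperplane in exactly one point, so $\check{C}\cdot\ell = F\cdot C$. Comparing, $a = \check{C}\cdot \ell = F\cdot C$.

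**Step 2: Compute $b$ by restricting to a dual section determined by $C$.** This is where I expect the main work. The idea: a point of $C$ in a fibre $\PP(E_b)$ is a one-dimensional quotient $E_b\twoheadrightarrow Q_b$; dually it picks out the hyperplane $\PP(\ker) \subset \PP(E_b^*)$, i.e. a point of $\PP(E_b^*)$ is the same as a hyperplane, and the hyperplane "dual to the point $[C_i\cap \PP(E_b)]$" sweeps out, as $b$ varies, precisely one of the components $\check{C}_i$ of $\check{C}$. Fix one component $C_i$ of $C$; it is a curve mapping finitely (say with degree $d_i$) onto $B$ via $\eta$. The quotient line bundle $\sO_{\PP(E)}(1)|_{C_i}$ corresponds via duality to a sub-line-bundle of $\eta_i^*E$ where $\eta_i\colon \widetilde{C_i}\to B$ is the normalisation composed with $\eta$, hence to a one-dimensional family of hyperplanes in the fibres of $\PP(E^*)$, i.e. to a multisection $s_i\colon \widetilde{C_i}\to \PP(E^*)$ of $\check{\eta}$ whose image is $\check{C_i}$ reparametrised... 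Actually the slicker route: take a general point $v\in \PP(E^*)$, i.e. a general hyperplane-section; no — take instead a general \emph{section} $\check{\sigma}\subset \PP(E^*)$ over $B$ of class $\check{\zeta}^{r-1}$-type, or more simply intersect $[\check{C}]$ with the class of a fibre of a \emph{resolution}. Let me instead intersect with $\check{\zeta}^{r-2}\cdot(\text{something})$: since $\check{C}$ is a divisor, $[\check{C}]\cdot \check{\zeta}^{r-2}\cdot\check{F}$ and $[\check{C}]\cdot\check{\zeta}^{r-1}$ give two linear equations in $a,b$ using the standard relations $\check{\zeta}^{r-1}\cdot\check{F}=1$, $\check{\zeta}^r = \deg(\check{\zeta}^{r-1})=\deg(E^*)=-\deg(E)$ on $\PP(E^*)$. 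The first equation recovers $a=F\cdot C$ as in Step 1. For the second, I must identify $\check{C}\cdot\check{\zeta}^{r-1}$ geometrically. A general member of the linear system defining a multiple of $\check{\zeta}$ is a relative hyperplane; $\check{C}\cdot \check{\zeta}^{r-1}$ counts intersection of $\check{C}$ with a general $\PP^1$-section $\check{\sigma}$ of $\check{\eta}$. By the biduality/incidence correspondence, such a section corresponds to a general $\PP^{r-2}$-subbundle of $\PP(E)$ over $B$, i.e. to $\zeta^{r-2}$ worth of data on the $\PP(E)$ side, and a point of $\check{C_i}$ over $b$ lies on $\check{\sigma}$ exactly when the point $C_i\cap \PP(E_b)$ lies on the dual linear space; counting these over all $b\in B$ via the projection formula gives exactly $\zeta\cdot C$. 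Thus $b=\zeta\cdot C$.

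**The main obstacle.** The delicate point is Step 2: making the duality-on-fibres argument rigorous simultaneously across the base, i.e. producing a genuine incidence variety $\Sigma\subset \PP(E)\times_B\PP(E^*)$ and using the two projections plus the projection formula to transfer the computation of $\check{C}\cdot\check{\zeta}^{r-1}$ (on the $\PP(E^*)$ side) into $C\cdot\zeta$ (on the $\PP(E)$ side). One must check that the relevant generic flatness / transversality holds over a dense open subset of $B$ (which is fine, $B$ a curve and $C$ has no vertical components by hypothesis), that multiplicities match, and that $C_i\to B$ being a multisection rather than a section does not introduce spurious factors — the degrees cancel correctly because $\check{C_i}\to B$ has the same degree as $C_i\to B$. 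Once the incidence-correspondence bookkeeping is set up cleanly, both coefficients drop out of the projection formula; everything else is the standard intersection theory on $\PP(E)$ and $\PP(E^*)$ over a curve.
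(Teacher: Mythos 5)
Your Step 1 is correct: intersecting with a general line $\ell$ in a general fibre of $\check{\eta}$ gives $\check{F}\cdot\ell=0$, $\check{\zeta}\cdot\ell=1$, and $\check{C}\cdot\ell = F\cdot C$ because $\check{C}$ meets that fibre in the $F\cdot C$ distinct hyperplanes dual to the points of $C$ in the corresponding fibre of $\eta$. The problem is Step 2, and it is not just the admitted incompleteness: the step as written is numerically inconsistent with your own setup. From $[\check{C}]\equiv a\check{\zeta}+b\check{F}$ and the relations $\check{\zeta}^{r}=-\deg(E)$, $\check{\zeta}^{r-1}\cdot\check{F}=1$ you get $[\check{C}]\cdot\check{\zeta}^{r-1}=-a\deg(E)+b$, i.e.\ $b=[\check{C}]\cdot\check{\zeta}^{r-1}+(F\cdot C)\deg(E)$. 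So if your incidence count really gave $[\check{C}]\cdot\check{\zeta}^{r-1}=\zeta\cdot C$, you would conclude $b=\zeta\cdot C+(F\cdot C)\deg(E)$, not $b=\zeta\cdot C$. The correct value is $[\check{C}]\cdot\check{\zeta}^{r-1}=\zeta\cdot C-(F\cdot C)\deg(E)$ (check it on a section $E\twoheadrightarrow L$, whose dual is the subbundle $\PP(E^*/L^*)$ of class $\check{\zeta}+\deg(L)\,\check{F}$). The missing term $-(F\cdot C)\deg(E)$ is a global twist that the fibre-by-fibre duality picture cannot see, which is exactly why "counting over all $b\in B$" does not close the argument. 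A second, related issue: you identify $\check{\zeta}^{r-1}$ with "a general $\PP^1$-section of $\check{\eta}$", but a section of $\check{\eta}$ corresponds to a quotient $E^*\twoheadrightarrow M$ and has class determined by $\deg(M)$; it is not the class $\check{\zeta}^{r-1}$ in general, and $\check{\zeta}$ need not even be effective. (Your incidence-variety idea can be salvaged: with $\Sigma\subset\PP(E)\times_B\PP(E^*)$ the incidence divisor, $[\Sigma]=p_1^*\zeta+p_2^*\check{\zeta}$ and $[\check{C}]=p_{2*}\bigl(p_1^*[C]\cdot[\Sigma]\bigr)=(F\cdot C)\check{\zeta}+(\zeta\cdot C)\check{F}$ by the projection formula — but this computation is precisely what you would need to write out, and it is not the route you took.)

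For comparison, the paper avoids all fibrewise counting. It reduces to $C$ irreducible, pulls everything back along $\nu=\eta\circ f:\bar{C}\to B$ where $f$ is the normalisation, so that $C$ becomes a section of $\PP(\nu^*E)$ given by a quotient $\nu^*E\twoheadrightarrow L$ with $\deg(L)=\zeta\cdot C$. The dual of a section is the projective subbundle $\PP(\nu^*E^*/L^*)$, whose class $\check{\bar{\zeta}}+\deg(L)\check{\bar{F}}$ is computed in one line, and pushing forward by the degree-$(F\cdot C)$ map $\bar{C}\to B$ gives both coefficients simultaneously. If you want to keep your test-curve strategy, you must either carry out the incidence-correspondence computation above in full, or replace $\check{\zeta}^{r-1}$ by an honest section of $\check{\eta}$ and keep track of the degree of the associated quotient line bundle; as it stands, Step 2 does not prove $b=\zeta\cdot C$.
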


\begin{proof}
Note that $\check{C} = \sum_{i=1}^r \check{C}_i$ where $\check{C}_i$ is the total dual of the irreducible curve $C_i$. Thus,
	without loss of generality, we may assume that $C$ is irreducible. Let $f:\bar{C}\rightarrow C$ be the normalisation. Then we have a Cartesian diagram
	\[
	\begin{tikzcd}[column sep=large, row sep=large]
		\PP(\nu^*E) \arrow[r,"\bar{\nu}"] \arrow[d,"\bar{\eta}" left]  &  \PP(E) \arrow[d,"\eta"] \\
		\bar{C} \arrow[r,"\nu"]                                &  B
	\end{tikzcd}
	\]
	where $\nu:\bar{C}\rightarrow B$ is the composition $\eta\circ f$. Moreover, there exists a quotient line bundle $\nu^*E\rightarrow L$ such that the corresponding section $\mu:\bar{C}\rightarrow \PP(\nu^*E)$ satisfies
	\begin{center}
		$(\bar{\nu}\circ \mu)(\bar{C})=C$\quad and\quad $c_1(L)\cdot \bar{C}=\zeta\cdot C$.
	\end{center}
    On the other hand, the total dual variety $\check{\widetilde{C}}\subset \PP(\nu^*E^*)$ of $\widetilde{C}:=\mu(\bar{C})\subset \PP(\nu^*E)$ is exactly the projective sub-bundle $\PP(\nu^*E^*/L^*)\subset \PP(\nu^*E^*)$. In particular, we have
    \[
    [\check{\widetilde{C}}] \equiv \check{\bar{\zeta}} + (c_1(L)\cdot \bar{C}) \check{\bar{F}},
    \]
    where $\check{\bar{\zeta}}$ is the tautological divisor class of $\PP(\nu^*E^*)$ and $\check{\bar{F}}$ is a fibre of $\bar{\eta}$. On the other hand, note that we have $\bar{\nu}_*\check{\widetilde{C}}=\check{C}$. Therefore, we get
    \[
    [\check{C}]\equiv \deg(\nu) \check{\zeta} + (c_1(L)\cdot \bar{C}) \check{F} = (F\cdot C)\check{\zeta} + (\zeta\cdot C) \check{F}.
    \]
\end{proof}

\subsection{Big tangent bundle} \label{subsectionbig}

In this subsection, we shall always make the following

\begin{assumption}{\rm \label{assumptionbig}
Let $X$ be a Fano manifold of dimension $n$ and with Picard number one, and  denote by $A$ the ample generator of $\pic(X)$.  Assume that the tangent bundle $T_X$ is big and $X$ admits an \'etale web of rational curves $\sK$, i.e., 
for a general point $x\in X$, there exists a rational curve which has degree two with respect to $-K_X$.
}
\end{assumption}

Let us  recall that by \cite[Thm.3.3(2)]{Keb02a} the curves in the \'etale web passing through a general point $x$ are immersed and smooth in $x$. However it is possible that these curves are singular in some other point. 
Our goal is now to deduce further properties of the family of rational curves when $T_X$ is big. We will summarise these properties in Theorem \ref{Thm:Summary-VMRT} and Corollary \ref{c.lines-B_l^1-properties}, introducing the notation along the way of their proof.

In the sequel of this section, we will follow the notations given as in Section \ref{Section:Etale-Webs} except that we replace the notations $\sH$ by $\sK$ and $\sD$ by $\sC$ to emphasise that $\sK$ is an \'etale web of rational curves. In particular, we have the following commutative diagram
\begin{equation}
	\label{Eq:Universal-Family-K}
	\begin{tikzcd}[column sep=large, row sep=large]
		\Univ_{\sK}^s \arrow[r,"\widetilde{\mu}"] \arrow[d,"q_{\sK}^s" left] \arrow[rr, bend left, "f"]&  \Univ_{\sK} \arrow[r,"e_{\sK}"] \arrow[d,"q_{\sK}"] &  X \\
		\sK^s \arrow[r,"\mu"]                                           &   \sK                                       & 
	\end{tikzcd}
\end{equation}

Let now
\[
\bar{\sK}:=f^*\sK=\bar{\sK}_0\cup \left(\bigcup_{j=1}^m \bar{\sK}_j\right)
\]
be the decomposition into irreducible components such that $\bar{\sK}_0$ corresponds to general fibres of $q_{\sK}^s$. In particular, we have $[C_l^0]\in \bar{\sK}_0$ for general members $[l]\in \sK$. Moreover, up to renumbering, we can assume that $\bar{\sK}_j$ is an irreducible component of $\Mult(f^*\sK)$ for $1\leq j\leq k$ $(\leq m)$ and $\bar{\sK}_j$ is an irreducible component of $\Bir(f^*\sK)$ for $k+1\leq j\leq m$. 

Denote by $\bar{\sC}_j$ the total tangent variety of $\bar{\sK}_j$. For a general member $[l]\in \sK$, we decompose
\[
\bar{\sC}_l^j:=\bar{\sC}_j\cap\PP(\Omega_{\Univ_{\sK}^s}|_{C_l^0})=\bigcup_{i} \bar{\sC}^{ji}_l 
\]
into irreducible components. We know that $\bar{\sC}_l^0$ is irreducible since it is exactly the section corresponding to the quotient $\Omega_{\Univ_{\sK}^s}|_{C_l^0}\rightarrow \Omega_{C_l^0}$. 

Let us denote by $\bar{\zeta}$ the tautological divisor class of $\PP(\Omega_{\sU_{\sK}^s})$, and by $\zeta_l$ its restriction to $\PP(\Omega_{\Univ_{\sK}^s}|_{C_l^0})$.

\subsubsection{VMRT along minimal rational curves} \label{subsubsectionVMRTalong}

We start by deducing some numerical consequence of the bigness of $T_X$:
\begin{proposition}
	\label{Prop:Degree-VMRT}
	Under the Assumption \ref{assumptionbig}, let $l$ be a general member of $\sK$. Then $A\cdot l=1$ and the following statements hold.
	\begin{enumerate}
		\item $\Mult(\bar \sK)=\Fin_{q_{\sK}^s}(\bar \sK)=\bar{\sK}_1$ and $\bar{\sC}_l^1$ is an irreducible curve such that  $\bar{\sC}_l^1$ is not contracted by $\bar{P}_{\bar{\sC}_0}$ and $\bar{\zeta}\cdot \bar{\sC}_l^1=1$.
		
		\item For each $j\geq 2$, we have $\bar{\zeta}\cdot \bar{\sC}_{l}^{ji}=0$ for arbitrary $i$. In particular, $\bar{\sC}_l^{ji}$ is a rational curve which is disjoint from $\bar{\sC}_l^0$ if $j\geq 1$. 
	\end{enumerate}
    Moreover the projection $\bar{P}_j: \bar{\sC}_j \dashrightarrow \sZ_j$ is generically finite if and only if $j=1$.
\end{proposition}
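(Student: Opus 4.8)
The proposition would be proved by first establishing $A\cdot l=1$ and statements (1)--(2); granting these, the final assertion follows formally. The plan is to use the dictionary supplied by Remark \ref{Remark:Generical-finiteness}: there, the restriction $\bar{\sC}_j\cap\PP(\Omega_{\Univ_{\sK}^s}|_{C_l^0})$ is a finite union of horizontal irreducible curves $\bar{\sC}_l^{ji}$, all different from $\bar{\sC}_l^0=l_0$, and $\bar{P}_j:\bar{\sC}_j\dashrightarrow\sZ_j$ is generically finite if and only if none of the images $P_{\bar{\sC}_l^0}(\bar{\sC}_l^{ji})$ is a fibre of the second projection $p_2:C_l^0\times\PP^{n-2}\to\PP^{n-2}$. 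Thus everything reduces to deciding, component by component, whether these images are fibres of $p_2$, and for that I would feed the curves $C=\bar{\sC}_l^{ji}$ (legitimately, since they are horizontal and $\ne l_0$) into Lemma \ref{Lemma:Projection-Local}.

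For $j\ge 2$: by part (2) we have $\zeta_l\cdot\bar{\sC}_l^{ji}=\bar\zeta\cdot\bar{\sC}_l^{ji}=0$ for every $i$, so Lemma \ref{Lemma:Projection-Local}(1) forces $P_{\bar{\sC}_l^0}(\bar{\sC}_l^{ji})$ to be a fibre of $p_2$; hence $\bar{P}_j$ is not generically finite. For $j=1$: part (1) gives $\bar{\sK}_1\subseteq\Mult(f^*\sK)$, so the last assertion of Theorem \ref{Thm:Projection-VMRT} yields directly that $\bar{P}_1$ is generically finite. (Alternatively, staying inside the same framework: part (1) says $\bar{\sC}_l^1$ is irreducible and is not contracted by $\bar{P}_{\bar{\sC}_0}$, which along the fibre $C_l^0$ restricts to $\bar{P}_{l_0}$; hence the irreducible curve $P_{\bar{\sC}_l^0}(\bar{\sC}_l^1)$ surjects onto $C_l^0$ and is not contracted by $p_2$, so it is not a fibre of $p_2$, and Remark \ref{Remark:Generical-finiteness} again gives generic finiteness of $\bar{P}_1$.) Combining the two cases shows that $\bar{P}_j$ is generically finite precisely when $j=1$.

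I do not expect a genuine obstacle in this last step once parts (1) and (2) are in hand; the only care required is bookkeeping — identifying the curves $\sD_{ji}$ of Remark \ref{Remark:Generical-finiteness} with the components $\bar{\sC}_l^{ji}$, checking $\bar{\sC}_l^{ji}\ne l_0$ so that Lemma \ref{Lemma:Projection-Local} applies, and invoking the compatibility of $\bar{P}_{\bar{\sC}_0}$ with $\bar{P}_{l_0}$ along $C_l^0$ recorded just before Theorem \ref{Thm:Projection-VMRT}. The real work of the proposition — where the bigness of $T_X$ enters, through the cohomology class of the total dual VMRT — is the computation of the intersection numbers $\bar\zeta\cdot\bar{\sC}_l^1=1$ and $\bar\zeta\cdot\bar{\sC}_l^{ji}=0$, i.e. parts (1) and (2) themselves.
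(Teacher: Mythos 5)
Your proposal does not actually prove the proposition: you establish only the final ``Moreover'' clause, \emph{conditionally} on $A\cdot l=1$ and on parts (1) and (2), and you explicitly defer ``the real work'' --- the intersection-number computations $\bar\zeta\cdot\bar{\sC}_l^1=1$ and $\bar\zeta\cdot\bar{\sC}_l^{ji}=0$ --- without carrying it out. That deferred part is the entire content of the statement. The paper's argument runs as follows: by Proposition \ref{Prop:Bigness-Criterion} the bigness of $T_X$ gives $[\check{\sC}]\equiv a\check{\zeta}-b\check{\pi}^*A$ with $a,b>0$; Lemma \ref{Lemma:Class-Dual-Curves} applied to the total dual of $\bar{\sC}_l=\bigcup_j\bar{\sC}_l^j$ yields $-bA\cdot l=\zeta_l\cdot\bar{\sC}_l$; since $\zeta_l\cdot\bar{\sC}_l^0=-2$, this forces $\zeta_l\cdot\sum_{j\geq 1}\bar{\sC}_l^j\leq 1$, with equality implying $b=1$ and $A\cdot l=1$. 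Then Corollary \ref{cor:existence-mult-components} guarantees $\Mult(f^*\sK)\neq\emptyset$, Theorem \ref{Thm:Projection-VMRT} shows $\bar{P}_1$ is generically finite for a component $\bar{\sK}_1\subset\Mult(f^*\sK)$, and Lemma \ref{Lemma:Projection-Local} converts this into $\bar{\zeta}_l\cdot\bar{\sC}_l^1>0$, which rules out the case $\zeta_l\cdot\sum_{j\geq 1}\bar{\sC}_l^j=0$ and pins down everything in (1) and (2). None of this chain appears in your write-up.

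Note also that this reverses the logical order you propose for $j=1$: in the paper, generic finiteness of $\bar{P}_1$ is an \emph{input} (obtained from simple connectedness of $X$ via Corollary \ref{cor:existence-mult-components} and from Theorem \ref{Thm:Projection-VMRT}) used to prove (1), not a consequence read off from (1) afterwards. The portion you did write --- deducing from $\bar\zeta\cdot\bar{\sC}_l^{ji}=0$ for $j\geq 2$ via Lemma \ref{Lemma:Projection-Local} and Remark \ref{Remark:Generical-finiteness} that $\bar{P}_j$ is not generically finite, and hence that $\bar{\sK}_j\subset\Inf_{q_{\sK}^s}(f^*\sK)$ by Theorem \ref{Thm:Projection-VMRT} --- is correct and is exactly how the paper closes the proof. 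But as it stands the proposal is a reduction, not a proof.
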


\begin{remark*}
Before we start the proof, let us recall that if $[l] \in \sK$ is general, then $\check \sC \times_X l$ has no vertical component. Since $C^0_l \rightarrow l$ is the normalisation and $f$ is unramified near $C^0_l$, we can identify the irreducible components of $\check \sC \times_X l$ with their preimage under the pull-back
$\PP(f^*T_X|_{C_l^0}) \rightarrow \PP(T_X)$.
\end{remark*}

\begin{proof}
	Since $X$ has Picard number $1$ and $T_X$ is big, by Proposition \ref{Prop:Bigness-Criterion}, there exist positive integers $a$ and $b$ such that
	\[
	\check{\sC}\equiv a\check{\zeta} - b\check{\pi}^*A.
	\]
	Denote by $\check{\bar{\sC}}_l$ the pull-back of $\check{\sC}\subset \PP(T_X)$ to $\PP(T_{\Univ_{\sK}^s}|_{C_l^0})=\PP(f^*T_X|_{C_l^0})$. Then $\check{\bar{\sC}}_l$ is just the total dual variety of 
	\[
	\bar{\sC}_l=\bigcup_{j=0}^m \bar{\sC}_l^j \subset \PP(\Omega_{\Univ_{\sK}^s}|_{C_l^0}).
	\]
    
	Then, according to Lemma \ref{Lemma:Class-Dual-Curves}, we have	$-bA\cdot l = \zeta_l\cdot \bar{\sC}_l$. As $b>0$ and $\zeta_l\cdot \bar{\sC}_l^0=-2$, it follows that 
	\[
	\zeta_l\cdot \sum_{j=1}^m\bar{\sC}_l^{j}\leq 1	
	\]
	and if equality holds then $b=1$ and $A \cdot l=1$.
	Note that for a fixed $j$, the $\bar{\zeta}$-degree of $\bar{\sC}_l^{ji}$ is independent of $i$. Thus, thanks to Lemma \ref{Lemma:Projection-Local}, one of the following statements hold.
	\begin{enumerate}
		\item One has $\bar{\zeta}_l\cdot \sum_{j=1}^m\bar{\sC}_l^{j}=0$. In particular, for any $1\leq j\leq m$, we have $\bar{\zeta}_l\cdot \bar{\sC}_l^{ji}=0$. 
		
		\item One has $\bar{\zeta}_l\cdot \sum_{j=1}^m\bar{\sC}_l^{j}=1$. In particular, there exists a unique integer $1\leq j\leq m$ such that $\bar{\sC}_l^j$ is an irreducible curve satisfying $\bar{\zeta}_l\cdot \bar{\sC}_l^j=1$ and $\bar{\zeta}_l\cdot \bar{\sC}_l^{j'i}=0$ for any $1\leq j'\not=j$.	

	\end{enumerate}
    
    By Corollary \ref{cor:existence-mult-components}, the set $\Mult(f^*\sK)$ is not empty. Consider the irreducible component $\bar{\sK}_1 \subset \Mult(f^*\sK)$. Then, by Theorem \ref{Thm:Projection-VMRT}, the rational map $\bar{P}_1:\bar{\sC}_1\dashrightarrow \sZ_1$ is generically finite. In particular, $\bar{\sC}_l^1$ is not contracted by the projection
    \[
    \bar{P}_{\bar{\sC}^0_l}:\PP(\Omega_{\Univ_{\sK}^s|_{C_l^0}}) \dashrightarrow \PP^{n-2}.
    \] 
    Then Lemma \ref{Lemma:Projection-Local} implies $\bar{\zeta}_l \cdot \bar{\sC}^1_l > 0$. As a consequence, only the statement (2) above is possible. Then we must have $\bar{\sK}_j=\bar{\sK}_1$ and $\Mult(f^*\sK)=\bar{\sK}_1$. Moreover, Lemma \ref{Lemma:Projection-Local} also implies that $\bar{\sC}_l^{ji}$ is disjoint from $\bar{\sC}_l^0$ for each $j\geq 1$. 
    
    Finally, as $\bar{\zeta}_l\cdot \bar{\sC}_l^{ji}=0$ if $j\geq 2$, then Lemma \ref{Lemma:Projection-Local} says that $\bar{\sC}_{l}^{j}$ is contracted by $\bar{P}_{\bar{\sC}_l^0}$. In particular, the map $\bar{P}_j:\bar{\sC}_j\dashrightarrow \sZ_j$ is not generically finite if $j\geq 2$ (see also Remark \ref{Remark:Generical-finiteness}) and Theorem \ref{Thm:Projection-VMRT} implies that $\bar{\sK}_j$ is contained in $\Inf_{q_{\sK}^s}(f^*\sK)$ for $j\geq 2$.
\end{proof}

As $A\cdot l=1$ by Proposition \ref{Prop:Degree-VMRT}, the minimal family $\sK$ is  unsplit and the normalised universal family $\Univ_{\sK}\rightarrow \sK$ is a smooth $\PP^1$-bundle such that the $e_{\sK}^* A$-degree of the fibres of $q_{\sK}$ is $1$.
Set $V_\sK=(q_{\sK})_*\sO_{\Univ_{\sK}}(e_{\sK}^*A)$. Then $V_\sK$ is a vector bundle of rank $2$ over $\sK$ such that $\Univ_{\sK}\cong \PP(V_\sK)$. Thus we may set $\Univ_{\sK}^s=\PP(V)$ in the diagram \eqref{Eq:Universal-Family-K}, where $V=\mu^*V_\sK$. Then we have the following commutative diagram:

\begin{equation}
	\label{Eq:Diagram-Bigness}
	\begin{tikzcd}[column sep=large, row sep=large]
		\Univ_{\sK}^s=\PP(V) \arrow[d,"q_{\sK}^s"] \arrow[rr,bend left, "f"] \arrow[r,"\widetilde{\mu}"]  
		&  \Univ_{\sK}=\PP(V_\sK) \arrow[r,"e_{\sK}"] \arrow[d,"q_{\sK}"] 
		&  X     \\
		\sK^s \arrow[r,"\mu"]         &  \sK            &
	\end{tikzcd}
\end{equation}

\begin{remark}
	\label{remarkVample}
	Note that we have $V \cong (q_{\sK}^s)_*\sO_{\Univ_{\sK}^s}(f^*A)$, so the vector bundle
	$V$ is nef and big. Moreover, since $f$ is uniramified, hence finite, on $\Univ_{\sK_o}$, we know
	that if $B \subset \sK^s$ is an irreducible curve such that $B \cap \sK_o \neq \emptyset$,
	the restriction $V|_B$ is ample. Indeed for such a curve $B$, the morphism $\PP(V|_{B})\rightarrow X$ is finite onto its image, so the restriction of $f^* A$ to $\PP(V|_{B})$ is ample.
\end{remark}

Let $\sC_l \subset \PP(\Omega_X|_l)$ be the image of $f^{-1}(l)$ under the tangent map $\tau_{\sK}^s:\sU_{\sK}^s\dashrightarrow \PP(\Omega_X)$. Then $\sC_l$ coincides with $\sC\cap \PP(\Omega_X|_l)$. Recall that we have the following birational morphism
\[
\PP(\Omega_{\sU_{\sK}^s}|_{C_l^0})=\PP(f_l^*\Omega_X)\longrightarrow \PP(\Omega_X|_l),
\]
where $f_l:=f|_{C_l^0}:\PP^1=C_l^0\rightarrow l$ is the normalisation. Then $\bar{\sC}_l$ is nothing but the strict transform of $\sC_l$. Let us denote $\tau_{\sK}^s(C_l^i)$ by $\sC_l^i$. Then Proposition \ref{Prop:Degree-VMRT} can be rephrased as follows:

\begin{corollary}
	\label{Cor:Degree-VMRT}
	Notation and assumption as in Proposition \ref{Prop:Degree-VMRT}. There exists an integer $1\leq i(1)\leq r$ such that $C_l^{i(1)}$ is an irreducible component of $f^{-1}(l)$ satisfying $\zeta\cdot \sC_l^{i(1)}=1$ and  $\zeta\cdot \sC_l^i=0$ for any $1\leq i\not=i(1)$, where $\zeta$ is the tautological divisor class of $\PP(\Omega_X)$. 
	
	In particular, we have $\nu_1(\bar{\sC}_l^1)=\sC_l^{i(1)}$, where $\nu_1:\bar{\sC}_1\dashrightarrow \sC$ is the rational map defined in Notation \ref{notation-hjbar-bjbar}.
\end{corollary}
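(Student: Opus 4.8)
The plan is to derive the corollary from Proposition~\ref{Prop:Degree-VMRT} by transporting the latter along the birational morphism
\[
\beta \colon \PP(f_l^*\Omega_X) = \PP(\Omega_{\Univ_{\sK}^s}|_{C_l^0}) \longrightarrow \PP(\Omega_X|_l) \subset \PP(\Omega_X)
\]
associated to the normalisation $f_l \colon C_l^0 \to l$. Since $\beta$ is birational and is an isomorphism over the smooth locus of $l$, it satisfies $\beta^*\zeta = \zeta_l$, where $\zeta_l = \bar{\zeta}|_{\PP(\Omega_{\Univ_{\sK}^s}|_{C_l^0})}$, and for every irreducible curve $\Gamma \subset \PP(\Omega_{\Univ_{\sK}^s}|_{C_l^0})$ mapped birationally onto its image the projection formula gives $\zeta \cdot \beta(\Gamma) = \zeta_l \cdot \Gamma$. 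Moreover, on $\bar{\sC}_l^1 = \bar{\sC}_1 \cap \PP(\Omega_{\Univ_{\sK}^s}|_{C_l^0})$ the map $\beta$ agrees with the map $\nu_1$ of Notation~\ref{notation-hjbar-bjbar}, since both are induced by $df$ over the dense étale locus of $C_l^0$. The first step is to compare the two decompositions of $\bar{\sC}_l$ into irreducible components: on the one hand $\bar{\sC}_l = \bigcup_{j=0}^m \bar{\sC}_l^j$ with $\bar{\sC}_l^0 = l_0$, as used in the proof of Proposition~\ref{Prop:Degree-VMRT}; on the other hand, since $\bar{\sC}_l$ is the strict transform under $\beta$ of $\sC_l = \tau_{\sK}^s(f^{-1}(l)) = \bigcup_{i=0}^r \sC_l^i$, also $\bar{\sC}_l = \bigcup_{i=0}^r \widetilde{\sC_l^i}$, where $\widetilde{\sC_l^i}$ denotes the strict transform of the irreducible curve $\sC_l^i$, and $\widetilde{\sC_l^0} = l_0$.

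Next I would show $\widetilde{\sC_l^i} \neq l_0$ for $1 \leq i \leq r$. For such $i$ the morphism $f$ maps $C_l^i$ onto $l$ with finite fibres, so a general point $q \in C_l^i$ lies over a general point $x = f(q)$ of $l$; since $q \notin C_l^0$ we have $q_{\sK}^s(q) \neq [l]$, hence $\tau_{\sK}^s(q)$ is the tangent direction at $x$ of a rational curve $m \neq l$ of $\sK$ through $x$. Using that $l$ is a general member of $\sK$, a general point of the curve $\sC_l^0 = \tau_{\sK}^s(C_l^0)$ is a general point of the total VMRT $\sC$, so the tangent direction of $l$ at $x$ is a smooth point of $\sC_x$ and by Proposition~\ref{p.tangentstanadard}(3) has a unique preimage under the normalisation $\tau_x \colon \sK_x \to \sC_x$; therefore $m$ is not tangent to $l$ at $x$, whence $\sC_l^i \neq \sC_l^0$ and $\widetilde{\sC_l^i} \neq l_0$. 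Consequently each component $\widetilde{\sC_l^i}$ with $i \geq 1$ is one of the curves $\bar{\sC}_l^{ji}$ with $j \geq 1$. Since $\bar{\sC}_l^1$ is irreducible by Proposition~\ref{Prop:Degree-VMRT}(1), it is the only such curve with $j = 1$; choosing $i(1)$ with $\widetilde{\sC_l^{i(1)}} = \bar{\sC}_l^1$ — necessarily $1 \leq i(1) \leq r$, since $\bar{\zeta} \cdot \bar{\sC}_l^1 = 1 \neq -2 = \zeta_l \cdot l_0$ — every $\widetilde{\sC_l^i}$ with $1 \leq i \neq i(1)$ is then one of the $\bar{\sC}_l^{ji}$ with $j \geq 2$.

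It then remains to apply the projection formula. For $1 \leq i \leq r$ the restriction of $\beta$ to the strict transform $\widetilde{\sC_l^i}$ is birational onto $\sC_l^i$, so
\[
\zeta \cdot \sC_l^i = \beta^*\zeta \cdot \widetilde{\sC_l^i} = \zeta_l \cdot \widetilde{\sC_l^i} = \bar{\zeta} \cdot \widetilde{\sC_l^i}.
\]
For $i = i(1)$ this equals $\bar{\zeta} \cdot \bar{\sC}_l^1 = 1$ by Proposition~\ref{Prop:Degree-VMRT}(1), and for $1 \leq i \neq i(1)$ it equals $\bar{\zeta} \cdot \bar{\sC}_l^{ji} = 0$ by Proposition~\ref{Prop:Degree-VMRT}(2). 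Finally, $\nu_1(\bar{\sC}_l^1) = \beta(\bar{\sC}_l^1) = \beta(\widetilde{\sC_l^{i(1)}}) = \sC_l^{i(1)}$, which is the last assertion.

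The step I expect to require the most care is precisely this matching of the two component decompositions of $\bar{\sC}_l$, i.e.\ checking that for a general $l$ the curves $\sC_l^0, \sC_l^1, \dots, \sC_l^r$ are pairwise distinct, so that $\widetilde{\sC_l^{i(1)}} = \bar{\sC}_l^1$ determines $i(1)$ unambiguously and no $\widetilde{\sC_l^i}$ with $i \geq 1$ accidentally coincides with $l_0$ or with $\bar{\sC}_l^1$. This is where the genericity of $l$ and the good behaviour of $f$ in a neighbourhood of $C_l^0$ (the normalisation property together with \cite{Keb02a}) enter; the possibly bad behaviour of the components $C_l^i$ of $f^{-1}(l)$ away from $C_l^0$ is harmless, since only tangent directions over the general point $f(q)$ are used in the computation.
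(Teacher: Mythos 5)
Your proposal is correct and follows essentially the same route as the paper, which presents the corollary as a direct rephrasing of Proposition \ref{Prop:Degree-VMRT} via the identification of $\bar{\sC}_l$ with the strict transform of $\sC_l$ under $\PP(f_l^*\Omega_X)\to\PP(\Omega_X|_l)$; you simply spell out the component matching and the projection-formula computation that the paper leaves implicit. The one point you flag but only verify for $i'=0$ --- pairwise distinctness of the $\sC_l^i$ --- does follow by the identical argument for $i,i'\geq 1$: over a general $x\in l$ the fibres $C_l^i\cap f^{-1}(x)$ and $C_l^{i'}\cap f^{-1}(x)$ are disjoint, distinct points of $f^{-1}(x)$ correspond to distinct lines through $x$ (all lines through the general point $x$ being smooth there by \cite{Keb02a}), and distinct lines have distinct tangent directions by the injectivity of $\tau_x$, so exactly one $\widetilde{\sC_l^i}$ can coincide with $\bar{\sC}_l^1$.
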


\subsubsection{Multiple components of inverse image}

We have seen in Proposition \ref{Prop:Dual-Defect-VMRT} that 
$\Mult(\bar \sK)=\Fin_{q_{\sK}^s}(\bar \sK)=\bar{\sK}_1$ is irreducible
and $\bar{\sC}_l^1$ is an irreducible curve. We now pursue the analysis
by studying the irreducible components of $\fibre{f}{l}$. Let $[l]\in \sK$ be a general member and 
$$
B_l^{\rm \small mult} := \cup B_l^i := q_{\sK}^s(C_l^i)
$$ 
where the union runs over the curves $C_l^i$ such that $[C_l^i]\in \bar{\sK}_1$, i.e., the map $C_l^i\rightarrow l$ is not birational. 

\begin{proposition}
	\label{Prop:Uniqueness-Multple-Component}
	Under the Assumption \ref{assumptionbig},  let $x\in X$ be a general point and let $[l]$, $[l']$ be two members in $\sK_x$.
	\begin{enumerate}
		\item The curve $C_l^{i(1)}$ (see Corollary \ref{Cor:Degree-VMRT} above ) is the unique irreducible component of $f^{-1}(l)$ which is not birational onto $l$. In particular, each irreducible component of $f^{-1}(l)$ is a rational curve with trivial normal bundle.
		
		\item The point $[l]$ is contained in $B_{l'}^{\rm \small mult}$ if and only if $[l']$ is contained in $B_{l}^{\rm \small mult}$.
	\end{enumerate}
\end{proposition}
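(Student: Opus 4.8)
The plan is to prove (1) by a degree computation on the total variety of minimal rational tangents along $l$, and then to derive (2) by reading the conclusion of Corollary \ref{Cor:Degree-VMRT} fibrewise over a general point; the delicate ingredient, which ties the two statements together, is an identification between the irreducible components of $f^{-1}(l)$ and those of the total tangent variety along $l$.

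For statement (1): by Proposition \ref{Prop:Degree-VMRT} we have $\Mult(f^*\sK)=\bar{\sK}_1$, so an irreducible component $C_l^i$ of $f^{-1}(l)$ fails to be birational onto $l$ precisely when $[C_l^i]\in\bar{\sK}_1$, and at least one such component exists by Corollary \ref{cor:existence-mult-components}. To see there is exactly one, I would use that $\sC_l:=\sC\cap\PP(\Omega_X|_l)=\bigcup_i\sC_l^i$ with $\sC_l^i=\tau_{\sK}^s(C_l^i)$, and that $\bar{\sC}_l=\bigcup_j\bar{\sC}_l^j$ is the strict transform of $\sC_l$ in $\PP(f_l^*\Omega_X)=\PP(\Omega_{\Univ_\sK^s}|_{C_l^0})$. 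Since distinct members of $\sK$ through a general point of $X$ have distinct tangent directions (the web structure of $\sK$, cf.\ \cite{Keb02a}), the $\sC_l^i$ are pairwise distinct, $\tau_{\sK}^s$ restricts to a birational morphism $C_l^i\to\sC_l^i$, and the strict transform $\bar{\sC}_l^i$ of $\sC_l^i$ satisfies $\bar{\zeta}\cdot\bar{\sC}_l^i=\zeta\cdot\sC_l^i$; moreover $\bar{\sC}_l^i$ is a component of $\bar{\sC}_l^j$ exactly when $[C_l^i]\in\bar{\sK}_j$, and in particular $\bar{\sC}_l^1=\bar{\sC}_l^{i(1)}$ with $\nu_1(\bar{\sC}_l^1)=\sC_l^{i(1)}$ by Corollary \ref{Cor:Degree-VMRT}. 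Hence, if $[C_l^i]\in\bar{\sK}_1$, then $\bar{\sC}_l^i$ is a component of the irreducible curve $\bar{\sC}_l^1$, so $\bar{\sC}_l^i=\bar{\sC}_l^1$ and $\zeta\cdot\sC_l^i=\bar{\zeta}\cdot\bar{\sC}_l^1=1$, which by Corollary \ref{Cor:Degree-VMRT} forces $i=i(1)$. Finally, each component of $f^{-1}(l)$ has trivial normal bundle by Proposition \ref{Prop:Pull-Back-Trivial-Normal-Bundle}, and is rational: $C_l^0\cong\PP^1$; applying Lemma \ref{Lemma:Projection-Local}(2)(b) to $\bar{\sC}_l^1$, which has $\bar{\zeta}$-degree $1$ and, by Proposition \ref{Prop:Degree-VMRT}(1), is not contracted by $\bar{P}_{\bar{\sC}_l^0}$, gives $\bar{\sC}_l^1\cong\PP^1$, hence $C_l^{i(1)}$ has geometric genus zero; for $i\neq i(1)$, $\bar{\sC}_l^{ji}$ is rational by Proposition \ref{Prop:Degree-VMRT}(2), hence so is $C_l^i$.

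For statement (2): fix $x$ general, so that each member of $\sK_x$ is a general member of $\sK$ with $x$ a general point of it, and $f$ is unramified over a neighbourhood of $f^{-1}(x)$. By (1), $B_l^{\mathrm{mult}}=q_\sK^s(C_l^{i(1)})$ with $C_l^{i(1)}$ the unique non-birational component of $f^{-1}(l)$, and likewise for any member of $\sK_x$. Let $y_l\in C_l^0$ be the point over $[l]$, so $f(y_l)=x$; for general $x$ there is a unique component of $f^{-1}(m)$ through $y_l$, and by (1) it lies in $\bar{\sK}_1$ if and only if it is the unique non-birational component of $f^{-1}(m)$, hence if and only if $[l]\in B_m^{\mathrm{mult}}$, for every $m\in\sK_x\setminus\{l\}$. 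On the other hand, reading the equality $\nu_1(\bar{\sC}_l^1)=\sC_l^{i(1)}$ of Corollary \ref{Cor:Degree-VMRT} in the fibre over $x$, one sees that its left-hand side is the set of tangent directions $T_x m$ with $m\in\sK_x\setminus\{l\}$ such that the component of $f^{-1}(m)$ through $y_l$ lies in $\bar{\sK}_1$, while its right-hand side is the set of $T_x m$ with $m\in\sK_x\setminus\{l\}$ and $[m]\in B_l^{\mathrm{mult}}$. As the directions $T_x m$ $(m\in\sK_x)$ are pairwise distinct, comparing the two sets gives $[l]\in B_m^{\mathrm{mult}}\Leftrightarrow[m]\in B_l^{\mathrm{mult}}$ for all $m\in\sK_x\setminus\{l\}$; taking $m=l'$ proves (2).

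The main obstacle is the identification used in statement (1): one must check that the decomposition $\bar{\sC}_l=\bigcup_i\bar{\sC}_l^i$ into strict transforms of the $\sC_l^i$ agrees, component by component, with the decomposition $\bar{\sC}_l=\bigcup_j\bar{\sC}_l^j$ according to which $\bar{\sK}_j$ the component $C_l^i$ belongs to --- that is, that the ``type'' of a component of $f^{-1}(l)$ read off from the family $f^*\sK$ coincides with the one read off from the tangent map. Since members of the étale web $\sK$ may be singular away from $x$ and the incidence curves $B_l$ may have several components, this identification, which is what makes (1) and (2) interlock, must be established by the careful term-by-term analysis announced in the introduction rather than taken for granted, and it may well be cleanest to handle (1) and (2) simultaneously.
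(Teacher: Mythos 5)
You have correctly located the crux: everything hinges on matching the decomposition of $f^{-1}(l)$ into the components $C_l^i$ with the decomposition of $\bar{\sC}_l$ into the pieces $\bar{\sC}_l^j=\bar{\sC}_j\cap\PP(\Omega_{\Univ_{\sK}^s}|_{C_l^0})$ indexed by the components $\bar{\sK}_j$ of $f^*\sK$. But the sentence ``$\bar{\sC}_l^i$ is a component of $\bar{\sC}_l^j$ exactly when $[C_l^i]\in\bar{\sK}_j$'' is precisely what cannot be taken for granted, and your closing paragraph concedes you have not proved it. Unwinding the definitions, the strict transform of $\sC_l^i=\tau_{\sK}^s(C_l^i)$ records tangent directions of the lines $l_t$ with $[l_t]\in B_l^i$ at their incidence points with $l$, whereas $\bar{\sC}_l^1$ records tangent directions along $C_l^0$ of the curves of $\bar{\sK}_1$, i.e.\ of the non-birational components of $f^{-1}(l_t)$ for lines $l_t$ meeting $l$. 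Asserting the containment when $[C_l^i]\in\bar{\sK}_1$ amounts to the implication ``$[l_t]\in B_l^{\rm mult}\Rightarrow [l]\in B_{l_t}^{\rm mult}$'', which is statement (2); and your proof of (2) invokes (1). As written the argument is circular, and the genuinely new input is missing.

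That input, in the paper, is twofold. First, what Corollary \ref{Cor:Degree-VMRT} does give for free is a one-sided reciprocity (Claim 1 of the paper's proof): since $\nu_1(\bar{\sC}_l^1)=\sC_l^{i(1)}$ forces $f^1_{\flat}(B^1_{C_l^0})=B_l^{i(1)}=q_{\sK}^s(C_l^{i(1)})$, where $B^1_{C_l^0}\subset\bar{\sK}_1$ is the locus of curves of $\bar{\sK}_1$ meeting $C_l^0$, one gets $[l']\in B_l^{\rm mult}\iff [l]\in B_{l'}^{i(1)}$ --- note the asymmetry: $B_{l'}^{i(1)}$, not $B_{l'}^{\rm mult}$. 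The bridge between the two is Claim 2: $C_l^{i(1)}\rightarrow l$ is itself not birational. This is proved by a counting argument absent from your proposal: if it were birational, every $[l]\in\sK_x$ would determine a unique $[l_{i(1)}]\in\sK_x\cap B_l^{i(1)}$, giving a self-map $\Phi_x$ of the finite set $\sK_x$ that is surjective, hence injective; but the curve of $\bar{\sK}_1$ through the point of $C_l^0$ over $x$ mapping to $l_{i(1)}$ has degree $\geq 2$ over $l_{i(1)}$, hence meets $C_{\bar l}^0$ for a second line $\bar l\neq l$ through $x$, and Claim 1 then yields $\Phi_x([l])=\Phi_x([\bar l])=[l_{i(1)}]$, a contradiction. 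Once this is in place, uniqueness follows from the degree count exactly as you say, and (2) is immediate from Claim 1 together with $B_{l'}^{\rm mult}=B_{l'}^{i(1)}$. A secondary issue: your fibrewise reading in (2) tacitly assumes that $l\cap m=\{x\}$ and that only one component of $f^{-1}(m)$ passes through $y_l$; both are established only later, in Proposition \ref{Prop:Intersects-one-point}, using the present proposition.
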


\begin{proof}
	Denote by $B_{C_l^0}^1\subset \bar{\sK}_1$ the space of curves parametrised by $\bar{\sK}_1$ that meet $C_l^0$ (cf. Definition \ref{Defn:Spaces-of-lines}). Since $\bar{\sC}_l^1$ is irreducible by Proposition \ref{Prop:Degree-VMRT}(1), $B_{C_l^0}^1$ is also irreducible. Let $C_l^{i(1)}$ be the irreducible component of $f^{-1}(l)$ such that 
	\[
	\tau_{\sK}^s(C_l^{i(1)})=\nu_1(\bar{\sC}_l^1)=\sC_l^{i(1)},
	\]
	where $\nu_1:\bar{\sC}_1\dashrightarrow \sC$ is the rational map defined in Notation \ref{notation-hjbar-bjbar} (cf. Corollary \ref{Cor:Degree-VMRT}). Then we obtain 
	\[
	f_{\flat}^1(B_{C_l^0}^1)=B_{l}^{i(1)}=q_{\sK}^s(C_l^{i(1)}),
	\]
	where $f_{\flat}^1:\bar{\sK}_1\dashrightarrow \sK^s$ is the natural dominant rational map.  We note that all the arguments above still work if we replace $l$ by $l'$. 
	
	\bigskip
	
	\textbf{Claim 1.} \textit{$[l']$ is contained in $B_{l}^{\rm \small mult}$ if and only if $[l]$ is contained in $B_{l'}^{i(1)}=q_{\sK}^s(C_{l'}^{i(1)})$.}
	
	\bigskip
	
	\textit{Proof of Claim 1.} By definition, $[l']$ is contained in $B_l^{\rm \small mult}$ if and only if there exists an irreducible component $C_{l}^i$ of $f^{-1}(l)$ such that $C_l^i\rightarrow l$ is not birational and $C_l^i\cap C_{l'}^0\not=\emptyset$; that is, 
	\[
	[C_l^i]\in B_{C_{l'}^0}^1\subset\Mult(f^*\sK)=\bar{\sK}_1.
	\]
	This implies that $[l]$ is contained in $B_{l'}^{i(1)}=f_{\flat}^1(B_{C_{l'}^0}^1)$. Conversely, if $[l]$ is contained in $B_{l'}^{i(1)}$, then there exists an irreducible component $C_l^i$ of $f^{-1}(l)$ such that 
	\[
	[C_l^i]\in B_{C_{l'}^0}^1\subset \bar{\sK}_1=\Mult(f^*\sK).
	\]
	Then $C_l^i\rightarrow l$ is not birational and $C_l^i\cap C_l^0\not=\emptyset$; that is, $[l']\in B_l^{\rm \small mult}$. This finishes the proof of Claim 1.
	
	\bigskip
	
	\textbf{Claim 2.} \textit{The morphism $C_l^{i(1)}$$\rightarrow l$ is not birational.}
	
	\bigskip
	
	\textit{Proof of Claim 2.} Assume to the contrary that $C_l^{i(1)}\rightarrow l$ is birational. Then there exists a unique member $[l_{i(1)}]\in \sK_x$ such that $[l_{i(1)}]\in B_l^{i(1)}$. Then we have the following surjective map $\Phi_x:\sK_x\rightarrow \sK_x$ by sending $[l]$ to $[l_{i(1)}]$. As $\sK_x$ is a finite set, $\Phi_x$ is also injective. 
	
	Let $C_{l_{i(1)}}^i$ be an irreducible component of $f^{-1}(l_{i(1)})$ such that 
	\[
	[C_{l_{i(1)}}^i] \in B_{C_l^0}^1\quad {\rm and}\quad f_{\flat}^1([C_{l_{i(1)}}^{i}])=[l_{i(1)}].
	\]
	Since $C_{l_{i(1)}}^i\rightarrow l$ is not birational, there exists another member $[l]\not= [\bar{l}]\in \sK_x$ such that $C_{\bar{l}}^0\cap C_{l_{i(1)}}^i\not=\emptyset$. Note that here we use the fact the all the members in $\sK_x$ are smooth at $x$. 
	\[
	\begin{tikzpicture}[scale=0.9]
	  \coordinate   (A) at (-4,0);
	  \fill[black]  (A) circle [radius=2pt];
	  \draw (-4.2,0) node[left] {$e_{\sK}^s(y_1)=e_{\sK}^s(y_2)=x$};
	  
	  \draw (-3,1.3) .. controls (-3.6,0) and (-4,0) .. (-5.5,-0.7);
	  \draw (-3,1.3) node[above] {$\bar{l}$};
	  
	  \draw (-4.4,1.3) .. controls (-3.9,0.1) and (-4,0) .. (-4,-1.3);
	  \draw (-4.4,1.3) node[above] {$l$};
	  
	  \draw (-5,1.3) .. controls (-4.3,0) and (-4,0) .. (-2,-1.3);
	  \draw (-5.1,1.3) node[above] {$l_{i(1)}$};
	  
	  \coordinate    (B) at (1,0);
	  \fill[black]   (B) circle [radius=2pt];
	  \draw (1,0.3) node[right] {$y_2$};
	  
	  \coordinate    (C) at (2,0);
	  \fill[black]   (C) circle [radius=2pt];
	  \draw (2,0.3) node[right] {$y_1$};
	  
	  \draw (0.2,0.1).. controls (0.7,0) and (0.8,0) .. (1,0);
	  \draw (1,0) .. controls (1.8,0) and (2,0) .. (2.8,-0.1);
	  \draw (2.8,-0.1) node[below] {$C_{l_{i(1)}}^i$};
	  
	  \draw (1.1,1.3) .. controls (0.98,0.5) and (0.98,0) .. (1.1,-1.3);
	  \draw (1.1,1.3) node[above] {$C_{\bar{l}}^0$};
	  
	  \draw (2.1,1.3) .. controls (1.97,0.5) and (1.97,0) .. (2.1,-1.3);
	  \draw (2.1,1.3) node[above] {$C_{l}^0$};
	  
	  \draw[->] (0,0.5) -- (-2,0.5);
	  \draw (-1,0.5) node[above] {$e_{\sK}^s$};
	  
	  \draw[->] (2.8,0.5) -- (3.8,0.5);
	  \draw (3.3,0.5) node[above] {$f_{\flat}^1$};
	  
	  \draw[->] (1.5,-1.5) -- (1.5,-2.5);
	  \draw (1.5,-2) node[left] {$q_{\sK}^s$};
	  
	  \draw (0.2,-2.8) .. controls (1,-3) and (2,-3) .. (2.8,-2.8);
	  \draw (2.8,-2.8) node[right] {$B_{l_{i(1)}}^i\subset B_{l_{i(1)}}^{\rm \small mult}$};
	  
	  \coordinate    (E) at (1,-2.93);
	  \fill[black]   (E) circle [radius=2pt];
	  \draw (1,-2.95) node[below] {$[\bar{l}]$};
	  
	  \coordinate    (G) at (2,-2.93);
	  \fill[black]   (G) circle [radius=2pt];
	  \draw (2,-2.95) node[below] {$[l]$};
	  
	  \coordinate    (D) at (4,0);
	  \fill[black]   (D) circle [radius=2pt];
	  \draw (4,0) node[right] {$[l_{i(1)}]$};
	  
	  \draw (4.1,1.3) .. controls (3.95,0.5) and (3.95,0) .. (4.1,-1.3);
	  \draw (4.1,1.3) node[above] {$B_{\bar{l}}^{i(1)}$};
	  
	  \coordinate    (F) at (5.2,0);
	  \fill[black]   (F) circle [radius=2pt];
	  %\draw (5,0) node[right] {$[l_{i(1)}]$};
	  
	  \draw (5.25,1.3) .. controls (5.2,0.5) and (5.2,0) .. (5.25,-1.3);
	  \draw (5.2,1.3) node[above] {$B_{l}^{i(1)}$};
      \end{tikzpicture}
	\]
	Then $[\bar{l}]$ is contained in $B_{l_{i(1)}}^{\rm \small mult}$ and hence $[l_{i(1)}]$ is contained in $B_{\bar{l}}^{i(1)}$ by Claim 1. This imlies
	\[
	\Phi_x([l])=\Phi_x([\bar{l}])=[l_{i(1)}],
	\]
	which is a contradiction. This completes the proof of Claim 2.
	
	\bigskip
	
	Let $C_l^i$ be an irreducible component of $f^{-1}(l)$ such that $C_l^i\rightarrow l$ is not birational. Then $[C_l^i]$ is contained in $\bar{\sK}_1$. On the other hand, as $[C_{l}^{i(1)}]\in \bar{\sK}_1$ by Claim 2 above, it follows that we have
	\[
	\zeta\cdot \sC_l^i = \zeta\cdot \tau_{\sK}^s(C_l^i) = \zeta\cdot \tau_{\sK}^s(C_{l}^{i(1)})=\zeta\cdot \sC_l^{i(1)}=1. 
	\]
	Then Corollary \ref{Cor:Degree-VMRT} implies that $\sC_l^i=\sC_l^{i(1)}$ and hence $C_l^i=C_{l}^{i(1)}$. This shows that $C_l^{i(1)}$ is the unique irreducible component of $f^{-1}(l)$ which is not birational onto $l$. In particular, $C_l^i$ is a rational curve if $i\not=i(1)$ as $l$ is a rational curve. On the other hand, note that $\sC_l^{i(1)}$ is a rational curve by Lemma \ref{Lemma:Projection-Local} and the tangent map $C_l^{i(1)}\dashrightarrow \sC_l^{i(1)}$ is birational, it follows that $C_l^{i(1)}$ is also a rational curve. The remaining part of statement (1) follows from Proposition \ref{Prop:Pull-Back-Trivial-Normal-Bundle}.
	
	To prove the statement (2), note that, by Claim 1, $[l']$ is contained in $B_{l}^{\rm \small mult}$ if and only if $[l]$ is contained in $B_{l'}^{i(1)}$. On the other hand, it follows from the statement (1) that $B_{l'}^{\rm \small mult}=B_{l'}^{i(1)}$. This finishes the proof of the statement (2).
\end{proof}

\begin{notation}
	\label{Notation:C_l^1}
	In the sequel of this paper, we shall always assume that $C_l^1$ is the unique irreducible component of $f^{-1}(l)$ such that $C_l^1\rightarrow l$ is not birational. In particular, $\nu_1(\bar{\sC}_l^1)=\sC_l^1$ and $B_l^{\rm \small mult}=B_l^1$ is irreducible.
\end{notation}

\begin{lemma}
	\label{Lemma:Birationality-Multiple-Components}
	Under the Assumption \ref{assumptionbig}, the morphism $C_l^1 \rightarrow B_l^1$ is birational. Moreover, let $n:\PP^1\rightarrow B_l^1$ be the normalisation. Then there exists a positive integer $a\geq d+1$ such that
	\[
	n^*V\cong \sO_{\PP^1}(a)\oplus \sO_{\PP^1}(d),
	\]
	where $d \geq 2$ is the degree of the finite morphism $C_l^1\rightarrow l$.
\end{lemma}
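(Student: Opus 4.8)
The plan is to realise $C_l^1$ as a section of a ruled surface inside $\Univ_{\sK}^s=\PP(V)$ and to read off the statement from two numerical constraints. Write $g\colon\PP^1\to C_l^1$ for the normalisation. By Proposition~\ref{Prop:Pull-Back-Trivial-Normal-Bundle} the curve $C_l^1$ has trivial normal bundle in $\PP(V)$, so $g$ is an immersion and $g^*T_{\PP(V)}\cong\sO_{\PP^1}(2)\oplus\sO_{\PP^1}^{\oplus(n-1)}$; since $A\cdot l=1$, also $f^*A\cdot C_l^1=\deg(C_l^1\to l)=d$. The curve $C_l^1$ is horizontal over $\sK^s$ with image $B_l^1$, and for a general member $l$ the curve $B_l^1$ meets $\sK_o$ (a member of $\sK$ through a general point of $l$ lies in $B_l^1\cap\sK_o$), so $V|_{B_l^1}$ is ample by Remark~\ref{remarkVample}; hence $n^*V\cong\sO_{\PP^1}(a)\oplus\sO_{\PP^1}(b)$ with $a\ge b\ge 1$, where $n\colon\PP^1\to B_l^1$ is the normalisation. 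The map $\phi:=q_{\sK}^s\circ g$ factors as $\PP^1\xrightarrow{\bar\phi}\PP^1\xrightarrow{n}B_l^1\hookrightarrow\sK^s$ with $e:=\deg\bar\phi=\deg(C_l^1\to B_l^1)\ge1$; and, provided (as holds for general $l$) that $C_l^1$ avoids the non-normal locus of $\PP(V|_{B_l^1})$, the map $g$ lifts to the section of $\PP(\bar\phi^*n^*V)\to\PP^1$ attached to a rank-one quotient $\bar\phi^*n^*V\cong\sO_{\PP^1}(ea)\oplus\sO_{\PP^1}(eb)\to Q$ with $\deg Q=f^*A\cdot C_l^1=d$.

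Next I would extract two constraints. First, a rank-one quotient of $\sO(ea)\oplus\sO(eb)$ of degree $d$ with locally free kernel exists only if $d\ge ea$ or $d=eb$ (an elementary computation with the line subbundle that is the kernel). Second, pulling back along $g$ the relative tangent sequence $0\to T_{\PP(V)/\sK^s}\to T_{\PP(V)}\to(q_{\sK}^s)^*T_{\sK^s}\to 0$ (exact since $q_{\sK}^s$ is smooth), the line bundle $g^*T_{\PP(V)/\sK^s}$ is a line subbundle of $g^*T_{\PP(V)}\cong\sO(2)\oplus\sO^{\oplus(n-1)}$; since $\Hom(\sO(k),\sO)=0$ for $k\ge1$, such a subbundle has degree $\le 0$ unless it is the $\sO(2)$-summand, which must then coincide with $dg(T_{\PP^1})$ — and that is excluded because $C_l^1$ is horizontal, so $d(q_{\sK}^s\circ g)\ne 0$. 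Hence $\deg g^*T_{\PP(V)/\sK^s}\le 0$; as $T_{\PP(V)/\sK^s}\equiv 2f^*A-(q_{\sK}^s)^*c_1(V)$ and $(q_{\sK}^s)_*C_l^1=e[B_l^1]$, this reads $e(a+b)\ge 2d$.

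Combining the two: if $d\ge ea$ then $2ea\le 2d\le e(a+b)$ forces $a=b$ (and then $d=ea$); if instead $a>b$, the first constraint gives $d=eb$, the quotient $\sO(ea)\oplus\sO(eb)\to Q=\sO(eb)$ is the canonical projection (as $\Hom(\sO(ea),\sO(eb))=0$), so the associated section of $\PP(\bar\phi^*n^*V)$ is the $\bar\phi$-pullback of the section $C_0\cong\PP^1$ of $\PP(n^*V)$ attached to the projection $n^*V\to\sO(b)$, and since $g$ normalises its own image while this pullback maps to $C_0$ with degree $e$, we get $e=1$. Thus, away from the balanced case $a=b$, the morphism $C_l^1\to B_l^1$ is birational and $d=f^*A|_{\PP(n^*V)}\cdot C_0=b$, so $n^*V\cong\sO(a)\oplus\sO(d)$ with $a>d$. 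It remains to exclude $a=b$: there $\PP(n^*V)\cong\PP^1\times\PP^1$, and the strict transform of $C_l^1$ is horizontal of class $e\,[\PP^1\times\{\mathrm{pt}\}]$ (its $q_{\sK}^s$-degree is $e$ and its $f^*A$-degree is $d=ea$), so irreducibility forces $e=1$ and makes $C_l^1$ a fibre of the second ruling; but these fibres form a pencil $\cong\PP^1$ of sections of $\PP(V|_{B_l^1})$ over $B_l^1$, all of them deformations of $C_l^1$ inside $\PP(V)$, hence all parametrised by $\bar{\sK}_1=\Mult(f^*\sK)$, and all with the same $q_{\sK}^s$-image $B_l^1$. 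As $l$ is general, $B_l^1$ is a general member of $(q_{\sK}^s)_*\bar{\sK}_1$, so this produces a positive-dimensional general fibre of $\bar{\sK}_1\dashrightarrow(q_{\sK}^s)_*\bar{\sK}_1$, contradicting $\bar{\sK}_1=\Fin_{q_{\sK}^s}(f^*\sK)$ from Proposition~\ref{Prop:Degree-VMRT}. Hence $a\ge d+1$.

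The step I expect to be the main obstacle is this last one — ruling out the balanced case, i.e. upgrading $a\ge d$ to the strict inequality $a\ge d+1$ — together with the general-position statements that must be made precise for a general member $l$: that $B_l^1$ meets $\sK_o$, that $C_l^1$ avoids the non-normal locus of $\PP(V|_{B_l^1})$, and, above all, that the pencil of sections appearing in the balanced case really consists of members of $\bar{\sK}_1$ rather than of some unrelated family, which one should argue from the fact that $C_l^1$ carries a full $(n-1)$-dimensional space of deformations, all of them realised by $\bar{\sK}_1$.
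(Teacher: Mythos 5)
Your argument is essentially correct but follows a genuinely different route from the paper's. The paper derives both halves of the statement from the single fact $\bar\zeta\cdot\bar\sC_{l'}^1=1$ established in Proposition \ref{Prop:Degree-VMRT}: birationality of $C_l^1\to B_l^1$ because, by Lemma \ref{Lemma:Projection-Local}, the projection $\bar P_{\bar\sC_{l'}^0}$ is then \emph{injective} on $\bar\sC_{l'}^1$, so two points of $C_l^1$ lying over the same point $[l']\in B_l^1$ would give two points of $\bar\sC_{l'}^1$ with the same projection; and the splitting because the tangent map of $C_l^1$ factors through $\sO_{\PP^1}(1)$, which maps nontrivially to $\Omega_{\PP(n^*V)/\PP^1}|_{\sigma(\PP^1)}\cong\sO_{\PP^1}(-2d+a+b)$ and yields $a+b\geq 2d+1$ in one stroke. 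You never use this degree-one property: your inputs are the trivial normal bundle of $C_l^1$ (which only gives the weaker bound $e(a+b)\geq 2d$), the elementary classification of saturated line subbundles of $\sO_{\PP^1}(ea)\oplus\sO_{\PP^1}(eb)$, and, to exclude the balanced case $a=b$, the generic finiteness $\bar\sK_1=\Fin_{q_{\sK}^s}(f^*\sK)$ from Proposition \ref{Prop:Degree-VMRT}. This substitution is legitimate: the deformation-theoretic step you flag does go through, because $h^1(g^*T_M)=0$ makes the space of genus-zero deformations of $C_l^1$ locally irreducible of dimension $n-1$, so the pencil of second-ruling sections of $\PP(n^*V)\cong\PP^1\times\PP^1$ is forced into $\bar\sK_1$ and produces a positive-dimensional fibre of $\bar\sK_1\dashrightarrow (q_{\sK}^s)_*\bar\sK_1$ over the general point $[B_l^1]$, contradicting generic finiteness. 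What you gain is independence from the projection formalism at this particular spot and a sharper picture of exactly which consequences of bigness are needed; what you pay is the case analysis and this extra deformation argument. Two minor corrections: the lift of $g$ to a section of $\PP(\bar\phi^*n^*V)$ exists unconditionally by the universal property of the fibre product, so no hypothesis on the non-normal locus of $\PP(V|_{B_l^1})$ is required; and in the unbalanced case the final inequality $a\geq d+1$ already follows from $a>b=d$, so your second constraint is only needed to force $d=eb$ and to pin down the balanced case.
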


\begin{proof}
	Assume to the contrary that $C_l^1\rightarrow B_l^1$ is not birational. Let $t=[l']\in B_l^1$ be a general point. Then $C_l^1\cap C_{l'}^0$ contains two different points $y_1$ and $y_2$. Moreover, we may also assume that $C_l^1$ is smooth at $y_1$ and $y_2$ and $B_l^1$ is smooth at $t$. Then it follows that both $\PP(\Omega_{C_l^1,y_1})$ and $\PP(\Omega_{C_l^1,y_2})$ are contained in $\bar{\sC}_{l'}^1$ as $[C_l^1]\in \bar{\sK}_1$.
	
	\[
	\begin{tikzpicture}[scale=0.6]
		\draw[rotate=90] (-0.5,-7) parabola bend (0,0) (0.5,-7);
		
		\coordinate   (A) at (3,0.33);
		\fill[black]  (A) circle [radius=2pt];
		\draw (3,0.52) node[left] {$y_1$};
		
		\coordinate   (B) at (3,-0.33);
		\fill[black]  (B) circle [radius=2pt];
		\draw (3,-0.52) node[left] {$y_2$};
		
		\draw (3.1,1.3) .. controls (3,0.8) and (3,0.33) .. (3,0);
		\draw (3,0) .. controls (3,-0.33) and (3,-0.8) .. (3.1,-1.3);
		\draw (3.1,1.3) node[above] {$C_{l'}^0$};
		
		\draw[->] (3,-1.5) -- (3,-2.5);
		\draw (3,-2) node[right] {$q_{\sK}^s$};
		
		\draw (7,0) node[right] {$C_l^1$};
		
		\draw (0,-2.8) .. controls (3,-3) and (4,-3) .. (7,-2.8);
		\draw (7,-2.8) node[right] {$B_l^1$};
		
		\coordinate   (C) at (3,-2.95);
		\fill[black]  (C) circle [radius=2pt];
		\draw (3,-2.95) node[below] {$t=[l']$};
	\end{tikzpicture}
    \]
    On the other hand, note that we also have
    \[
    \bar{P}_{\bar{\sC}_{l'}^0}(\PP(\Omega_{C_l^1,y_1}))=\bar{P}_{\bar{\sC}_{l'}^0}(\PP(\Omega_{C_l^1,y_2}))=\PP(\Omega_{B_l^1,t}).
    \]
    Yet, by Proposition \ref{Prop:Degree-VMRT} and Lemma \ref{Lemma:Projection-Local}, the projection $\bar{\sC}_{l'}^1\rightarrow \bar{P}_{\bar{\sC}_{l'}^0}(\bar{\sC}_{l'}^1)$ is an isomorphism, which is a contradiction. Hence, the map $C_l^1\rightarrow B_l^1$ is birational.
    
    Let $n':\PP^1\rightarrow C_l^1$ be the normalisation of $C_l^1$. Then it is clear that the composition $\PP^1\rightarrow C_l^1\rightarrow B_l^1$ is the normalisation of $B_l^1$. Moreover, as $f^*H\cdot C_l^1=d$, there exists a line bundle quotient
    \[
    n^*V\cong \sO_{\PP^1}(a)\oplus \sO_{\PP^1}(b) \rightarrow \sO_{\PP^1}(d)
    \]
    such that $a\geq b>0$ (see Remark \ref{remarkVample}) and the corresponding section $\sigma:\PP^1\rightarrow \PP(n^*V)$ satisfies the following commutative diagram
    \[
    \begin{tikzcd}[column sep=large, row sep=large]
    	\PP(n^*V) \arrow[r, "\bar{n}"] \arrow[d, "p"]
    	    &  \PP(V|_{B_l^1}) \arrow[d] \\
    	\PP^1 \arrow[r,"n"] \arrow[u, bend left, "\sigma"] 
    	    &  B_l^1
    \end{tikzcd}
    \]
    and $(\bar{n}\circ\sigma)(\PP^1)=C_l^1$. On the other hand, we consider the following generically surjective morphism
    \[
    (f\circ \bar{n})^*\Omega_X \longrightarrow \Omega_{\PP(n^*V)/\PP^1}.
    \]
    By the definition of the tangent map $\tau$ and the fact $\zeta\cdot \sC_l^1=1$, we have the following factorisation of the morphism above
    \[
    (f\circ \bar{n})^*\Omega_X|_{\sigma(\PP^1)} \twoheadrightarrow \sO_{\PP^1}(1) \rightarrow \Omega_{\PP(\bar{n}^*V)/\PP^1}|_{\sigma(\PP^1)}\cong \sO_{\PP^1}(-2d+a+b),
    \]
    where $\zeta$ is the tautological divisor of $\PP(\Omega_X)$ and the last isomorphism follows from the following facts
    \[
    \Omega_{\PP(\bar{n}^*V)/\PP^1}\cong \sO_{\PP(\bar{n}^*V)}(-2)\otimes p^*\sO_{\PP^1}(a+b)\ \ \text{and}\ \ \sO_{\PP(\bar{n}^*V)}(1)|_{\sigma(\PP^1)}\cong \sO_{\PP^1}(d).
    \]
    As a consequence, we get $a+b\geq 2d+1$. In particular, we must have $a\geq d+1$. On the other hand, since there exists a line bundle quotient $n^*V\rightarrow \sO_{\PP^1}(d)$, it follows that $b=d$.
\end{proof}

\subsubsection{Intersection of two minimal rational curves} 

This subsection is devoted to study the intersection of two minimal rational curves. The main goal is to prove the following result:

\begin{proposition}
	\label{Prop:Intersects-one-point}
	Under the Assumption \ref{assumptionbig}, let $x\in X$ be a general point and let $[l]$ be a member in $\sK_x$. Denote by $C_l^1$ the unique irreducible component of $f^{-1}(l)$ such that $C_l^1\rightarrow l$ is not birational. If $[l']$ is a member in $\sK_x$ such that $[l']\in B_l^1$, then $l\cap l'=\{x\}$. In particular, $C_{l}^0\cap C_{l'}^i=\emptyset$ if $i\not=1$.
\end{proposition}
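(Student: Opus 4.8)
The plan is to transfer the statement into the normalised universal family $\Univ_\sK^s$ and then exploit the finiteness of $\sK_x$. I would begin with the elementary reductions. Since $x$ is general, \cite[Thm.3.3(2)]{Keb02a} shows that $l$ and $l'$ are immersed and smooth at $x$, that $f$ is a local isomorphism near each of the fibres $C_l^0$ and $C_{l'}^0$, and that $C_l^0\cap C_{l'}^0=\emptyset$ since these are two distinct $q_\sK^s$-fibres. Writing $y_0\in C_l^0$ and $y_0'\in C_{l'}^0$ for the unique points over $x$, we have $l\cap l'=f\big(C_{l'}^0\cap f^{-1}(l)\big)=f\big(\bigcup_{i\geq 1}(C_{l'}^0\cap C_l^i)\big)$, and $y_0'$ lies on exactly one component $C_l^{i_0}$ with $i_0\geq 1$ (because $f$ is a local isomorphism near $y_0'$ and $l$ is smooth at $x$). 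Hence $l\cap l'=\{x\}$ is equivalent to: $i_0=1$, $C_{l'}^0\cap C_l^1=\{y_0'\}$, and $C_{l'}^0\cap C_l^i=\emptyset$ for every $i\geq 2$. Granting this, the "in particular" clause is obtained by exchanging the roles of $l$ and $l'$ --- which is legitimate because $[l']\in B_l^1$ is equivalent to $[l]\in B_{l'}^1$ by Proposition \ref{Prop:Uniqueness-Multple-Component}(2) --- giving $C_l^0\cap C_{l'}^1=\{y_0\}$ and $C_l^0\cap C_{l'}^i=\emptyset$ for $i\geq 2$, which together with $C_l^0\cap C_{l'}^0=\emptyset$ is precisely $C_l^0\cap C_{l'}^i=\emptyset$ for all $i\neq 1$.

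Next I would locate $C_{l'}^0\cap C_l^1$ by means of Lemma \ref{Lemma:Birationality-Multiple-Components}. That lemma realises $C_l^1$ as the negative section of the Hirzebruch surface $\PP(n^*V)\to\PP^1$, where $n\colon\PP^1\to B_l^1$ is the normalisation and $\PP(n^*V)\to\PP(V|_{B_l^1})\subset\Univ_\sK^s$ carries the fibres of $\PP(n^*V)\to\PP^1$ to the $q_\sK^s$-fibres $C_{l''}^0$, $[l'']\in B_l^1$. A section of a Hirzebruch surface meets each fibre in exactly one point, so $C_{l'}^0\cap C_l^1$ consists of a single point once $[l']$ lies in the dense open locus of $B_l^1$ over which $n$ is an isomorphism; by genericity of $x$ I may assume this. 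Consequently everything reduces to proving that $i_0=1$ and that $C_{l'}^0$ is disjoint from all $C_l^i$ with $i\geq 2$, i.e.\ that $l$ and $l'$ meet in a single point. Letting $\psi\colon B_l^1\to l$ be the morphism of degree $d:=\deg(C_l^1\to l)\geq 2$ (defined via the birational identification $B_l^1\simeq C_l^1$ of Lemma \ref{Lemma:Birationality-Multiple-Components}) that sends a member of $B_l^1$ to its intersection point with $l$ along $C_l^1$, and using that $x$ is a general point of $l$, one checks that $\psi^{-1}(x)=\{[l_{(1)}],\dots,[l_{(d)}]\}$ is exactly the set of the $d$ distinct members of $\sK_x$ arising as $q_\sK^s$-images of the $d$ points of $C_l^1$ over $x$. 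Thus the remaining content is the equality $\sK_x\cap B_l^1=\{[l_{(1)}],\dots,[l_{(d)}]\}$, together with the statement that none of the $l_{(j)}$ meets $l$ outside $x$.

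The hard part, and the main obstacle, is precisely this: excluding a member $[l']\in\sK_x\cap B_l^1$ that meets $l$ at a second point $z\neq x$. I would argue by contradiction, in the style of the proof of Claim~2 inside Proposition \ref{Prop:Uniqueness-Multple-Component}. If such an $[l']$ existed, then $l'$ would pass through both $x$ and $z$; letting $x$ vary over general points we may take $z$ to be a general point of $l$, so that the fibre of $C_l^1\to l$ over $z$ consists of $d$ distinct points, one of which realises $[l']$ via $q_\sK^s$. Transporting this picture through $q_\sK^s$, through $f^1_\flat\colon\bar\sK_1\dashrightarrow\sK^s$, and through the symmetry $[l']\in B_l^1\Leftrightarrow[l]\in B_{l'}^1$, one is forced to manufacture a surjective --- hence injective --- self-map of the finite set $\sK_x$ which nonetheless identifies two distinct members, the desired contradiction. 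The technical subtleties that make this step delicate, and that account for the careful preparatory work in this section, are that $C_l^1$ can be singular away from $x$, that $B_l^1$ may fail to be irreducible or normal, and that the two genericity assertions used above (``$z$ is general on $l$'' and ``$[l']$ lies in the good locus of $B_l^1$'') must be deduced from the genericity of $x$; once these are in place, the counting via $\psi$ and the injectivity of the self-map of $\sK_x$ finish the argument.
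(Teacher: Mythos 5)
Your reductions at the start are sound: the ``in particular'' clause does follow by the symmetry $[l']\in B_l^1\Leftrightarrow[l]\in B_{l'}^1$, and you correctly isolate the crux, namely excluding a second point $z\in l\cap l'$ with $z\neq x$. But that crux is exactly where your argument has a genuine gap. You propose to imitate Claim~2 in the proof of Proposition \ref{Prop:Uniqueness-Multple-Component} and ``manufacture a surjective, hence injective, self-map of $\sK_x$ which identifies two distinct members''. In Claim~2 that map $\Phi_x$ is single-valued precisely because one assumes (for contradiction) that $C_l^{i(1)}\to l$ is birational, so $B_l^{i(1)}\cap\sK_x$ is a single point. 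In the present situation $C_l^1\to l$ has degree $d\geq 2$ by construction, so $B_l^1\cap\sK_x$ has up to $d$ elements and there is no canonical single-valued assignment $[l]\mapsto[l']$; the bijection argument does not transfer, and you give no substitute for it. Relatedly, your claim that the point of $C_{l'}^0$ over $z$ ``realises $[l']$'' via a point of $C_l^1$ is unjustified: a priori that point may lie on a component $C_l^i$ with $i\geq 2$, and this case is never excluded by your argument.

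The paper closes this gap by a completely different mechanism, for which Proposition \ref{Prop:Universal-Family-Trivial-Degree} (which you never invoke) was proved. Since each $C_{l'}^i\to B_{l'}^i$ is birational for $i\geq 1$, a second point of $f^{-1}(l')\cap C_l^0$ forces, by genericity, the existence of some $i\geq 2$ with $B_{l'}^i=B_{l'}^1$ as curves in $\sK^s$. Pulling back $V$ to the normalisation $n\colon\PP^1\to B_{l'}^1=B_{l'}^i$ then yields two incompatible splittings: $n^*V\cong\sO_{\PP^1}(a)\oplus\sO_{\PP^1}(d)$ with $a\geq d+1\geq 3$ from Lemma \ref{Lemma:Birationality-Multiple-Components}, and $n^*V\cong\sO_{\PP^1}(1)\oplus\sO_{\PP^1}(1)$ from Proposition \ref{Prop:Universal-Family-Trivial-Degree}. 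This numerical contradiction is the actual content of the proof; some argument of this kind (or a worked-out replacement) is needed, and your proposal does not supply one.
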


Firstly we show that an analogue of Lemma \ref{Lemma:Birationality-Multiple-Components} still holds for $i\geq 2$.

\begin{proposition}
	\label{Prop:Universal-Family-Trivial-Degree}
	Under the Assumption \ref{assumptionbig}, let $l$ be a general curve parametrised by $\sK$. Then $C_{l}^i\rightarrow B_{l}^i$ is birational if $i\geq 2$ and we have 
	\[
	n^*V\cong \sO_{\PP^1}(1) \oplus \sO_{\PP^1}(1),
	\]
	where $n:\PP^1\rightarrow B_{l}^i$ is the normalisation.
\end{proposition}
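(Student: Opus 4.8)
The plan is to adapt the proof of Lemma~\ref{Lemma:Birationality-Multiple-Components}. The relevant difference is that for $i\geq 2$ the associated projection \emph{contracts} the tangent curves instead of being an isomorphism (Proposition~\ref{Prop:Degree-VMRT}), so the projective--duality input has to be replaced by Chern class computations on the ruled varieties $\PP(V)$ and $\PP(n^*V)$, fed by the fact that $\bar\sK_{j}\subset\Inf_{q_\sK^s}(f^*\sK)$ for $j\geq 2$. First I record the numerics: since $i\geq 2$, the morphism $C_l^i\rightarrow l$ is birational (Notation~\ref{Notation:C_l^1}), so $\sO_{\PP(V)}(1)\cdot C_l^i=f^*A\cdot C_l^i=A\cdot l=1$, where we use $\sO_{\PP(V)}(1)=f^*A$ (Remark~\ref{remarkVample}) and $A\cdot l=1$ (Proposition~\ref{Prop:Degree-VMRT}); and $[C_l^i]\in\bar\sK_{j(i)}$ with $j(i)\geq 2$, because $[C_l^i]\notin\Mult(f^*\sK)=\bar\sK_1$ and $[C_l^i]\notin\bar\sK_0$. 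By Proposition~\ref{Prop:Pull-Back-Trivial-Normal-Bundle}, $C_l^i$ has trivial normal bundle in $\Univ_\sK^s$, so its normalisation is $\PP^1$ and $c_1(T_{\Univ_\sK^s})\cdot C_l^i=2$.

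\textbf{Birationality of $C_l^i\rightarrow B_l^i$.} Let $n\colon\PP^1\rightarrow B_l^i$ be the normalisation and form the fibre product, giving a $\PP^1$-bundle $p\colon\PP(n^*V)\rightarrow\PP^1$ together with a birational morphism $\bar n\colon\PP(n^*V)\rightarrow(q_\sK^s)^{-1}(B_l^i)\subset\Univ_\sK^s$. Since $V$ is nef and $B_l^i$ meets $\sK_o$, the bundle $n^*V$ is ample (Remark~\ref{remarkVample}), hence $n^*V\cong\sO_{\PP^1}(a)\oplus\sO_{\PP^1}(b)$ with $a\geq b\geq 1$. The strict transform $\widetilde C\subset\PP(n^*V)$ of $C_l^i$ is an irreducible curve, horizontal over $\PP^1$ of relative degree $e:=\deg(C_l^i\rightarrow B_l^i)$, with $\xi\cdot\widetilde C=\sO_{\PP(V)}(1)\cdot C_l^i=1$, where $\xi=c_1(\sO_{\PP(n^*V)}(1))$. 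Let $\sigma_0$ be the minimal section, so $[\sigma_0]=\xi-aF$ with $F$ a fibre of $p$. If $\widetilde C=\sigma_0$, then $e=1$. If not, then $\widetilde C$ and $\sigma_0$ are distinct irreducible curves on the surface $\PP(n^*V)$, so $\widetilde C\cdot\sigma_0\geq 0$; writing $[\widetilde C]=e\xi+cF$ and using $\xi\cdot\widetilde C=1$ gives $c=1-e(a+b)$ and
\[
0\leq\widetilde C\cdot\sigma_0=(e\xi+cF)\cdot(\xi-aF)=eb+c=1-ea,
\]
so $ea\leq 1$ and hence $e=1$. In both cases $C_l^i\rightarrow B_l^i$ is birational.

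\textbf{Splitting type.} By Proposition~\ref{Prop:Degree-VMRT} we have $\bar\sK_{j(i)}\subset\Inf_{q_\sK^s}(f^*\sK)$, so $\dim(q_\sK^s)_*\bar\sK_{j(i)}<\dim\bar\sK_{j(i)}=n-1$; being a covering family of curves on the $(n-1)$-dimensional manifold $\sK^s$, the family $(q_\sK^s)_*\bar\sK_{j(i)}$ then has dimension exactly $n-2$, and the web-structure of the étale web $\bar\sK_{j(i)}$ near a general point of $\Univ_\sK^s$ — transverse to the fibres of $q_\sK^s$ since $\bar\sK_{j(i)}\neq\bar\sK_0$ — descends under the submersion $q_\sK^s$ to a web-structure on $\sK^s$, so that $(q_\sK^s)_*\bar\sK_{j(i)}$ is an étale web of curves on $\sK^s$. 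Hence its general member $B_l^i$ has trivial normal bundle in $\sK^s$, and therefore $-K_{\sK^s}\cdot B_l^i=2$. On the other hand, restricting the relative tangent sequence
\[
0\rightarrow T_{\Univ_\sK^s/\sK^s}\rightarrow T_{\Univ_\sK^s}\rightarrow(q_\sK^s)^*T_{\sK^s}\rightarrow 0
\]
to the normalisation of $C_l^i$ and taking degrees, and using $c_1(T_{\Univ_\sK^s})\cdot C_l^i=2$, the identity $T_{\Univ_\sK^s/\sK^s}=\sO_{\PP(V)}(2)\otimes(q_\sK^s)^*(\det V)^\vee$, $\sO_{\PP(V)}(1)\cdot C_l^i=1$, $c_1(V)\cdot B_l^i=\deg\det n^*V=a+b$, and $(q_\sK^s)_*[C_l^i]=[B_l^i]$ (by the previous step), we obtain
\[
2=\bigl(2-(a+b)\bigr)+\bigl(-K_{\sK^s}\cdot B_l^i\bigr),
\]
whence $-K_{\sK^s}\cdot B_l^i=a+b$. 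Combining the two computations, $a+b=2$, and since $a\geq b\geq 1$ this yields $n^*V\cong\sO_{\PP^1}(1)\oplus\sO_{\PP^1}(1)$.

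The step I expect to be the main obstacle is the assertion that $(q_\sK^s)_*\bar\sK_{j(i)}$ is an \emph{étale} web of curves, so that its general member $B_l^i$ genuinely has trivial normal bundle in $\sK^s$: one must verify that the line-subbundle structure descended from $\bar\sK_{j(i)}$ gives an evaluation morphism that is unramified over a dense Zariski open subset of $\sK^s$. This is precisely the kind of statement treated by the web machinery of \cite{Hwa17a}, in the same spirit as the proof of Theorem~\ref{Thm:Projection-VMRT}; granting it, the remainder is routine Chern class bookkeeping on projective bundles.
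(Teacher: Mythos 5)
Your birationality argument is correct and is a legitimate variant of the paper's: the paper simply notes that $n'^*V\cong\sO_{\PP^1}(da)\oplus\sO_{\PP^1}(db)$ must admit a line-bundle quotient $\sO_{\PP^1}(1)$, forcing $d=b=1$, while you run an intersection computation against the minimal section of $\PP(n^*V)$; both work, and your computation even gives $a=1$ (hence the whole proposition) in the case $\widetilde C\neq\sigma_0$, leaving only the case where $C_l^i$ is the minimal section.

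The splitting-type step, however, has a genuine gap, and it is exactly the one you flag. Everything hinges on the claim that $B_l^i$ has trivial normal bundle in $\sK^s$, which you want to deduce from $(q_{\sK}^s)_*\bar{\sK}_{j(i)}$ being an \emph{\'etale} web. But the descent you describe only produces the line subbundles $L_i\subset T_{O_t}$ over a dense Zariski-open subset of $\sK^s$ (this is what the argument of Theorem \ref{Thm:Projection-VMRT} yields); a family whose members are leaves of a web-structure on a dense open set need not be an \'etale web, because the \'etale condition requires the evaluation morphism to be unramified in a neighbourhood of an \emph{entire} general fibre, not just at general points of $\sK^s$. (A pencil of conics through four points of $\PP^2$ is a web whose members are leaves of a foliation on a dense open set, yet the conics have normal bundle $\sO_{\PP^1}(4)$.) Worse, in the present situation ``$B_l^i$ has trivial normal bundle'' is \emph{equivalent} to the desired conclusion: the normal sheaf of $B_l^i$ is the quotient of $n^*T_{\sK^s}$, of degree $a+b$, by $T_{\PP^1}$, so it is trivial exactly when $a+b=2$. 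Your second step therefore reduces the proposition to an equivalent unproven statement. The paper closes this differently and more directly: since $\bar{\sK}_{j(i)}\subset\Inf_{q_{\sK}^s}(f^*\sK)$, there are \emph{infinitely many} curves $C_{l_t}$ parametrised by $\bar{\sK}_{j(i)}$ with $q_{\sK}^s(C_{l_t})=B_l^i$, and each of them (by the same degree count you used for $C_l^i$) is a section of $\PP(n^*V)\rightarrow\PP^1$ of $\xi$-degree one; infinitely many line-bundle quotients $n^*V\rightarrow\sO_{\PP^1}(1)$ force $a=b=1$. I would replace your second step by this argument.
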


\begin{proof}
	We will follow the commutative diagram \eqref{Eq:Diagram-Bigness}. Recall that by our assumption the morphism $C_l^i\rightarrow l$ is always birational for $i\geq 2$ (see Proposition \ref{Prop:Uniqueness-Multple-Component} and Notation \ref{Notation:C_l^1}).
	By Remark \ref{remarkVample} the vector bundle $n^*V$ is ample; so there exist positive integers $a\geq b\geq 1$ such that 
	\[
	n^*V\cong \sO_{\PP^1}(a)\oplus \sO_{\PP^1}(b).
	\]
	Let $j(i)\geq 2$ be the unique integer such that $[C_{l}^i]\in \bar{\sK}_{j(i)}$. Thanks to Proposition \ref{Prop:Degree-VMRT}, the projection $\bar{\sC}_{j(i)}\dashrightarrow \sZ_{j(i)}\subset \PP(\Omega_{\sK^s})$ is not generically finite. Thus Theorem \ref{Thm:Projection-VMRT} implies that $\bar{\sK}_{j(i)}$ is contained in $\Inf_{q_{\sK}^s}(f^*\sK)$. As a consequence, there exist infinitely many curves $C_{l_t}$ parametrised by $\bar{\sK}_{j(i)}$ such that 
	\[
	q_{\sK}^s(C_{l_t})=B_{l}^i,
	\]
	and $l_t=e_{\sK}^s(C_{l_t})$ is a member in $\sK$. Moreover, the morphism $C_{l_t}\rightarrow l_t$ is birational as $\Mult(f^*\sK)=\bar{\sK}_1$ and $j(i)\geq 2$. 
	
	Denote by $n':\PP^1\rightarrow C_{l}^i$ the normalisation. Then the composition $\PP^1\rightarrow C_l^i\rightarrow B_l^i \hookrightarrow \sK^s$ factors through $n$. In particular, we have
	\[
	n'^*V\cong \sO_{\PP^1}(da)\oplus \sO_{\PP^1}(db),
	\]
	where $d$ is the degree of $C_{l}^i\rightarrow B_{l}^i$. Moreover, as $f^*A\cdot C_{l}^i=1$, it follows that $n'^*V$ admits a line bundle quotient $n'^*V\rightarrow \sO_{\PP^1}(1)$. As a consequence, we obtain 
	\[
	d=b=1.
	\]
	In particular, we can identify $n'^*V$ with $n^*V$. Let $\widetilde{C}_{l_t}$ be the strict transform of $C_{l_t}$ in $\PP(n^*V)$. As $f^*A\cdot C_{l_t}=1$, the same argument shows that $\widetilde{C}_{l_t}$ is again a section of $\PP(n^*V)\rightarrow \PP^1$ such that
	\[
	c_1(\sO_{\PP(n^*V)}(1))\cdot \widetilde{C}_{l_t}=f^*A\cdot C_{l_t}=1.
	\]
	Consequently, $n^*V$ admits infinitely many line bundle quotients $n^*V\rightarrow \sO_{\PP^1}(1)$. Hence, we obtain $a=b=1$.
\end{proof}

Now we are in the position to prove Proposition \ref{Prop:Intersects-one-point}.

\begin{proof}[Proof of Proposition \ref{Prop:Intersects-one-point}]
	By Lemma \ref{Lemma:Birationality-Multiple-Components} and Proposition \ref{Prop:Universal-Family-Trivial-Degree}, for each irreducible component $C_{l'}^i$ of $f^{-1}(l')$ with $i\geq 1$, the morphism $C_{l'}^i\rightarrow B_{l'}^i$ is birational. Note that the morphism $f:\sU_{\sK}^s\rightarrow X$ is unramified along $C_{l}^0$. In particular, if $l\cap l'$ consists a point $x'\not=x$, then $f^{-1}(l')\cap C_{l}^0$ consists at least two points. However, as $x$ is general, the latter situation happens if and only if there exists an irreducible component $C_{l'}^i$ of $f^{-1}(l')$ such that $i\geq 2$ and 
	\[
	B_{l'}^i = q_{\sK}^s(C_{l'}^i) = q_{\sK}^s (C_{l'}^1) = B_{l'}^1.
	\]
	\[
	\begin{tikzpicture}[scale=0.6]
			
		\draw (0,0.35) .. controls (3,0.315) and (4,0.315) .. (7,0.5);
		\draw (0,0.35) node[left] {$C_{l'}^1$};
		
		\coordinate   (A) at (3,0.33);
		\fill[black]  (A) circle [radius=2pt];
		\draw (3,0.7) node[left] {$y$};
		
		\draw (0,-0.4) .. controls (3,-0.31) and (4,-0.31) .. (7,-0.5);
		\draw (0,-0.4) node[left] {$C_{l'}^i$};
		
		\coordinate   (B) at (3,-0.33);
		\fill[black]  (B) circle [radius=2pt];
		\draw (3,-0.7) node[left] {$y'$};
		
		\draw (3.1,1.3) .. controls (3,0.8) and (3,0.33) .. (3,0);
		\draw (3,0) .. controls (3,-0.33) and (3,-0.8) .. (3.1,-1.3);
		\draw (3.1,1.3) node[above] {$C_{l}^0$};
		
		\draw[->] (3,-1.5) -- (3,-2.5);
		\draw (3,-2) node[right] {$q_{\sK}^s$};
		
		\draw (0,-2.8) .. controls (3,-3) and (4,-3) .. (7,-2.8);
		\draw (7,-2.8) node[right] {$B_{l'}^1=B_{l'}^i$};
		
		\coordinate   (C) at (3,-2.95);
		\fill[black]  (C) circle [radius=2pt];
		\draw (3,-2.95) node[below] {$t=[l']$};
		
		\coordinate   (D) at (12,0.7);
		\fill[black]  (D) circle [radius=2pt];
		\draw (12,0.8) node[right] {$x=e_{\sK}^s(y)$};
		
		\coordinate   (D) at (12,-0.6);
		\fill[black]  (D) circle [radius=2pt];
		\draw (12,-0.8) node[right] {$x'=e_{\sK}^s(y')$};

		\draw (12.04,1.5) .. controls (12,0.7) and (12,-0.6) .. (12.04,-1.5);
		\draw (12.04,1.5) node[above] {$l$};
		
		\draw[->] (7.5,0) -- (10.5,0);
		\draw (9,0) node[above] {$e_{\sK}^s$};
		
		\draw [rounded corners] (11,-1.5) -- (11.5,-0.85) .. controls (12,-0.6) and (12.5,-0.4) .. (12.9,-0.1)-- (12.5,0.4) .. controls (12,0.7) and (11.5,1) .. (11,1.5);
		\draw (11,-1.5) node[below] {$l'$};
	\end{tikzpicture}
	\]
	Let $n:\PP^1\rightarrow B_{l'}^1=B_{l'}^i$ be the normalisation. Then by Lemma \ref{Lemma:Birationality-Multiple-Components} we have
	\[
	n^*V\cong \sO_{\PP^1}(a)\oplus \sO_{\PP^1}(d)
	\]
	for some integer $a\geq d+1$ and here $d\geq 2$ is the degree of $C_l^1\rightarrow l$. On the other hand, by Proposition \ref{Prop:Universal-Family-Trivial-Degree} we also have
	\[
	n^*V\cong \sO_{\PP^1}(1)\oplus \sO_{\PP^1}(1),
	\]
	which is absurd. Hence, the intersection $l\cap l'$ is a single point.
\end{proof}

\subsubsection{Conclusion}

Let us summarise what we have proved in this section in the following theorem. We still follow the notation as in Section \ref{Section:Etale-Webs}.

\begin{theorem}
	\label{Thm:Summary-VMRT}
	Let $X$ be a Fano manifold of dimension $n$ and with Picard number one, and  denote by $A$ the ample generator of $\pic(X)$.  Assume that the tangent bundle $T_X$ is big and $X$ admits an \'etale web of rational curves $\sK$.

 Let $l$ be a general curve parametrised by $\sK$. Then $A\cdot l=1$ and the following statements hold.
	\begin{enumerate}
		\item For each $j\geq 1$, the tangent variety $\bar{\sC}_l^j$ is disjoint from $\bar{\sC}_l^0$. Moreover, let $\bar{\zeta}$ be the tautological divisor class of $\PP(\Omega_{\sU_{\sK}^s})$, then the following statements hold.
		\begin{enumerate}
			\item The curve $\bar{\sC}_l^1$ is an irreducible rational curve such that $\bar{\zeta}\cdot \bar{\sC}_l^j=1$. Moreover, the morphism $\bar{\sC}_l^1\rightarrow \bar{P}_{\bar{\sC}_l^0}(\bar{\sC}_l^1)=\PP^1$ is an isomorphism. 
			
			\item For $j\geq 2$, each irreducible component $\bar{\sC}_{l}^{ji}$ of $\bar{\sC}_l^j$ is a rational curve such that $\bar{\zeta}\cdot \bar{\sC}_l^{ji}=0$. In particular, the image $\bar{P}_{\bar{\sC}_l^0}(\bar{\sC}_{l}^{ji})$ is a point. 
		\end{enumerate}
	    As a consequence, we have $[\check{\sC}]\equiv a\check{\zeta}-\check{\pi}^*A$ for some $a>0$, where $\check{\zeta}$ is the tautological divisor class of the projectivised tangent bundle $\check{\pi}:\PP(T_X)\rightarrow X$.
		
		\item Each irreducible component of $f^{-1}(l)$ is a rational curve with trivial normal bundle and there exists a unique irreducible component $C_l^1$ of $f^{-1}(l)$ such that $C_l^1\rightarrow l$ is not birational. Moreover, we have $\tau_{\sK}^s(C_l^1)=\nu_1(\bar{\sC}_l^1).$
		
		\item For a general member $[l']\in B_l^1$, the intersection $l\cap l'$ consists of a single point and $[l]$ is contained in $B_{l'}^1$.
	\end{enumerate}
\end{theorem}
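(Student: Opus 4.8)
The statement is a compilation of the results established earlier in this section, so the plan is simply to assemble them; no new argument is required. I would organise the write-up into three blocks, one for each numbered item, after first recording that $A\cdot l=1$ is the opening assertion of Proposition \ref{Prop:Degree-VMRT}.

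For item (1), I would invoke Proposition \ref{Prop:Degree-VMRT}: part (1) of that proposition gives that $\bar\sK_1$ is the unique irreducible component of $\Mult(f^*\sK)=\Fin_{q_\sK^s}(f^*\sK)$, that $\bar\sC_l^1$ is irreducible, is not contracted by $\bar P_{\bar\sC_0}$, and satisfies $\bar\zeta\cdot\bar\sC_l^1=1$; part (2) gives $\bar\zeta\cdot\bar\sC_l^{ji}=0$ for $j\ge 2$ and that every $\bar\sC_l^{ji}$ with $j\ge 1$ is a rational curve disjoint from $\bar\sC_l^0$. Feeding these into Lemma \ref{Lemma:Projection-Local} yields the remaining geometric assertions: since $l$ is rational and $\bar\sC_l^1$ is not contracted, case (2)(b) of that lemma shows $\bar\sC_l^1\cong\PP^1$ and that $\bar\sC_l^1\to\bar P_{\bar\sC_l^0}(\bar\sC_l^1)=\PP^1$ is an isomorphism, while case (1) shows $\bar P_{\bar\sC_l^0}(\bar\sC_l^{ji})$ is a point for $j\ge 2$. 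For the class of $\check\sC$, I would recall from the proof of Proposition \ref{Prop:Degree-VMRT} that $[\check\sC]\equiv a\check\zeta-b\check\pi^*A$ with $a,b>0$ (Proposition \ref{Prop:Bigness-Criterion}), that Lemma \ref{Lemma:Class-Dual-Curves} gives $-bA\cdot l=\zeta_l\cdot\bar\sC_l=-2+\zeta_l\cdot\sum_{j\ge 1}\bar\sC_l^j$, and that Corollary \ref{cor:existence-mult-components} forces $\Mult(f^*\sK)\ne\emptyset$; combined with Theorem \ref{Thm:Projection-VMRT} and Lemma \ref{Lemma:Projection-Local}, this forces $\zeta_l\cdot\sum_{j\ge 1}\bar\sC_l^j=1$, hence $b=1$.

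Item (2) is a direct transcription: Proposition \ref{Prop:Uniqueness-Multple-Component}(1) gives that every irreducible component of $f^{-1}(l)$ is a rational curve with trivial normal bundle (the latter via Proposition \ref{Prop:Pull-Back-Trivial-Normal-Bundle}) and that there is a unique one, named $C_l^1$ in Notation \ref{Notation:C_l^1}, whose map to $l$ is not birational; the identity $\tau_\sK^s(C_l^1)=\nu_1(\bar\sC_l^1)$ is Corollary \ref{Cor:Degree-VMRT} read through that notation. For item (3), I would note that a general $[l']\in B_l^1$ corresponds, via the birational map $C_l^1\to B_l^1$ of Lemma \ref{Lemma:Birationality-Multiple-Components}, to a general point of $C_l^1$, whose image under the finite map $C_l^1\to l$ is a general point of $l$, hence a general point $x\in X$; thus $[l],[l']\in\sK_x$ with $[l']\in B_l^1$, so Proposition \ref{Prop:Intersects-one-point} gives $l\cap l'=\{x\}$, and Proposition \ref{Prop:Uniqueness-Multple-Component}(2), together with the identification $B^{\mathrm{mult}}=B^1$ from Notation \ref{Notation:C_l^1}, gives $[l]\in B_{l'}^1$.

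The only step that is not a verbatim quotation is the determination $b=1$ in item (1), which is where the dichotomy of Proposition \ref{Prop:Degree-VMRT} and the non-vanishing input of Corollary \ref{cor:existence-mult-components} genuinely enter; everything else is bookkeeping, and I expect no real obstacle beyond keeping the indexing consistent with Notation \ref{Notation:C_l^1}.
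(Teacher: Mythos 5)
Your proposal is correct and follows exactly the paper's route: the paper introduces this theorem with "Let us summarise what we have proved in this section" and gives no separate proof, so the intended argument is precisely the assembly of Proposition \ref{Prop:Degree-VMRT}, Lemma \ref{Lemma:Projection-Local}, Corollary \ref{Cor:Degree-VMRT}, Proposition \ref{Prop:Uniqueness-Multple-Component} with Notation \ref{Notation:C_l^1}, Lemma \ref{Lemma:Birationality-Multiple-Components}, and Proposition \ref{Prop:Intersects-one-point} that you carry out, including the determination $b=1$ from the proof of Proposition \ref{Prop:Degree-VMRT}. The only point you add beyond verbatim quotation — that a general point of $B_l^1$ corresponds under $C_l^1\to B_l^1\to l$ to a general point of $X$, so that Proposition \ref{Prop:Intersects-one-point} applies — is the correct and standard justification.
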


Since the family of rational curves $\sK$ has degree one with respect to an ample line bundle, we will, by an abuse of terminology, call the members of this family ``lines'', there by distinguishing them from the members of the second family of rational curves that we will introduce in the next section. With this terminology in mind, we have the following:

\begin{corollary}
	\label{c.lines-B_l^1-properties}
	Let $X$ be a Fano manifold of dimension $n$ and with Picard number one, and  denote by $A$ the ample generator of $\pic(X)$.  Assume that the tangent bundle $T_X$ is big and $X$ admits an \'etale web of rational curves $\sK$.
	
 Let $l$ be a general line parametrised by $\sK$. Then the following statements hold.
	\begin{enumerate}
		\item For a general point $x\in l$, there exists at least two lines $l_i$ $(1\leq i\leq 2)$ parametrised by $B_l^1$ such that $x\in l_i$ for $i=1$ and $i=2$.
		
		\item For a general point $x\in l$, for two different lines $l_i$ $(1\leq i\leq 2)$ parametrised by $B_l^1$ such that $x\in l_i$ for $i=1$ and $i=2$, the linear span of $T_{l,x}$, $T_{l_1,x}$ and $T_{l_2,x}$  in $T_{X, x}$ is $3$-dimensional.
		
		\item For a general point $x\in l$, there exists a $3$-dimensional subspace $F_x$ of $T_{X,x}$ such that $T_{l,x}\subset F_x$ and $T_{\bar{l},x}\subset F_x$ for any line $\bar{l}$ parametrised by $B_{l}^1$ such that $x\in \bar{l}$.
	\end{enumerate}
\end{corollary}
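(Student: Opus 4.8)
Here is how I would prove Corollary~\ref{c.lines-B_l^1-properties}. The plan is to deduce all three statements by unwinding, at a general point of $l$, the structure of the curve $\bar{\sC}_l^1$ already established in Theorem~\ref{Thm:Summary-VMRT}; no genuinely new geometric input is needed. Throughout I fix a general line $[l]\in\sK$ and a general point $x\in l$; since $\sK$ covers $X$, such an $x$ is a general point of $X$, so all results of Subsection~\ref{subsectionbig} apply to it, and for $x$ outside a suitable finite subset of $l$ the finitely many ``bad loci'' appearing below are all avoided.

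First I would describe the lines of $B_l^1$ through $x$. Put $d:=\deg(C_l^1\to l)$, so $d\geq 2$ by Lemma~\ref{Lemma:Birationality-Multiple-Components}, and let $y_1,\dots,y_d$ be the fibre of $f|_{C_l^1}$ over $x$, which for $x$ general consists of $d$ distinct points. Since $q_{\sK}^s|_{C_l^1}\colon C_l^1\to B_l^1$ is birational (Lemma~\ref{Lemma:Birationality-Multiple-Components}), the members $[l_i]:=q_{\sK}^s(y_i)$ of $B_l^1$ are pairwise distinct, and $x=f(y_i)$ lies on the curve $f\bigl((q_{\sK}^s)^{-1}([l_i])\bigr)=l_i$; this already gives (1), with any two $l_i,l_j$ as the required lines. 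Conversely, if $[\bar l]\in B_l^1$ is any line through $x$, pick $y\in C_l^1$ with $q_{\sK}^s(y)=[\bar l]$; then $f(y)$ lies in $l\cap\bar l$, which equals $\{x\}$ by Proposition~\ref{Prop:Intersects-one-point}, so $y\in\{y_1,\dots,y_d\}$ and $\bar l\in\{l_1,\dots,l_d\}$. Hence $l_1,\dots,l_d$ are exactly the lines of $B_l^1$ passing through $x$.

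Next I would locate the tangent directions $[T_{l_i,x}]$ inside $\bar{\sC}_l^1$. I work on $\PP(\Omega_{\Univ_{\sK}^s}|_{C_l^0})=\PP(f_l^*\Omega_X)$ over $\widetilde l=C_l^0$, with the minimal section $l_0=\bar{\sC}_l^0$, the normal-bundle sequence $0\to T_{\widetilde l}\to f_l^*T_X\to N\to 0$ with $N\cong\sO_{\widetilde l}^{\oplus(n-1)}$, and the projections $P_{l_0},\bar P_{l_0}$ of Section~\ref{Section:projection-local}, which on the fibre $C_l^0$ coincide with $P_{\bar{\sC}_0},\bar P_{\bar{\sC}_0}$. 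Let $y_0\in\widetilde l$ be the point over $x$ (a smooth point of $l$); identifying $\PP(f_l^*\Omega_X)|_{y_0}$ with $\PP(\Omega_{X,x})$, the section value $l_0|_{y_0}$ is $[T_{l,x}]$ and $\bar P_{l_0}$ restricted to this fibre is the linear projection of $\PP(\Omega_{X,x})$ from $[T_{l,x}]$ onto $\PP(N_x)$, $N_x=T_{X,x}/T_{l,x}$, composed with the trivialisation $\PP(N_x)\cong\PP^{n-2}$; so it sends the direction $[T_{l_i,x}]$ to the class $\bar v_i\in\PP(N_x)$ of the image of $T_{l_i,x}$ in $N_x$. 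By Theorem~\ref{Thm:Summary-VMRT}(2) and Corollary~\ref{Cor:Degree-VMRT}, $\tau_{\sK}^s(y_i)=[T_{l_i,x}]$ lies on $\nu_1(\bar{\sC}_l^1)=\sC_l^1$; transporting through the modification $\PP(f_l^*\Omega_X)\to\PP(\Omega_X|_l)$, which is an isomorphism near the fibre over the general point $y_0$, the points $\tilde y_1,\dots,\tilde y_d$ of $\PP(f_l^*\Omega_X)|_{y_0}$ corresponding to $[T_{l_1,x}],\dots,[T_{l_d,x}]$ lie on $\bar{\sC}_l^1$, and they are distinct because the tangent map $C_l^1\dashrightarrow\sC_l^1$ is birational (proof of Proposition~\ref{Prop:Uniqueness-Multple-Component}) and the $y_i$ are general in $C_l^1$.

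Finally, Theorem~\ref{Thm:Summary-VMRT}(1)(a) — via Lemma~\ref{Lemma:Projection-Local}(2)(b), applicable since $\bar{\sC}_l^1$ has $\bar\zeta$-degree one, is disjoint from $l_0$ and is not contracted by $\bar P_{\bar{\sC}_0}$ — says that $\bar P_{l_0}$ maps $\bar{\sC}_l^1$ isomorphically onto a \emph{line} $\Lambda\subset\PP^{n-2}$. Its preimage under $\PP(N_x)\cong\PP^{n-2}$ is a line $\PP(\bar W_x)$ with $\dim\bar W_x=2$, and $\bar v_i\in\Lambda$ now means precisely that $T_{l_i,x}$ maps into $\bar W_x$, i.e.\ $T_{l_i,x}\subset F_x$, where $F_x\subset T_{X,x}$ is the preimage of $\bar W_x$ under $T_{X,x}\twoheadrightarrow N_x$. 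Since $F_x$ contains $T_{l,x}=\ker(T_{X,x}\twoheadrightarrow N_x)$ and $\dim F_x=\dim\bar W_x+1=3$, the description of the lines of $B_l^1$ through $x$ from the second paragraph gives (3). For (2): $\bar P_{l_0}$ is injective on $\bar{\sC}_l^1$, so the $\bar v_i$ are pairwise distinct, and for two distinct lines $l_i,l_j$ of $B_l^1$ through $x$ the classes $\bar v_i\neq\bar v_j$ span $\bar W_x$; hence $T_{l,x}+T_{l_i,x}+T_{l_j,x}$ surjects onto $\bar W_x$, contains $T_{l,x}$, and therefore equals $F_x$, which is $3$-dimensional. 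I expect the only step needing real care to be the genericity bookkeeping of the third paragraph — that for $x$ general in $l$ one simultaneously avoids the loci where the modification $\PP(f_l^*\Omega_X)\to\PP(\Omega_X|_l)$ fails to be an isomorphism, where $\tau_{\sK}^s|_{C_l^1}$ is undefined or not injective, and where the identification of $\bar P_{l_0}$ on the fibre over $y_0$ with honest linear projection in $\PP(\Omega_{X,x})$ breaks down; everything else is formal from Theorem~\ref{Thm:Summary-VMRT}.
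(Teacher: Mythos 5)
Your proposal is correct and follows essentially the same route as the paper: statement (1) from the fact that $C_l^1\to l$ has degree $d\geq 2$ while $C_l^1\to B_l^1$ is birational, and statements (2) and (3) from the fact that $\bar P_{\bar\sC_l^0}$ maps $\bar\sC_l^1$ isomorphically onto a projective line in $\PP^{n-2}$, whose preimage in $T_{X,x}$ is the desired $3$-dimensional space $F_x$. Your write-up is somewhat more explicit than the paper's (e.g.\ using Proposition \ref{Prop:Intersects-one-point} to identify \emph{all} lines of $B_l^1$ through $x$ with the fibre of $C_l^1\to l$, and spelling out the genericity needed for the distinctness of the tangent directions), but the underlying argument is the same.
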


The last statement is our first hint that the bigness of $T_X$ might lead to a restriction on $\dim X$. Indeed the number of lines $[\bar l] \in B^1_l$
such that $x \in \bar l$ might be very high, so it seems unlikely that they generate only a subspace of dimension three.

\begin{proof}[Proof of Corollary \ref{c.lines-B_l^1-properties}.]
	The statement (1) follows from the facts that $C_l^1\rightarrow l$ is not birational and $C_l^1\rightarrow B_l^1$ is birational. So for a general point $x\in l$, there exist at least two points in $B_l^1$ which parametrise a line passing through $x$.
	
	For  statement (2), it is enough to show that $T_{l,x}$, $T_{l_1,x}$ and $T_{l_2,x}$ are not contained in a subspace of $T_{X,x}$ of dimension $2$. Assume that this does not hold. Then the images of $T_{l_1,x}$ and $T_{l_2,x}$ in the quotient space $T_{X,x}/T_{l,x}$ coincide. In particular, the projection $\bar{\sC}_l^1\rightarrow \bar{P}_{\bar{\sC}_l^0}(\bar{\sC}_l^1)$ is not injective, which is a contradiction.
	
	For statement (3), recall that the image $\bar{P}_{\bar{\sC}_l^0}(\bar{\sC}_l^1)$ is a projective line. It follows that there exists a $2$-dimensional subspace $\bar{F}_x$ of the quotient $T_{X,x}/T_{l,x}$ such that the image of $T_{\bar{l},x}$ in the quotient $T_{X,x}/T_{l,x}$ is contained in $\bar{F}_x$ for any line $\bar{l}$ parametrised by $B_{l}^1$ such that $x\in l$. Then the preimage $F_x\subset T_{X,x}$ of $\bar{F}_x$ satisfies the desired properties.
\end{proof}

\section{Families of conics and their tangent varieties}

In this section we introduce a new tool for the study of manifolds with big tangent bundles: we consider the family of rational curves obtained by smoothing a pair of minimal rational curves.

\subsection{Family of conics}

In order to simplify the exposition, we will focus on the setting that is relevant for the proof of Theorem \ref{thm:big-finite-VMRT}.

\begin{definition}
    Let $X$ be a Fano manifold of dimension $n$ such that $-K_X = 2A$ with $A$ an ample Cartier divisor.
     A family of lines on $X$ is an irreducible component of $\Chow{X}$ such that the
     general point corresponds to an irreducible rational curve $C$ such that $A \cdot C=1$.
\end{definition}

Since we do not suppose that the polarisation $A$ is very ample, the terminology ``line'' is a somewhat abusive extension of \cite[Defn.3.7]{Hwa14}. Our hope is that the choice of terminology will improve the readability of the text.

\begin{remark} \label{remarkline}
Denote by $\sK \rightarrow \Chow{X}$ the normalisation of an irreducible component of $\Chow{X}$ parametrising lines.
    Since $A$ is ample, the family is unsplit: all the curves parametrised by $\sK$
    are given by the images of morphisms $f:\PP^1 \rightarrow X$ that are birational onto their image. 
    A well-known Riemann-Roch computation \cite[Ch.2.11]{Deb01} shows that that $\dim \mathcal \sK \geq n-1$ and equality holds if the family is covering \cite[Prop.4.9]{Deb01}. 
    
  If we consider lines passing through a general point $x \in X$, they belong to a minimal family of rational curves such that $-K_X \cdot l=2$. 
    Thus by \cite[Thm.3.3(2)]{Keb02a} all the lines passing through $X$ are immersed and smooth in the fixed point $x$ (but might be singular at some other point).
\end{remark}

\begin{definition}
    Let $X$ be a Fano manifold of dimension $n$ such that $-K_X = 2A$ with $A$ an ample Cartier divisor. 
    A family of conics on $X$ is an irreducible component $\Chow{X}$ such that the general point corresponds to an irreducible rational curve $C$ such that $A \cdot C=2$.
    
    We say that a family of conics is standard if for a general 
    member the normalisation $\holom{f}{\PP^1}{C \subset X}$ satisfies
    $$
    f^* T_X \simeq \sO_{\PP^1}(2) \oplus  \sO_{\PP^1}(1)^{\oplus 2} \oplus  \sO_{\PP^1}^{\oplus n-3}
    $$
\end{definition}

\begin{remark} \label{remarkconic}
If $C$ is a general member of a standard family of conics, the normalisation $\holom{f}{\PP^1}{C \subset X}$ is an embedding \cite[II,Thm.3.14.3]{Ko96}. Thus we will identify the curve $C$
with its normalisation $\PP^1$.

As in the case of lines we see that a family of conics has dimension at least $n+1$ and 
equality holds if the family is covering. 
\end{remark}

Let $T\subset \Chow{X}$ be a standard family of conics,
and let $\sH \rightarrow T$ be its normalisation. Denote by $\Univ_\sH$ the normalisation of the 
pull-back of the universal family over $T$, and as in Section \ref{Section:VOT}, denote by
$$
\holom{q_\sH}{\Univ_\sH}{\sH} \qquad
\holom{e_\sH}{\Univ_\sH}{X} 
$$
the natural morphisms. By construction the general fibre of $q_{\sH}$ is irreducible,
and, as a normalisation of a universal family, the morphism $q_\sH$ is equidimensional.
Thus the fibres of $q_\sH$ are connected and have pure dimension one,
in particular $\holom{q_\sH}{\Univ_\sH}{\sH}$ is a well-defined family of algebraic cycles in $X$
 (see \cite[Thm. 3.17]{Ko96}). 

For a general point $x\in X$, we denote by $\sH_x$ the fibre of the evalution map $e_\sH$.
Then $\sH_x$ is normal and the map $\sH_x \rightarrow q_\sH(\sH_x)$ is finite and birational onto its image, since a general member of the family is smooth.
Thus $\sH_x$ identifies to the normalisation of $q_\sH(\sH_x)$. For this reason, we call $\sH_x$ the \emph{normalised space of conics passing through $x$}.

Denote by $\sH_{r}$ and $\sH_{nr}$ the subset of points of $\sH$ parametrising the cycles of the form $l_1+l_2$ with $l_1\not=l_2$ and the cycles of the form $2l$, respectively. Then 
$$
\Delta := \sH_{r} \cup \sH_{nr}
$$ 
is a closed subvariety of $\sH$ that we call the discriminant locus. 
Recall that by \cite[II, Thm.2.8]{Ko96} the restriction of $q_\sH$ to
$$
\Univ_\sH \setminus \fibre{q_\sH}{\Delta} \rightarrow \sH \setminus \Delta
$$
is a $\PP^1$-bundle. We can say a bit more: there exist a smooth open subset $\sH^\dagger \subset \sH \setminus \sH_{nr}$
such that the complement has codimension at least two in $\sH \setminus \sH_{nr}$ and $q_\sH$ is flat over $\sH^\dagger$.
Thus the fibre of a point in $\sH^\dagger$ is a reduced Gorenstein curve of arithmetic genus zero. Since $e_\sH^* A$
is relatively ample of degree two, we see that the fibres are conics, and call $\sH^\dagger$ the conic bundle
locus of the universal family.

\begin{setup} 	\label{setupconics} {\rm
    Let $X$ be a Fano manifold of dimension $n$ such that $-K_X = 2A$ with $A$ an ample Cartier divisor. 	Assume that $X$ contains a covering family of lines $\sK \rightarrow \Chow{X}$
    such that through a general point $x \in X$ pass at least two members of $\sK$.
Let $x\in X$ be a general point and let $l_1:=l_{1,x}$ and $l_2:=l_{2,x}$ be two distinct lines passing through $x$. Then both $l_1$ and $l_2$ are standard rational curves. In particular, they are free and according to \cite[Prop. 4.24]{Deb01}, we can smooth the couple of lines $l_1+l_2$ to obtain a free rational curve, i.e. a covering family $\sH$ of conics such that $[l_1+l_2]\in \sH_x$ and a general member in $\sH_x$ is an irreducible free rational curve. Moreover, according to \cite[3.14.4]{Ko96}, a general member in $\sH_x$ is an immersed free rational curve.

{\em The family $\sH$ can depend on the choice of the couple $(l_1, l_2)$. When we specify such a choice, as in Subsection \ref{subsection-tgt-conics}, we choose the family of conics such that
the cycles $(l_{1,x} +l_{2,x})_{x \in X \mbox{\small general}}$ define an irreducible component
of the discriminant locus~$\Delta$.}

Let now $\Delta^j \subset \Delta$ be an irreducible component such that for a general point $x \in X$
the intersection $q_\sH(\sH_x) \cap \Delta^j$ is not empty and a general point 
of $\Delta^j$ parametrises a reducible cycle. Since covering families of lines have dimension $n-1$
and for a given line $l$ and a general point $x' \in l$ there exists another line $l_{x'}$ passing through 
$x'$, the variety $\Delta^j$ has dimension $n$.

Let $\Univ_{\Delta^j}\rightarrow \Delta^j$ be the universal family,
and let  $\holom{\nu}{\widetilde{\Univ}_{\Delta^j}}{\Univ_{\Delta^j}}$ be the normalisation.
Let $$
\tilde q_{\Delta_j}\colon \widetilde{\Univ}_{\Delta^j} \rightarrow \widetilde{\Delta}^j \qquad \mbox{and} \qquad g\colon \widetilde{\Delta}^j\rightarrow \Delta^j
$$ 
be the Stein factorisation of the morphism $q_\sH \circ \nu$. 
The general fibres of $\tilde q_{\Delta_j}$ are smooth rational curves and any fibre of $\tilde q_{\Delta_j}$ has degree one with respect to $\nu^*e_\sH^*A$. Hence, any fibre of $\tilde q_{\Delta_j}$ is irreducible and generically reduced. Thus by \cite[II, Thm.2.8]{Ko96}, the morphism $\tilde q_{\Delta_j}\colon \widetilde{\Univ}_{\Delta^j}\rightarrow \widetilde{\Delta}^j$ is a $\PP^1$-bundle. 

Denote by $\Chow{X}_{1}$ the subset of $\Chow{X}$ corresponding to lines on $X$. Then there exists a natural morphism
	\[
	\eta\colon \widetilde{\Delta}^j\rightarrow \Chow{X}_{1}
	\]
	such that $\widetilde{\Univ}_{\Delta^j}$ is the pull-back of the normalised universal family
	$\PP(V) \rightarrow \widetilde{\Chow{X}}_{1}$ over the normalisation of
	$\Chow{X}_{1}$ (cf. Subsection \ref{subsubsectionVMRTalong} for the notation $V$).
Since 	$\widetilde{\Delta}^j$ is obtained by Stein factorisation, its (at most two) irreducible components have dimension $n$. Since covering families of lines have dimension $n-1$, we see that the general fibre of $\eta$ has dimension one. 
}
\end{setup}

    Finally let us observe that the divisor $\Delta^j \subset \sH$ intersects the conic bundle
locus $\sH^\dagger \subset \sH$. Thus there exists an irreducible subvariety $N_j \subset \Univ_{\Delta^j}$
that surjects onto $\Delta^j$ and the intersection with a general fibre coincides with the singular point of the reducible conic. 

\begin{notation}
We denote by $N$ the union of all the possible irreducible closed subvarieties $N_j \subset \Univ_\sH$ obtained in this way.
\end{notation}

We start with an elementary property:

\begin{lemma}\label{lemmadeforminWx} 
In the situation of Setup \ref{setupconics}, let $x \in X$ be a general point and $\sH_x$ be the normalised space of conics passing through $x$. Let $y \in \sH_x$ be a point parametrising a cycle $l_1+l_2$ such that $l_1\not=l_2$ and $x \in l_i$ for $i=1$ and $i=2$. 

    Then for $i=1,2$ there exists an irreducible curve $\delta_i \subset \sH_x$ such that 
    $y \in \delta_i$ and a general point $y_t \in \delta_i$ corresponds to a cycle
    $l_i+l_t$ where $l_t$ depends on $t$, i.e. the map $\delta_i \rightarrow \Chow{X}, \ y_t \ \mapsto [l_t]$ is finite onto its image.
\end{lemma}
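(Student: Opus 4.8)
The plan is to use the structure attached to the divisor $\Delta^j$ in Setup \ref{setupconics}, and in particular the fact established there that the morphism $\eta\colon \widetilde{\Delta}^j \rightarrow \Chow{X}_{1}$ has general fibre of dimension one. First note that, as $l_1\neq l_2$ and the cycle $l_1+l_2$ is a limit of smooth conics, the curve $l_1\cup l_2$ has arithmetic genus zero; hence $l_1\cap l_2$ is a single point, necessarily $x$, and $l_1\cup l_2$ is a one-nodal rational curve with node $x$. Let $\Delta^j\subset\Delta$ be an irreducible component containing $q_{\sH}(y)=[l_1+l_2]$; since $x$ is general, $\Delta^j$ satisfies the hypotheses of Setup \ref{setupconics}, so we have at our disposal the Stein factorisation $g\colon\widetilde{\Delta}^j\rightarrow\Delta^j$ and the morphism $\eta\colon\widetilde{\Delta}^j\rightarrow\Chow{X}_{1}$. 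Since the normalisation $\widetilde{\Univ}_{\Delta^j}\rightarrow\Univ_{\Delta^j}$ separates the two branches of the node of $l_1\cup l_2$, the fibre $g^{-1}([l_1+l_2])$ contains two points $\tilde y_1,\tilde y_2$ with $\eta(\tilde y_i)=[l_i]$.

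Now fix $i\in\{1,2\}$. By Setup \ref{setupconics}, $\widetilde{\Delta}^j$ has dimension $n$ and $\eta$ has general fibre of dimension one, so $\eta(\widetilde{\Delta}^j)$ has dimension $n-1$ and every non-empty fibre of $\eta$ has dimension at least one; in particular $\eta^{-1}([l_i])$ contains an irreducible curve $\gamma_i$ through $\tilde y_i$. As $g$ is finite, $g(\gamma_i)\subset\Delta^j\subset\sH$ is an irreducible curve through $[l_1+l_2]$, and since every point of $\gamma_i$ maps to $[l_i]$ under $\eta$, it parametrises a reducible cycle having $l_i$ as one of its components; thus $g(\gamma_i)$ is a curve of cycles $l_i+l_t$ with $l_i$ fixed. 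Because $x\in l_i$, each of these cycles passes through $x$, so $g(\gamma_i)\subset q_{\sH}(\sH_x)$, and we may take $\delta_i\subset\sH_x$ to be the irreducible component through $y$ of the preimage of $g(\gamma_i)$ under the finite normalisation map $\sH_x\rightarrow q_{\sH}(\sH_x)$ (such a component exists because $q_{\sH}(y)=[l_1+l_2]$ lies on $g(\gamma_i)$). Then $y\in\delta_i$ and a general point $y_t\in\delta_i$ corresponds to a cycle $l_i+l_t$. Finally, along $g(\gamma_i)$ the component $l_i$ is constant whereas the cycles $l_i+l_t$ are pairwise distinct, so $t\mapsto[l_t]$ is non-constant on $\delta_i$; being a non-constant morphism from an irreducible projective curve, it is finite onto its image, which is the last assertion.

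The step that needs genuine care is the first one: one must check that $[l_1+l_2]$ lies on a component $\Delta^j$ of the discriminant to which the constructions of Setup \ref{setupconics} apply, and that over it the Stein factorisation $g$ carries the two branches $\tilde y_1,\tilde y_2$ mapped by $\eta$ to $[l_1]$ and $[l_2]$ respectively. Both points follow from $x$ being general --- so that $l_1\cup l_2$ is an ordinary nodal curve and $[l_1+l_2]$ is a sufficiently general point of $\Delta^j$ --- but they are the only non-formal ingredients; the rest is a dimension count combined with facts already recorded in Setup \ref{setupconics}.
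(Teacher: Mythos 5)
Your overall strategy is the same as the paper's: work on a component $\Delta^j$ of the discriminant, use the $\PP^1$-bundle $\widetilde{\Univ}_{\Delta^j}\to\widetilde{\Delta}^j$ and the morphism $\eta$ whose fibres are at least one-dimensional, choose a curve $\gamma_i\subset\eta^{-1}([l_i])$ through a point lying over $q_\sH(y)$, push it down by $g$ and lift it to $\sH_x$. Two steps, however, are not justified as written. First, the claim that $l_1\cup l_2$ is a one-nodal rational curve with node $x$ does not follow from ``being a limit of smooth conics'': the Chow-theoretic limit controls the cycle, not the scheme structure; the lines may be singular away from $x$ (Remark \ref{remarkline}); and the statement that two such lines meet only at $x$ is precisely the content of Proposition \ref{Prop:Intersects-one-point}, which is proved much later and only for $[l']\in B_l^1$. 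Fortunately you do not need this: the existence of $\tilde y_1,\tilde y_2\in g^{-1}([l_1+l_2])$ with $\eta(\tilde y_i)=[l_i]$ already follows from the surjectivity of $\widetilde{\Univ}_{\Delta^j}\to\Univ_{\Delta^j}$ over $[l_1+l_2]$, whose fibre dominates both $l_1$ and $l_2$.

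The genuine gap is in the lifting step. You take $\delta_i$ to be ``the irreducible component through $y$ of the preimage of $g(\gamma_i)$ under the normalisation $\sH_x\to q_\sH(\sH_x)$'', justified only by $q_\sH(y)\in g(\gamma_i)$. But the preimage of a curve under a finite (normalisation) morphism can contain isolated points: if $q_\sH(\sH_x)$ has several branches at $q_\sH(y)$ and $g(\gamma_i)$ lies in only one of them, the points of $\sH_x$ over $q_\sH(y)$ sitting on the other branches are isolated in the preimage, and nothing in your argument rules out that $y$ is one of them. The paper closes exactly this gap by observing that, since the lines through the general point $x$ are smooth at $x$, the point $y$ is the \emph{unique} point of $\sH_x$ lying over $q_\sH(y)$; hence any component of the preimage dominating $g(\gamma_i)$ --- and such a component exists because the normalisation is finite and surjective --- automatically contains $y$. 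With that observation inserted your argument goes through, and your final non-constancy argument for $t\mapsto[l_t]$ (via finiteness of $\sH\to\Chow{X}$) coincides with the paper's.
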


\begin{proof}
    Since $x$ is general, the cycle $l_1+l_2$ is contained in an irreducible component $\Delta^j \subset \Delta$ as in Setup \ref{setupconics}. Let us denote by $t_i \in \tilde \Delta^j$ a point such that $\eta(t_i)=[l_i]$
    and $g(t_i)=q_\sH(y)$. 
Choose an irreducible curve $\tilde \delta_i \subset \fibre{\eta}{\eta(t_i)}$ such that $t_i \in \tilde \delta_i$, and
   set $\delta'_i = g(\tilde \delta_i)$.
    In particular, $\delta'_i$ is contained in the image of $\sH_x\rightarrow \sH$ and $q_\sH(y) \in \delta'_i$. 
    Since the curves $l_i$ are smooth in $x$ (cf. Remark \ref{remarkline}), the point $y$
    is the unique preimage of $q_\sH(y)$ in $\sH_x$.
    Thus we can choose an irreducible curve $\delta_i\subset \sH_x$ passing through $y$ such that $q_{\sH}(\delta_i)=\delta'_i$. 
    The surface $\Univ_{\delta_i}:= \Univ_\sH \times_{\sH} \delta_i$ is reducible since it contains the image of the
    fibre product
    $$
    \PP(V) \times_{\mathcal H} \tilde \delta_i \simeq l_i \times \tilde \delta_i
    $$
    as an irreducible component. In particular a general point $y_t \in \delta_i$ parametrises a reducible cycle that contains the line $l_i$. The second component of the reducible cycle is a line that depends on the point $y_t$, since otherwise the map
    $\delta_i \rightarrow \sH \rightarrow \Chow{X}$ is not finite onto its image. Yet by construction $\sH$ is finite onto its image in $\Chow{X}$.
\end{proof}

The next step is to understand the tangent map $\tau$ introduced in Section \ref{Section:VOT}.
We start with an easy observation:

\begin{lemma} \label{lemmaindeterminacy}
In the situation of Setup \ref{setupconics}, let $x \in X$ be a general point and $\sH_x$ be the normalised space of conics passing through $x$.
Let $y \in \sH_x$ be a point parametrising a cycle $l_1+l_2$ such that $x \in l_i$ for $i=1$ and $i=2$.

  Then the points $[\PP(\Omega_{l_1,x})]$ and $[\PP(\Omega_{l_2,x})]$ are in the image of the tangent map $\tau_x$, but $\tau_x$ is not defined in $y$.      
  In particular the set $N \subset \Univ_\sH$ is contained in the indeterminacy locus of $\tau$.
 \end{lemma}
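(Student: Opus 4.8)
The plan is to work over the general point $x$ and analyze the reducible cycle $l_1+l_2 \in \sH_x$ by smoothing it in two directions, using Lemma \ref{lemmadeforminWx}. First I would address the positive part of the statement: that $[\PP(\Omega_{l_1,x})]$ and $[\PP(\Omega_{l_2,x})]$ lie in the image of $\tau_x$. By Lemma \ref{lemmadeforminWx}, for $i=1$ there is an irreducible curve $\delta_1 \subset \sH_x$ through $y$ whose general point $y_t$ parametrises a reducible cycle $l_1 + l_t$ with $l_t$ varying. For such a general $y_t$, the corresponding fibre of $q_{\sH}$ is the node $l_1 \cup l_t$, and the branch passing through $x$ is exactly $l_1$ (since $x \in l_1$ is a general, hence smooth, point of $l_1$, and $l_t$ need not pass through $x$). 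Hence the tangent map $\tau_x$, which is defined at the smooth points of the fibre lying over $x$, sends $y_t$ to $[\PP(\Omega_{l_1,x})]$ — a fixed point independent of $t$ along $\delta_1$. Taking the closure, $[\PP(\Omega_{l_1,x})]$ lies in the image of $\tau_x$; symmetrically for $l_2$.

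Next I would prove that $\tau_x$ is \emph{not} defined at $y$. The point $y$ corresponds to the nodal cycle $l_1 + l_2$ whose node is $x$ itself (this is the defining property of the chosen family: $x \in l_1 \cap l_2$). By the previous paragraph, along $\delta_1$ the tangent map tends to $[\PP(\Omega_{l_1,x})]$ as we approach $y$, while along $\delta_2$ it tends to $[\PP(\Omega_{l_2,x})]$. Since $l_1$ and $l_2$ are distinct lines through $x$ meeting only at $x$ (here one uses that through a general point pass finitely many lines, so their tangent directions at $x$ are distinct — alternatively cite Corollary \ref{c.lines-B_l^1-properties}(2), where the three tangent directions are even linearly independent), we have $[\PP(\Omega_{l_1,x})] \neq [\PP(\Omega_{l_2,x})]$ in $\PP(\Omega_{X,x})$. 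A rational map cannot have two distinct limits along two curves through the same point if it is defined (i.e. regular) at that point; therefore $\tau_x$ has an indeterminacy at $y$. The same argument, carried out at a general point of the curve $l_1 \cup l_2$ rather than just at $x$, shows $\tau$ itself is undefined at the node; but it suffices to observe that $N \cap q_\sH^{-1}(\sH_x)$ consists of such nodal points $y$, and since $x$ ranges over a dense set of $X$, the locus $N$ is contained in the indeterminacy locus of $\tau$.

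The main obstacle I anticipate is not conceptual but bookkeeping: one must make sure that the two smoothing directions $\delta_1, \delta_2$ provided by Lemma \ref{lemmadeforminWx} really do produce \emph{distinct} tangent limits, and that the node of the relevant cycles genuinely sits over the general point $x$ (not over some other point of the conic). Both are guaranteed by the specific choice of family $\sH$ in Setup \ref{setupconics} — namely that $(l_{1,x}+l_{2,x})_{x}$ sweeps out an irreducible component of $\Delta$ — together with the smoothness of lines at general points (Remark \ref{remarkline}) and the finiteness of lines through a general point. Once these are in place, the argument is a clean ``two limits $\Rightarrow$ indeterminacy'' dichotomy, and no delicate estimate is needed.
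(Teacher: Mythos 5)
Your proof is correct and follows essentially the same route as the paper: both arguments use Lemma \ref{lemmadeforminWx} to produce the two smoothing curves $\delta_1,\delta_2$ through $y$, observe that for general $y_t\in\delta_i$ the varying line $l_t$ misses $x$ (finitely many lines pass through $x$) so that $\tau_x$ is constant equal to $[\PP(\Omega_{l_i,x})]$ along $\delta_i$, and then deduce indeterminacy at $y$ from the existence of two distinct limits. The one point to tighten is the justification that $[\PP(\Omega_{l_1,x})]\neq[\PP(\Omega_{l_2,x})]$: mere finiteness of the set of lines through $x$ does not by itself force distinct tangent directions, and Corollary \ref{c.lines-B_l^1-properties}(2) is proved under the bigness Assumption, which is not part of Setup \ref{setupconics}; the paper instead invokes the birationality of the tangent map for minimal rational curves (Hwang--Mok), which is the correct reference for this step.
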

 
 \begin{remark} \label{remarkindeterminacy}
 For later use, let us observe that the proof will show more precisely that if $S \subset \sH_x$
 is the unique irreducible component containing the point $y$, then the points $[\PP(\Omega_{l_i,x})]$
 are in the closure of the image of $S$ under $\tau$.
 \end{remark}

\begin{proof}
We start by deducing the second statement: if $N$ is not contained in the indeterminacy locus, we can choose a general point $y \in N$ mapping onto a general point $x \in X$.
By the construction of $N$ (see Setup \ref{setupconics}) we know that $y \in \sH_x$ parametrises a cycle $l_1+l_2$ such that $x \in l_i$ for $i=1$ and $i=2$. If $\tau$ is well-defined in $y$, then its restriction $\tau_x$ is well-defined in $y$, a contradiction.

Let us now prove the first statement: by Lemma \ref{lemmadeforminWx} there exist
curves $\delta_i \subset \sH_x$ passing through $y$ such that a point $y_t \in \delta_i$
parametrises a cycle $l_i+l_{t}$ where $l_{t}$ depends on $t$. Since there are only finitely many lines passing through $x$, we see that for $y_t \in \delta_i$ general, the curve
$l_{t}$ does not pass through $x$. Thus the tangent map $\tau_x$ is well-defined in the generic point of $\delta_i$ and maps it onto the point $[\PP(\Omega_{l_i, x})]$. In particular the points $[\PP(\Omega_{l_1, x})]$
and $[\PP(\Omega_{l_2, x})]$ are in the image of $\tau_x$.

Arguing by contradiction assume that the tangent map $\tau_x$ is defined in $y$, so it is continuous.
Then by what precedes 
$$
[\PP(\Omega_{l_1,x})] = \tau_x(y) = [\PP(\Omega_{l_2,x})].
$$
Since the tangent map is birational for minimal rational curves passing through a general point \cite{HM04}, we obtain that $l_1=l_2$, a contradiction.
\end{proof}

We will now investigate the indeterminacies of $\tau$ in a general point of $N$:

\begin{lemma} \label{lemmaconnect}
In the situation of Setup \ref{setupconics}, let $x \in X$ be a general point and $\sH_x$ be the normalised space of conics passing through $x$.
Let $y \in \sH_x$ be a point parametrising a cycle $l_1+l_2$ such that $x \in l_i$ for $i=1$ and $i=2$.
 Denote by $S \subset \sH_x$ the unique component containing $y$, and by $\mathcal D_S \subset \PP(\Omega_{X,x})$ the closure of the image of $\tau_x|_S$.

Then there exists a connected, maybe reducible curve $C \subset \PP(\Omega_{X,x})$ of degree at most two such that 
$C \subset \mathcal D_S$ and $[\PP(\Omega_{l_i, x})] \in C^{\rm sm}$.
\end{lemma}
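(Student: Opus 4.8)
Write $q_i := [\PP(\Omega_{l_i,x})]$. Since $x$ is general and the tangent map of the minimal family $\sK$ is birational onto the VMRT at a general point (\cite{HM04}, cf.\ Proposition \ref{p.tangentstanadard}(3)), the finite reduced set $\sK_x$ maps injectively into $\PP(\Omega_{X,x})$, so $q_1\neq q_2$. The plan is as follows. As $\sH_x$ is normal, its irreducible component $S$ through $y$ is a normal surface; let $\sigma\colon \widetilde S\to S$ resolve the indeterminacies of $\tau_x|_S$ (an isomorphism over the complement of the indeterminacy locus), write $\widetilde\tau\colon \widetilde S\to \PP(\Omega_{X,x})$ for the induced morphism, so $\mathcal D_S=\widetilde\tau(\widetilde S)$, and set $E:=\sigma^{-1}(y)$ and $C:=\widetilde\tau(E)$. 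By Zariski's main theorem $E$ is connected, hence so is $C\subset \mathcal D_S$.

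The first step is to check $q_i\in C$, which is essentially the argument of Lemma \ref{lemmaindeterminacy}: by Lemma \ref{lemmadeforminWx} there is an irreducible curve $\delta_i\subset \sH_x$ through $y$ whose general point parametrises a cycle $l_i+l_t$ with $l_t$ varying; since $S$ is the unique component of $\sH_x$ through $y$ we have $\delta_i\subset S$, and since only finitely many lines pass through $x$, a general such $l_t$ misses $x$, so $x$ is a smooth point of $l_i\cup l_t$ lying on $l_i$ and $\tau_x(y_t)=q_i$. Thus $\tau_x$ is constant equal to $q_i$ on $\delta_i$, so $\widetilde\tau$ contracts the strict transform $\widetilde\delta_i$ to $q_i$; as $\sigma|_{\widetilde\delta_i}\colon\widetilde\delta_i\to\delta_i$ is finite surjective we have $\emptyset\neq(\sigma|_{\widetilde\delta_i})^{-1}(y)\subset E$, so $\widetilde\delta_i$ meets $E$, and connectedness of $\widetilde\tau(\widetilde\delta_i\cup E)=\{q_i\}\cup C$ forces $q_i\in C$. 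In particular $C$ is not a point, so $\dim C=1$.

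The main obstacle is the degree bound. The idea is to pass to a one‑parameter degeneration. Fix a general smooth curve $B\subset S$ through $y$; the conics parametrised by $B$ give, after normalisation, a conic bundle $\pi_B\colon \mathcal S_B\to B$ with evaluation $e_B\colon \mathcal S_B\to X$ and a section $s_B$ with $e_B\circ s_B\equiv x$. As $l_1\cup l_2$ has arithmetic genus zero, $l_1\cap l_2=\{x\}$, so the central fibre of $\pi_B$ is the nodal conic $l_1\cup l_2$ with $s_B$ through its node, and $\mathcal S_B$ has an $A_{k-1}$‑singularity $\{uv=t^k\}$ there. Now blow up $X$ at $x$: let $\beta\colon \widehat X\to X$ with exceptional divisor $\widehat E\cong \PP(\Omega_{X,x})$ and $\sO_{\widehat E}(1)=-\widehat E|_{\widehat E}$; resolving $\mathcal S_B$ and the induced rational map to $\widehat X$ yields a smooth surface $\widehat{\mathcal S}_B$, flat over $B$, with a morphism $\widehat{e}_B\colon \widehat{\mathcal S}_B\to \widehat X$ lifting $e_B$. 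A general fibre of $\widehat{\mathcal S}_B\to B$ is the strict transform $\widehat C_t$ of a smooth conic through the smooth point $x$, so $\widehat C_t\cdot \widehat E=1$; the central fibre $F_0$ is a connected reduced chain whose two ends are the strict transforms $\widehat l_1,\widehat l_2$ (each meeting $\widehat E$ once) and whose interior components are contracted to $x$ by $e_B$, hence sent into $\widehat E$ by $\widehat{e}_B$. By flatness $(\widehat{e}_B)_*[F_0]\equiv (\widehat{e}_B)_*[\widehat C_t]=[\widehat C_t]$ in $N_1(\widehat X)$, so intersecting with $\widehat E$ gives $1=\widehat l_1\cdot \widehat E+\widehat l_2\cdot \widehat E+\Gamma\cdot \widehat E=2+\Gamma\cdot \widehat E$, where $\Gamma$ is the $\widehat E$‑supported push‑forward of the interior of $F_0$; hence $\Gamma\cdot \widehat E=-1$, i.e.\ $\deg\Gamma=1$, and since $\Gamma$ joins $q_1$ to $q_2$ it is the line through them. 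Letting $B$ range over general curves through $y$ and checking that the images of the interiors of the central fibres fill out $\widetilde\tau(E)$, one obtains $C=\Gamma$, so $\deg C=1\le 2$ and $q_1,q_2\in C=C^{\rm sm}$, finishing the proof. (If one wishes only to extract $\deg C\le 2$ with $C$ possibly reducible, a short case check — $C$ a line, a smooth conic, or two distinct lines through a point — still yields a subcurve of degree $\le 2$ on which both $q_1$ and $q_2$ are smooth, using $q_1\neq q_2$.)

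I expect the delicate point to be the flat‑limit computation on $\widehat{\mathcal S}_B$ together with the identification of the $F_0$‑part with $\widetilde\tau(E)$: one has to control $\widehat{e}_B$ along the exceptional chain over the node (possibly after further blow‑ups, with the chain and the position of $s_B$ depending on the smoothing order $k$), and to keep track of which components $\widehat{e}_B$ contracts to one of the $q_i$ and which surject onto $\Gamma$.
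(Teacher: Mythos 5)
Your approach is genuinely different from the paper's: the paper computes the image of $e_{\sH}^*\Omega_X$ in $\omega_{\Univ_{\sH^\dagger}/\sH^\dagger}$, identifies the total tangent variety with the blow-up of $\sI_{N}$, and reads off the exceptional fibre over $y$ using the classification of the singularities of $\Univ_{\sH}$ along $N$ from \cite{DH17}; you instead degenerate along an arc $B$ and run an intersection computation on $\mathrm{Bl}_xX$. The core of your computation (class of the limit cycle, $\Gamma\cdot\widehat E=-1$, connectedness of the limit) is attractive and essentially sound \emph{given its hypotheses}. But there is a genuine gap at the step ``as $l_1\cup l_2$ has arithmetic genus zero, $l_1\cap l_2=\{x\}$.'' The arithmetic genus statement concerns the fibre of $q_{\sH}$ over $[l_1+l_2]$, which is an \emph{abstract} nodal conic (two $\PP^1$'s glued at the single point lying over $x$); it says nothing about the intersection of the images $l_1,l_2\subset X$, which may meet at further points. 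In the generality of Setup \ref{setupconics} nothing excludes this: the statement $l\cap l'=\{x\}$ is precisely Proposition \ref{Prop:Intersects-one-point}, a nontrivial result proved only under the bigness Assumption \ref{assumptionbig}, via the splitting type of $V$ on the curves $B_l^i$. Your argument needs $l_1\cap l_2=\{x\}$ in an essential way: if $\widehat l_1\cap\widehat l_2\neq\emptyset$ (away from $\widehat E$), the support of the limit cycle is already connected through that intersection, so connectedness only forces the degree-one exceptional part $\Gamma_B$ to meet \emph{one} of $q_1,q_2$. Then $\Gamma_B$ is a line through $q_1$ or $q_2$ that may vary with $B$, the lines sweep out a cone, and no degree bound on $C=\widetilde\tau(E)\subset\bigcup_B\Gamma_B$ follows.

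Two further points. First, the identification of $\widetilde\tau(E)$ with the union of the limit loci over arcs $B$ — which you flag as the delicate point — is where the remaining content sits: one must reach a dense subset of \emph{every} component of $E$ (including those created by deeper blow-ups) using arcs that avoid the locus where $C_t$ is reducible or has several branches at $x$, since otherwise $\widehat C_t\cdot\widehat E=1$ fails and the whole count changes. This is fillable but is not done. Second, your conclusion is that $C$ is always a single line, which is strictly stronger than the statement and than the paper's proof: the paper explicitly allows $C$ to be an irreducible conic or a chain of two lines when $\Univ_{\sH}$ has compound $A_k$ singularities along $N$, and Theorem \ref{t.unionplanes} treats those cases separately. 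That discrepancy should at least have prompted a sanity check; it traces back to the fact that the paper's exceptional fibre records limits of tangent directions taken in all of $\Univ_{\sH}$, whereas your arcs are constrained to lie in $\sH_x=e_{\sH}^{-1}(x)$ (so their tangent vectors lie in $\ker d_ye_{\sH}$), and reconciling the two pictures is exactly the kind of verification the proposal omits.
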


\begin{remark*} 
By  Lemma \ref{lemmaindeterminacy} we already know that the points $[\PP(\Omega_{l_i, x})]$ are in $\mathcal D_S$. The additional information is that these points can be connected by a line
or a (maybe reducible) conic contained in $\mathcal D_S$.
\end{remark*}

\begin{proof}
For the proof we will compute the sheaf $\sS$ appearing in the definition of the total variety of tangents in Section \ref{Section:VOT}. Let $\sH^\dagger \subset \sH$ be the conic bundle locus, and denote by
$\Univ_{\sH^\dagger} \rightarrow \sH^\dagger$ the restriction of the universal family.
Our goal is to compute the image of the morphism
$$
e_\sH^* \Omega_X|_{\Univ_{\sH^\dagger}} \rightarrow \Omega_{\Univ_{\sH^\dagger}/\sH^\dagger}.
$$
Set $N^\dagger := N \cap \Univ_{\sH^\dagger}$.
Since $\Univ_{\sH^\dagger} \rightarrow \sH^\dagger$ is a conic bundle with reduced fibres and singular locus $N^\dagger$, its dualising
sheaf $\omega_{\Univ_{\sH^\dagger}/\sH^\dagger}$ is invertible and
the cotangent sheaf
$\Omega_{\Univ_{\sH^\dagger}/\sH^\dagger}$ is locally free in the complement of $N^\dagger$. So we obtain a morphism
$$
\Omega_{\Univ_{\sH^\dagger}/\sH^\dagger} \rightarrow \omega_{\Univ_{\sH^\dagger}/\sH^\dagger}.
$$
which is an isomorphism outside $N^\dagger$.
On the one hand since all the curves parametrised by $\sH^\dagger$ are generically immersed into $X$ and general ones are immersed, the locus where the composition
$$
e_\sH^* \Omega_X|_{\Univ_{\sH^\dagger}} \rightarrow \Omega_{\Univ_{\sH^\dagger}/\sH^\dagger} \rightarrow \omega_{\Univ_{\sH^\dagger}/\sH^\dagger}
$$
is not surjective has codimension at least two in $\Univ_{\sH^\dagger}$. 
On the other hand we know
by Lemma \ref{lemmaindeterminacy}  that $N^\dagger$ is contained in the indeterminacy locus of $\tau$, so the composition is not surjective in a generic point of $N^\dagger$.
Consequently, we obtain a morphism
$$
e_\sH^* \Omega_X|_{\Univ_{\sH^\dagger}} \rightarrow \Omega_{\Univ_{\sH^\dagger}/\sH^\dagger} \rightarrow \sI_{N^\dagger} \otimes \omega_{\Univ_{\sH^\dagger}/\sH^\dagger}.
$$ 
Fix now $N_j \subset N^\dagger$ an irreducible component.
We claim that in the generic point of $N_j$, the map above is surjective.

{\em Assuming the claim, let us show how to conclude.}
Up to replacing $\sH^\dagger$ by a big open subset we can assume that 
we have a surjection $e_\sH^* \Omega_X|_{\Univ_{\sH^\dagger}} \rightarrow \sI_{N^\dagger} \otimes \omega_{\Univ_{\sH^\dagger}/\sH^\dagger}$.
By \cite[Prop.A.2.2.d]{Eis95} this induces a surjective map between the symmetric algebras:
$$
S^{\bullet} e_\sH^* \Omega_X|_{\Univ_{\sH^\dagger}} \rightarrow S^{\bullet} (\sI_{N^\dagger} \otimes \omega_{\Univ_{\sH^\dagger}/\sH^\dagger})
$$
Since $\sI_{N^\dagger}$ is an ideal sheaf, it is well-known \cite{Mic64} that
$$
S^{\bullet} (\sI_{N^\dagger})/\mbox{Tor} \simeq R(\sI_{N^\dagger}),
$$
where $R(\sI_{N^\dagger})$ is the Rees algebra of the ideal. Thus if $\Univ_{\sH^\dagger}' \rightarrow \Univ_{\sH^\dagger}$
is the blow-up of $\sI_{N^\dagger}$ we obtain an inclusion
$$
\Univ_{\sH^\dagger}'  \hookrightarrow \PP(e_\sH^* \Omega_X).
$$
In particular the fibres of the blowup are naturally embedded in the fibres of the projectivised bundle $\PP(e_\sH^* \Omega_X) \rightarrow \sH$.
Since the blowup of a variety is irreducible \cite[II,Prop.7.16]{Har77}, the variety $\Univ_{\sH^\dagger}'$
coincides with the unique irreducible component of $\PP(\sS|_{\Univ_{\sH^\dagger}})$ that dominates $\Univ_{\sH^\dagger}$. Thus the closure of its image is the total variety of tangents.

If $\Univ_{\sH}$ is generically smooth along $N_j$, the exceptional curve over a general point  $y \in N_j$ is a line that connects the strict transforms of the curves $\delta_1, \delta_2$
passing through $y$. By construction these strict transforms intersect
$\PP(e_\sH^*(\Omega_{X})_y) = \PP(\Omega_{X,x})$, and hence the exceptional line, exactly in the points $\PP(\Omega_{l_i,x})$.

If $Z$ is singular along $N_j$, it has compound $A_k$-singularities in a general point $y \in N$ \cite[Lemma 3.3]{DH17}. In this case the fibre of the blow-up is either an irreducible conic (for $cA_1$-singularities) or a chain of two lines (for $cA_k$-singularities with $k \geq 2$). The strict transforms of the curves $\delta_1$ and $\delta_2$ intersect the exceptional curve in its smooth locus \cite[Lemma 3.2]{DH17} and the claim follows as before.

{\em Proof of the claim.}
Let $[l_1+l_2]$ be a general point in $q_\sH(N_j)$.
The fibre
over the point $[l_1+l_2]$ is the union of two copies of $\PP^1$ intersecting exactly in the point $y = \fibre{\pi}{[l_1+l_2]} \cap N$.
Since $\omega_{\Univ_{\sH^\dagger}/\sH^\dagger}$ is locally free and $q_\sH$ is flat over the conic bundle locus,
the restriction of $\omega_{\Univ_{\sH^\dagger}/\sH^\dagger}$ to $\fibre{\pi}{[l_1+l_2]}$ coincides with $\omega_{\fibre{\pi}{[l_1+l_2]}}$.
The canonical sheaf of a reducible conic is well-known: if $\PP^1 \hookrightarrow l_1 \cup l_2$
is one of the irreducible components we 
have 
$$
\omega_{\Univ_{\sH^\dagger}/\sH^\dagger}  \otimes \sO_{\PP^1} \simeq \omega_{\fibre{\pi}{[l_1+l_2]}} \otimes \sO_{\PP^1} \simeq \sO_{\PP^1}(-1)
$$ 
and the canonical inclusion
$$
 \sO_{\PP^1}(-2) \simeq \omega_{\PP^1}  \rightarrow \omega_{\fibre{\pi}{[l_1+l_2]}} \otimes \sO_{\PP^1} \simeq \sO_{\PP^1}(-1)
$$
vanishes exactly on the intersection point. 
For a general point $[l_1+l_2] \in \Delta^j$ corresponding to $N_j$, the 
corresponding lines $\holom{f_i}{\PP^1}{X}$ are immersed, so the cotangent map
$f_i^* \Omega_X \rightarrow \Omega_{\PP^1} \simeq \omega_{\PP^1}$ is surjective.
Since the map
$$
e_\sH^* \Omega_X \otimes \sO_{\PP^1} \rightarrow  \omega_{\Univ_{\sH^\dagger}/\sH^\dagger}  \otimes \sO_{\PP^1}   
$$
factors through $f_i^*$, this shows the claim.
\end{proof}

\subsection{Tangent variety of conics} \label{subsection-tgt-conics}

We will now work under the following

\begin{assumption}{\rm \label{assumptionbig2}
Let $X$ be a Fano manifold of dimension $n\geq 3$ and Picard number $1$
such that $-K_X=2A$ where $A$ is the ample generator of the $\pic(X)$.  Assume that the tangent bundle $T_X$ is big, and there exists a covering family $\sK$ of minimal rational curves on $X$ with anti-canonical degree two.
}
\end{assumption}

Moreover, we shall also use the notations introduced in Section \ref{Section:Setup}, the commutative diagram \eqref{Eq:Diagram-Bigness} and Theorem \ref{Thm:Summary-VMRT}. 

Using the terminology of the preceding subsection, let $\sH$ be a covering family of conics on $X$ obtained by smoothing a couple of lines passing through a general point of $X$. Let $\Delta$ be the discriminant locus of $\sH$. As explained in Setup \ref{setupconics} we can assume that 
the following holds:

\textit{There exists an irreducible component $\Delta^j$ of $\Delta$ such that for a general point $x\in X$, there exists a point $y\in \Delta^j$ parametrising a cycle $l_1+l_2$ satisfying $l_1\not=l_2$, $x\in l_i$ for $i=1$ and $i=2$, and $[l_1]\in B_{l_2}^1$ (see Notation \ref{Notation:C_l^1}). We remark that by Theorem \ref{Thm:Summary-VMRT}, we also have $[l_2]\in B_{l_1}^1$.}

\begin{notation} \label{notationdelta}
We denote by $\Delta^\sharp$ the union of all the irreducible components $\Delta^j$ of $\Delta$ satisfying the assumption above. 
\end{notation}

By \eqref{Eq:Diagram-Bigness}, we have the following diagram 
\[
\begin{tikzcd}[column sep=large, row sep=large]
	M:=\sU_{\sK}^s=\PP(V) \arrow[r,"f"] \arrow[d,"q:=q_{\sK}^s"] 
	    & X \\
	\sK^s 
	    &  
\end{tikzcd}
\]

Let $\bar{\sH}=f^*\sH$ be the pull-back of $\sH$ by the generically finite morphism $f$ (see Definition \ref{Defn:pull-back-forward-web}). Given an irreducible component $\bar{\sH}_k$ of $\bar{\sH}$, let $\sU_{\bar{\sH}_k}\subset \bar{\sH}_k\times M$ be the universal family. Then we have the following morphism
\[
\Phi_k:=(id,f) : \bar{\sH}_k \times M \rightarrow \bar{\sH}_k\times X.
\]
Then $\Phi_k(\sU_{\bar{\sH}_k})\rightarrow \bar{\sH}_k$ is a well-defined family of $1$-cycles (cf. \cite[I, Thm. 3.17 and Thm. 6.8]{Ko96}). In particular, there exists a morphism $\bar{\sH}_k\rightarrow T\subset \Chow{X}$, where $T$ is the image of $\sH$ in $\Chow{X}$. As $\bar{\sH}_k$ is normal by construction and $\sH$ is the normalisation of $T$, we get a natural morphism $f_{\flat,c}^k:\bar{\sH}_k\rightarrow \sH$ (see also Definition \ref{Defn:pull-back-forward-web}). Denote by $\bar{\Delta}^{\sharp}_k$ the union of irreducible components of $(f_{\flat,c}^k)^{-1}(\Delta^{\sharp})$ which dominates an irreducible component of $\Delta^{\sharp}$. Let $\bar{\Delta}_k^{\sharp,j}$ be an irreducible component of $\bar{\Delta}_k^{\sharp}$ and let $\sU_{\bar{\Delta}^{\sharp,j}_k}\rightarrow \bar{\Delta}^{\sharp,j}_k$ be the universal family. Take the normalisation $\widetilde{\sU}_{\bar{\Delta}^{\sharp,j}_k}$ of $\sU_{\bar{\Delta}^{\sharp,j}_k}$ and then take the Stein factorisation
\[
\widetilde{\sU}_{\bar{\Delta}^{\sharp,j}_k} \rightarrow \widetilde{\Delta}^{\sharp,j}_k \rightarrow \bar{\Delta}^{\sharp,j}_k.
\]
Then it is clear that a general fibre of $\widetilde{\sU}_{\bar{\Delta}^{\sharp,j}_k}\rightarrow \widetilde{\Delta}^{\sharp,j}_k$ is a reduced and irreducible $1$-cycle parametrised by  $\bar{\sK}=f^*\sK$. In particular, we have a rational map $\widetilde{\Delta}^{\sharp,j}_k\dashrightarrow \bar{\sK}$, which maps each irreducible component of $\widetilde{\Delta}^{\sharp,j}_k$ onto an irreducible component of $\bar{\sK}$.

\begin{notation}
	\label{d.H^v}
	We denote by  $\bar{\sH}^v$ the union of the irreducible components $\bar{\sH}_k$ of $\bar{\sH}$ such that $\bar{\sK}_0$ is contained in the image of the rational map $\widetilde{\Delta}^{\sharp,j}_k\dashrightarrow \bar{\sK}$ for some $j$ \footnote{Recall that $\bar{\sK}_0$ is the irreducible component of $\bar{\sK}$ parametrising the fibres of $q$.}.
\end{notation}

%For an irreducible component $\bar{\sH}_k$ of $\bar{\sH}^v$, we have the following simple observation.

%\begin{lemma}
	%Let $\bar{\sH}_k$ be an irreducible component of $\bar{\sH}^v$ and let $x\in X$ be a general point. Let $\bar{\Delta}^k_j$ be an irreducible component of $\bar{\Delta}^k$ satisfying the assumption of Notation \ref{d.H^v} and denote by $\Delta_j$ the its image in $\Delta$. Then for a general point $\bar{y}\in \Delta_j$ parametrising a cycle $l_1+l_2$ such that $x\in l_i$ for $i=1$ and $i=2$, then there exists a point $\bar{x}\in Y$ such that $f(\bar{x})=x$ and there exists a point $\bar{y}\in \bar{\Delta}^k_j$ parametrising a cycle $C_{\bar{y}}$ and satisfying $f_{\flat,c}^k(\bar{y})=y$ and $\bar{x}\in C_{l_1}^0\subset \supp(C_{\bar{y}})$ (exchanging the indexes of $l_1$ and $l_2$ if necessary).
%\end{lemma}

\subsubsection{Pull-back of reducible conics} 

In this subsection, we study the behavior of the pull-back of reducible conics by $f$. We know by Theorem \ref{Thm:Summary-VMRT}(2) that the preimage $\fibre{f}{l}$ of a general line $l$ can have three different types of irreducible components: the curve $C^0_l$ that belong to the universal family, the distinguished component $C^1_l$ that is not birational onto $l$, and possibly some other components $C^i_l$ with $i \geq 2$. Our first goal is to understand how these components fit together when looking at the preimage
$\fibre{f}{l_1 \cup l_2}$. 

\begin{proposition}
	\label{p.0-1} Under the Assumption \ref{assumptionbig2},
	let $\bar{\sH}_k$ be an irreducible component of $\bar{\sH}^v$. Let $x\in X$ be a general point and let $y\in \Delta^{\sharp}$ be a point parametrising a cycle $l_1+l_2$ such that $x\in l_i$ for $i=1$ and $i=2$. If $\bar{y}\in \bar{\Delta}^{\sharp}_{k}$ is a point parametrising a cycle $C_{\bar{y}}$ such that $f_{\flat,c}^k(\bar{y})=y$ and $\supp(C_{\bar{y}})$ contains the irreducible curve $C_{l_i}^0$ for $i=1$ or $2$, then $C_{l_{3-i}}^1$ is also contained in $\supp(C_{\bar{y}})$. 
\end{proposition}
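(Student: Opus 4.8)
The plan is to argue entirely at the level of irreducible components of $f^{-1}(l_1)$ and $f^{-1}(l_2)$, exploiting that $C_{\bar y}$ is connected. Without loss of generality assume $i=1$, so $C_{l_1}^0\subseteq\supp C_{\bar y}$ and we must produce $C_{l_2}^1\subseteq\supp C_{\bar y}$. Set $\tilde x:=(l_1,x)\in M$; since $C_{l_1}^0=q^{-1}([l_1])$ maps to $l_1$ by the normalisation and $x$ is a smooth point of $l_1$ (Remark~\ref{remarkline}), $\tilde x$ is the unique point of $C_{l_1}^0$ lying over $x$. Because $f_{\flat,c}^k(\bar y)=y$ and $y$ parametrises the reducible conic $l_1+l_2$, the set $f(\supp C_{\bar y})$ equals $l_1\cup l_2$; hence every irreducible component of $\supp C_{\bar y}$ is a component of $f^{-1}(l_1)$ or of $f^{-1}(l_2)$, and at least one of them dominates $l_2$. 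Finally $\supp C_{\bar y}$ is connected, being the support of a member of the irreducible family $\bar{\sH}_k$ whose general members are irreducible curves (components of $f^{-1}$ of a general conic).

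The two inputs I would isolate are the following. First, since $y\in\Delta^\sharp$ we have $[l_1]\in B_{l_2}^1$ (cf. Notation~\ref{notationdelta}), hence also $[l_2]\in B_{l_1}^1$ by Theorem~\ref{Thm:Summary-VMRT}, and Proposition~\ref{Prop:Intersects-one-point} yields $l_1\cap l_2=\{x\}$ together with $C_{l_1}^0\cap C_{l_2}^i=\emptyset$ and $C_{l_2}^0\cap C_{l_1}^i=\emptyset$ for every $i\neq 1$. Second, $f$ is unramified in a neighbourhood of $C_{l_1}^0$ (see the Remark after Proposition~\ref{Prop:Degree-VMRT}), so near $\tilde x$ both $f^{-1}(l_1)$ and $f^{-1}(l_2)$ are analytically isomorphic to $l_1$, respectively $l_2$, near $x$, hence locally irreducible there; thus $C_{l_1}^0$ is the only component of $f^{-1}(l_1)$ through $\tilde x$, and there is exactly one component of $f^{-1}(l_2)$ through $\tilde x$. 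I claim the latter is $C_{l_2}^1$: as $x$ is general, $[l_1]$ is a general point of $B_{l_2}^1$, so by Lemma~\ref{Lemma:Birationality-Multiple-Components} the birational morphism $C_{l_2}^1\to B_{l_2}^1$ is an isomorphism over $[l_1]$, and, using $l_1\cap l_2=\{x\}$, the point of $C_{l_2}^1$ lying over $[l_1]$ is precisely $(l_1,x)=\tilde x$.

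Assembling these: $\supp C_{\bar y}$ is connected, contains $C_{l_1}^0$, and is strictly larger than $C_{l_1}^0$ because it has a component dominating $l_2$. Hence $C_{l_1}^0$ meets some other component $D\subseteq\supp C_{\bar y}$. If $D$ dominates $l_2$, then $C_{l_1}^0\cap D\subseteq f^{-1}(l_1\cap l_2)\cap C_{l_1}^0=\{\tilde x\}$, and by the first input $D$ is neither $C_{l_2}^0$ nor any $C_{l_2}^i$ with $i\geq 2$; since $\tilde x$ lies on a unique component of $f^{-1}(l_2)$, namely $C_{l_2}^1$, we conclude $D=C_{l_2}^1\subseteq\supp C_{\bar y}$, as desired. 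The only remaining possibility is that $C_{l_1}^0$ is linked inside $C_{\bar y}$ to the $l_2$-part solely through a chain of further components of $f^{-1}(l_1)$, which can only occur at points of $C_{l_1}^0$ lying over singular (necessarily multibranch, since $l_1$ is immersed) points of $l_1$. \emph{This is the main obstacle.} I expect to dispose of it by a closer analysis of the reducible members of $\bar{\sH}_k$ — using that $\bar{\sH}_k\subseteq\bar{\sH}^v$ and $\bar y\in\bar\Delta_k^\sharp$, so that the ``line components'' of $C_{\bar y}$ form a prescribed degenerate configuration — and by propagating the above local/combinatorial analysis along any such chain, invoking Proposition~\ref{Prop:Intersects-one-point} (which controls exactly which components of $f^{-1}(l_2)$ can be met at a point over $x$) so that the first $l_2$-component the chain reaches is forced to be $C_{l_2}^1$.
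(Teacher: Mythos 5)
Your argument is sound up to the point you yourself flag: the identification of $\tilde x$ as the unique point of $C_{l_1}^0$ over $x$, the fact that the unique component of $f^{-1}(l_2)$ through $\tilde x$ is $C_{l_2}^1$ (via Proposition \ref{Prop:Intersects-one-point}, Lemma \ref{Lemma:Birationality-Multiple-Components} and the unramifiedness of $f$ along $C_{l_1}^0$), and the conclusion in the case where $C_{l_1}^0$ directly meets a component of $\supp(C_{\bar y})$ dominating $l_2$. But the ``main obstacle'' you defer is a genuine gap, not a technicality, and the sketch you give for closing it does not go through. Nothing proved in the paper excludes that $l_1$ has multibranch singular points away from $x$ (the paper explicitly warns that general members of $\sK$ may be singular), so $C_{l_1}^0$ may meet components $C_{l_1}^i$, $i\ge 1$, over such points, and the connected curve $\supp(C_{\bar y})$ could a priori reach its $l_2$-part only through a chain of such components. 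Propagating your local analysis along that chain then stalls: the only intersection control available is Proposition \ref{Prop:Intersects-one-point}, which governs $C_{l_1}^0\cap C_{l_2}^i$ (an intersection with the \emph{fibre} component) but says nothing about $C_{l_1}^j\cap C_{l_2}^i$ for $j\ge 1$. So when the chain first touches $f^{-1}(l_2)$, nothing forces that component to be $C_{l_2}^1$ rather than some $C_{l_2}^i$ with $i\ne 1$, and the desired conclusion would simply fail along that route.

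The paper's proof avoids the connectivity/chain analysis altogether by a deformation--specialisation argument, and I would encourage you to adopt that mechanism. Since $f_{\flat,c}^k(\bar y)=y$, the curve $\delta_2\subset\sH_x$ of Lemma \ref{lemmadeforminWx} (whose general point parametrises $l_2+l_t$ with $[l_t]\in B_{l_2}^1$ varying) lifts to an irreducible curve $\bar\delta_2\subset\bar\sH_{k,\bar x}$ through $\bar y$, where $\bar x=\tilde x$ is your distinguished point. Every cycle $C_{\bar y_t}$, $\bar y_t\in\bar\delta_2$, passes through $\bar x$ by construction; for general $t$ one has $x\notin l_t$, so the component of $\supp(C_{\bar y_t})$ through $\bar x$ must lie in $f^{-1}(l_2)$, hence equals $C_{l_2}^1$ by exactly the local analysis you already carried out at $\tilde x$. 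Specialising $t$ to the point $\bar y$ then yields $C_{l_2}^1\subset\supp(C_{\bar y})$. In other words, the missing ingredient is not a finer combinatorics of the components of $f^{-1}(l_1\cup l_2)$, but the observation that $\bar y$ moves in a one-parameter family of cycles all containing $\bar x$ and generically mapping to conics $l_2+l_t$ that meet $\bar x$ only along $l_2$.
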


\begin{proof}
	Without loss of generality, we may assume that $\supp(C_{\bar{y}})$ contains $C_{l_1}^0$. Let $\bar{x}\in C_{l_1}^0\subset M$ be the unique point such that $f(\bar{x})=x$. Denote by $\bar{\sH}_{k,\bar{x}}$ (resp. $\sH_x$) the normalised space of curves passing through $\bar{x}$ (resp. $x$). Then the induced morphism $\bar{\sH}_{k,\bar{x}}\rightarrow \sH_x$ is generically finite onto its image. Note that this map may be not surjective in general and its image is a union of irreducible components of $\sH_x$. By Lemma \ref{lemmadeforminWx}, for $i=1$ and $i=2$, there exists an irreducible curve $\delta_i\subset \sH_x$ such that $y\in \delta_i$ and a general point $y_t\in \delta_i$ corresponds to a cycle $l_i+l_t$ where $[l_t]\in B_{l_i}^1$ depends on $t$. As $f_{\flat,c}^k(\bar{y})=y$, there exists an irreducible curve $\bar{\delta}_i\subset \bar{\sH}_{k,\bar{x}}$ such that $\bar{y}\in \bar{\delta}_i$ and $f_{\flat,c}^k(\bar{\delta}_i)=\delta_i$ for $i=1$ and $i=2$.
	
	Let $\bar{y}_t\in \bar{\delta}_2$ be a general point parametrising a cycle $C_{\bar{y}_t}$. Then $y_t=f_{\flat,c}^k(\bar{y}_t)\in \delta_2$ parametrises a cycle $l_2+l_t$ such that $f(\supp(C_{\bar{y}_t}))=l_2\cup l_t$. Since $l_t$ is a general line parametrised by $B_{l_2}^1$, we have $x\not\in l_t$. Note that $l_2$ is smooth at $x$ and $\bar{x}\in C_{l_2}^1$ as $[l_1]\in B_{l_2}^1$. As a consequence, as $\bar{x}\in \supp(C_{\bar{y}_t})$, it follows that $\supp(C_{\bar{y_t}})$ contains the curve $C_{l_2}^1$. Hence, the support $\supp(C_{\bar{y}})$ contains the curve $C_{l_2}^1$ by specialisation.
\end{proof}

\begin{lemma}	
	\label{Lemma:Reducedness-pull-back-reducible-conics} 
	Under the Assumption \ref{assumptionbig2}, let $y\in \Delta^{\sharp}$ be a general point parametrising a cycle $l_1+l_2$ such that $x\in l_i$ for $i=1$ and $i=2$. Then we have the following equality as $1$-cycles
	\[
	\sum_{k=1}^s \sum_{\bar{y}\in \bar{\sH}_k^y} C_{\bar{y}} = \sum_{i=0}^r (C_{l_1}^i + C_{l_2}^i),
	\]
	where $\bar{\sH}_{k}^y= (f_{\flat,c}^{k})^{-1}(y)$. In particular, the cycle $C_{\bar{y}}$ is reduced for any point $\bar{y}\in \bar{\sH}_k^y$ and any $1\leq k\leq s$. Moreover, for any couple of points $\bar{y}_1\in \bar{\sH}_k^y$ and $\bar{y}_2\in \bar{\sH}_{k'}^{y}$ (maybe $k=k'$) the $1$-cycles $C_{\bar{y}_1}$ and $C_{\bar{y}_2}$ have no common components.
\end{lemma}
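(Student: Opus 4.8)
The plan is to show that the cycle-theoretic pull-back $f^*(l_1+l_2)$, which is the left-hand side of the claimed identity, coincides with the effective cycle $\sum_{i=0}^r(C_{l_1}^i+C_{l_2}^i)$ on the right. First I would record the two basic facts we have available: the map $f$ is unramified (indeed a local biholomorphism) near $C_{l_i}^0$ since these are fibres of $q$ over points of $\sK_o$ (Remark \ref{remarkVample}), and $l_1$, $l_2$ are general lines, so $\fibre{f}{l_i}$ is reduced with its decomposition $\bigcup_{i=0}^r C_{l_i}^i$ into irreducible components of trivial normal bundle (Theorem \ref{Thm:Summary-VMRT}(2)). The key point is that $f$ is \'etale over a general point of each $l_i$ — so over the generic point of each irreducible component of $l_1\cup l_2$ the fibre of $f$ consists of $\deg f$ reduced points — hence the cycle-theoretic preimage $f^*(l_1+l_2)$ is exactly $\sum_i (C_{l_1}^i+C_{l_2}^i)$ \emph{with multiplicity one on each component}, once we check no component appears in both $\fibre{f}{l_1}$ and $\fibre{f}{l_2}$; this last is Proposition \ref{Prop:Intersects-one-point} together with Theorem \ref{Thm:Summary-VMRT}, which tell us $l_1\cap l_2=\{x\}$ and control how the $C^i$'s meet $C^0$'s.

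Next I would identify $f^*(l_1+l_2)$ with $\sum_{k}\sum_{\bar y\in\bar\sH_k^y} C_{\bar y}$. Here the point is definitional: $\bar\sH=f^*\sH$ is the covering family of conics on $M$ obtained by taking set-theoretic preimages of general conics (Definition \ref{Defn:pull-back-forward-web}), and the $1$-cycle attached to $\bar y\in\bar\sH_k^y=(f^k_{\flat,c})^{-1}(y)$ is the cycle-theoretic preimage of the corresponding component of the reducible conic $l_1\cup l_2$. Summing over all $k$ and all $\bar y$ over $y$ recovers the full cycle $f^*(l_1\cup l_2)$. The one subtlety is that a priori the family $\bar\sH$ might \emph{not} contain a component whose general member maps to a general reducible conic in $\Delta^\sharp$; but this is exactly why we restricted to $\bar\sH^v$ (Notation \ref{d.H^v}), and one checks using Proposition \ref{p.0-1} that the fibres $C_{\bar y}$ for $\bar y\in\bar\Delta^\sharp_k$ with $k$ running over components of $\bar\sH^v$ do pick up every $C^0_{l_i}$ and hence, by Proposition \ref{p.0-1}, also the paired components $C^1_{l_{3-i}}$. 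The indexing $1\le k\le s$ is chosen so that the $\bar\sH_k$ are precisely the components of $\bar\sH^v$, and over a general $y\in\Delta^\sharp$ every irreducible component of $f^{-1}(l_1\cup l_2)$ lies over $y$ via exactly one $f^k_{\flat,c}$.

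From the cycle identity $\sum_k\sum_{\bar y} C_{\bar y}=\sum_i(C_{l_1}^i+C_{l_2}^i)$, the remaining assertions are immediate. The right-hand side is reduced: each $C_{l_a}^i$ appears with coefficient $1$ (lines are general, Theorem \ref{Thm:Summary-VMRT}(2)) and $\fibre{f}{l_1}$, $\fibre{f}{l_2}$ share no component because $l_1\ne l_2$ and $f$ is unramified near the $C^0$'s, so a shared component would force (via Proposition \ref{Prop:Intersects-one-point}) a contradiction with $l_1\cap l_2=\{x\}$ and with the splitting types of $n^*V$ computed in Lemma \ref{Lemma:Birationality-Multiple-Components} and Proposition \ref{Prop:Universal-Family-Trivial-Degree}. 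Since the total effective cycle on the left equals this reduced cycle, each summand $C_{\bar y}$ must itself be reduced and, for $\bar y_1\in\bar\sH_k^y$, $\bar y_2\in\bar\sH_{k'}^y$ distinct, the cycles $C_{\bar y_1}$ and $C_{\bar y_2}$ can share no component — otherwise that component would occur with coefficient $\ge 2$ in the sum.

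I expect the main obstacle to be the bookkeeping in the second paragraph: verifying that summing the $C_{\bar y}$ over \emph{all} components $\bar\sH_k$ of $\bar\sH^v$ (and over $\bar\Delta^\sharp_k$) recovers the \emph{entire} preimage cycle $f^*(l_1\cup l_2)$ rather than a proper subcycle. This requires knowing that every irreducible component of $\fibre{f}{l_1}\cup\fibre{f}{l_2}$ does appear in some $C_{\bar y}$ with $\bar y$ lying over the \emph{general} point $y\in\Delta^\sharp$ — i.e. that no component of $f^{-1}(l_i)$ is ``lost'' because it only shows up over special reducible conics. The way to handle this is to run the deformation argument of Lemma \ref{lemmadeforminWx} and Proposition \ref{p.0-1} for \emph{each} irreducible component $C^0_{l_i}, C^1_{l_i}, C^i_{l_a}$ ($i\ge 2$) in turn, using that $B_{l_a}^i$ is itself covering (Proposition \ref{Prop:Universal-Family-Trivial-Degree}) so that through a general point of $C^0_{l_a}$ there passes a curve of $\bar\sK$ meeting it; semicontinuity of cycle length in the flat family $\widetilde\sU_{\bar\Delta^\sharp_k}\to\widetilde\Delta^\sharp_k$ over the conic bundle locus then forces these components into the specialised cycle $C_{\bar y}$. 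Everything else is essentially formal once the cycle identity is in place.
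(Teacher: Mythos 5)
There is a genuine gap. Your second paragraph treats the identity $\sum_k\sum_{\bar y\in\bar\sH_k^y}C_{\bar y}=f^*(l_1+l_2)$ as essentially definitional, but the cycles $C_{\bar y}$ are \emph{flat limits} of the cycles $C_{\bar u}$ (for $\bar u$ a nearby point of $\bar\sH_k$ parametrising a smooth conic), not set- or cycle-theoretic preimages of the components of $l_1\cup l_2$. A priori such a limit can be non-reduced, two limits $C_{\bar y_1}$, $C_{\bar y_2}$ can share components, the number of points of $\bar\sH_k$ over $y$ can drop, and some component of $f^{-1}(l_1\cup l_2)$ could fail to appear in any $C_{\bar y}$ — and ruling all of this out is precisely the content of the lemma. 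Your closing argument (``otherwise that component would occur with coefficient $\ge 2$ in the sum'') presupposes the identity it is meant to establish. The fix you sketch (deformation arguments component by component plus ``semicontinuity of cycle length'') addresses at best the surjectivity half (every component appears) but gives no upper bound, so it cannot exclude multiplicities or shared components. There is also a concrete misreading: the index $k$ in the lemma runs over \emph{all} irreducible components of $\bar\sH=f^*\sH$, not only those of $\bar\sH^v$; restricting to $\bar\sH^v$ would omit the cycles supported on the components $C_{l_i}^j$ with $j\ge 2$ (these are exactly the ones analysed in Corollary \ref{c.components-pull-back-conics}, which is deduced \emph{from} this lemma).

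The paper's proof supplies the missing two-sided control. First, working over an open $X^\circ\subset X$ with $\codim(X\setminus X^\circ)\ge 2$ over which $f$ is \'etale, the base change $\Phi^\circ=(\mathrm{id},f)$ applied to the universal family shows that every irreducible component of $f^{-1}(l_1\cup l_2)$ lies in the support of some $C_{\bar y}$ (support containment, i.e. the lower bound). Second, each $f^k_{\flat,c}$ is finite and flat over a big open subset, so $\sharp(\bar\sH_k^y)\le d_k=\sharp(\bar\sH_k^u)$ for $u$ general; since $f^*A\cdot C_{\bar y}$ is constant in each family, this gives
$\sum_k\sum_{\bar y}f^*A\cdot C_{\bar y}\le\sum_k\sum_{\bar u}f^*A\cdot C_{\bar u}=f^*A\cdot f^{-1}(C)_{\red}=2d$,
while the support containment gives $\sum_k\sum_{\bar y}f^*A\cdot C_{\bar y}\ge f^*A\cdot f^{-1}(l_1+l_2)_{\red}=2d$. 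The resulting squeeze forces the left-hand cycle to equal the reduced cycle $\sum_i(C_{l_1}^i+C_{l_2}^i)$, from which reducedness and disjointness of the $C_{\bar y}$ follow at once. Without an argument of this quantitative type your proof does not close.
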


\begin{proof}
	Firstly we show that the support of $f^{-1}(l_1\cup l_2)$ is contained in the support of the $1$-cycle on the left hand of the equality.
	
	Let $\sU_{\sH}\subset \sH\times X$ be the universal family. As $\sH$ is normal and $\Delta^{\sharp}\subset \sH$ is a divisor, we may assume that the point $y$ is contained in the smooth locus of $\bar{\sH}$. Since the morphism $f:M:=\sU_{\sK}^s\rightarrow X$ is generically finite and both $M$ and $X$ are smooth, there exists a Zariski open subset $X^\circ\subset X$ such that $\codim(X\setminus X^{\circ})\geq 2$ and the induced morphism $M^\circ:=f^{-1}(X^\circ)\rightarrow X^\circ$ is \'etale. Denote by $\sH^\circ$ the largest Zariski open subset of the smooth locus of $\sH$ such that the cycles parametrised by $\bar{\sH}^\circ$ are contained in $X^{\circ}$. By \cite[II, Proposition 3.7]{Ko96}, the variety $\sH^{\circ}$ is not empty and we may also assume that $\bar{\sH}_k^y\subset \sH^{\circ}$. Consider the following morphism
	\[
	\Phi^{\circ}:=(id,f): \sH^{\circ}\times M^{\circ} \rightarrow \sH^{\circ}\times X^{\circ}.
	\]
	Then $\Phi^{\circ}$ is \'etale. Let $\sU^{\circ}_k\rightarrow \sH^{\circ}$ be an irreducible component of $(\Phi^{\circ})^{-1}(\sU_{\sH^{\circ}})$ and let $\widetilde{\sU}_{k}^{\circ}\rightarrow \widetilde{\sH}^{\circ}_k\rightarrow \sH^{\circ}$ be the Stein factorisation with $\widetilde{\sU}^{\circ}_k \rightarrow \sU^{\circ}_k$ normalisation. Then the fibration $\widetilde{\sU}_{k}^{\circ}\rightarrow \widetilde{\sH}^{\circ}_k$ is a well-defined family of algebraic cycles in $M^{\circ}$. In particular, there exists a natural morphism $\widetilde{\sH}_k^{\circ}\rightarrow \Chow{M}_1$ such that the Zariski closure of its image is an irreducible component of $\bar{T}$, where $\bar{T}$ is the image of $\bar{\sH}$ in $\Chow{M}_1$. On the other hand, by the definition of $\bar{\sH}$, each irreducible component of $\bar{T}$ is obtained in such a way. Let $\widetilde{\sH}_k^y\subset \widetilde{\sH}_k^{\circ}$ be the inverse image of $y\in \sH^{\circ}$ under the finite morphism $\widetilde{\sH}^{\circ}_k\rightarrow \sH^{\circ}$. Then it is clear from the construction above that the support of $f^{-1}(l_1+l_2)$ is contained in the following curve
	\[
	\supp\left(\sum_{k=1}^s \sum_{\bar{y}\in \bar{\sH}_k^y} C_{\bar{y}}\right) = \supp \left(\sum_{k=1}^s \sum_{\bar{y}\in\widetilde{\sH}^y_k} C_{\bar{y}}\right).
	\]
	
	Secondly, for each $1\leq k\leq s$, the induced morphism $\bar{\sH}_k^{\circ}:=(f_{\flat,c}^k)^{-1}(\sH^{\circ})\rightarrow \sH^{\circ}$ is a finite morphism. Moreover, after removing a subvariety of codimension two, we may assume that $\bar{\sH}_k^{\circ}$ is smooth. In particular, the restriction of $f_{\flat,c}^k$ to $\bar{\sH}_k^{\circ}$ is flat. Denote the degree of $f_{\flat,c}^k$ by $d_k$. Then we have $\sharp(\bar{\sH}_k^y)\leq d_k$ by the flatness. On the other hand, let $u\in \sH^{\circ}$ be a general point parametrising a conic. Then we obtain $\sharp(\bar{\sH}_k^u)=d_k$, where $\bar{\sH}_k^u=(f_{\flat,c}^k)^{-1}(u)$. In particular, we have
	\[
	\sum_{\bar{y}\in \bar{\sH}_k^y} f^*A\cdot C_{\bar{y}} = \sharp(\bar{\sH}_k^y) f^*A\cdot \bar{\sH}_k\leq d_k f^*A\cdot\bar{\sH}_k = \sum_{\bar{u}\in \bar{\sH}_k^u} f^*A\cdot C_{\bar{u}}.
	\]
	On the other hand, by the construction of $\bar{\sH}$, we have
	\[
	\sum_{k=1}^s \sum_{\bar{u}\in\bar{\sH}_k^{u}} C_{\bar{u}} = f^{-1}(C)_{\red}. 
	\]
	This yields
	\[
	2d=\sum_{k=1}^s \sum_{\bar{u}\in\bar{\sH}_k^{u}} f^*A\cdot C_{\bar{u}} \geq \sum_{k=1}^s \sum_{\bar{y}\in\bar{\sH}_k^{y}} f^*A\cdot C_{\bar{y}}\geq f^*A\cdot f^{-1}(l_1+l_2)_{\red}=2d,
	\]
	where $d$ is the degree of $f:M\rightarrow X$.
\end{proof}

\begin{corollary}
	\label{c.components-pull-back-conics}
	Under the Assumption \ref{assumptionbig2}, let $\bar{\sH}_k$ be an irreducible component of $\bar{\sH}$ such that $\bar{\sH}_k$ is not contained in $\bar{\sH}^v$. Let $y\in \Delta^{\sharp}$ be a general point parametrising a cycle $l_1+l_2$ such that $x\in l_i$ for $i=1$ and $i=2$. Let $\bar{y}\in \bar{\sH}_k$ be a point parametrising a cycle $C_{\bar{y}}$ such that $f_{\flat,c}^k(\bar{y})=y$. Then for $1\leq i\leq 2$ and $0\leq j\leq 1$, the curve $C_{l_i}^j$ is not contained in $\supp(C_{\bar{y}})$.
\end{corollary}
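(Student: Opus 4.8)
The plan is to deduce the statement formally from Lemma~\ref{Lemma:Reducedness-pull-back-reducible-conics} and Proposition~\ref{p.0-1}: the two ``distinguished'' curves $C_{l_i}^0$ and the two ``multiple'' curves $C_{l_i}^1$ $(i=1,2)$ always occur \emph{inside} cycles $C_{\bar y}$ carried by irreducible components of $\bar{\sH}^v$, so that the ``no common component'' statement of Lemma~\ref{Lemma:Reducedness-pull-back-reducible-conics} expels them from the cycles carried by the components not lying in $\bar{\sH}^v$.

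First I would fix a general point $y\in\Delta^{\sharp}$ parametrising $l_1+l_2$ with $x\in l_i$ for $i=1,2$, and recall from Theorem~\ref{Thm:Summary-VMRT} that $[l_1]\in B_{l_2}^1$ and $[l_2]\in B_{l_1}^1$. By Lemma~\ref{Lemma:Reducedness-pull-back-reducible-conics}, when $\bar y$ runs over $\bar{\sH}_k^y=(f_{\flat,c}^k)^{-1}(y)$ and $k$ over all irreducible components of $\bar{\sH}$, the cycles $C_{\bar y}$ are reduced, pairwise have no common component, and $\sum_{k,\bar y}C_{\bar y}=\sum_{i=0}^r(C_{l_1}^i+C_{l_2}^i)$. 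Hence each of the four curves $C_{l_1}^0,C_{l_1}^1,C_{l_2}^0,C_{l_2}^1$ is a component of exactly one of these cycles.

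The main step is to locate $C_{l_1}^0$ and $C_{l_2}^0$. Let $(k_0,\bar y_0)$ be the unique pair, with $\bar y_0\in\bar{\sH}_{k_0}^y$, such that $C_{l_1}^0\subset\supp(C_{\bar y_0})$. Since $y$ is general, $\bar y_0$ is a general point of some irreducible component $\bar{\Delta}_{k_0}^{\sharp,j_0}$ of $(f_{\flat,c}^{k_0})^{-1}(\Delta^{\sharp})$ dominating $\Delta^{\sharp}$; over such a point the fibres of $\widetilde{\sU}_{\bar{\Delta}_{k_0}^{\sharp,j_0}}\to\widetilde{\Delta}_{k_0}^{\sharp,j_0}$ lying above $\bar y_0$ are precisely the irreducible components of $C_{\bar y_0}$, one of which is $C_{l_1}^0$. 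Consequently the rational map $\widetilde{\Delta}_{k_0}^{\sharp,j_0}\dashrightarrow\bar{\sK}$ of Setup~\ref{setupconics} carries a general point of one of the irreducible components of $\widetilde{\Delta}_{k_0}^{\sharp,j_0}$ to $[C_{l_1}^0]\in\bar{\sK}_0$; since each such component is mapped \emph{onto} a component of $\bar{\sK}$, it follows that $\bar{\sK}_0$ lies in the image, i.e. $\bar{\sH}_{k_0}\subset\bar{\sH}^v$. The same argument yields a pair $(k_0',\bar y_0')$ with $\bar y_0'\in\bar{\sH}_{k_0'}^y$, $C_{l_2}^0\subset\supp(C_{\bar y_0'})$ and $\bar{\sH}_{k_0'}\subset\bar{\sH}^v$. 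Applying Proposition~\ref{p.0-1} to $\bar{\sH}_{k_0}\subset\bar{\sH}^v$ and $\bar y_0$ (case $i=1$) we then get $C_{l_2}^1\subset\supp(C_{\bar y_0})$, and applying it to $\bar{\sH}_{k_0'}$ and $\bar y_0'$ (case $i=2$) we get $C_{l_1}^1\subset\supp(C_{\bar y_0'})$.

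To conclude, given an irreducible component $\bar{\sH}_k$ of $\bar{\sH}$ not contained in $\bar{\sH}^v$ and a point $\bar y\in\bar{\sH}_k^y$ with $f_{\flat,c}^k(\bar y)=y$, we have $\bar{\sH}_k\neq\bar{\sH}_{k_0}$ and $\bar{\sH}_k\neq\bar{\sH}_{k_0'}$ because $\bar{\sH}_{k_0},\bar{\sH}_{k_0'}\subset\bar{\sH}^v$; hence by the ``no common component'' part of Lemma~\ref{Lemma:Reducedness-pull-back-reducible-conics} the cycle $C_{\bar y}$ shares no irreducible component with $C_{\bar y_0}$ or $C_{\bar y_0'}$. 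Since the four curves $C_{l_i}^j$ with $1\le i\le 2$ and $0\le j\le 1$ all occur as components of $C_{\bar y_0}$ or $C_{\bar y_0'}$, none of them is a component of $C_{\bar y}$, which is the assertion. The step I expect to require the most care is the middle one, namely checking that membership of $C_{l_i}^0$ in a cycle $C_{\bar y}$ genuinely forces $\bar{\sK}_0$ into the image of the relevant map $\widetilde{\Delta}_k^{\sharp,j}\dashrightarrow\bar{\sK}$; this rests on controlling the genericity of $\bar y$ along $\bar{\Delta}_k^{\sharp}$ and on the fact that normalisation followed by Stein factorisation separates the components of $C_{\bar y}$.
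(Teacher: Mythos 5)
Your proof is correct and follows essentially the same route as the paper: both arguments combine the cycle decomposition and ``no common components'' clause of Lemma~\ref{Lemma:Reducedness-pull-back-reducible-conics} with Proposition~\ref{p.0-1} to locate all four curves $C_{l_i}^j$ ($0\le j\le 1$) inside cycles carried by components of $\bar{\sH}^v$. The only difference is that you exclude the $C_{l_i}^0$ via the disjointness of cycles rather than quoting the definition of $\bar{\sH}^v$ directly, and along the way you explicitly verify that the components carrying $C_{l_i}^0$ lie in $\bar{\sH}^v$ --- a hypothesis of Proposition~\ref{p.0-1} that the paper's proof leaves implicit --- so your version is, if anything, slightly more complete.
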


\begin{proof}
	Since $\bar{\sH}$ is not contained in $\bar{\sH}^v$, it follows from the definition of $\bar{\sH}^v$ that for $1\leq i\leq 2$, the curve $C_{l_i}^0$ is not contained in $\supp(C_{\bar{y}})$. Moreover, by Lemma \ref{Lemma:Reducedness-pull-back-reducible-conics}, there exist irreducible components $\bar{\sH}_{k_1}$ and $\bar{\sH}_{k_2}$ of $\bar{\sH}$ (maybe $k_1=k_2$) such that there exists a point $\bar{y}_i\in \bar{\sH}_{k_i}$ parametrising a cycle $C_{\bar{y}_i}$ such that $f_{\flat,c}^k(\bar{y}_i)=y_i$ and $C_{l_i}^0$ is contained in $\supp(C_{\bar{y}_i})$. Then Proposition \ref{p.0-1} shows that $C_{l_{3-i}}^1$ is also contained in $\supp(C_{\bar{y}_i})$. In particular, Lemma \ref{Lemma:Reducedness-pull-back-reducible-conics} applies to say that $C_{l_i}^1$ is not contained in $\supp(C_{\bar{y}})$ for $i=1$ and $i=2$.
\end{proof}

\subsubsection{Linearity of tangent variety of $\sH$}

From now on, we will investigate the tangent variety $\sD_x\subset\PP(\Omega_{X,x})$ of $\sH$ at a general point $x\in X$. The main efforts goes into showing that $\sD_x$ is a finite union of projective planes. We start with some simple but useful observations.

\begin{lemma}
	\label{l.inclusion-tangents-lines}
	Under the Assumption \ref{assumptionbig2}, let $x\in X$ be a general point. Let $y\in \Delta^{\sharp}$ be a point parametrising a cycle $l_1+l_2$ such that $x\in l_i$ for $i=1$ and $i=2$. Let $\sH'_x$ be the unique irreducible component of $\sH_x$ containing $y$ and denote by $\sD'_x$ the tangent variety of $\sH'_x$. Then for any line $\bar{l}$ parametrised by $\sK_x$ such that $[\bar{l}]\in B_{l_i}^1$ ($i=1$ or $2$), there exists a point $y_{\bar{l}}\in \sH'_x$ parametrising the cycle $l_i+\bar{l}$. In particular, the point $\PP(\Omega_{\bar{l},x})$ is contained in $\sD'_x$.
\end{lemma}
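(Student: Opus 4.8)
The plan is to produce each point $y_{\bar l}$ by deforming $y$ inside a suitably chosen curve that lies in the single component $\sH'_x$, and then to deduce the statement about $\sD'_x$ from the analysis of the indeterminacy locus of the tangent map. A useful preliminary observation is that $\sH_x$ is normal, hence its irreducible components are pairwise disjoint; consequently $\sH'_x$ is in fact the unique component of $\sH_x$ passing through \emph{any} of its points, which makes all the component bookkeeping below automatic.

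First I would apply Lemma~\ref{lemmadeforminWx} to the point $y$, which parametrises $l_1+l_2$ with $l_1\neq l_2$ and $x\in l_1\cap l_2$: for $i=1,2$ this produces an irreducible curve $\delta_i\subset\sH_x$ with $y\in\delta_i$ such that a general point $y_t\in\delta_i$ parametrises a cycle $l_i+l_t$, with the induced map $\delta_i\to\Chow{X}$, $y_t\mapsto[l_t]$, finite onto its image. Since $\delta_i$ is irreducible and meets $\sH'_x$ in $y$, it is contained in $\sH'_x$. As noted in the proof of Proposition~\ref{p.0-1}, for general $t$ the partner satisfies $[l_t]\in B_{l_i}^1$ (cf.\ Notation~\ref{Notation:C_l^1}); hence the image of the finite map $y_t\mapsto[l_t]$ is an irreducible curve contained in the irreducible curve $B_{l_i}^1$, and the two therefore coincide, so $\delta_i\to B_{l_i}^1$ is surjective. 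In particular, for any line $\bar l$ parametrised by $\sK_x$ with $[\bar l]\in B_{l_i}^1$, there is a point $y_{\bar l}\in\delta_i\subset\sH'_x$ whose second component is $\bar l$; since $y_{\bar l}\in\sH_x$, it parametrises the cycle $l_i+\bar l$, which passes through $x$. This settles the first assertion.

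For the final assertion, recall that $\sD'_x$ is the closure of the image of $\sH'_x$ under the rational tangent map $\tau_x$. If $\bar l=l_i$, then $[\PP(\Omega_{\bar l,x})]=[\PP(\Omega_{l_i,x})]$ already lies in $\sD'_x$: indeed $\tau_x$ is defined at a general point of $\delta_i\subset\sH'_x$ — there the cycle is $l_i+l_t$ with $l_t$ not passing through $x$, since only finitely many lines pass through $x$ — and it sends that point to $[\PP(\Omega_{l_i,x})]$. If $\bar l\neq l_i$, then $y_{\bar l}\in\sH'_x$ parametrises $l_i+\bar l$ with $l_i\neq\bar l$ and $x\in l_i\cap\bar l$, so $y_{\bar l}$ satisfies the hypotheses of Lemma~\ref{lemmaindeterminacy}; as $\sH'_x$ is the unique component of $\sH_x$ through $y_{\bar l}$, Remark~\ref{remarkindeterminacy} applies and gives $[\PP(\Omega_{\bar l,x})]\in\overline{\tau(\sH'_x)}=\sD'_x$, as wanted.

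The step I expect to be the main obstacle is the surjectivity of $\delta_i\to B_{l_i}^1$, i.e.\ the claim that the deformed partner $l_t$ sweeps out \emph{all} of $B_{l_i}^1$. This combines the finiteness in Lemma~\ref{lemmadeforminWx} with the fact — established along the lines of the proof of Proposition~\ref{p.0-1} — that $l_t$ remains inside the irreducible curve $B_{l_i}^1$, which forces the a priori smaller irreducible image curve to fill it up. By contrast, the containment of the auxiliary deformation curves in the single component $\sH'_x$ and the passage to the tangent variety are formal, once one invokes the normality of $\sH_x$ and Lemma~\ref{lemmaindeterminacy}/Remark~\ref{remarkindeterminacy}.
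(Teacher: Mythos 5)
Your proof is correct and follows essentially the same route as the paper's: apply Lemma \ref{lemmadeforminWx} to get the deformation curve $\delta_i\subset\sH'_x$, observe that its (finite, hence one-dimensional) image in $\Chow{X}_1$ is an irreducible curve inside the irreducible curve $B_{l_i}^1$ and therefore equals it, and then conclude via Lemma \ref{lemmaindeterminacy}/Remark \ref{remarkindeterminacy}. Your write-up is in fact slightly more careful than the paper's at the two points you flag (the surjectivity of $\delta_i\to B_{l_i}^1$ and the component-wise use of Remark \ref{remarkindeterminacy}), but the underlying argument is identical.
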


\begin{proof}
	Without loss of generality, we may assume that $i=1$. By Lemma \ref{lemmadeforminWx}, there exists an irreducible curve $\delta_1\subset \sH'_x$ such that a general point $y_t\in\delta_1$ parametrising a cycle $l_1+l_t$ with $[l_t]\in B_{l_1}^1$ depending on $t$. Then there exists a natural morphism $\delta_1\rightarrow \Chow{X}_1$ whose image is exactly the image of $B_{l_1}^1$ in $\Chow{X}_1$. In particular, for any line $\bar{l}$ parametrised by $\sK_x$ such that $[\bar{l}]\in B_{l}^1$, there exists a point $y_{\bar{l}}\in \delta_1$ parametrising the cycle $l_1+\bar{l}$. Then it follows from Lemma \ref{lemmaindeterminacy} that $\PP(\Omega_{\bar{l},x})$ is contained in $\sD'_x$.
\end{proof}

\begin{lemma} \label{lemmaconicsstandard} Under the Assumption \ref{assumptionbig2}, the covering family $\sH$ of conics are standard. In particular, the tangent variety $\sD_x$ is two dimensional.
\end{lemma}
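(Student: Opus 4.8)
The plan is first to prove that $\sH$ is standard, by contradiction, and then to deduce $\dim\sD_x=2$ from the splitting type of a general conic. Suppose $\sH$ is not standard. A general member $[C]\in\sH$ is an immersed free rational curve with $-K_X\cdot C=4$ (recall $A\cdot C=2$; see Setup \ref{setupconics}), so Lemma \ref{Lemma:Standard} shows that for a general point $x\in X$ the tangent variety $\sD_x\subset\PP(\Omega_{X,x})$ is contained in a finite union of projective lines. I will contradict this by exhibiting, inside a single irreducible piece of $\sD_x$, three directions $\PP(\Omega_{l_i,x})$ whose tangent lines $T_{l_i,x}\subset T_{X,x}$ span a $3$-dimensional subspace.

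First I would produce the three lines. Fix a general point $x\in X$. By the construction of $\sH$ (Setup \ref{setupconics}, Notation \ref{notationdelta}) there is a point $y\in\Delta^{\sharp}$ parametrising a cycle $l_1+l_2$ with $l_1\neq l_2$, $x\in l_1\cap l_2$ and $[l_1]\in B_{l_2}^1$; then $[l_2]\in B_{l_1}^1$ by Proposition \ref{Prop:Uniqueness-Multple-Component}(2). Since $x$ is general, $l_1$ may be taken to be a general member of $\sK$, so Corollary \ref{c.lines-B_l^1-properties}(1) applies to $l_1$ and the general point $x\in l_1$ and produces a line $l_3\in\sK_x$ with $l_3\neq l_2$ and $[l_3]\in B_{l_1}^1$. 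Let $\sH'_x$ be the irreducible component of $\sH_x$ containing $y$ and $\sD'_x$ its tangent variety. Applying Lemma \ref{l.inclusion-tangents-lines} (with $i=1$ to $\bar l=l_2$ and to $\bar l=l_3$, and with $i=2$ to $\bar l=l_1$), the three points $\PP(\Omega_{l_1,x})$, $\PP(\Omega_{l_2,x})$, $\PP(\Omega_{l_3,x})$ all lie on $\sD'_x$.

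Now I would conclude. The irreducible variety $\sD'_x\subset\sD_x$ is contained in one of the finitely many lines, say $\ell=\PP(W^{*})$ with $W\subset T_{X,x}$ a $2$-dimensional subspace; every point of $\ell$ corresponds to a line contained in $W$, so $T_{l_1,x},T_{l_2,x},T_{l_3,x}\subset W$ span a subspace of dimension at most $2$. This contradicts Corollary \ref{c.lines-B_l^1-properties}(2), applied to the general line $l_1$ and the two distinct lines $l_2,l_3\in B_{l_1}^1$ through $x$. Hence $\sH$ is standard. Finally, since $\sH$ is a covering family of conics we have $\dim\sH=n+1$, so $\sH_x=e_\sH^{-1}(x)$ has pure dimension $2$ for general $x$; the non-standard members of $\sH$ form a proper closed subset whose $q_\sH$-preimage meets $\sH_x$ in a set of dimension $<2$, so a general member of $\sH_x$ is a standard conic, and by Proposition \ref{p.tangentstanadard}(1) the tangent map $\tau_x\colon\sH_x\dashrightarrow\sD_x$ is an immersion at it. Therefore $\dim\sD_x=\dim\sH_x=2$.

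The step I expect to cause trouble is the first one: one must verify carefully that Corollary \ref{c.lines-B_l^1-properties} can be invoked with $l_1$ in the role of a general line of $\sK$, that the point $y\in\Delta^{\sharp}$ supplied by the construction of $\sH$ may be taken with $l_1$ of this kind, and that $l_1,l_2,l_3$ are genuinely pairwise distinct. Once this bookkeeping is settled, the projective-duality input (Lemma \ref{Lemma:Standard}) together with the elementary observation that collinear points of $\PP(\Omega_{X,x})$ have coplanar tangent lines closes the argument; all the deformation-theoretic content has already been packaged in Lemmas \ref{lemmadeforminWx}, \ref{lemmaindeterminacy} and \ref{l.inclusion-tangents-lines}.
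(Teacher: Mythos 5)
Your proof is correct and follows essentially the same route as the paper: contradiction via Lemma \ref{Lemma:Standard}, producing three lines $l_1,l_2,l_3$ through $x$ with $[l_2],[l_3]\in B_{l_1}^1$, placing their tangent directions in the irreducible piece $\sD'_x$ by Lemma \ref{l.inclusion-tangents-lines}, and contradicting Corollary \ref{c.lines-B_l^1-properties}(2). The extra care you take with the distinctness of $l_3$ and with the final deduction $\dim\sD_x=2$ (via Proposition \ref{p.tangentstanadard}(1)) is sound and only makes explicit what the paper leaves implicit.
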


\begin{proof}
	Assume to the contrary that $\sH$ is not standard. By Lemma \ref{Lemma:Standard}, the tangent variety $\sD_x$ is contained in a finite union of projective lines. Let $y\in \Delta^{\sharp}$ be a point parametrising a cycle $l_1+l_2$ such that $x\in l_1$ and $x\in l_2$. Let $\sH'_x$ be the unique irreducible component of $\sH_x$ containing $y$ and denote by $\sD'_x$ the tangent variety of $\sH'_x$. Then $\sD'_x\subset \PP(\Omega_{X,x})$ is contained in a projective line by our assumption. On the other hand, by Corollary \ref{c.lines-B_l^1-properties} (1), there exists a third line $l_3$ parametrised by $\sK_x$ such that $[l_3]\in B_{l_1}^1$. By Lemma \ref{l.inclusion-tangents-lines}, the points $\PP(\Omega_{l_i,x})$ $(1\leq i\leq 3)$ are contained in $\sD'_x$. In particular, the tangent directions $T_{l_i,x}$ $(1\leq i\leq 3)$ are co-plane, which contradicts Corollary \ref{c.lines-B_l^1-properties} (2).
\end{proof}

\begin{definition} \label{definitionuniversalvertex}
Let $M$ be a projective manifold, and let $P \rightarrow M$ be a projective bundle 
over $M$. Let $C \subset D \subset P$ be irreducible subvarieties such that $C$ is generically finite onto $M$. We say that $C$ is a universal vertex for $D$ if for 
a general point $m \in M$ and every irreducible component $D'_m \subset D_m$ there
exists a point $v \in C_m$ such that $D'_m \subset P_x$ is a cone with vertex $v$.
\end{definition}

\begin{lemma}
	\label{l.cone-vertex}
	Under the Assumption \ref{assumptionbig2}, the total VMRT $\sC$ is the universal vertex of $\sD$.
\end{lemma}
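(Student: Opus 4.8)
The plan is to show that for a general point $x \in X$ and every irreducible component $\sD'_x$ of the tangent variety $\sD_x$, there is a point $v$ of the VMRT $\sC_x$ that is a vertex of $\sD'_x$; combined with the irreducibility of $\sC$ and $\sD$, this will give that $\sC$ is a universal vertex of $\sD$ in the sense of Definition \ref{definitionuniversalvertex}. First I would fix a general point $x$, pick a component $\sD'_x$ of $\sD_x$, and choose a point $y \in \Delta^{\sharp}$ parametrising a reducible conic $l_1 + l_2$ with $x \in l_i$ and $[l_1] \in B_{l_2}^1$, arranged so that the unique component $\sH'_x$ of $\sH_x$ containing $y$ has tangent variety exactly $\sD'_x$ (this is possible by Notation \ref{notationdelta} and Lemma \ref{l.inclusion-tangents-lines}).

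The heart of the argument is a projection computation, parallel to the one carried out for the minimal family in Section \ref{Section:Etale-Webs}. Since $T_X$ is big and $\sH$ is a standard covering family of conics (Lemma \ref{lemmaconicsstandard}), its total dual tangent variety $\check{\sD} \subset \PP(T_X)$ is a prime divisor; by Proposition \ref{Prop:Bigness-Criterion} applied to the numerical class, there are positive integers $a, b$ with $[\check{\sD}] \equiv a\check{\zeta} - b\,\check{\pi}^* A$. Restricting to the preimage of a general conic $C \in \sH$ and using Lemma \ref{Lemma:Class-Dual-Curves} as in the proof of Proposition \ref{Prop:Degree-VMRT}, I would bound the $\zeta$-degrees of the non-trivial components of the tangent variety $\sD_C$ over $C$. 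The point is that, because $A \cdot C = 2$ is small, the minimal section $\bar{C}_0$ over $C$ forces the remaining components $\sD_C^j$ (the ones that are not the section $\bar{C}_0 = l_0$) to satisfy $\zeta \cdot \sD_C^j \leq 1$. Then Lemma \ref{Lemma:Projection-Local} applies: a horizontal curve $C' \neq l_0$ in $\PP(f_C^* \Omega_X) \to \widetilde C$ with $\zeta \cdot C' \in \{0,1\}$ is either disjoint from $l_0$ with image a fibre of $p_2$, or it meets $l_0$. Interpreting this back in $\PP(\Omega_{X,x})$: a curve in $\sD'_x$ that is the tangent-map image of a one-parameter family of conics through $x$ either is a line through the point $[\PP(\Omega_{l_i, x})]$, or passes through it — so $[\PP(\Omega_{l_i,x})] \in \sC_x$ is forced to be a vertex of such curves, and sweeping out $\sD'_x$ by these curves as in Remark \ref{remarkdefinitionpoint} shows that $\sD'_x$ is a cone with vertex $[\PP(\Omega_{l_i,x})]$.

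To finish I would observe that, as $x$ varies over a general locus in $X$ and the reducible conic $l_1 + l_2$ varies, the vertices $[\PP(\Omega_{l_i,x})]$ trace out (the closure of) the total VMRT $\sC \subset \PP(\Omega_X)$, which is irreducible; and the total tangent variety $\sD$ is irreducible as well, so all components $\sD'_x$ of $\sD_x$ have the same degree (as in the proof of Lemma \ref{Lemma:Standard}) and the cone structure is uniform. One then checks that $\sC$, which is generically finite onto $X$ of the right dimension (being a rational section-like subvariety by construction of the tangent variety, cf.\ Proposition \ref{p.tangentstanadard}), sits inside $\sD$ with the property required in Definition \ref{definitionuniversalvertex}. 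The main obstacle I expect is the bookkeeping in the projection step: the preimage $f^{-1}(l_1 \cup l_2)$ can have several components of the types identified in Theorem \ref{Thm:Summary-VMRT}, and one must use Proposition \ref{p.0-1}, Lemma \ref{Lemma:Reducedness-pull-back-reducible-conics} and Corollary \ref{c.components-pull-back-conics} to control precisely which components of $\bar{\sH}$ contribute to the tangent variety along $C_{l_i}^0$, so that the degree inequality coming from $[\check{\sD}] \equiv a\check{\zeta} - b\,\check{\pi}^*A$ can actually be read off; the delicate point is ensuring that the distinguished component $C_{l_i}^1$ does not spoil the bound, which is exactly what Corollary \ref{c.components-pull-back-conics} is designed to guarantee.
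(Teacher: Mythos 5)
There is a genuine gap. Your argument hinges on treating the conic family the same way the paper treats the line family in Proposition \ref{Prop:Degree-VMRT}, and this breaks down at two places. First, you assert that $\check{\sD}$ is a prime divisor with class $a\check{\zeta}-b\check{\pi}^*A$, $b>0$, ``by Proposition \ref{Prop:Bigness-Criterion}''. That proposition is a statement about the total dual VMRT $\check\sC$ of the \emph{minimal} family with $c=0$; it does not apply to $\check\sD$. Worse, since $\sD_x$ is $2$-dimensional, it may be (and, a posteriori, by Theorem \ref{t.unionplanes} it is) dual defective, so $\check\sD_x$ has codimension $>1$ in $\PP(T_{X,x})$ and $\check\sD$ is not a divisor at all for $n\geq 4$; its class is therefore not of the asserted form. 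Second, the degree computation you want to run via Lemma \ref{Lemma:Class-Dual-Curves} requires a finite union of curves in a projective bundle over a curve; over a line $l$ the fibres of $\sC\to X$ are finite so $\sC_l$ is a curve, but over a conic $C$ the restriction $\sD\cap\PP(\Omega_X|_C)$ is $3$-dimensional, so neither Lemma \ref{Lemma:Class-Dual-Curves} nor the dichotomy of Lemma \ref{Lemma:Projection-Local} applies. Finally, even if one had curves of degree $\leq 1$ in $\sD'_x$ meeting $[\PP(\Omega_{l_i,x})]$, the statement ``some curves pass through $v$'' does not show that \emph{every} point of $\sD'_x$ lies on a line through $v$ inside $\sD'_x$, which is what being a cone with vertex $v$ means.

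The paper's actual proof is much shorter and purely dual: since $\sH$ is standard (Lemma \ref{lemmaconicsstandard}), $\check\sD$ is swept out by minimal sections $\bar C$ over standard conics, which satisfy $\check\zeta\cdot\bar C=0$ and hence
\[
[\check\sC]\cdot\bar C=(a\check\zeta-\check\pi^*A)\cdot\bar C=-A\cdot C=-2<0
\]
by Theorem \ref{Thm:Summary-VMRT}(1). Therefore every such section, and hence all of $\check\sD$, is contained in $\check\sC$ --- note that this containment argument works whatever the dimension of $\check\sD$ is. At a general $x$ the fibre $\check\sC_x$ is a union of hyperplanes dual to the points $\PP(\Omega_{l,x})$ of the VMRT, so each component $\check{\sD}'_x$ lies in one such hyperplane, and biduality (\cite[Thm.1.25]{Tev03}) then says exactly that $\sD'_x$ is a cone with vertex $\PP(\Omega_{l,x})\in\sC_x$. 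If you want to salvage your write-up, replace the divisor-class computation for $\check\sD$ by this intersection of $[\check\sC]$ with minimal sections over conics, and replace the appeal to Lemma \ref{Lemma:Projection-Local} by the biduality theorem.
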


\begin{proof}
	As $\sH$ is standard by Lemma \ref{lemmaconicsstandard}, the total dual tangent variety $\check{\sD}\subset \PP(T_{X})$ of $\sH$ is dominated by curves with $\check{\zeta}$-degree $0$ (cf. Proposition \ref{Prop:Dual-Defect-VMRT}). In particular, by Theorem \ref{Thm:Summary-VMRT}(1), the variety $\check{\sD}$ is contained in $\check{\sC}$. Let $x\in X$ be a general point. Given an irreducible component $\check{\sD}'_x$ of $\check{\sD}_x$, there exists an irreducible component $\check{\sC}'_x$ such that $\check{\sD}'_x$ is contained in $\check{\sC}'_x$. Denote by $l$ the line passing through $x$ such that $\check{\sC}'_x$ is the dual hyperplane defined by $\PP(\Omega_{l,x})\in \PP(\Omega_{X,x})$. Then it follows from \cite[Thm.1.25]{Tev03} that the variety $\sD'_{x} \subset \PP(\Omega_{X,x})$ is a cone with vertex $\PP(\Omega_{l,x})$. 
\end{proof}

\begin{remark}
	Since the total VMRT $\sC$ is irreducible, thus for any line $[l]\in \sK_x$, there exists an irreducible component $\sD'_x$ of $\sD_x$  whose vertex is $\PP(\Omega_{l,x})$.
\end{remark}

\begin{theorem}
	\label{t.unionplanes} Under the Assumption \ref{assumptionbig2}, the tangent variety
	$\sD_x \subset \PP(\Omega_{X,x})$ at a general point $x \in X$ is a finite union of projective planes.
\end{theorem}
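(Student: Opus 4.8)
The plan is to combine three ingredients established above: (i) by Lemma \ref{lemmaconicsstandard}, each irreducible component $\sD'_x \subset \sD_x$ is a two-dimensional irreducible subvariety of $\PP(\Omega_{X,x})$; (ii) by Lemma \ref{l.cone-vertex}, $\sD'_x$ is a cone with vertex a point $v = \PP(\Omega_{l,x}) \in \sC_x$ for some line $[l] \in \sK_x$; and (iii) by Corollary \ref{c.lines-B_l^1-properties}(3) together with Lemma \ref{l.inclusion-tangents-lines}, all the tangent points $\PP(\Omega_{\bar l,x})$ for $[\bar l] \in B^1_l$ with $x \in \bar l$ — and in particular the relevant portion of $\sD'_x$ — are confined to $\PP(F_x^*) \subset \PP(\Omega_{X,x})$ where $F_x \subset T_{X,x}$ is three-dimensional. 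A two-dimensional cone sitting inside a projective plane $\PP(F_x^*) \simeq \PP^2$ is either the whole plane or... well, a two-dimensional subvariety of $\PP^2$ is all of $\PP^2$. So the real work is to show that the component $\sD'_x$ with vertex $v = \PP(\Omega_{l,x})$ is actually contained in such a $\PP(F_x^*)$.

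First I would fix a general point $x \in X$, an irreducible component $\sD'_x$ of $\sD_x$, and (via Lemma \ref{l.cone-vertex} and the Remark following it) the line $[l] \in \sK_x$ whose tangent direction is the vertex $v$ of the cone $\sD'_x$. Pick the irreducible component $\sH'_x \subset \sH_x$ with $\tau_x(\sH'_x)$ dense in $\sD'_x$; by the reduction in Subsection \ref{subsection-tgt-conics} (the choice of $\Delta^\sharp$, Notation \ref{notationdelta}) we may assume $\sH'_x$ contains a point $y$ parametrising a cycle $l + l'$ with $[l'] \in B^1_l$ and $x \in l'$. By Lemma \ref{l.inclusion-tangents-lines}, for every line $\bar l$ with $[\bar l] \in B^1_l$ and $x \in \bar l$ the point $\PP(\Omega_{\bar l,x})$ lies in $\sD'_x$; by Corollary \ref{c.lines-B_l^1-properties}(3) all these points, together with $v = \PP(\Omega_{l,x})$ itself, lie in the plane $\PP(F_x^*)$. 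Since $\sD'_x$ is a cone with vertex $v$, it is swept out by the lines joining $v$ to the points of a curve of "ruling directions"; the points $\PP(\Omega_{\bar l,x})$ trace out (the closure of) this curve of ruling directions as $\bar l$ varies — this is exactly the content of the tangent map $\tau_x$ restricted to the curves $\delta_1$ of Lemma \ref{lemmadeforminWx}. Hence the base curve of the cone lies in $\PP(F_x^*)$, and since the vertex $v$ also lies there, the entire cone $\sD'_x$ is contained in $\PP(F_x^*) \simeq \PP^2$. Being irreducible of dimension two, $\sD'_x = \PP(F_x^*)$ is a projective plane. Running over the finitely many irreducible components of $\sD_x$ finishes the proof.

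The step I expect to be the main obstacle is the assertion that the base curve of the cone $\sD'_x$ — i.e. a curve $\Gamma \subset \sD'_x$ not through $v$ such that $\sD'_x = \bigcup_{p \in \Gamma} \overline{vp}$ — is genuinely swept out by the points $\PP(\Omega_{\bar l,x})$ with $[\bar l] \in B^1_l$, rather than by some larger family of tangent directions of conics not of the form $l + \bar l$. The point is that $B^1_l$ is one-dimensional (it is birational to the curve $C^1_l$, cf. Notation \ref{Notation:C_l^1} and Lemma \ref{Lemma:Birationality-Multiple-Components}), so the $\PP(\Omega_{\bar l,x})$ already sweep out a one-dimensional subset of $\sD'_x$, which combined with the cone structure and the vertex $v$ forces them to be dense in a base curve — here one uses that $\sD'_x$ is two-dimensional and irreducible, so a one-dimensional subset together with the rulings through $v$ exhausts it only if that subset dominates a base curve. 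I would also need to be slightly careful that the image $\tau_x(\delta_i)$ really is the locus $\{\PP(\Omega_{\bar l,x}) : [\bar l] \in B^1_{l_i}\}$, which follows from the description of $\delta_i$ in Lemma \ref{lemmadeforminWx} (its members are cycles $l_i + l_t$ with $[l_t] \in B^1_{l_i}$) together with the computation of the tangent map along such a family in the proof of Lemma \ref{lemmaindeterminacy}. Once these identifications are in place, the containment $\sD'_x \subset \PP(F_x^*)$ and hence equality is immediate from Corollary \ref{c.lines-B_l^1-properties}(3).
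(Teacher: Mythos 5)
There is a genuine gap, and it sits exactly where you predicted: the identification of the base curve of the cone $\sD'_x$ with the tangent directions $\PP(\Omega_{\bar l,x})$ of lines $[\bar l]\in B_l^1$. Since $\sK$ is an \'etale web, the fibre $\sK_x$ is \emph{finite}, so only finitely many members of $B_l^1$ pass through the fixed point $x$ (those corresponding to the $d\geq 2$ points of $C_l^1$ over $x$). Hence $\{\PP(\Omega_{\bar l,x}) : [\bar l]\in B_l^1,\ x\in\bar l\}$ is a finite set of points, not a one-dimensional family, and it cannot sweep out the base curve of the two-dimensional cone $\sD'_x$. Relatedly, the image $\tau_x(\delta_i)$ is a \emph{single point} $\PP(\Omega_{l_i,x})$, not a curve: as the proof of Lemma \ref{lemmaindeterminacy} shows, a general $y_t\in\delta_i$ parametrises $l_i+l_t$ with $l_t\not\ni x$, so the tangent direction at $x$ is that of the fixed component $l_i$. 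Consequently you have only established that the cone $\sD'_x$ contains its vertex $v$ and finitely many further points of the plane $\PP(F_x^*)$; this does not force $\sD'_x\subset\PP(F_x^*)$, since a general point of $\sD'_x$ is the tangent direction of an irreducible conic, about which your argument says nothing. Note also that the containment $\sD'_x\subset\PP(F_x^*)$ is essentially equivalent to Corollary \ref{c.dimension-spanning-VMRT}, which in the paper is \emph{deduced from} Theorem \ref{thm:irreduciblity-tangent-H}, hence from the theorem you are trying to prove.

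The paper's proof takes a different and necessarily more involved route: it uses Lemma \ref{lemmaconnect} to produce genuine curves of degree at most two inside $\sD'_x$ joining the points $\PP(\Omega_{l_i,x})$ (these come from the exceptional fibres of the blow-up of the universal family along the nodal locus $N$, not from the curves $\delta_i$), and then invokes the elementary fact that a two-dimensional cone containing a line missing its vertex, an irreducible conic through its vertex, or a second vertex is a plane. The case distinction on whether $\bar\sC_0$ is the universal vertex of some component of $\bar\sH^v$, carried out on the pull-back $f^*\sH$ to $M=\Univ_\sK^s$, is precisely what rules out the degenerate possibility that all the low-degree curves produced are rulings through the vertex — the complication your proposal does not address. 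To repair your argument you would need an independent proof that the tangent directions of the \emph{irreducible} conics in $\sH'_x$ stay inside $\PP(F_x^*)$, which is not available at this stage.
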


\begin{remark*}
	Let $Z\subset \PP^{d}$ be an irreducible cone of dimension two with vertex $v\in Z$. 
	Projecting from the vertex $v$ we obtain the following basic fact:
	The cone $Z$ is a projective plane if one of the following holds
	\begin{enumerate}
		\item there exists a projective line contained in $Z$ not passing through $v$;
		
		\item there exists an irreducible conic contained in $Z$ passing through $v$;
		
		\item there exists another point $v\not=v'\in Z$ such that $v'$ is also a vertex of $Z$.
	\end{enumerate}
\end{remark*}

The idea of the proof of Theorem \ref{t.unionplanes} is to combine the remark above with Lemma \ref{lemmaconnect} which assures the existence of curves of low degree in the irreducible component $\sD_x'$ of the tangent variety $\sD_x$. There are however complications due to the fact that we must ensure that these curves are not just the lines passing through the vertex of the cone $\sD_x'$. This issue is solved by working with the pull-back $f^* \sH$ of our family of conics
to the universal family of lines $M=\Univ_\sK^s$.

\begin{proof}[Proof of Theorem \ref{t.unionplanes}]
	The statement is trivial for $n=3$ by Lemma \ref{lemmaconicsstandard}. Thus we may assume that $n\geq 4$. As the total tangent variety $\sD$ is irreducible, every irreducible component of $\sD_x$ has the same degree. Thus it is enough to show that there exists an irreducible component of $\sD_x$ which is a projective plane. We will divide the proof into two different cases.
	
	\bigskip
	
	\textit{Case 1. There exists an irreducible component $\bar{\sH}_k$ of $\bar{\sH}^v$ with total tangent variety $\bar{\sD}_k$ such that $\bar{\sC}_0$ is the universal vertex of $\bar{\sD}_k$, where $\bar{\sC}_0$ is the total tangent variety of $\bar{\sK}_0$.}
	
	\bigskip
	
	Let $\bar{\Delta}^{\sharp,j}_k$ be an irreducible component of $\bar{\Delta}_k^{\sharp}$ satisfying the assumption in Notation \ref{d.H^v}. Let $\Delta^{\sharp,j}$ be the irreducible component of $\Delta^{\sharp}$ dominated by $\bar{\Delta}^{\sharp,j}_k$. Let $y\in \Delta^{\sharp,j}$ be a point parametrising a cycle $l_1+l_2$ such that $x\in l_i$ for $i=1$ and $i=2$. Let $\bar{y}\in \bar{\Delta}^{\sharp,j}_k$ be a point parametrising a cycle $C_{\bar{y}}$ such that $f_{\flat,c}^k(\bar{y})=y$. Then $\supp(C_{\bar{y}})$ contains either $C_{l_1}^0$ or $C_{l_2}^0$. Without loss of generality, we assume that $\supp(C_{\bar{y}})$ contains the curve $C_{l_1}^0$. 
	
	Let $\bar{x}\in C_{l_1}^0\subset M$ be the unique point on $C_{l_1}^0$ such that $f(\bar{x})=x$. Let $\sH'_x$ be the unique irreducible component of $\sH_x$ containing $y$ and let $\bar{\sH}'_{k,\bar{x}}$ be the unique irreducible component of $\bar{\sH}_{k,\bar{x}}$ containing $\bar{y}$. As $\sH_x$ is normal, the irreducible components of $\sH_x$ are disjoint. In particular, we have $f_{\flat,c}^k(\bar{\sH}'_{k,\bar{x}})=\sH'_{x}$ as $f_{\flat,c}^k(\bar{y})=y$. Denote by $\sD'_x$ and $\bar{\sD}'_{\bar{x}}$ the tangent variety of $\sH'_x$ and $\bar{\sH}'_{k,\bar{x}}$, respectively. As $x\in X$ is general, the morphism $f$ is unramified at $\bar{x}$. In particular, we can identify the two projective varieties $\sD'_x\subset \PP(\Omega_{X,x})$ with $\bar{\sD}'_{\bar{x}}\subset \PP(\Omega_{M,\bar{x}})$ via the natural isomorphism $df:T_{M,\bar{x}}\rightarrow f^*T_{X}|_{\bar{x}}$. 
	
	On the other hand, by our assumption, the point $\PP(\Omega_{C_{l_1}^0,\bar{x}})$ is the vertex of $\bar{\sD}'_{\bar{x}}$. Therefore, the point $\PP(\Omega_{l_1,x})$ is the vertex of $\sD'_x$. By Corollary \ref{c.lines-B_l^1-properties} and Lemma \ref{l.inclusion-tangents-lines}, there exists a third line $l_3$ parametrised by $\sK_x$ such that $[l_3]\in B_{l_1}^1$ and there exists a point $y_3\in \sH'_x$ parametrising the cycle $l_1+l_3$.
	
	Thanks to Lemma \ref{lemmaconnect}, for each couple $1\leq i<j\leq 3$, there exists a curve $C_{i,j}$ of degree at most two contained in $\sD'_x$ such that $\PP(\Omega_{l_i,x})\in C_{i,j}$ and $\PP(\Omega_{l_j,x})\in C_{i,j}$. In particular, the curve $C_{i,j}$ is either a projective line, or a reducible conic, or an irreducible conic. Moreover, since the type of the curve only depends on the singularities of $M$ along $N$ (cf. proof of Lemma \ref{lemmaconnect}), the type of $C_{i,j}$ does not depend on the the choice of the couple $(i,j)$.\\
	
	\textit{Case (1.a). The curve $C_{i,j}$ is a projective line.} By Corollary \ref{c.lines-B_l^1-properties} and Lemma \ref{l.inclusion-tangents-lines}, there exists a line $l_4$ (which is distinct from $l_1$ and $l_2$, but may coincide with $l_3$) parametrised by $\sK_x$ such that $[l_4]\in B_{l_2}^1$ and there exists a point $y_4\in \sH'_x$ parametrising the cycle $l_2+l_4$. Then Lemma \ref{lemmaconnect} implies that there exists a projective line $C_{2,4}$ contained in $\sD'_x$ such that $\PP(\Omega_{l_2,x})\in C_{2,4}$ and $\PP(\Omega_{l_4,x})\in C_{2,4}$. Moreover, by Corollary \ref{c.lines-B_l^1-properties}, the tangent directions $T_{l_1,x}$, $T_{l_2,x}$ and $T_{l_4,x}$ are not co-plane. Hence, the point $\PP(\Omega_{l_1,x})$ is not contained in $C_{2,4}$. Hence, the cone $\sD'_x$ is a projective plane.\\
	
	\textit{Case (1.b). The curve $C_{i,j}$ is a reducible conic.} By Lemma \ref{lemmaconnect}, the point $\PP(\Omega_{l_1,x})$ is contained in the smooth locus of $C_{1,2}$. In particular, there exists an irreducible component $C_{1,2}'$ of $C_{1,2}$ not passing through $\PP(\Omega_{l_1,x})$. As $C_{1,2}'$ is a projective line, we again obtain that $\sD'_x$ is a projective plane.\\
	
	\textit{Case (1.c). The curve $C_{i,j}$ is an irreducible conic.} Then $\sD'_x$ contains an irreducible conic $C_{1,2}$ passing through the vertex $\PP(\Omega_{l_1,x})$ and therefore $\sD'_x$ is a projective plane.
	
	\bigskip
	
	\textit{Case 2. For any irreducible component $\bar{\sH}_k$ of $\bar{\sH}^v$, the total tangent variety $\bar{\sC}_0$ of $\bar{\sK}_0$ is not the universal vertex of the total tangent variety $\bar{\sD}_k$ of $\bar{\sH}_k$.}
	
	\bigskip
	
	By Lemma \ref{l.cone-vertex} and the assumption above, there exists an irreducible component $\bar{\sH}_k$ of $\bar{\sH}$ not belonging to $\bar{\sH}^v$ such that $\bar{\sC}_0$ is the universal vertex of the total tangent variety $\bar{\sD}_k$ of $\bar{\sH}_k$. Let $y\in \Delta^{\sharp}$ be a point parametrising a cycle $l_1+l_2$ such that $x\in l_i$ for $i=1$ and $i=2$. Let $\bar{y}\in \bar{\Delta}_k^{\sharp}$ be a point parametrising a cycle $C_{\bar{y}}$ such that $f_{\flat,c}^k(\bar{y})=y$. By Lemma \ref{Lemma:Reducedness-pull-back-reducible-conics} and Corollary \ref{c.components-pull-back-conics}, there exist two finite sets $J_i$ of $\{2,\cdots,r\}$ such that
	\[
	C_{\bar{y}} = \sum_{j\in J_1} C_{l_1}^j + \sum_{j\in J_2} C_{l_2}^j.
	\]
	Let $M_o\subset M$ be the largest Zariski open subset of $M$ such that $f$ is unramified over $M_o$. Denote by $X_o$ the image $f(M_o)$ in $X$.\\
	
	\textbf{Claim.} \textit{Let $\bar{C}$ be a general curve parametrised by $\bar{\sH}_k$. Then $C$ is contained in $M_o$ and the induced morphism $\bar{C}\rightarrow f(\bar{C})$ is an isomorphism.}\\
	
	\textit{Proof of Claim.} For each $1\leq i\leq 2$, since $C_{l_i}^j\rightarrow l_i$ is birational for $j\geq 2$ (see Notation \ref{Notation:C_l^1}), by Proposition \ref{prop:existence-multiple-component}, the curve $C_{l_i}^j$ is contained in $M_o$ for $j\geq 2$. In particular, the curve $\supp(C_{\bar{y}})$ is contained in $M_o$ and so is $\bar{C}$ as $\bar{C}$ is general. On the other hand, note that $C=f(\bar{C})$ is also a general member in $\sH$, the curve $C=f(\bar{C})$ is actually a smooth rational curve as $\sH$ is standard (see Remark \ref{remarkconic}). In particular, the curve $\bar{C}$ is also smooth as $\bar{C}\subset M_o$. If $\bar{C}\rightarrow C$ is not birational, then $\bar{C}\rightarrow C$ is ramified at a point $\bar{c}\in \bar{C}$. As a consequence, the morphism $f$ is ramified at $\bar{c}\in M_o$, which is a contradiction. Hence, the morphism $\bar{C}\rightarrow C$ is birational and hence an isomorphism.\\

	According to the Claim above, the degree of $\bar{\sH}_k$ with respect to $f^*A$ is $2$. Thus we must have $\sharp(J_1)=\sharp(J_2)=1$; that is, there exist two integers $2\leq j_1,j_2\leq r$ such that $C_{\bar{y}}=C_{l_1}^{j_1} + C_{l_2}^{j_2}$.As the curve $C_{l_i}^{j_i}$ is a rational curve with trivial normal bundle (see Proposition \ref{Prop:Pull-Back-Trivial-Normal-Bundle}), the covering family  $\bar{\sH}_k$ of curves is obtained by smoothing $C_{\bar{y}}$ and it is clear that the tangent variety of $\bar{\sH}_k$ at a general point of $M$ is two dimensional. Moreover, note that we have 
	\[
	-K_M\cdot \bar{C}=-f^*K_X\cdot \bar{C}=-K_X\cdot C=4.
	\] 
	Thus Lemma \ref{Lemma:Standard} applies to show that $\bar{\sH}_k$ is standard. Let $\check{\bar{\zeta}}$ be the tautological divisor of $\PP(T_M)$ and let $\check{\zeta}$ be the tautological divisor of $\PP(T_X)$. Then we must have 
	\[
	(\bar{f}^*\check{\zeta})|_{\PP(T_{M_o})} = \check{\bar{\zeta}}|_{\PP(T_{M_o})},
	\]
	where $\bar{f}:\PP(T_{M})\dashrightarrow \PP(T_X)$ is the induced rational map. Let $\check{\bar{\sC}}_0\subset \PP(T_{M})$ be the total dual VMRT of $\bar{\sK}_0$. Then by \cite[Corollary 2.13]{HLS20}, we get
	\[
	[\check{\sC}_0] \equiv \check{\bar{\zeta}} + \check{\bar{\pi}}^*K_{M/\sH^s},
	\]
	where $\check{\bar{\pi}}:\PP(T_M)\rightarrow M$ is the projectivisation and $K_{M/\sH^s}$ is the relative canonical divisor. On the one hand, note that we have $M=\PP(V)$ for a rank $2$ vector bundle $V$ over $\sH^s$ and $\sO_{\PP(V)}(1)\cong \sO_M(f^*A)$, we obtain $K_{M/\sH^s}=-2f^*A + q^*c_1(V)$, where $q:=q_{\sK}^s:M=\PP(V)\rightarrow \sH^s$. By the definition of $\bar{\sK}$ and Theorem \ref{Thm:Summary-VMRT}, we must have
	\[
	\sum_{i=0}^r \check{\bar{\sC}}_i|_{\PP(T_{M_o})} = \bar{f}^*\check{\sC}|_{\PP(T_{M_o})} \equiv \bar{f}^*(a\check{\zeta}-\check{\pi}^*A)|_{\PP(T_{M_o})},
	\]
	for $a=\deg(f)\geq 3$. As a consequence, we get
	\begin{align*}
		\sum_{i=1}^r [\check{\bar{\sC}}_i]|_{\PP(T_{M_o})} &  \equiv (a\check{\bar{\zeta}} - \check{\bar{\pi}}^*f^*A)|_{\PP(T_{M_o})} - [\check{\sC}_0]|_{\PP(T_{M_o})}  \\
		          &  \equiv ((a-1)\check{\bar{\zeta}} + \check{\bar{\pi}}^*f^*A - \check{\bar{\pi}}^*q^*c_1(V))|_{\PP(T_{M_o})}.
 	\end{align*}
    Let $\bar{C}$ be a general standard rational curve parametrised by $\bar{\sH}_k$. Then $\bar{C}$ is contained in $M_o$. Let $\bar{C}'$ be a minimal section over $\bar{C}$. Then $\bar{C}'$ is contained in $\PP(T_{M_o})$. In particular, we obtain
    \[
    \sum_{i=1}^r [\check{\bar{\sC}}_i] \cdot \bar{C}' = (f^*A-q^*c_1(V)) \cdot \bar{C} = 2 - q^*c_1(V) \cdot C_{\bar{y}}.
    \] 
    By Proposition \ref{Prop:Universal-Family-Trivial-Degree} we know that $q^*c_1(V)\cdot C_{l_i}^j=2$ for $j\geq 2$. Hence we get
    \[
    \sum_{i=1}^r [\check{\bar{\sC}}_i] \cdot \bar{C}'=-2<0.
    \]
    As a consequence, the total dual tangent variety $\check{\bar{\sD}}_k$ is contained in $\cup_{i=1}^r \check{\bar{\sC}}_i$. In particular, there exists an integer $1\leq i\leq r$ such that $\check{\bar{\sD}}_k$ is contained in $\check{\bar{\sC}}_i$. 
    
    Let $\bar{x}\in M$ be a point such that $f(\bar{x})=x$ and let $C_{l}^0$ be the fibre of $q$ passing through $\bar{x}$. By our assumption, the point $\PP(\Omega_{C_{l}^0,\bar{x}})$ is the vertex of all the irreducible components of $\bar{\sD}_{k,\bar{x}}$. On the other hand, as $\check{\bar{\sD}}_k$ is contained in $\check{\bar{\sC}}_i$, there exists a curve $C_{\bar{l}}^j$ $(j\geq 1)$ parametrised by $\bar{\sK}_i$ passing through $\bar{x}$ such that $f(C_{\bar{l}}^j)=\bar{l}\ni x$ and the point $\PP(\Omega_{C_{\bar{l}}^j,\bar{x}})$ is the vertex of an irreducible component $\bar{\sD}'_{k,\bar{x}}$ of $\bar{\sD}_{k,\bar{x}}$. Then it follows that the points $\PP(\Omega_{\bar{l},\bar{x}})$ and $\PP(\Omega_{l,x})$ are both vertices of $\sD'_x$, where $\sD'_x$ is the image of $\bar{\sD}'_{k,\bar{x}}$ in $\sD_x$. Hence, the variety $\sD'_x\subset \PP(\Omega_{X,x})$ is a projective plane.
\end{proof}

\subsubsection{Irreducibility of tangent variety of $\sH$}

In this subsection, we aim to show that the tangent variety $\sD_x\subset \PP(\Omega_{X,x})$ of $\sH$ at a general point $x\in X$ is actually irreducible. We consider the following commutative diagram
\[
\begin{tikzcd}[column sep=large, row sep=large]
	Y:=\sU_{\sH} \arrow[r,"e_{\sH}"]  \arrow[d,"q_{\sH}"]  
	    &  X  \\
	\sH
	    &
\end{tikzcd}
\]
Let $l$ be a general line parametrised by $\sK$. Denote by $Y_{l}$ the preimage $e_{\sH}^{-1}(l)$ and denote by $\Delta^{\sharp}_{l}$ the union of irreducible components of $Y_l\cap q_{\sH}^{-1}(\Delta^{\sharp})$ dominating $l$.

\begin{lemma}
	\label{l.existence-Delta^j_l}
	Under the Assumption \ref{assumptionbig2},
	let $Y_l^j$ be an irreducible component of $Y_l$. Then there exists an irreducible component $\Delta^{\sharp,j}_{l}$ of $\Delta^{\sharp}_{l}$ such that $\Delta^{\sharp,j}_{l}\subset Y^j_l$.
\end{lemma}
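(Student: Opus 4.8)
The plan is to reduce the assertion to a statement about the fibres of $e_\sH$ over points of $l$, and then analyse these fibres by bend-and-break. First I would check that $Y_l^j$ dominates $l$ under $e_\sH$: since $\sH$ is a covering family of conics we have $\dim\sH=n+1$ and $\dim\Univ_\sH=n+2$, so a general fibre $\sH_x=e_\sH^{-1}(x)$ has dimension two; as $l$ is a general line it is a local complete intersection curve in the smooth variety $X$, hence every irreducible component of $Y_l=e_\sH^{-1}(l)$ has dimension at least $(n+2)-(n-1)=3$, which forces each of them to dominate $l$. Consequently, for a general point $x\in l$ the scheme $Y_l^j\cap\sH_x$ contains at least one irreducible component $S$ of $\sH_x$, and it suffices to prove that $S$ meets $q_\sH^{-1}(\Delta^\sharp)$. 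Indeed, running this over a dense open subset of $l$ and taking the closure of the union of the loci $S\cap q_\sH^{-1}(\Delta^\sharp)$ produces a two-dimensional subvariety of $Y_l^j$ which dominates $l$ and is contained in $q_\sH^{-1}(\Delta^\sharp)$; since $Y_l^j$ is not contained in $q_\sH^{-1}(\Delta^\sharp)$ — its general member is an irreducible conic, because $\dim\sH_x=2$ exceeds the dimension of the family of reducible and non-reduced conics through $x$ (through a general point there are finitely many lines, each meeting a one-dimensional family of lines) — this subvariety is a union of irreducible components of $Y_l\cap q_\sH^{-1}(\Delta^\sharp)$ dominating $l$, any one of which is the required $\Delta^{\sharp,j}_l$.

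It remains to show that for a general point $x\in l$, every irreducible component $S$ of $\sH_x$ meets $q_\sH^{-1}(\Delta^\sharp)$, i.e.\ contains a reducible conic $l_1+l_2$ with $[l_1]\in B_{l_2}^1$ (in the sense of Notation \ref{Notation:C_l^1}). Write $\bar S$ for the closure of $S$ in $\sH_x$. Since $\dim\bar S=2$, a general complete curve in $\bar S$ gives a non-constant family of rational curves all passing through the fixed point $x$, so by Mori's bend-and-break (\cite[II.5]{Ko96}) some member of this family is reducible or non-reduced; hence $\bar S\cap\Delta\neq\emptyset$. As $\Delta$ is a divisor in the irreducible variety $\sH$ and $\bar S\not\subseteq\Delta$, the intersection $\bar S\cap\Delta$ is purely one-dimensional; and since $\bar S\cap\sH_{nr}$ (double lines through $x$) is finite, $\bar S$ meets $\sH_{r}$ along a curve. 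Thus $\bar S$ contains a one-dimensional family $\{l_1^t+l_2^t\}_{t\in\Gamma}$ of reducible conics through $x$; as the lines through $x$ form a finite set, after replacing $\Gamma$ by one of its components we may assume $x\in l_1^t$ with $l_1:=l_1^t$ a fixed line, and then $[l_2^t]$ runs over a dense subset of an irreducible component of $B_{l_1}$.

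The remaining point — and, I expect, the main obstacle — is to show that this component of $B_{l_1}$ is $B_{l_1}^1$, so that $l_1+l_2^t$ lies in $\Delta^\sharp$. One route is to use the construction of $\sH$ in Setup \ref{setupconics} and Notation \ref{notationdelta}, where $\sH$ is obtained by smoothing a pair $m_1+m_2$ with $[m_1]\in B_{m_2}^1$: the plan would be to show that this $B^1$-relation propagates over all reducible members of $\sH$ through general points, so that the irreducible curve $\{[l_2^t]\}\subset B_{l_1}$ obtained above, which degenerates along a path inside $\Delta$ to a point lying on the component determined by the $B^1$-datum, must already be contained in $B_{l_1}^1$; then $\bar S\cap\sH_{r}\subseteq q_\sH^{-1}(\Delta^\sharp)$ and the first paragraph finishes the argument. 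An alternative, using results already available for $\sH$, is to invoke the linearity of the tangent variety (Theorem \ref{t.unionplanes}) together with its universal-vertex description (Lemma \ref{l.cone-vertex}): the tangent variety $\sD_S\subset\PP(\Omega_{X,x})$ of $S$ is a projective plane with vertex $\PP(\Omega_{l_0,x})$ for some line $l_0$ through $x$, while the family $l_1+l_2^t$ forces, via Lemma \ref{lemmaindeterminacy} and Lemma \ref{lemmaconnect}, the tangent directions of $l_1$ and of those $l_2^t$ passing through $x$ to lie on $\sD_S$; comparing this with the coplanarity constraints of Corollary \ref{c.lines-B_l^1-properties} and with the numerical dichotomy of Lemma \ref{Lemma:Birationality-Multiple-Components} versus Proposition \ref{Prop:Universal-Family-Trivial-Degree} (the pull-back $n^*V$ along the normalisation is unbalanced over $B_{l_1}^1$ but balanced over $B_{l_1}^i$ for $i\geq2$) should exclude $[l_2^t]\in B_{l_1}^i$ with $i\geq2$. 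In either case, once $\bar S\cap q_\sH^{-1}(\Delta^\sharp)\neq\emptyset$ is established, the reduction of the first paragraph completes the proof.
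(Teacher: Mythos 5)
Your reduction in the first paragraph is sound and matches the shape of the paper's argument: it suffices to show that, for a general $x\in l$, every irreducible component $S$ of $\sH_x$ meets $q_\sH^{-1}(\Delta^{\sharp})$, since the intersection then sweeps out a two-dimensional component of $Y_l^j\cap q_\sH^{-1}(\Delta^{\sharp})$ dominating $l$. (Two small caveats there: ``dimension $\geq 3$ forces domination of $l$'' additionally needs that the fibres of $e_\sH$ over points of a general line are two-dimensional, i.e.\ that the jumping locus of $e_\sH$ has codimension $\geq 2$ and is avoided by a general member of the \'etale web $\sK$; and one should check that no component of $Y_l$ is contained in $q_\sH^{-1}(\Delta)$, which you do.) The genuine gap is in the second half. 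Bend-and-break only produces a curve of degenerate members in $\bar S$, i.e.\ it shows $S\cap q_\sH^{-1}(\Delta)\neq\emptyset$; the lemma requires $S\cap q_\sH^{-1}(\Delta^{\sharp})\neq\emptyset$, where $\Delta^{\sharp}$ is the union of those components of $\Delta$ whose general point over a general $x'\in X$ parametrises a pair of lines \emph{both} through $x'$ and in the $B^1$-relation (Notation \ref{notationdelta}). The degenerations you find are of the form $l_1+l_2^t$ with $x\in l_1$ fixed and $l_2^t$ generically \emph{not} through $x$, moving in some component $B_{l_1}^i$; you would have to show both that $i=1$ and that the irreducible component of $\Delta$ containing this curve satisfies the defining property of $\Delta^{\sharp}$ at general points of $X$. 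You explicitly flag this as the main obstacle and offer only two programmes (propagation of the $B^1$-relation, or an argument via Theorem \ref{t.unionplanes} and the numerics of $n^*V$), neither of which is carried out. As it stands the proof is therefore incomplete at its decisive step.

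For comparison, the paper avoids this identification problem entirely by a global argument: taking the Stein factorisation $\Univ_\sH\rightarrow\widetilde X\rightarrow X$, the irreducible components of $\sH_x$ for general $x$ are exactly the (irreducible) general fibres of $\Univ_\sH\rightarrow\widetilde X$; since $q_\sH^{-1}(\Delta^{\sharp})$ is a divisor in the irreducible $\Univ_\sH$ which dominates $X$ \emph{by the very definition of} $\Delta^{\sharp}$, it dominates $\widetilde X$ and hence meets every such fibre. No bend-and-break and no analysis of which component of $\Delta$ one lands in is needed. If you want to salvage your route, the cleanest fix is to replace the bend-and-break step by this Stein factorisation observation; otherwise you must actually prove one of the two programmes sketched in your last paragraph.
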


\begin{proof}
	Let $Y\rightarrow \widetilde{X}\rightarrow X$ be the Stein factorisation. By the definition of $\Delta^{\sharp}$ (see Notation \ref{notationdelta}),  the restricted morphism $q_{\sH}^{-1}(\Delta^{\sharp})\rightarrow X$ is surjective and so is the induced morphism $q_{\sH}^{-1}(\Delta^{\sharp})\rightarrow \widetilde{X}$. Note that the general fibre of $Y\rightarrow \widetilde{X}$ is irreducible. In particular, the variety $q_{\sH}^{-1}(\Delta^{\sharp})$ meets the general fibres of $Y\rightarrow \widetilde{X}$. Hence, each irreducible component of the fibre of $Y_l\rightarrow l$ over a general point $x\in l$ contains an irreducible component of the fibre of $\Delta_{l}^{\sharp}\rightarrow l$ over $x$ and we are done.
\end{proof}

Given a general point $x\in l$, the fibre of $Y_l\rightarrow l$ can be identified to the variety $\sH_x$ and hence it is of pure dimension two. Moreover, the fibre $\delta_x$ of $\Delta_{l}^{\sharp}\rightarrow l$ over $x$ is of pure dimension one as $\dim(\Delta^{\sharp})=n$. By identifying $\delta_x$ with its image in $\Delta^{\sharp}$, we can regard $\delta_x$ as the subvariety of $\Delta^{\sharp}$ parametrising reducible conics passing through $x$. In particular, let $\delta^i_x$ be an irreducible component of $\delta_x$. Then a general point $y_t\in \delta_x^i$ parametrising a cycle of the form $l_x^i+l_t$, where $l^i_x$ is a line parametrised by $\sK_x$ such that $[l_t]\in B_{l_x^i}^1$ depending on $t$. 

Let $\Delta^{\sharp,j}_l$ be an irreducible component of $\Delta_{l}^{\sharp}$ and denote by $\widetilde{\Delta}^{\sharp,j}_l$ its normalisation. Take the Stein factorisation $\widetilde{\Delta}^{\sharp,j}_l\rightarrow \widetilde{l}^j\rightarrow l$. Let $x'\in \widetilde{l}^j$ be a point mapped to $x$. Then the fibre of $\widetilde{\Delta}^{\sharp,j}_l\rightarrow \widetilde{l}^j$ over $x'$ can be identified to the normalisation of an irreducible component $\delta_x^i$ of the fibre $\Delta^{\sharp,j}_l\rightarrow l$ over $x$. In particular, the image of the induced natural morphism 
$$
\sigma^j_l:\widetilde{l}^j\rightarrow \Chow{X}_1
$$ 
defined by sending $x'$ to $[l_x^i]$ is either the image of an irreducible component $B_{l}^j$ of $B_l$ in $\Chow{X}_1$ or the single point $[l]$. On the other hand, in the former case, as $\widetilde{l}^j$ is normal and $B_l^j\rightarrow \Chow{X}_1$ is birational, the morphism $\sigma^j_l$ can be lifted to a morphism $\widetilde{l}^j\rightarrow B_{l}^j$. By abuse of notation, we shall again denote it by $\sigma_l^j$.

\begin{proposition}
	\label{p.fibres-irreducibility-components} Under the Assumption \ref{assumptionbig2},
	let $Y_l^j$ be an irreducible component of $Y_l$ and let $\sD_l^j\rightarrow l$ be the tangent variety of $Y_{l}^j$. Then the general fibre of $\sD^j_l\rightarrow l$ is irreducible.
\end{proposition}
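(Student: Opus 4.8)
The plan is to fix a general point $x\in l$ and to prove that the finitely many projective planes making up the fibre of $\sD^j_l\to l$ over $x$ all coincide. First one identifies this fibre: since $\sH_x$ has pure dimension two, the fibre of $Y_l^j\to l$ over $x$ is a union of some of the irreducible components $S_1,\dots,S_k$ of $\sH_x$, which are pairwise disjoint because $\sH_x$ is normal. As a general member of each component of $\sH_x$ is a standard conic (the family $\sH$ being standard, Lemma \ref{lemmaconicsstandard}), the tangent map $\tau_x$ is an immersion at a general point of each $S_t$ by Proposition \ref{p.tangentstanadard}(1), hence generically finite on $S_t$; so by Theorem \ref{t.unionplanes} the image $P_t:=\overline{\tau_x(S_t)}$ is a single projective plane, and the fibre of $\sD^j_l\to l$ over $x$ equals $P_1\cup\dots\cup P_k$. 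It therefore suffices to prove that all the $P_t$ are equal.

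The key input is Lemma \ref{l.existence-Delta^j_l}, which provides an irreducible component $\Delta^{\sharp,j}_l$ of $\Delta^{\sharp}_l$ contained in $Y_l^j$. This is a divisor in the irreducible threefold $Y_l^j$ which dominates $l$, so a dimension count using the irreducibility of $Y_l^j$ shows that for general $x$ the fibre $\Delta^{\sharp,j}_l|_x$ meets every component $S_t$; choose an irreducible component $\delta_t$ of it lying in $S_t$. By construction a general point of $\delta_t$ parametrises a reducible cycle $\bar l_t+l'$, where the base line $\bar l_t=\sigma^j_l(\cdot)$ is a fixed line through $x$ and $[l']\in B_{\bar l_t}^1$ varies. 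Since $[l']$ is non-constant and $B_{\bar l_t}^1$ is an irreducible curve, the induced morphism $\delta_t\to B_{\bar l_t}^1$ is surjective, so the closure of $\delta_t$ in $S_t$ contains a point $[\bar l_t+\bar l]$ with $[\bar l]\in B_{\bar l_t}^1\cap\sK_x$ (a non-empty set, by Corollary \ref{c.lines-B_l^1-properties}(1)); in particular both $\bar l_t$ and $\bar l$ pass through $x$ and $[\bar l_t+\bar l]$ lies in $\Delta^{\sharp}$.

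Applying Lemma \ref{l.inclusion-tangents-lines} to the point $[\bar l_t+\bar l]\in S_t$ shows that $P_t$ contains $\PP(\Omega_{\hat l,x})$ for every $\hat l\in\sK_x$ with $[\hat l]\in B_{\bar l_t}^1$, and — using that $[\bar l]\in B_{\bar l_t}^1$ is equivalent to $[\bar l_t]\in B_{\bar l}^1$ by Theorem \ref{Thm:Summary-VMRT}(4) — also $\PP(\Omega_{\bar l_t,x})\in P_t$. By Corollary \ref{c.lines-B_l^1-properties}(1) and (2) three of these points are not collinear, while by Corollary \ref{c.lines-B_l^1-properties}(3) all of them lie in $\PP(F)$, where $F\subset T_{X,x}$ is the $3$-dimensional subspace attached to $\bar l_t$. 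Since $P_t$ and $\PP(F)$ are both planes passing through the same three non-collinear points, $P_t=\PP(F)$; thus $P_t$ depends only on the base line $\bar l_t$.

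It remains to see that this plane does not vary with $t$, which one reads off from the image of $\sigma^j_l$. If the image is the single point $[l]$, then all base lines equal $l$ and we are done. Otherwise the image is an irreducible component $B_l^j$ of $B_l$. If $B_l^j$ coincides with some $B_l^i$, $i\ge 2$, then $B_l^i\cong l$ by Proposition \ref{Prop:Universal-Family-Trivial-Degree}, so only one line of $B_l^j$ passes through the general point $x$ and all base lines coincide. Finally, if $B_l^j=B_l^1$, then $[\bar l_t]\in B_l^1$, hence $[l]\in B_{\bar l_t}^1$ and $l\in B_{\bar l_t}^1\cap\sK_x$; taking $\bar l=l$ above we obtain $[\bar l_t+l]\in S_t$, and Lemma \ref{l.inclusion-tangents-lines} applied to this point yields $\PP(\Omega_{\hat l,x})\in P_t$ for every $\hat l\in B_l^1\cap\sK_x$ together with $\PP(\Omega_{l,x})\in P_t$, whence $P_t=\PP(F_l)$ with $F_l$ the subspace of Corollary \ref{c.lines-B_l^1-properties}(3) attached to $l$, independently of $t$. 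The main obstacle is precisely this last subcase, where the base lines genuinely move with $t$: the point is to exploit the symmetry of the relation $B_{(\cdot)}^1$ (Theorem \ref{Thm:Summary-VMRT}(4)) to re-introduce the fixed line $l$, whose associated plane $\PP(F_l)$ is manifestly $t$-independent. A secondary delicate point is the dimension-theoretic argument, resting on the irreducibility of $Y_l^j$, that ensures $\Delta^{\sharp,j}_l$ meets every component of the general fibre.
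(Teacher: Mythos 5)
Your proposal is correct and follows essentially the same route as the paper's proof: the same inputs (Lemma \ref{l.existence-Delta^j_l} to place a curve of reducible conics in every component of the fibre, Lemma \ref{l.inclusion-tangents-lines} plus Corollary \ref{c.lines-B_l^1-properties} to produce three non-collinear common points, Theorem \ref{t.unionplanes} for planarity) and the same three-case analysis according to whether the image of $\sigma^j_l$ is $\{[l]\}$, $B_l^1$, or $B_l^i$ with $i\geq 2$, including the use of the symmetry of $B^1_{(\cdot)}$ to reintroduce the fixed line $l$ in the $B_l^1$ case. The only cosmetic differences are that you identify each plane explicitly with $\PP(F_{\bar l_t})$ instead of merely exhibiting three shared non-collinear points, and that your ``dimension count'' for meeting every component is really the Stein-factorisation argument already contained in the proof of Lemma \ref{l.existence-Delta^j_l}.
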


\begin{remark*}
	We will use the following simple fact: two projective planes containing three non co-linear points  are the same.
\end{remark*}

\begin{proof}[Proof of Proposition \ref{p.fibres-irreducibility-components}]
	By Lemma \ref{l.existence-Delta^j_l}, there exists an irreducible component $\Delta^{\sharp,j}_l$ of $\Delta_{l}^{\sharp}$ such that $\Delta^{\sharp,j}_l\subset Y^j_l$. Let $x\in l$ be a general point. Denote by $\sD_{l,x}^{j}$ the fibre of $\sD_l^j\rightarrow l$ over $x$. Then $\sD_{l,x}^j$ is exactly the image of the fibre of $Y_{l}^j\rightarrow l$ over $x$ under the tangent map $\tau$. Let $\sH'_x$ be an arbitrary irreducible component of the fibre of $Y_l^j\rightarrow l$ over $x$ and denote by $\sD'_x$ the corresponding tangent variety. Then $\sD'_x$ is an irreducible component of $\sD_{l,x}^j$. By Lemma \ref{l.existence-Delta^j_l}, there exists an irreducible component $\Delta_l^{\sharp,j}$ of $\Delta_l^{\sharp}$ contained in $Y_l^j$. In particular, there exists an irreducible component $\delta'_x$ of $\Delta_l^{\sharp,j}\rightarrow l$ over $x$ such that $\delta_x'\subset \sH'_x$. Let $x'\in \widetilde{l}^j$ be the point corresponding to $\delta'_x$. Then the image $\sigma_l^j(x')$ is a point in $\sK_x$. We shall finish the proof by analysing the intersection $\sD'_x\cap \sC_x$ and the argument will divided into three different cases according to the types of the image of $\sigma_l^j$.\\
	
	\textit{Case 1. The image of the morphism $\sigma^j_l$ is the single point $[l]$.} Then we have $\sigma_l^{j}(x')=[l]$. In other words, a general point $y_t\in \delta'_x$ parametrises a cycle of the form $l+l_t$, where $l_t$ depends on $t$ such that $[l_t]\in B_{l}^1$. Thanks to Lemma \ref{l.inclusion-tangents-lines}, the variety $\sD'_x$ contains the points $\PP(\Omega_{\bar{l},x})$ for all lines $\bar{l}$ parametrised by $\sK_x$ such that $[\bar{l}]\in B_l^1$. On the other hand, by Corollary \ref{c.lines-B_l^1-properties}, there exist two lines $l_1$ and $l_2$ parametrised by $\sK_x$ such that $[l_1]\in B_{l}^1$, $[l_2]\in B_l^1$ and the tangent directions $T_{l,x}$, $T_{l_1,x}$ and $T_{l_2,x}$ are not co-plane. It follows that the three non co-linear points $\PP(\Omega_{l,x})$, $\PP(\Omega_{l_1,x})$ and $\PP(\Omega_{l_2,x})$ are contained in every irreducible component of $\sD^j_{l,x}$. As $\sD_{l,x}^j$ is a finite union of projective planes, one derive that $\sD_{l,x}^j$ is actually an irreducible projective plane. \\
	
	\textit{Case 2. The image of the morphism $\sigma_l^j$ is the irreducible component $B_l^1$.} Then we have $\sigma_l^j(\delta'_x)=[l_1]$ for some line $l_1$ parametrised by $\sK_x$ such that $[l_1]\in B_{l}^1$. In other words, a general point $y_t\in \delta'_x$ parametrising a cycle $l_1+l_t$, where $l_1$ is a line parametrised by $\sK_x$ such that $[l_1]\in B_{l}^1$ and $[l_t]\in B_{l_1}^1$ depending on $t$. Moreover, by Proposition \ref{prop:existence-multiple-component}, we have $[l]\in B_{l_1}^1$ and therefore there exists a point $y\in \delta'_x$ parametrising the cycle $l+l_1$ by Lemma \ref{l.inclusion-tangents-lines}. Then Lemma \ref{lemmadeforminWx} implies that there exists another irreducible curve $\delta_x^l\subset \sH'_x$ such that $y\in \delta_x^l$ and a general point $z_t\in \delta_x^l$ parametrises a cycle $l+l_t$, where $[l_t]\in B_{l}^1$ depending on $t$. Then the same argument as in Case 1 applies to show that there exists a third line $l_2$ parametrised by $\sK_x$ such that $[l_2]\in B_l^1$ and every irreducible components of $\sD_{l,x}^j$ contains the three non co-linear points $\PP(\Omega_{l,x})$, $\PP(\Omega_{l_1,x})$ and $\PP(\Omega_{l_2,x})$. As a consequence, the variety $\sD_{l,x}^j$ is an irreducible projective plane. \\
	
	\textit{Case 3. The image of the morphism $\sigma_l^j$ is an irreducible component $B_l^j$ of $B_l$ for some $j\geq 2$.} Then we have $\sigma_l^j(\delta'_x)=[l_1]$ for the unique\footnote{We recall that the morphism $C_{l}^j\rightarrow l$ and $C_l^j\rightarrow B_l^j$ are birational for $j\geq 2$. So the existence of $l_1$ is unique.}  line $l_1$ parametrised by $\sK_x$ such that $[l_1]\in B_{l}^j$ $(j\geq 2)$. In other words, a general point $y_t\in \delta'_x$ parametrises a cycle $l_1+l_t$, where $[l_t]\in B_{l_1}^1$ depending on $t$ and $l_1$ is the unique line parametrised by $\sK_x$ such that $[l_1]\in B_{l}^j$. Then as in Case 1 above, there exist two lines $l_1^1$ and $l_1^2$ parametrised by $\sK_x$ such that $[l_1^1]\in B_{l_1}^1$, $[l_1^2]\in B_{l_1}^1$ and every irreducible component of $\sD_{l,x}^j$ contains the three non co-linear points $\PP(\Omega_{l_1,x})$, $\PP(\Omega_{l_1^1,x})$ and $\PP(\Omega_{l_1^2,x})$. Hence, the variety $\sD_{l,x}^j$ is an irreducible projective plane.
\end{proof}

\begin{theorem}
	\label{thm:irreduciblity-tangent-H}
	Under the Assumption \ref{assumptionbig2}, the tangent variety $\sD_x$ of $\sH$ at a general point $x\in X$ is an irreducible projective plane.
\end{theorem}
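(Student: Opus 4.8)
The plan is to start from Theorem~\ref{t.unionplanes}, which already gives that $\sD_x$ is a finite union of projective planes, and then to prove that all of these planes coincide by running the analysis over a general line and invoking Proposition~\ref{p.fibres-irreducibility-components} together with Corollary~\ref{c.lines-B_l^1-properties}.

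\emph{First, identify the components of $\sD_x$ with tangent varieties over a general line.} For $x\in X$ general, every line $l\in\sK_x$ is a general member of $\sK$. Fix such an $l$ and set $Y_l=e_\sH^{-1}(l)$, with irreducible decomposition $Y_l=\bigcup_j Y_l^j$. The fibre of $Y_l\to l$ over a general point $x'\in l$ is $e_\sH^{-1}(x')=\sH_{x'}$, so $\sH_x=\bigcup_j (Y_l^j)_x$ and hence $\sD_x=\bigcup_j \sD_{l,x}^j$, where $\sD_l^j\to l$ is the tangent variety of $Y_l^j$. By Proposition~\ref{p.fibres-irreducibility-components} each $\sD_{l,x}^j$ is an irreducible projective plane; since $\sD_x$ has pure dimension two (Lemma~\ref{lemmaconicsstandard}), the distinct planes among the $\sD_{l,x}^j$ are exactly the irreducible components of $\sD_x$. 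Thus it suffices to show that all the planes $\sD_{l,x}^j$ are equal.

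\emph{Next, make the three-point description uniform.} Unwinding the proof of Proposition~\ref{p.fibres-irreducibility-components} (via Lemmas~\ref{lemmadeforminWx} and~\ref{l.inclusion-tangents-lines}), each plane $\sD_{l,x}^j$ is the linear span of the points $\PP(\Omega_{m,x})$ with $m$ running over $\{m_0\}\cup(B_{m_0}^1\cap\sK_x)$ for a line $m_0\in\sK_x$ attached to $Y_l^j$ through the morphism $\sigma_l^j$: one has $m_0=l$ when the image of $\sigma_l^j$ is $[l]$ or $B_l^1$, and $m_0=l_1$, the unique line of $B_l^k\cap\sK_x$, when the image is $B_l^k$ with $k\ge 2$. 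By Corollary~\ref{c.lines-B_l^1-properties}(2)--(3) each such span is a genuine plane and contains three of these points that are not co-linear. For the components with $m_0=l$ these planes all equal the fixed plane $\Pi_l$ spanned by $\PP(\Omega_{l,x})$ and by the $B_l^1$-directions through $x$; since two planes sharing three points that are not co-linear are equal, all such components coincide with $\Pi_l$.

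\emph{Finally, the main obstacle: the components coming from $B_l^k$ with $k\ge 2$.} For such a component $\sD_{l,x}^j$ is a priori the plane $\Pi_{l_1}$ spanned by the tangent direction of a line $l_1$ that merely meets $l$, together with the $B_{l_1}^1$-directions through $x$, so it is not immediate that $\Pi_{l_1}=\Pi_l$. I expect this to be the delicate point, and would resolve it either by showing that for a sufficiently general choice of $l$ among the lines through $x$ such components do not occur, or by propagating the coincidence of planes along a chain of $B^1$-incidences from $l$ to $l_1$, using the symmetry $[a]\in B_b^1\Leftrightarrow [b]\in B_a^1$ from Theorem~\ref{Thm:Summary-VMRT} and the fact that each $\Pi_m$ contains all $B_m^1$-directions through $x$. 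Equivalently, what must be excluded is that $\sC_x$ fails to span a single plane inside $\PP(\Omega_{X,x})$ — a statement closely related to, but logically preceding, the rank-$3$ conclusion for the span sheaf $\sF$. Once all components of $\sD_x$ are shown to equal $\Pi_l$, the tangent variety $\sD_x$ is the irreducible projective plane $\Pi_l$.
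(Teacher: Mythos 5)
Your reduction is sound up to a point: the identification of $\sD_x$ with the union of the fibres $\sD^j_{l,x}$ over a general line $l\ni x$, the appeal to Proposition \ref{p.fibres-irreducibility-components}, and the observation that every component whose attached line is $l$ itself (images of $\sigma^j_l$ equal to $[l]$ or $B^1_l$) coincides with the fixed plane $\Pi_l$ spanned by $\PP(\Omega_{l,x})$ and the $B^1_l$-directions, all work. But the last step is a genuine gap, and you have named it yourself without closing it: for a component attached to $B^k_l$ with $k\ge 2$, the plane is $\Pi_{l_1}$ for the line $l_1\in B^k_l\cap\sK_x$, and since $[l_1]\in B^1_l\Leftrightarrow[l]\in B^1_{l_1}$ (Theorem \ref{Thm:Summary-VMRT}(3)), when $[l_1]\notin B^1_l$ the plane $\Pi_{l_1}$ need not even contain $\PP(\Omega_{l,x})$, so the three-non-collinear-points trick does not identify it with $\Pi_l$. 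Neither of your proposed repairs is available: the non-occurrence of components $B^k_l$ with $k\ge 2$ is only known a posteriori (on $V_5$ the curve $B_l$ happens to be an irreducible conic), and the chain-propagation argument would require connectivity of the $B^1$-incidence graph on $\sK_x$, which is established nowhere in the paper. As written, the proposal therefore does not prove irreducibility.

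The paper closes this exact gap by a global monodromy argument rather than by comparing the planes pairwise. One normalises the total tangent variety and takes the Stein factorisation $\widetilde\sD\to\widetilde X\xrightarrow{\ \mu\ }X$, so that the irreducible components of $\sD_x$ at a general point correspond to the points of $\mu^{-1}(x)$. If $\sD_x$ were reducible, $\mu$ would be generically finite of degree at least two, and Corollary \ref{cor:existence-mult-components} (which uses that $X$ is simply connected of Picard number one and that $\sK$ is an \'etale web) produces an irreducible component $\widetilde l\subset\mu^{-1}(l)$ that is not birational onto $l$. This forces a \emph{single} irreducible component $\sD^j_l$ of the tangent variety along $l$ to have reducible general fibre, contradicting Proposition \ref{p.fibres-irreducibility-components}. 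In other words, the fibrewise irreducibility you already established is converted into irreducibility of $\sD_x$ by monodromy along $l$; if you want to complete your write-up, this Stein-factorisation step is the missing ingredient.
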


\begin{proof}
	Let $l\subset X$ be a general line parametrised by $\sK$. Let $\sD\subset \PP(\Omega_X)$ be the total tangent variety of $\sH$. Take $\widetilde{\sD}$ the normalisation of $\sD$ and let 
	\[
	\widetilde{\sD}\xrightarrow{\widetilde{\pi}} \widetilde{X}\xrightarrow{\mu} X
	\]
	be the Stein factorisation. The general fibre of $\widetilde{\pi}$ is connected and normal, hence irreducible. Arguing by contradiction we assume that $\mu$ is not birational. By Corollary \ref{cor:existence-mult-components}, there exists an irreducible component $\widetilde{l}$ of $\mu^{-1}(l)$ such that $\widetilde{l}\rightarrow l$ is not birational. In particular, there exists an irreducible component $\sD^j_l$ of the tangent variety $\sD_l$ along $l$ such that the general fibre of $\sD^j_l\rightarrow l$ is not irreducible. Let $Y^j_l$ be the irreducible component of $Y_l$ such that the tangent variety of $Y_l^j$ is exactly $\sD_l^j$. Then we get a contradiction by Proposition \ref{p.fibres-irreducibility-components}.
\end{proof}

\begin{corollary}
	\label{c.dimension-spanning-VMRT}
	Let $X$ be an $n$-dimensional Fano manifold of Picard number $1$ equipped with an \'etale web $\sK$ of rational curves. If $T_X$ is big, then for a general point $x\in X$, the linear subspace of $\PP(\Omega_{X,x})$ spanned by the VMRT $\sC_x$ is two dimensional.
\end{corollary}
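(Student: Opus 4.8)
The plan is to squeeze the VMRT $\sC_x$ between two bounds: on one side it is contained in the irreducible projective plane produced by Theorem~\ref{thm:irreduciblity-tangent-H}, and on the other it already contains three non-collinear points coming from Corollary~\ref{c.lines-B_l^1-properties}. We may assume $n\geq 3$.

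The first step is to check that we are in the situation of Assumption~\ref{assumptionbig2}, so that all the constructions of Subsection~\ref{subsection-tgt-conics} are available. Since $X$ carries an \'etale web of rational curves $\sK$, Assumption~\ref{assumptionbig} holds, and Proposition~\ref{Prop:Degree-VMRT} gives $A\cdot l=1$ for a general member $l\in\sK$. As the curves parametrised by $\sK$ have trivial normal bundle, their anticanonical degree is two, so $-K_X\cdot l=2$; since $\pic(X)=\Z\cdot A$ this forces $-K_X=2A$, and because $A\cdot l=1$ is the smallest possible $A$-degree of a curve on $X$, the family $\sK$ is a covering family of minimal rational curves of anticanonical degree two. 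Hence Assumption~\ref{assumptionbig2} is satisfied. Moreover, by Corollary~\ref{c.lines-B_l^1-properties}(1), a general point of a general line of $\sK$ — and therefore a general point of $X$ — lies on at least two distinct lines of $\sK$, so the covering family $\sH$ of conics obtained by smoothing a pair of lines through a general point (Setup~\ref{setupconics}) is well-defined; we take $\sH$ and the discriminant locus $\Delta^{\sharp}$ as in Subsection~\ref{subsection-tgt-conics}.

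For the upper bound, I would apply Theorem~\ref{thm:irreduciblity-tangent-H}: the tangent variety $\sD_x\subset\PP(\Omega_{X,x})$ of $\sH$ at a general point $x$ is an \emph{irreducible} projective plane. By Lemma~\ref{l.cone-vertex} and the remark following it, for every line $[l]\in\sK_x$ the point $\PP(\Omega_{l,x})$ is the vertex of an irreducible component of $\sD_x$; since $\sD_x$ is irreducible this simply means $\PP(\Omega_{l,x})\in\sD_x$. By Remark~\ref{remarkdefinitionpoint} the VMRT $\sC_x$ is the closure of the set of such points, so $\sC_x\subseteq\sD_x$, and the linear span of $\sC_x$ is contained in the plane $\sD_x$, hence is at most two-dimensional.

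For the lower bound, I would use Corollary~\ref{c.lines-B_l^1-properties} once more: for a general point $x$ of a general line $l\in\sK$ there are two distinct lines $l_1,l_2$ parametrised by $B_l^1$ passing through $x$, and the tangent directions $T_{l,x},T_{l_1,x},T_{l_2,x}$ span a three-dimensional subspace of $T_{X,x}$; equivalently $\PP(\Omega_{l,x})$, $\PP(\Omega_{l_1,x})$ and $\PP(\Omega_{l_2,x})$ are three non-collinear points of $\sC_x$. Thus the span of $\sC_x$ has dimension at least two, and combined with the previous step it is exactly two. The only substantial input is Theorem~\ref{thm:irreduciblity-tangent-H}, which has already been proved; the point that Lemma~\ref{l.cone-vertex} takes care of — and that a direct appeal to Corollary~\ref{c.lines-B_l^1-properties} could not — is that one must control the tangent directions of \emph{all} lines through $x$, not only those parametrised by some $B_l^1$. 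Beyond that the argument is pure assembly, so I do not anticipate a real obstacle.
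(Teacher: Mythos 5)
Your proof is correct and follows essentially the same route as the paper: the upper bound comes from the inclusion $\sC_x\subset\sD_x$ together with Theorem~\ref{thm:irreduciblity-tangent-H}, and the lower bound from the three non-collinear tangent directions of Corollary~\ref{c.lines-B_l^1-properties}. The only (immaterial) difference is that you justify $\sC_x\subset\sD_x$ via Lemma~\ref{l.cone-vertex} and the remark following it, whereas the paper invokes Lemma~\ref{lemmaindeterminacy}; both work.
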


\begin{proof}
	Note that by Lemma \ref{lemmaindeterminacy} the total VMRT $\sC$ is contained in $\sD$ and by Theorem \ref{thm:irreduciblity-tangent-H} the tangent variety $\sD_x\subset \PP(\Omega_{X,x})$ at a general point is an irreducible plane. Moreover, by Corollary \ref{c.lines-B_l^1-properties}, the linear span of $\sC_x$ is at least two dimensional. Hence, the linear span of $\sC_x$ is exactly $\sD_x$, which is two dimensional.
\end{proof}

\section{Proofs of the main results}

Now we are in the position to finish the proof of the main theorem of this paper and deduce its corollaries.

\begin{proof}[Proof of Theorem \ref{thm:big-finite-VMRT}]
By assumption $X$ contains a rational curve $l'$ with trivial normal bundle,
we denote by $\sK$ the \'etale web of rational curves determined by $l'$.
Since $-K_X \cdot l'=2$, the family $\sK$ is a covering family of minimal rational curves. In particular, as $T_X$ is big, by Theorem \ref{Thm:Summary-VMRT}, the Fano manifold $X$ is of index two.	

Let $\sF\subset T_X$ be the saturated subsheaf defined by the linear span of the VMRT $\sC_x$. Then $\sF$ has rank $3$ by Corollary \ref{c.dimension-spanning-VMRT}. Let $l$ be a general line parametrised by $\sK$. As $\sF \subset T_X$ is saturated, we may assume by \cite[II, Prop.3.7]{Ko96} that $\sF$ is a subbundle of $T_X$ along $l$. Denote by $\sF_l$ the restriction of $\sF$ to $l$. Let $f:\PP^1\rightarrow l$ be the normalisation. By assumption, we have
	\[
	f^*T_{X} \cong \sO_{\PP^1}(2)\oplus \sO_{\PP^1}^{\oplus (n-1)}.
	\]
	On the other hand, it is clear that the factor $\sO_{\PP^1}(2)$ is contained in $\sF_l$ as $\PP(\Omega_l)\subset \sC$. Thanks to Theorem \ref{Thm:Summary-VMRT}, for a general point $x\in X$, the linear span of $\PP(\Omega_{l,x})$ and $\sC_{l,x}^1$ in $\PP(\Omega_{X,x})$ has dimension at least two. It follows that at a general point $x\in l$, the vector space $\sF_l$ is spanned by the tangent directions $T_{l,x}$ and all $T_{\bar{l},x}$, where $\bar{l}$ are the  lines parametrised by $\sK_x$ such that $\bar{l}\in B_l^1$. Consider the following projection  (see Section \ref{Section:projection-local})
	\[
	\bar{P}_{l_0}:\PP(f^*\Omega_X) \dashrightarrow \PP(\sO_{\PP^1}^{\oplus (n-1)}) \rightarrow \PP^{n-2}.
	\]
	Then the image $\bar{P}_{l_0}(\sC_l^1)$ is a projective line in $\PP^{n-2}$. This implies that there exists a trivial rank two vector bundle $\bar{\sF_l}\subset f^*T_X/\sO_{\PP^1}(2)$ such that the image of $\sF_l$ in $f^*T_X/\sO_{\PP^1}(2)$ is contained in $\bar{\sF_l}$. As $\sF_l$ is of rank three, it follows that $\sF_l$ is nothing but the inverse image of $\bar{\sF}_l$ in $f^*T_X$. In particular, we get $c_1(\sF_l)=2$. As a consequence, we obtain $c_1(\sF)\cdot l=2$. 
	
	Arguing by contradiction we assume that $n\geq 4$, so the inclusion $\sF \subset T_X$ is strict. Since $X$ has Picard number one and $-K_X\cdot l=2$, it follows that we have $-K_{\sF}=-K_X$. Thus the quotient sheaf $\mathcal Q:= T_X/\sF$ has trivial determinant. In particular the tangent bundle $T_X$ is not generically ample, which is a contradiction to \cite[Thm.1.3]{Pet12}. 
	
	Hence $X$ must be a del Pezzo threefold and we then conclude by \cite[Thm. 1.5]{HLS20} that $X$ is isomorphic to the quintic del Pezzo threefold $V_5$, which is a smooth codimension $3$ linear section of $\text{Gr}(2,5)\subset \PP^9$.
\end{proof}

\begin{proof}[Proof of Corollary \ref{c.threefolds}]
	Let $X$ be a smooth Fano threefold of Picard number one. If $X$ is of index at most two, then it is known that $X$ carries a covering family of rational curves with anti-canonical degree two. Thus Theorem \ref{thm:big-finite-VMRT} implies that $X$ is isomorphic to the quintic del Pezzo threefold $V_5$. If $X$ is of index at least three, then $X$ is isomorphic to either the smooth quadric threefold $Q^3$ or the projective space $\PP^3$, whose tangent bundles are known to be nef and big. 
\end{proof}

\begin{proof}[Proof of Corollary \ref{c.indexone}]
	Let $X$ be a Fano manifold of Picard number one and with index one. Since $T_X$ is big and the VMRT $\sC_x\subset \PP(\Omega_{X,x})$ is not dual defective, by \cite[Theorem 3.4]{FuLiu21}, the general members of $\sK$ have anti-canonical degree at most two. Hence, we must have $-K_X\cdot \sK=2$ and then Theorem \ref{Thm:Summary-VMRT} implies that $X$ is of index two, which is a contradiction.
\end{proof}

\begin{remark}
	Let $X$ be a Fano manifold of Picard number one and with index two. Assume that $X$ carries a covering family $\sK$ of minimal rational curves such that the VMRT $\sC_x\subset \PP(\Omega_{X,x})$ at a general point $x\in X$ is not dual defective. Then the above proof shows that $T_X$ is big only if either $X$ is isomorphic to the quintic del Pezzo threefold $V_5$ or the general members of $\sK$ have anti-canonical degree four.
\end{remark}

%\bibliographystyle{alpha}
%\bibliography{biblio.bib}

\newcommand{\etalchar}[1]{$^{#1}$}

\end{document}